\definecolor{codegreen}{rgb}{0,0.6,0}
\definecolor{codegray}{rgb}{0.5,0.5,0.5}
\definecolor{codepurple}{rgb}{0.58,0,0.82}
\definecolor{backcolour}{rgb}{0.95,0.95,0.92}
\lstdefinestyle{mystyle}{
    backgroundcolor=\color{backcolour},   
    commentstyle=\color{codegreen},
    keywordstyle=\color{magenta},
    numberstyle=\tiny\color{codegray},
    stringstyle=\color{codepurple},
    basicstyle=\ttfamily\footnotesize,
    breakatwhitespace=false,         
    breaklines=true,                 
    captionpos=b,                    
    keepspaces=true,                 
    numbers=left,                    
    numbersep=5pt,                  
    showspaces=false,                
    showstringspaces=false,
    showtabs=false,                  
    tabsize=2
}
\def\C{{\mathbbm C}}
\def\E{{\mathbbm E}}
\def\H{{\mathbbm H}}
\def\N{{\mathbbm N}}
\def\P{{\mathbbm P}}
\def\R{{\mathbbm R}}
\def\U{{\mathbbm U}}
\def\Y{{\mathbbm Y}}
\newcommand{\calB}{\mathcal B}
\newcommand{\calF}{\mathcal F}
\newcommand{\calL}{\mathcal L}
\newcommand{\calS}{\mathcal S}
\newcommand{\calW}{\mathcal W}
\newcommand{\bfLambda}{\mathbf{\Lambda}}
\newcommand{\bfA}{\mathbf{A}}
\newcommand{\bfB}{\mathbf{B}}
\newcommand{\bfH}{\mathbf{H}}
\newcommand{\bfP}{\mathbf{P}}
\newcommand{\bfS}{\mathbf{S}}
\newcommand{\eps}{\varepsilon}
\renewcommand{\phi}{\varphi}
\newcommand{\one}{\mathbbm{1}}
\DeclarePairedDelimiter{\norm}{\lVert}{\rVert}
\DeclarePairedDelimiter{\abs}{\lvert}{\rvert}
\newcommand{\twovector}[2]{\begin{pmatrix} #1 \\ #2 \end{pmatrix}} 
\newcommand{\round}[2]{\lfloor #1 \rfloor_{#2}} %
\DeclareMathOperator{\id}{id}
\DeclareMathOperator{\Lip}{Lip}
\theoremstyle{plain}
\newtheorem{theorem}{Theorem}[section]
\theoremstyle{remark}
\theoremstyle{plain}
\newtheorem{corollary}[theorem]{Corollary}
\newtheorem{lemma}[theorem]{Lemma}
\newtheorem{proposition}[theorem]{Proposition}
\newtheorem{setting}[theorem]{Setting}
\newcommand{\bchange}{\begin{color}{red}}
\newcommand{\echange}{\end{color}}
\newcommand{\wen}{\color{black}}
\title{Weak convergence rates for temporal\linebreak numerical approximations of stochastic wave\linebreak equations with multiplicative noise}
\author{Sonja Cox$^{1}$, Arnulf Jentzen$^{2,3,4} $, and Felix Lindner$^{5} $
	\bigskip 
	\\
	\small{$^1$ Korteweg-de Vries Institute for Mathematics, Faculty of Science,}
	\\
	\small{University of Amsterdam,  The Netherlands; e-mail: s.g.cox@uva.nl}
	\medskip
	\\
	\small{$^2$ School of Data Science and Shenzhen Research Institute of Big Data,}
	\\
	\small{The Chinese University of Hong Kong, Shenzhen (CUHK-Shenzhen), China;} \\
	\small{e-mail: ajentzen@cuhk.edu.cn} 
	\medskip
	\\
	\small{$^3$ Applied Mathematics: Institute for Analysis and Numerics,}
	\\
	\small{University of Münster, Germany; e-mail: ajentzen@uni-muenster.de}
	\medskip
	\\
	\small{$^4$ Seminar for Applied Mathematics, Department of Mathematics,}
	\\
	\small{ETH Z{\"u}rich, Switzerland; e-mail: arnulf.jentzen@sam.math.ethz.ch} 
	\medskip
	\\
	\small{$^5$  Institute of Mathematics, Faculty of Mathematics and Natural Sciences,}
	\\
	\small{University of Kassel, Germany; e-mail: lindner@mathematik.uni-kassel.de}
}
\date{}                     
\begin{document}

\allowdisplaybreaks
\maketitle 
\begin{abstract}
In this work we establish weak convergence rates for 
temporal discretisations of stochastic wave equations with multiplicative noise, 
in particular, for the hyperbolic Anderson model. 
For this class of stochastic partial differential equations the weak convergence rates we obtain are indeed twice the known strong rates. 
To the best of our knowledge, our findings are the first in the scientific literature which provide essentially sharp weak convergence rates for 
temporal discretisations of stochastic wave equations  
with multiplicative noise. 
Key ideas of our proof are a sophisticated splitting of the error and applications of the recently introduced mild It\^{o} formula. We complement our analytical findings by means of numerical simulations in Python for the decay of the weak approximation error for SPDEs for four different test functions.
\end{abstract}

\renewcommand{\thefootnote}{\fnsymbol{footnote}} 
\footnotetext{\emph{2010 Mathematics Subject Classification.} 60H15, 60H35, 65C30}     
\renewcommand{\thefootnote}{\arabic{footnote}} 
\renewcommand{\thefootnote}{\fnsymbol{footnote}} 
\footnotetext{\emph{Key words and phrases.} Stochastic wave equation, multiplicative noise, hyperbolic Anderson model, temporal discretisation, weak convergence rate, mild It\^{o} formula, numerical simulation}     
\renewcommand{\thefootnote}{\arabic{footnote}} 

\setcounter{tocdepth}{3}
\tableofcontents
\section{Introduction}\label{sec:intro}
Stochastic partial differential equations (SPDEs) are used to model various evolutionary processes subject to random forces.
For example, stochastic wave equations may model the motion of a strand of DNA in a liquid or heat flow around a ring; see, e.g.,~\cite{Dalang:2009, Thomas:2012}. In general the solution to an SPDE cannot be given explicitly, whence it is desirable to prove convergence rates for numerical approximations. 
Here one distinguishes strong convergence rates, i.e., 
with respect to the strong (mean square) error, and weak convergence rates, i.e., 
with respect to 
a suitable weak approximation error. \label{remark:weak_error} 
Typically, the convergence rate for the weak error is twice the convergence rate for the strong error.
However, there does not exist a straightforward way to establish this. Moreover, non-trivial exceptions to this rule exist; see, e.g., \cite{Alfonsi:2005, HefterJentzen:2017}.

For both parabolic and hyperbolic semilinear SPDEs 
with coefficients depending on the state in a globally Lipschitz continuous way, 
 strong convergence is by now well-understood. In particular, strong convergence rates for numerical approximations of 
stochastic wave equations 
have been established in, e.g.,~\cite{AntonEtAl:2016, CohenEtAl:2013, CohenQuerSardanyons:2016, CaoYin:2017, KovacsEtAl:2013,KovacsEtAl:2010, QuerSans:2006, Walsh:2006, Wang:2015, WangGanTang:2014}.  
Recently, a strong approximation scheme for a stochastic wave equation with a cubic nonlinearity has been analysed in 
\cite{CuiEtAl:2019}.

Establishing optimal weak convergence rates for both hyperbolic and parabolic SPDEs is currently active field of research; see, e.g.,~\cite{AnderssonEtAl:2016b, AnderssonEtAl:2015, AnderssonEtAl:2016a, AnderssonEtAl:2016d, AnderssonEtAl:2016c, AnderssonLarsson:2016, AnderssonLindner:2018, AnderssonLindner:2017,  Brehier:2012, Brehier:2014, Brehier:2017, BrehierEtAl:2017, BrehierGoudenege:2018, BrehierKopec:2017, ConusEtAl:2014, CuiHong:2017, CuiHong:2018, deBouardDebussche:2006, Debussche:2011, DebusschePrintems:2009, GeissertEtAl:2009, HarmsMueller:2017, Hausenblas:2003,  Hausenblas:2010, HefterEtAl:2016, JacobedeNauroisJentzenWelti:2021, JentzenKurniawan:2015, Kopec:2014, KovacsEtAl:2013, KovacsEtAl:2012, KovacsEtAl:2015, KovacsPrintems:2014, Kruse:2014, Lindgren:2012, LindnerSchilling:2013, Shardlow:2003, Wang:2015, Wang:2016, WangGan:2013}. Arguably, the most relevant basic SPDEs are the parabolic and hyperbolic Anderson model, i.e., the heat equation with multiplicative noise and the wave equation with multiplicative noise. 
However, establishing optimal weak convergence rates for SPDEs with multiplicative noise is challenging. Indeed, of the articles cited above only~\cite{BrehierEtAl:2017, ConusEtAl:2014, CuiHong:2017, deBouardDebussche:2006, Debussche:2011, HefterEtAl:2016, JacobedeNauroisJentzenWelti:2021, JentzenKurniawan:2015} provide weak rates for SPDEs with multiplicative noise. 
Roughly speaking, there are two successful approaches to obtain optimal weak convergence rates for parabolic SPDEs with multiplicative noise. One is based on regularity results for the corresponding Kolmogorov equation and Malliavin calculus; see, e.g.,~\cite{BrehierEtAl:2017, Debussche:2011}. The other is based on more elementary regularity results of the Kolmogorov equation and the mild It\^o formula; see, e.g., ~\cite{ConusEtAl:2014, HefterEtAl:2016, JentzenKurniawan:2015}. 

No successful approach for proving optimal weak convergence rates has been developed yet for temporal discretisations of hyperbolic SPDEs with multiplicative noise. Indeed, the two approaches mentioned above are not applicable as they rely strongly on the smoothing effect of the semigroup. In this work we tackle this problem and develop a technique that allows one to establish optimal weak convergence rates for hyperbolic SPDEs with multiplicative noise. A special case of our main result is presented in the following theorem. 
\begin{theorem}\label{thm:hyperbolic_anderson}
Let $ T, \vartheta \in ( 0 , \infty ) $, 
$ b_0,\, b_1 \in \R$, $H=L^2((0,1);\R)$,
let $ ( \Omega , \calF , \P, (\mathbbm{F}_t )_{ t \in [ 0 , T ] } ) $ be a filtered probability space which fulfills the usual conditions, 
let $ ( W_t )_{ t \in [ 0 , T ] } $ be an $\id_{H} $-cylindrical $ ( \mathbbm{F}_t )_{ t \in [ 0 , T ] } $-Wiener process, 
let $A\colon D(A) \subseteq H \rightarrow H$
be the Dirichlet Laplacian on $H$, 
let $ ( H_{r} , \langle \cdot, \cdot \rangle_{H_r}, \left\| \cdot \right\|_{H_{r}})$, $ r \in \R $, 
be a family of interpolation spaces\footnote{
Note that for every 
$r\in[0,\infty)$, 
$v\in H_r$ it holds that $H_r=D((-A)^r)$ and $\|v\|_{H_r}=\|(-A)^rv\|_H$.
}
associated to $- A $, 
let $\bfH_0 = H_{0} \times H_{-\nicefrac{1}{2}}$, $\bfH_1 = H_{\nicefrac{1}{2}} \times H_{0} $, let 
$ \bfA \colon D(\bfA)\subseteq\bfH_0 \to \bfH_0 $ be the linear operator which satisfies that $D(\bfA)=\bfH_1$ and $[\forall\,(v,w) \in D(\bfA)\colon\bfA( v , w ) = ( w , \vartheta A v )]$, 
let  $ \xi \in \calL^6 ( \P|_{\mathbbm{F}_0} ; \bfH_{1}  ) $, 
$ \phi \in C^4( \bfH_0, \R ) $ satisfy\footnote{
Observe that for every $k\in\N$ and every $k$-linear bounded operator 
$C\colon(\bfH_0)^k\to\R$ 
it holds that $\|C\|_{L^{(k)}(\bfH_0,\R)}=
\sup\{|C(x_1,x_2,\ldots,x_k)|\colon x_1,x_2,\ldots,x_k\in\bfH_0
,\,
\|x_1\|_{\bfH_0}=\|x_2\|_{\bfH_0}=\ldots=\|x_k\|_{\bfH_0}=1\}.$
}
 that $\sup_{k\in \{1,2,3,4\},\,x\in \bfH_0}\| \phi^{(k)}(x) \|_{L^{(k)}(\bfH_0,\R)} < \infty$, 
let $\bfB \colon \bfH_0 \to L_2( H, \bfH_0 ) $ 
be the function which satisfies for every 
$ (v,w) \in \bfH_0 $, $u\in H_{\nicefrac12}$ that 
$ \bfB( v, w ) u  = \bigl( 0, (b_0  + b_1 v) u \bigr)$, 
let $ X\colon [0,T] \times \Omega \rightarrow  \bfH_0$
be an $(\mathbbm{F}_t)_{t\in[0,T]}$-predictable stochastic process which satisfies 
for every $t\in [0,T]$ that 
$\sup_{s\in[0,T]}\E\big[\| X_s \|_{\bfH_0}^2\big] < \infty$  
and 
\begin{align}
 X_t & 
 =
 e^{t\bfA } \xi
 + \int_{0}^{t} e^{ (t-s)\bfA} \bfB( X_s )\,\mathrm{d}W_s,
\end{align}
and let 
$Y^{N}\colon \{0,1,2,\ldots,N\} \times \Omega \rightarrow \bfH_0$, $N\in \N$,
be the stochastic processes which satisfy for all $N\in \N$, $n\in \{1,2,\ldots,N\}$ 
that $Y^{N}_0 = \xi$ and
\begin{equation}
  Y^{N}_n 
  = 
  e^{(\nicefrac{T}{N})\bfA} 
  \bigg( 
    Y^{N}_{n-1} 
    + 
	\int_{\nicefrac{(n-1)T}{N}}^{\nicefrac{nT}{N}} \bfB(Y^N_{n-1})\,\mathrm{d}W_s
    \bigg).
\end{equation}
Then it holds for all $\eps \in (0,\infty)$ that
$
  \sup_{N\in \N} 
  \left(
    N^{1-\eps}
    \left|
      \E\! \left[ \phi\big(X_T\big) \right] 
      - 
      \E\! \left[ \phi\big(Y_N^N\big)\right] 
    \right|
  \right)
   < \infty.
$
\end{theorem}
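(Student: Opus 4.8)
The plan is to bound the weak error $|\E[\phi(X_T)] - \E[\phi(Y_N^N)]|$ by combining a telescoping decomposition along the time grid with the mild It\^o formula, exploiting the fact that for the wave equation the semigroup $e^{t\bfA}$ is a (unitary) group on each $\bfH_r$ and hence produces \emph{no} smoothing, so all regularity must come from the temporal H\"older regularity of the mild solution and from the structure of $\bfB$. First I would introduce the continuous-time interpolant $\tilde Y^N\colon[0,T]\times\Omega\to\bfH_0$ defined by $\tilde Y^N_t = e^{t\bfA}\xi + \int_0^t e^{(t-s)\bfA}\bfB(Y^N_{\lfloor s\rfloor_{T/N}})\,\mathrm{d}W_s$, where $\lfloor s\rfloor_{T/N}$ denotes the largest grid point $\le s$; one checks $\tilde Y^N_{nT/N} = Y^N_n$, so it suffices to estimate $|\E[\phi(X_T)] - \E[\phi(\tilde Y^N_T)]|$. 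The standard first move — establishing a priori moment bounds $\sup_{N}\sup_{t}\E[\|\tilde Y^N_t\|_{\bfH_0}^6]<\infty$ and temporal H\"older bounds $\E[\|X_t - X_s\|_{\bfH_0}^2]\lesssim |t-s|$, $\E[\|X_t - X_s\|_{\bfH_{-1/2}}^2]\lesssim |t-s|^2$ (and likewise for $\tilde Y^N$) — follows from Burkholder--Davis--Gundy, the linear growth of $\bfB$, and the mapping properties of $\bfB$ between the interpolation spaces; this I would invoke as routine.

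The heart of the argument is a careful splitting of the error. Following the mild-It\^o approach of \cite{ConusEtAl:2014, JentzenKurniawan:2015}, I would \emph{not} introduce the Kolmogorov equation of the SPDE directly (that would demand smoothing), but instead write $\E[\phi(X_T)] - \E[\phi(\tilde Y^N_T)]$ as a telescoping sum over $n\in\{0,\dots,N-1\}$ of the differences obtained by replacing, on the interval $[nT/N,(n+1)T/N]$, the exact integrand $\bfB(X_s)$ by the frozen integrand $\bfB(\tilde Y^N_{nT/N})$ — or more precisely by interpolating between the two processes and applying the mild It\^o formula from the cited earlier results to $\phi$ composed with the relevant mild process on each subinterval. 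This generates three groups of terms: (i) a first-order term involving $\phi'$ contracted against $e^{(T-s)\bfA}(\bfB(X_s) - \bfB(\tilde Y^N_{\lfloor s\rfloor}))$, which vanishes in expectation at leading order and whose residual, after a \emph{second} application of the mild It\^o formula (the key duplication that produces the doubled rate), is controlled by $\E[\|X_s - X_{\lfloor s\rfloor}\|^2]\lesssim (T/N)$ together with one extra factor $(T/N)$ from the outer integration — giving order $N^{-2}$ per the standard heuristic, here realised as $N^{-1}$ net after summing $N$ subintervals, and the second-order gain pushing this to $N^{-1+\eps}$; (ii) a second-order term with $\phi''$ contracted against $\bfB(X_s)$ and $\bfB(X_s) - \bfB(\tilde Y^N_{\lfloor s\rfloor})$, again of the right order by H\"older regularity; and (iii) remainder terms from $\phi^{(3)},\phi^{(4)}$ that are manifestly of order $N^{-1}$ or smaller by the boundedness of the derivatives of $\phi$ and the moment/H\"older bounds. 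The affine structure $\bfB(v,w)u = (0,(b_0+b_1v)u)$ is used crucially: $\bfB'$ is constant and $\bfB'' = 0$, which annihilates several would-be obstruction terms and lets the relevant stochastic integrals be estimated by It\^o isometry rather than BDG with losses.

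The main obstacle — and the reason the two existing parabolic approaches fail — is that there is no semigroup smoothing to absorb the spatial irregularity of $\bfB(X_s) - \bfB(\tilde Y^N_{\lfloor s\rfloor})$; in the parabolic case one gains a factor $(t-s)^{-\rho}$ from $\|(-A)^{\rho}e^{(t-s)A}\|$ that converts spatial regularity into extra temporal rate, and that tool is simply unavailable here. The workaround, which I expect to be the technically heaviest part, is to trade this lost smoothing for the \emph{second} temporal H\"older estimate $\E[\|X_t - X_s\|_{\bfH_{-1/2}}^2]\lesssim |t-s|^2$: by measuring certain error contributions in the weaker norm $\bfH_{-1/2}$ (where $\bfB(X_s)-\bfB(\tilde Y^N_{\lfloor s\rfloor})$ still makes sense because $\bfB$ maps into $\{0\}\times H_0\hookrightarrow \bfH_{-1/2}$ after the group, by the smoothing \emph{within} the product structure of $\bfA$ rather than of $A$) and pairing with the corresponding Sobolev regularity of $\phi'$ and $\phi''$ as functionals on $\bfH_{-1/2}$, one recovers the extra power of $N^{-1}$. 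Making the bookkeeping of which norm is used where consistent across all the telescoped terms, and verifying at each step that the required moments ($\calL^6$ on $\xi$ is exactly what is needed for the $\phi^{(4)}$ terms) and H\"older exponents are available, is where the real work lies; once the splitting is set up correctly, each individual term is estimated by BDG/It\^o isometry, H\"older's inequality, and the a priori bounds, and the final $\sup_{N} N^{1-\eps}(\cdots)<\infty$ follows by summing the $N$ subinterval contributions.
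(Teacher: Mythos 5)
Your proposal diverges from the paper's route in a way that leaves two genuine gaps. First, the decomposition itself: you explicitly decline to introduce the Kolmogorov equation and instead telescope over the grid, claiming that the first-order term, $\phi'$ contracted against $e^{(T-s)\bfA}(\bfB(X_s)-\bfB(\tilde Y^N_{\lfloor s\rfloor}))$, ``vanishes in expectation at leading order.'' That cancellation is exactly what the backward Kolmogorov function provides and is not available when the test functional is $\phi$ evaluated at the terminal time: $\phi'$ at the endpoint is a random, non-adapted weight, so the stochastic-integral contributions do not drop out and the error committed on $[nT/N,(n+1)T/N]$ must still be propagated through the nonlinear dynamics up to time $T$, which is precisely what $v(t,x)=\E[\phi(X^{t,x}_{T-t})]$ (or a Malliavin duality argument) is for. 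The paper does use the Kolmogorov equation, but the point you miss is \emph{how} it avoids the smoothing obstruction: it first projects onto finite spectral Galerkin spaces, where $\bfA\bfP_I|_{\bfH_0}\in L(\bfH_0)$ is bounded, so the regularity estimates for $v^I$ (Lemma~\ref{lem:kolmogorov}) hold uniformly in $I$ because $\sup_{t}\|e^{t\bfA}\|_{L(\bfH_0)}=1$; the classical It\^o formula applied to $v^I(t,Y^{h,I}_t)$ then cancels the $\bfA x$ term via the Kolmogorov PDE, the remaining differences are split as in \eqref{eq:split_det_term}--\eqref{eq:split_noise_term}, the terms where the evaluation point moves from $Y^{h,I}_t$ to $e^{\delta(t)\bfA}Y^{h,I}_{\lfloor t\rfloor_h}$ are handled by the mild It\^o formula on intervals of length $h$ (so no unbounded operator appears and each such term is $O(h)$), and finally one passes to $I=\H$ by the strong Galerkin convergence of Proposition~\ref{prop:strong_Galerkin_conv} and Corollary~\ref{cor:strong_Galerkin_conv}. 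Your sketch contains no substitute for this cancellation/propagation mechanism.

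Second, your proposed source of the doubled rate is not usable under the stated hypotheses. You want to measure parts of the error in $\bfH_{-\nicefrac{1}{2}}$ using the temporal H\"older bound $\E[\|X_t-X_s\|^2_{\bfH_{-1/2}}]\lesssim|t-s|^2$ and then pair with $\phi'$, $\phi''$ ``as functionals on $\bfH_{-1/2}$''; but $\phi$ is only assumed to lie in $C^4_{\mathrm b}(\bfH_0,\R)$, and its derivatives have no continuous extension to $\bfH_{-\nicefrac{1}{2}}$. The paper gains the extra order quite differently: through \emph{spatial} regularity of the diffusion coefficient along the solution --- the conditions $B|_{\bfH_\rho}\in\Lip(\bfH_\rho,L(U,\bfH_\gamma))$ with $\gamma$ close to $1$ and the weighted bounds \eqref{eq:linGrowB}--\eqref{eq:LipB} with $\mu_{e_n}\sim n^{(1-\eps)/2}$ --- combined with the deterministic group estimate $\|\bfLambda^{-\alpha}(\id_{\bfH_0}-e^{h\bfA})\|_{L(\bfH_0)}\lesssim h^\alpha$ (Lemma~\ref{lemma:sg-estimate2}) and the Schatten--H\"older inequality (Lemma~\ref{lemma:Schatten_hoelder}) to distribute regularity between the two $B$-arguments of $v^I_{0,2}$. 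Verifying those hypotheses for the Anderson coefficient $\bfB(v,w)u=(0,(b_0+b_1v)u)$ is a substantial part of the proof (the Sobolev--Slobodeckij multiplier estimates and the Hilbert--Schmidt bound on $(-A)^{r}M_m(-A)^{-1/2}$ in Section~\ref{ssec:prepHA}); your observation that $\bfB'$ is constant and $\bfB''=0$ does not supply these mapping properties, so even granting your error decomposition, the quantitative input needed for rate $1^-$ is missing.
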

Note that we obtain rate of convergence $1^{-}$, which is indeed twice the known strong rate. Theorem~\ref{thm:hyperbolic_anderson} is an immediate consequence of Corollary~\ref{cor:hyperbolic_anderson} below. 
Corollary~\ref{cor:hyperbolic_anderson} follows from Theorem~\ref{prop:fin_dim_weak_error}, which is the main result of this article. 
Indeed, Theorem~\ref{prop:fin_dim_weak_error} establishes an upper bound for the weak error of a temporal discretisation of a hyperbolic SPDE 
with multiplicative noise. 
Similar as in the parabolic case, a key ingredient of the proof of this upper bound is the mild It\^o formula
developed in~\cite{DaPratoJentzenRoeckner:2012}. 
For the convenience of the reader, we recall a suitable variant of the mild It\^o formula in Proposition~\ref{thm:mildIto} below. \label{remark:mildIto} 
For
parabolic SPDEs the mild It\^o formula is used to insert the semigroup in an appropriate place so the smoothing property can be exploited. Here, however, the mild It\^o formula is used to rewrite certain terms in the error as integrals over an interval of length at most $\nicefrac{T}{N}$. 
The use of the mild It\^o formula is crucial: if one would apply the `classical' It\^o formula, then one would obtain a term involving an unbounded operator.  
Although the underlying semigroup does not does not enjoy a smoothing property as in the parabolic case, by using the mild It\^{o} formula one can avoid the appearance of an unbounded operator and thus the roughing effect accompanied by it. 
Another key ingredient of the proof is an elegant decomposition of the error into terms that can be treated using this mild It\^o formula approach, and terms that can be dealt with in a relatively straightforward manner; see~\eqref{eq:fin_dim_weak_error}--\eqref{eq:split_noise_term} in the proof of Theorem~\ref{prop:fin_dim_weak_error}. 
It is to be expected that this method of proof can also be applied to other types of temporal discretisations, as well as to spatial discretisations such as the finite element method. Moreover,
although we consider the Hilbert space setting in this work, our approach can be extended in a straightforward way to the Banach space setting; see~\cite{CoxEtAl:2016}. This would allow one to prove optimal weak rates for more general semilinear drift and diffusion coefficients; see~\cite{HefterEtAl:2016} for analogous results for parabolic SPDEs. 
For completeness we note that optimal weak convergence rates for spatial spectral Galerkin approximations of 
stochastic wave equations  
have been established in~\cite{NauroisEtAl:2017, JacobedeNauroisJentzenWelti:2021}. 
The approach taken in~\cite{NauroisEtAl:2017, JacobedeNauroisJentzenWelti:2021} essentially relies on the specific structure of the spatial spectral Galerkin approximations and can thus neither be extended to temporal approximations nor to other more complicated spatial approximations such as the finite element method. 
\par 
Let us comment on the optimality of the convergence rate obtained in Theorem~\ref{thm:hyperbolic_anderson} above. 
Lower bounds for strong and weak approximation errors of numerical discretisations of SPDEs have been derived in, e.g., \cite{BeckerGessJentzenKloeden:2018,ConusEtAl:2014,DavieGaines:2001,NauroisEtAl:2017,JentzenKurniawan:2015,mr:07a,MuellerGronbachRitterWagner:2008b,mrw:08}. 
In particular, lower bounds for weak approximation errors of spatial spectral Galerkin approximations of stochastic wave equations can be found in \cite{NauroisEtAl:2017}. 
Lower bounds for strong and weak approximation errors of temporal numerical approximations of stochastic wave equations remain an open problem for future research. Nevertheless, we conjecture that the weak convergence rate for the exponential Euler scheme in Theorem~\ref{thm:hyperbolic_anderson} above can in general not essentially be improved.
\par
The remainder of this article is structured as follows. 
In Section~\ref{sec:Kolmogorov_Hilbert} we present some essentially well-known auxiliary results. 
More specifically, a standard existence and uniqueness result for semilinear SDEs in Hilbert spaces is recalled in Subsection~\ref{ssec:SDE}, 
regularity results for the associated Kolmogorov equations are provided in Subsection~\ref{ssec:Kolmogorov}, 
and further preparatory lemmas are collected in Subsection~\ref{ssec:prep}. 
Section~\ref{sec:weak_convergence_rates} is devoted to the weak error analysis for temporal discretisations of a class of stochastic wave equations with multiplicative noise and contains our main abstract results. 
A general setting for our convergence analysis is presented in Subsection~\ref{ssec:setting_wave}, 
and some elementary properties of the wave semigroup are collected in Subsection~\ref{ssec:wavesg}. 
Theorem~\ref{prop:fin_dim_weak_error} in Subsection~\ref{ssec:upperboundsweak} establishes upper bounds for the weak errors of temporal discretisations of spatial spectral Galerkin approximations. 
This combined with the uniform moment bounds obtained in Subsection~\ref{ssec:momentbounds} and
the strong convergence of the Galerkin approximations proven in Subsection~\ref{ssec:upperboundsstrong} establishes the weak convergence rates of the temporal discretisations, see Corollary~\ref{cor:weak_conv} below. 
In Section~\ref{sec:hyperbolic_anderson} we apply the weak convergence result 
from  
Corollary~\ref{cor:weak_conv} to the hyperbolic Anderson model. 
After specifying a suitable setting in Subsection~\ref{ssec:setting_hyperbolic_anderson}, 
we collect some results on multipliers on Sobolev-Slobodeckij spaces in Subsection~\ref{ssec:prepHA}, 
which we use in Subsection~\ref{ssec:HA} to verify that Corollary~\ref{cor:weak_conv} implies Corollary~\ref{cor:hyperbolic_anderson}. 
Recall that Corollary~\ref{cor:hyperbolic_anderson} implies Theorem~\ref{thm:hyperbolic_anderson}. 
Numerical simulations illustrating Corollary~\ref{cor:hyperbolic_anderson} are presented in Subsection~\ref{ssec:simulations}. 
\subsection{Setting}\label{ssec:setting}
\noindent{}Throughout this article we shall frequently use the following setting.
\begin{setting}\label{setting:Part1}
For every pair of $\R$-Hilbert spaces $ (\mathcal V , \langle \cdot , \cdot \rangle_{\mathcal V } , \left\| \cdot\right\|_{\mathcal V} ) $ and
$ ( \mathcal W , \langle \cdot , \cdot \rangle_{ \mathcal W } , \left\| \cdot\right\|_{ \mathcal W } ) $ 
let $(L_2(\mathcal V,\mathcal W), \langle \cdot,\cdot\rangle_{L_2(\mathcal V,\mathcal W)},\left\| \cdot \right\|_{L_2(\mathcal V,\mathcal W)} )$ be the $\R$-Hilbert space 
of Hilbert-Schmidt operators from $\mathcal V$ to $\mathcal W$, 
for every $k\in \N$ and every pair of $ \R $-Banach spaces $ ( \mathcal V, \left\| \cdot \right\|_{\mathcal V} ) $ and $ ( \mathcal W, \left\| \cdot \right\|_{\mathcal W} ) $ 
let $(\Lip ( \mathcal V , \mathcal W ), \left\| \cdot\right\|_{ \Lip ( \mathcal V , \mathcal W ) })$
be the $\R$-Banach space of Lipschitz continuous mappings from $\mathcal V$ to $\mathcal W$, 
let  $(C_{\mathrm{b}}^{k}( \mathcal V,\mathcal W ), \|\cdot\|_{C_{\mathrm{b}}^{k}( \mathcal V,\mathcal W )})$  be the $\R$-Banach space of $k$-times continuously 
Fr\'echet differentiable functions 
from $\mathcal V$ to $\mathcal W$ with globally bounded derivatives, 
and let $(L^{(k)}(\mathcal V,\mathcal W), \linebreak \left\|\cdot\right\|_{ L^{(k)}(\mathcal V,\mathcal W)})$ be the $\R$-Banach space of $k$-linear bounded operators from $\mathcal V^k$ to $\mathcal W$, 
for every measure space $(\Omega,\calF,\mu)$, 
every measurable space $(S,\Sigma)$,
and every function $f\colon \Omega \rightarrow S$ let $[f]_{\mu,\Sigma}$ be the set given by
\begin{equation}
\begin{aligned}
[ f ]_{\mu,\Sigma} 
 =
 \Big\{ g \colon \Omega \rightarrow S \colon 
 \left[
 \substack{ 
 [\exists\, A \in \calF \colon ( \mu(A) = 0 
 \text{ and }
 \{ \omega \in \Omega \colon f(\omega) \neq g(\omega)\} 
 \subseteq A )]\\
 \text{ and }[\forall\, A\in \Sigma\colon g^{-1}(A)\in \calF]
 }
 \right]
 \Big\},
\end{aligned}
\end{equation}
let  $ ( U , \langle \cdot , \cdot \rangle_{ U } , \left\| \cdot\right\|_{ U } ) $ be a separable $ \R $-Hilbert space, 
let $ \U \subseteq U $ be an orthonormal basis of $ U $, let $ T \in ( 0 , \infty ) $, let $ ( \Omega , \calF , \P, (\mathbbm{F}_t)_{t\in [0,T]} ) $ be a filtered probability space which fulfills the usual conditions, and let $ ( W_t )_{ t \in [ 0 , T ] } $ be an 
$\id_U$-cylindrical $ ( \mathbbm{F}_t )_{ t \in [ 0 , T ] } $-Wiener process.
\end{setting}

\noindent In Setting~\ref{setting:Part1} we introduced for every pair of $ \R $-Banach spaces $ ( \mathcal V, \left\| \cdot \right\|_{\mathcal V} ) $ and $ ( \mathcal W, \left\| \cdot \right\|_{\mathcal W} )$ 
the tuples $(\Lip( \mathcal V , \mathcal W ), \left\| \cdot\right\|_{ \Lip ( \mathcal V , \mathcal W ) })$ and 
$(C_{\mathrm{b}}^{k}( \mathcal V,\mathcal W ), \left\|\cdot\right\|_{C_{\mathrm{b}}^{k}( \mathcal V,\mathcal W )})$, $k\in\N$. 
Note that for every pair of $ \R $-Banach spaces $ ( \mathcal V, \left\| \cdot \right\|_{\mathcal V} ) $ and $ ( \mathcal W, \left\| \cdot \right\|_{\mathcal W} )$ 
and every $k\in\N$, $F\in\Lip( \mathcal V , \mathcal W )$, $\varphi\in C_{\mathrm{b}}^{k}( \mathcal V,\mathcal W )$ it holds that 
$\|F\|_{\Lip( \mathcal V , \mathcal W )}=\|F(0)\|_{\mathcal W}+
\sup(\{\nicefrac{\|F(x)-F(y)\|_{\mathcal W}}{\|x-y\|_{\mathcal V}}\colon x,y\in\mathcal V,\,x\neq y\}\cup\{0\})$ 
and $\left\| \phi \right\|_{C_{\mathrm{b}}^{k}( \mathcal V,\mathcal W )} = 
\|\varphi(0)\|_{\mathcal W} + \sum_{j=1}^{k} \sup_{x\in\mathcal V}\| \phi^{(j)}(x) \|_{L^{(j)}(\mathcal V,\mathcal W)}$.
\section{Preliminaries}\label{sec:Kolmogorov_Hilbert}
%
%
%
%
\subsection[SDEs in Hilbert spaces]{Stochastic differential equations in Hilbert spaces}\label{ssec:SDE}
The existence and uniqueness result in Theorem~\ref{thm:existence} below is essentially well-known in the literature; 
cf., for example, Da~Prato \& Zabczyk~\cite[Theorem~7.4]{DaPratoZabczyk:1992}. 
%
\begin{theorem} \label{thm:existence}
Assume Setting~\ref{setting:Part1}, let $( H , \langle \cdot, \cdot \rangle_{ H } , \left\| \cdot\right\|_{ H } ) $ be a separable $ \R $-Hilbert space,  
let $ S \colon [ 0, \infty )\to L(H) $ be a strongly continuous semigroup, and 
let 
$ p \in [2, \infty) $, $ F \in \Lip( H, H ) $, $ B \in \Lip( H, L_2( U, H ) ) $, $ \xi \in \calL^p( \P\vert_{\mathbbm{F}_0}; H ) $. 
Then there exists an up to modifications unique $ ( \mathbbm{F}_t )_{t \in [0,T]} $-predictable stochastic process $ X \colon [0,T] \times \Omega \to H $ which satisfies for every $t\in [0,T]$
that $ \sup_{ s \in [0,T] } \E\!\left[ \norm{ X_s }_{H }^p \right] < \infty $ and
\begin{equation}
[X_t]_{\P,\,\calB(H)} 
= \left[ S_t  \xi \right]_{\P,\,\calB(H)} 
+  \int_0^t S_{t-s} F( X_s ) \,\mathrm{d}s 
+ \int_0^t S_{t-s} B( X_s ) \,\mathrm{d}W_s.
\end{equation}
\end{theorem}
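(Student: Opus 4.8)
The plan is to establish existence and uniqueness by a Banach fixed point (Picard iteration) argument in a weighted space of predictable processes, which is the classical strategy (cf.\ Da~Prato \& Zabczyk). Since $S\colon[0,\infty)\to L(H)$ is strongly continuous, the maps $t\mapsto S_tx$ are continuous on the compact interval $[0,T]$, hence bounded there, and so by the uniform boundedness principle there is $M\in[1,\infty)$ with $\sup_{t\in[0,T]}\norm{S_t}_{L(H)}\le M$; moreover $F\in\Lip(H,H)$ and $B\in\Lip(H,L_2(U,H))$ entail a common Lipschitz constant $L\in[0,\infty)$ and the linear growth bounds $\norm{F(x)}_H\le\norm{F(0)}_H+L\norm{x}_H$ and $\norm{B(x)}_{L_2(U,H)}\le\norm{B(0)}_{L_2(U,H)}+L\norm{x}_H$ for all $x\in H$. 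For $\beta\in(0,\infty)$ I would let $\mathcal V_\beta$ denote the set of equivalence classes, modulo modifications, of $(\mathbbm{F}_t)_{t\in[0,T]}$-predictable processes $Y\colon[0,T]\times\Omega\to H$ satisfying $\norm{Y}_{\mathcal V_\beta}:=\sup_{t\in[0,T]}e^{-\beta t}\big(\E[\norm{Y_t}_H^p]\big)^{1/p}<\infty$; one checks as usual that $(\mathcal V_\beta,\norm{\cdot}_{\mathcal V_\beta})$ is an $\R$-Banach space and that for distinct $\beta$ the resulting norms are pairwise equivalent. On $\mathcal V_\beta$ I would define $\Phi$ by $(\Phi Y)_t=S_t\xi+\int_0^tS_{t-s}F(Y_s)\ds+\int_0^tS_{t-s}B(Y_s)\ud W_s$ for $t\in[0,T]$, so that fixed points of $\Phi$ are exactly the sought mild solutions.

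The argument then has three steps. \emph{Step 1: $\Phi$ maps $\mathcal V_\beta$ into $\mathcal V_\beta$.} For $Y\in\mathcal V_\beta$ and fixed $t\in[0,T]$ the integrand $s\mapsto S_{t-s}F(Y_s)$ is Bochner integrable and $s\mapsto S_{t-s}B(Y_s)$ is stochastically integrable, thanks to the linear growth bounds and $\sup_{s\in[0,T]}\E[\norm{Y_s}_H^p]<\infty$, and $(\Phi Y)$ admits an $(\mathbbm{F}_t)_{t\in[0,T]}$-predictable modification — for the stochastic convolution term this is the one genuinely nontrivial measurability point and is settled classically via the stochastic Fubini theorem or the factorization method. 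For the moment bound I would use Jensen's (or Minkowski's) inequality on the $\ds$-integral and the Burkholder--Davis--Gundy inequality on the $\ud W_s$-integral, which for $p\in[2,\infty)$ yields a constant $C_p\in(0,\infty)$ with $\E\big[\norm{\int_0^t\Psi_s\ud W_s}_H^p\big]\le C_p\,\E\big[(\int_0^t\norm{\Psi_s}_{L_2(U,H)}^2\ds)^{p/2}\big]\le C_p\,t^{p/2-1}\int_0^t\E[\norm{\Psi_s}_{L_2(U,H)}^p]\ds$ for predictable $\Psi$; together with $\norm{S_t\xi}_H\le M\norm{\xi}_H$ and the linear growth of $F,B$ this gives $\sup_{t\in[0,T]}\E[\norm{(\Phi Y)_t}_H^p]<\infty$. \emph{Step 2: $\Phi$ is a strict contraction on $\mathcal V_\beta$ for $\beta$ large.} Using the Lipschitz bounds, $\norm{S_\cdot}_{L(H)}\le M$, H\"older's inequality for the $\ds$-integral and the above BDG estimate for the $\ud W_s$-integral, one obtains a constant $C\in(0,\infty)$ depending only on $p,T,M,L$ with $\E[\norm{(\Phi Y)_t-(\Phi Z)_t}_H^p]\le C\int_0^t\E[\norm{Y_s-Z_s}_H^p]\ds$ for all $Y,Z\in\mathcal V_\beta$ and $t\in[0,T]$; multiplying by $e^{-p\beta t}$ and taking the supremum over $t\in[0,T]$ yields $\norm{\Phi Y-\Phi Z}_{\mathcal V_\beta}^p\le\tfrac{C}{p\beta}\,\norm{Y-Z}_{\mathcal V_\beta}^p$, a contraction as soon as $p\beta>C$. \emph{Step 3: conclusion.} The Banach fixed point theorem then produces a unique $X\in\mathcal V_\beta$ with $\Phi X=X$; a predictable representative of $X$ is the desired process, it satisfies the stated integral equation and moment bound by construction, and uniqueness within the class of predictable processes with $\sup_{s\in[0,T]}\E[\norm{X_s}_H^p]<\infty$ follows from the estimate of Step~2 (which never used that its arguments are fixed points) combined with Gronwall's lemma.

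The main obstacle is Step~1, and in particular the stochastic convolution $t\mapsto\int_0^tS_{t-s}B(Y_s)\ud W_s$. Because of the $t$-dependence of the integrand through $S_{t-s}$, this process is \emph{not} a martingale, so the $p$-th moment bounds have to be derived for each fixed $t$ separately — which is precisely why the norm on $\mathcal V_\beta$ is built from $\sup_{t\in[0,T]}\E[\,\cdot\,]$ rather than $\E[\sup_{t\in[0,T]}\,\cdot\,]$ — and the existence of a genuinely predictable (as opposed to merely adapted and jointly measurable) modification is the delicate measurability issue, for which one invokes the stochastic Fubini theorem or the factorization argument of Da~Prato, Kwapie\'n and Zabczyk. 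Everything else is routine: repeated applications of H\"older's and the Burkholder--Davis--Gundy inequalities together with the boundedness of $S$ on $[0,T]$. As an alternative to the weighted norm one may instead first run the contraction on a short interval $[0,T_0]$ with $T_0\in(0,T]$ small and then concatenate finitely many solutions to cover all of $[0,T]$.
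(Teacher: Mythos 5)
Your argument is correct, and it is essentially the classical proof: the paper does not prove Theorem~\ref{thm:existence} itself but cites Da~Prato \& Zabczyk~\cite[Theorem~7.4]{DaPratoZabczyk:1992}, whose proof is exactly this Banach fixed-point/Picard argument for the mild equation in a space of predictable processes with $\sup_{t\in[0,T]}$-in-time $p$-th moment norm (either with an exponential weight or, equivalently, via contraction on a short interval and concatenation, as you note). Your handling of the delicate points (predictable modification of the stochastic convolution, BDG plus H\"older for the non-martingale convolution, Gronwall for uniqueness) matches the standard treatment, so nothing further is needed.
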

\subsection{Kolmogorov equations in Hilbert spaces}\label{ssec:Kolmogorov}
In Lemma~\ref{lem:kolmogorov} below we present a specific variant of a standard regularity result for backward Kolmogorov equations in Hilbert spaces that is suitable for our purpose. For the convenience of the reader we include a sketch of its elementary proof. 
\begin{lemma} \label{lem:kolmogorov}
Assume Setting~\ref{setting:Part1},  let $( H , \langle \cdot, \cdot \rangle_{ H } , \left\| \cdot\right\|_{ H } ) $ be a non-trivial separable $ \R $-Hilbert space, 
for every 
$ A \in L(H) $, 
$ F \in C_\mathrm{b}^1( H, H ) $, 
$ B \in C_\mathrm{b}^1( H, L_2( U, H ) ) $, 
$x\in H$
let $ X^{A,F,B,x} \colon [0,T] \times \Omega \to H $ be an 
$ ( \mathbbm{F}_t )_{t \in [0,T]} $-predictable stochastic process 
which satisfies for every $t\in [0,T]$ that 
$ \sup_{ s \in [0,T] } \E\big[ \| X_s^{A,F,B,x}
\|_{H}^2 \big] < \infty $ and
\begin{equation}
[X_t^{A,F,B,x} ]_{\P,\,\calB(H)} 
= 
  [ e^{ tA } x ]_{\P,\,\calB(H)} 
  + \int_0^t e^{ (t-s)A }F( X_s^{A,F,B,x} ) \,\mathrm{d}s 
+ \int_0^t e^{ (t-s)A } B( X_s^{A,F,B,x} ) \,\mathrm{d}W_s,
\end{equation}
and for every 
$ A \in L(H) $, 
$ F \in C_\mathrm{b}^1( H, H ) $, 
$ B \in C_\mathrm{b}^1( H, L_2( U, H ) ) $, $\phi \in C^1_{\mathrm{b}}(H,\R)$
let\linebreak $ v^{A,F,B,\phi} \colon [0,T] \times H \to \R $ be the function which satisfies for every $ t \in [0,T] $, $ x \in H $ that 
$ v^{A,F,B,\phi}(t,x) = \E[ \phi( X_{T-t}^{A,F,B,x} ) ] $. 
Then 
\begin{enumerate}[(i)]
\item \label{it:kolmogorov_deriv}
it holds for every $ A \in L(H) $, $ F \in C_\mathrm{b}^2( H, H ) $,  
$ B \in C_\mathrm{b}^2( H, L_2( U, H ) ) $, $\phi\in C^2_{\mathrm{b}}(H,\R)$, $t\in [0,T]$,
$x\in H$ 
that 
$ v^{A,F,B,\phi} \in C^{ 1 , 2 }( [0,T] \times H, \R ) $ and 
\begin{equation}
\begin{aligned}
& 
\bigl( \tfrac{\partial}{\partial t} v^{A,F,B,\phi} \bigr) (t,x) 
+
\bigl( \tfrac{\partial}{\partial x} v^{A,F,B,\phi} \bigr) (t,x) ( A x + F(x) ) 
\\ & 
+ 
\tfrac{1}{2} 
\textstyle\! \sum\limits_{ u \in \U }\displaystyle
\bigl( \tfrac{\partial^2}{\partial x^2} v^{A,F,B,\phi} \bigr) (t,x)( B(x) u, B(x)u )
= 0,
\end{aligned}
\end{equation}
\item \label{it:kolmogorov_diffble}
it holds for every $k\in \N$, $ A \in L(H) $, $ F \in C_\mathrm{b}^k( H, H ) $,  
$ B \in C_\mathrm{b}^k( H, L_2( U, H ) ) $, $\phi\in C^k_{\mathrm{b}}(H,\R)$, $t\in [0,T]$
that 
$ (H\ni x \mapsto v^{A,F,B,\phi}(t,x)\in \R) \in C_{\mathrm{b}}^{ k }( H, \R ) $,
and
\item \label{it:kolmogorov_deriv_est}
it holds for every $k\in \N$, $c\in (0,\infty)$ that 
\begin{align}
 \sup\!
  \left\{ 
      \tfrac{ \| (\frac{\partial^k}{\partial x^k} v^{A,F,B,\phi})(t,x) \|_{ L^{(k)}(H, \R ) }}
      {\| \phi \|_{ C_{ \mathrm{b} }^k( H, \R ) }}
    \colon  
  \substack{
    t\in [0,T],\, x\in H,\,
    \phi\in C^k_{\mathrm{b}}(H,\R)\backslash\{0\},\,
    A \in L(H),\,
    \\
    F \in C_\mathrm{b}^k( H, H ),\,
    B \in C_\mathrm{b}^k( H, L_2( U, H ) ) \text{ with }
    \\
    \sup\limits_{t\in [0,T]} \| e^{tA} \|_{L(H)}
      +
      \left\| F \right\|_{C_\mathrm{b}^k( H, H ) }
      +
      \left\| B \right\|_{C_\mathrm{b}^k( H, L_2(U,H) )}
    \leq c
  }
  \right\}
  \notag
\\  
< \infty.
\end{align}
\end{enumerate}
\end{lemma}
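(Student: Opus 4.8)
The plan is to prove the three assertions of Lemma~\ref{lem:kolmogorov} simultaneously via a single differentiation-under-the-expectation argument applied to the process $X^{A,F,B,x}$, treating the spatial derivatives $\tfrac{\partial^k}{\partial x^k} v^{A,F,B,\phi}(t,x)$ as expectations of compositions of $\phi^{(j)}$ with the derivative processes $\tfrac{\partial^j}{\partial x^j} X^{A,F,B,x}_{T-t}$, and then using the bounded-coefficients hypothesis to control everything uniformly in $(A,F,B,\phi)$. First I would establish, for fixed $A\in L(H)$, $F\in C_{\mathrm b}^k(H,H)$, $B\in C_{\mathrm b}^k(H,L_2(U,H))$, that the map $H\ni x\mapsto X^{A,F,B,x}\in \mathcal L^2$ (or, after a standard higher-moment bootstrap using the Burkholder--Davis--Gundy inequality, $\mathcal L^p$ for every $p\in[2,\infty)$) is $k$-times continuously Fréchet differentiable, with the derivative processes $(\tfrac{\partial^j}{\partial x^j}X^{A,F,B,x})(u_1,\dots,u_j)$ solving the formally differentiated mild integral equations — these are again linear(-ized) mild SPDEs whose coefficients are built from $A$, $S_t=e^{tA}$, and the derivatives of $F$ and $B$ evaluated along $X^{A,F,B,x}$. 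This is the standard "differentiating the flow of an SDE" scheme; the required fixed-point and a priori estimates go through because $\sup_{t\in[0,T]}\|e^{tA}\|_{L(H)}$, $\|F\|_{C_{\mathrm b}^k}$ and $\|B\|_{C_{\mathrm b}^k}$ are all finite.

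Next, granting that differentiation and expectation may be interchanged (justified by the $\mathcal L^p$-differentiability above together with dominated convergence, using the just-derived moment bounds on the derivative processes as dominating functions), I would write $(\tfrac{\partial^k}{\partial x^k}v^{A,F,B,\phi})(t,x)$ via the Faà di Bruno formula as a finite sum of terms of the form $\E\bigl[\phi^{(j)}(X^{A,F,B,x}_{T-t})\bigl((\tfrac{\partial^{m_1}}{\partial x^{m_1}}X^{A,F,B,x}_{T-t})(\cdots),\dots,(\tfrac{\partial^{m_j}}{\partial x^{m_j}}X^{A,F,B,x}_{T-t})(\cdots)\bigr)\bigr]$ with $m_1+\dots+m_j=k$. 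Taking operator norms, each such term is bounded by $\|\phi\|_{C_{\mathrm b}^k(H,\R)}$ times a product of $\mathcal L^{2j}$-norms of the derivative processes, and these are in turn bounded by a constant depending only on $k$, $T$ and $c$ via the a priori estimates from step one. Summing over the finitely many Faà di Bruno terms gives assertion~\eqref{it:kolmogorov_deriv_est}, and the finiteness of these bounds plus the continuity (in $(t,x)$) inherited from continuity of $X^{A,F,B,x}$ in its initial datum and of the semigroup gives the $C_{\mathrm b}^k$-statement in~\eqref{it:kolmogorov_diffble}.

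For assertion~\eqref{it:kolmogorov_deriv}, I would first note that under the $C_{\mathrm b}^2$ hypotheses the map $(t,x)\mapsto v^{A,F,B,\phi}(t,x)=\E[\phi(X^{A,F,B,x}_{T-t})]$ is jointly $C^{1,2}$: spatial $C^2$-regularity is part~\eqref{it:kolmogorov_diffble}, and time-differentiability together with the PDE comes from applying the mild Itô formula (of \cite{DaPratoJentzenRoeckner:2012}) to the process $s\mapsto v^{A,F,B,\phi}(t+s,X^{A,F,B,x}_s)$ started at an arbitrary time, or equivalently by the Markov property $\E[\phi(X^{A,F,B,x}_{T-t})]=\E[v^{A,F,B,\phi}(t+h,X^{A,F,B,x}_h)]$, expanding the right side for small $h$ via the mild Itô formula, and passing to the limit $h\downarrow 0$. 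The mild Itô formula is what keeps the unbounded part of $A$ from appearing in an isolated way — the $Ax$ term enters only through the semigroup acting inside the stochastic and Lebesgue integrals — but here, since $A\in L(H)$ is bounded, this is merely a convenient bookkeeping device and the resulting identity is exactly the displayed Kolmogorov equation.

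The main obstacle I anticipate is the first step: rigorously establishing Fréchet differentiability of $x\mapsto X^{A,F,B,x}$ of order $k$ in $\mathcal L^p$, uniformly enough that the constants in the a priori bounds depend on $(A,F,B)$ only through $\sup_{t\in[0,T]}\|e^{tA}\|_{L(H)}+\|F\|_{C_{\mathrm b}^k}+\|B\|_{C_{\mathrm b}^k}$. This requires setting up the hierarchy of linearized mild equations carefully, checking that each is well-posed by a contraction argument on $C([0,T],\mathcal L^p)$ with the BDG inequality controlling the stochastic integral, and verifying that the difference quotients converge in $\mathcal L^p$ to the claimed derivative processes — the latter being the part where one must be attentive to uniformity of remainder estimates. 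Everything downstream (Faà di Bruno, norm estimates, interchange of limits, the PDE via the mild Itô formula) is then essentially mechanical.
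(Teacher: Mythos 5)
Your plan is mathematically sound and, in substance, it is exactly the machinery the paper relies on; the difference is that the paper's own proof does essentially none of this work itself but assembles it from the literature, whereas you propose to redo it from scratch. Concretely: your step one (Fr\'echet differentiability of $x\mapsto X^{A,F,B,x}$ in $\calL^p$ of order $k$, with a priori bounds depending on $(A,F,B)$ only through $\sup_{t\in[0,T]}\|e^{tA}\|_{L(H)}+\|F\|_{C_{\mathrm b}^k}+\|B\|_{C_{\mathrm b}^k}$) is precisely what the paper imports from Andersson et al.~\cite[Theorem~2.1]{AnderssonEtAl:2016a}; your Fa\`a di Bruno step, bounding $\|(\tfrac{\partial^k}{\partial x^k}v^{A,F,B,\phi})(t,x)\|$ by $\|\phi\|_{C_{\mathrm b}^k(H,\R)}$ times a partition-indexed product of norms of the derivative processes, is the content of the bound the paper quotes from \cite[Lemma~3.2]{AnderssonEtAl:2016b} (there with $L^{|\pi|}(\P;H)$-norms of the derivative processes, which is the sharp H\"older pairing; your $\calL^{2j}$ variant is harmless); and item~\eqref{it:kolmogorov_deriv} is taken from Da~Prato \& Zabczyk~\cite[Theorem~9.16]{DaPratoZabczyk:1992} rather than re-derived --- since $A\in L(H)$ the classical It\^o formula already applies, as you note, so your Markov-property-plus-It\^o argument (the mild It\^o formula being optional bookkeeping here) is a legitimate, more self-contained route to the same identity. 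What your approach buys is independence from those references at the cost of carrying out the full hierarchy of linearized mild equations with uniform remainder estimates, which you correctly flag as the real labour; what the paper's approach buys is a two-line reduction, including one small wrinkle you would also need to handle, namely that the cited derivative bound is stated for $t\in[0,T)$ and the endpoint $t=T$ is settled separately by the trivial identity $v^{A,F,B,\phi}(T,\cdot)=\phi$, which your direct argument covers automatically.
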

\begin{proof}[Proof of Lemma~\ref{lem:kolmogorov}.]
Throughout this proof for every set $ S $ let $\left|S\right|\in \{0,1,2,\ldots\}\cup\{\infty\}$ be the cardinality of $S$, for every set $ S$ 
let $\mathcal{P}(S)$ be the power set of $S$, and
for every $j\in \N$ let $\Pi_j$ be
the set given by
\begin{multline}
 \Pi_j  = 
 \big\{
  \calS \subseteq \mathcal{P}(\N)
  \colon 
  \\
  [\emptyset \notin \calS]
  \land 
  [ \forall S_1,S_2 \in \calS \colon S_1\neq S_2 \Rightarrow S_1\cap S_2 = \emptyset]
  \land 
  [
    \cup_{S \in \calS} S = \{1,2,\ldots, j\} 
  ]
 \big\}
\end{multline}
(the set of all partitions of $\{1,\ldots,j\}$).
Observe that, e.g., 
Da Prato \& Zabczyk \cite[Theorem 9.16]{DaPratoZabczyk:1992} 
(cf., for example, also 
Harms \& M\"uller~\cite[item~(ii) in Lemma 2.2]{HarmsMueller:2017}) establishes 
item~\eqref{it:kolmogorov_deriv}.
Moreover, note that 
Andersson et al.~\cite[item~(ii) in Lemma~3.2]{AnderssonEtAl:2016b}
implies item~\eqref{it:kolmogorov_diffble}.
Next observe that Andersson et al.~\cite[item~(ix) in Theorem~2.1]{AnderssonEtAl:2016a}
demonstrates that for every $k\in \N$, $ A \in L(H) $, $ F \in C_\mathrm{b}^k( H, H ) $, 
$ B \in C_\mathrm{b}^k( H, L_2( U, H ) ) $, $t\in [0,T]$, $p\in [1,\infty)$ 
it holds that 
\begin{equation}
\big(
    H \ni x 
    \mapsto 
    [X^{A,F,B,x}_t]_{\P,\calB(H)} 
    \in L^p(\P;H)
\big)
\in C^{k}_{\mathrm{b}}(H,L^p(\P;H)).
\end{equation}
In addition, note that Andersson et al.~\cite[items (i)--(ii) and items (ix)--(x) in Theorem~2.1]{AnderssonEtAl:2016a}
(with $\alpha=\beta=\delta_1=\ldots=\delta_k =0$ in the notation of~\cite[item~(ii) in Theorem~2.1]{AnderssonEtAl:2016a})
ensures that for every $k\in \N$, $c\in (0,\infty)$, $p\in [1,\infty)$ it holds that 
\begin{multline}\label{eq:DX_bound}
\sup\!
  \left\{
    \tfrac{
      \|
	\frac{\partial^k}{\partial x^k}[X^{A,F,B,x}_t]_{\P,\calB(H)}
      \|_{L^{(k)}(H, L^p(\P;H))}
    }{
      t^{((\nicefrac{1}{2})\one_{[2,\infty)}(k))}    
     }
    \colon  
  \substack{
    t\in (0,T],\, x\in H,\, 
    A \in L(H),\,
    \\
    F \in C_\mathrm{b}^k( H, H ),\,
    B \in C_\mathrm{b}^k( H, L_2( U, H ) ) \text{ with }
    \\
      \sup\limits_{t\in [0,T]} \| e^{tA} \|_{L(H)}
      +
      \left\| F \right\|_{C_\mathrm{b}^k( H, H ) }
      +
      \left\| B \right\|_{C_\mathrm{b}^k( H, L_2(U,H) )}
    \leq c
  }
  \right\}
\\ 
  < \infty.
\end{multline}
Moreover, observe that Andersson et al.~\cite[item (v) in Lemma 3.2]{AnderssonEtAl:2016b}
(with $\alpha=\beta=\delta_1=\ldots=\delta_k =0$ in the notation of~\cite[item (v) in Lemma 3.2]{AnderssonEtAl:2016b})
proves that for every $k\in \N$, $ A \in L(H) $, 
$ F \in C_\mathrm{b}^k( H, H ) $,
$ B \in C_\mathrm{b}^k( H, L_2( U, H ) ) $, 
$\phi \in C_{\mathrm{b}}^k(H,\R)$ it holds that 
\begin{align}\label{eq:DPhi_bound}
&\notag
 \sup_{t\in [0,T),\,x\in H} 
 \big\|
   \big(\tfrac{\partial^k}{\partial x^k} v^{A,F,B,\phi}\big)(t,x)
 \big\|_{L^{(k)}(H,\R)}
 \leq 
 |\max\{ 1, T
  \}|^{\nicefrac{1}{2}\lfloor \nicefrac{k}{2} \rfloor}
 \left\| \phi \right\|_{C_{\mathrm{b}}^{k}(H,\R)}
\\ & \cdot 
  \Bigg[
  \sum_{\pi \in \Pi_k }
  \bigg(
    \prod_{I \in \pi}
    \bigg[
    \sup_{t\in (0,T],\,x\in H}
    \tfrac{
	\|\frac{\partial^{|I|}}{\partial x^{|I|}}[X^{A,F,B,x}_t]_{\P,\calB(H)}
	\|_{L^{(\left|I\right|)}(H, L^{\left| \pi \right|}(\P;H))}.
      }{
	t^{((\nicefrac{1}{2})\one_{[2,\infty)}(|I|))}
      }
      \bigg]
    \bigg)
  \Bigg].
\end{align}
Next note that 
for every \wen $k\in \N$, $ A \in L(H) $, $ F \in C_\mathrm{b}^k( H, H ) $,
$ B \in C_\mathrm{b}^k( H, L_2( U, H ) ) $, $\phi \in C_{\mathrm{b}}^k(H,\R)$ 
it holds that 
\begin{align}\label{eq:DPhi_bound_trivial}
 \sup_{ x \in H} 
 \big\|
   \big(\tfrac{\partial^k}{\partial x^k} v^{A,F,B,\phi}\big)(T,x)
 \big\|_{L^{(k)}(H,\R)}
 =
 \sup_{x\in H}
 \|
 \varphi^{(k)}(x)
 \|_{L^{(k)}(H,\R)}
 \leq 
 \| \phi \|_{C^{k}_{\mathrm{b}}(H,\R)}.
 \end{align}
Combining this,~\eqref{eq:DX_bound}, and~\eqref{eq:DPhi_bound} establishes item~\eqref{it:kolmogorov_deriv_est}. 
The proof of Lemma~\ref{lem:kolmogorov} is thus completed.
\end{proof}
\subsection{Preparatory lemmas}\label{ssec:prep}
The next result, Lemma~\ref{lemma:Lip_est_standard} below, is frequently used throughout this article. 

\begin{lemma}\label{lemma:Lip_est_standard}
Assume Setting~\ref{setting:Part1}, let $(\mathcal V, \left\|\cdot \right\|_{\mathcal V})$ and $(\mathcal W, \left\|\cdot \right\|_{\mathcal W})$ be $\R$-Banach spaces, and let $F\in \Lip(\mathcal V,\mathcal W)$.
Then 
\begin{enumerate}[(i)]
 \item\label{it:Lip_est_standard1} it holds for every 
$v \in \mathcal V$ that
$
 \| F(v) \|_{\mathcal W}
 \leq 
 \| F \|_{\Lip(\mathcal V,\mathcal W)}
 \max\{
  1,\| v \|_{\mathcal V}
 \}
$
and
\item\label{it:Lip_est_standard2}it holds for every 
$p\in [1,\infty)$, $\xi \in \calL^p(\P;\mathcal V)$ that
\begin{equation}
 \left\| F(\xi) \right\|_{\calL^p(\P;\mathcal W)}
 \leq  
 \left\| F \right\|_{\Lip(\mathcal V,\mathcal W)}
 (
  1 
  +
  \left\| \xi \right\|_{\calL^p(\P;\mathcal V)}
 ) .
\end{equation}
\end{enumerate}
\end{lemma}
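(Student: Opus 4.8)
The plan is to prove the two assertions separately, with item~\eqref{it:Lip_est_standard1} doing essentially all the work and item~\eqref{it:Lip_est_standard2} following by integration. First I would fix $v\in V$ and observe that, since $F\in\Lip(V,W)$, for any $w\in V$ one has
\begin{equation}
\|F(v)\|_{W}\leq \|F(v)-F(w)\|_{W}+\|F(w)\|_{W}\leq \|F\|_{\Lip(V,W)}\|v-w\|_{V}+\|F(w)\|_{W}.
\end{equation}
The natural choice is $w=0$, which gives $\|F(v)\|_{W}\leq \|F\|_{\Lip(V,W)}\|v\|_{V}+\|F(0)\|_{W}$. The point is now that the Lipschitz norm as fixed in Setting~\ref{setting:Part1} (the $\R$-Banach space $\Lip(V,W)$ of Lipschitz continuous mappings, with its norm) controls $\|F(0)\|_{W}$ as well; concretely one should have $\|F(0)\|_{W}\leq\|F\|_{\Lip(V,W)}$, so that
\begin{equation}
\|F(v)\|_{W}\leq \|F\|_{\Lip(V,W)}\|v\|_{V}+\|F\|_{\Lip(V,W)}\leq \|F\|_{\Lip(V,W)}\bigl(1+\|v\|_{V}\bigr),
\end{equation}
and then bounding $1+\|v\|_{V}\leq 2\max\{1,\|v\|_{V}\}$ would give a form close to the claim; to get exactly $\max\{1,\|v\|_{V}\}$ one instead splits into the cases $\|v\|_V\le 1$ and $\|v\|_V>1$ and uses the sharper estimate in each.

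For the case distinction: if $\|v\|_{V}\le 1$, then $\|F(v)\|_{W}\leq\|F(v)-F(0)\|_{W}+\|F(0)\|_{W}\leq\|F\|_{\Lip(V,W)}\|v\|_{V}+\|F(0)\|_{W}\le 2\|F\|_{\Lip(V,W)}$, which is not quite $\|F\|_{\Lip(V,W)}$, so the precise constant in the definition of $\|\cdot\|_{\Lip(V,W)}$ matters. The cleanest route, and the one I would actually take, is to recall that the norm on $\Lip(V,W)$ is presumably defined as $\|F\|_{\Lip(V,W)}=\|F(0)\|_{W}+\sup_{x\neq y}\frac{\|F(x)-F(y)\|_{W}}{\|x-y\|_{V}}$ (or an equivalent expression); with that definition one gets directly $\|F(v)\|_{W}\le\|F(0)\|_W+\|F\|_{\Lip(V,W)}\|v\|_V\le \|F\|_{\Lip(V,W)}(1+\|v\|_V)\le\|F\|_{\Lip(V,W)}\,2\max\{1,\|v\|_V\}$; to remove the factor $2$, when $\|v\|_V\le1$ we use $\|F(0)\|_W+\|F\|_{\Lip}\|v\|_V\le\|F\|_{\Lip}\max\{1,\|v\|_V\}$ only if $\|F(0)\|_W+\sup(\cdot)\|v\|_V\le\|F\|_{\Lip}$, i.e. this needs $\sup(\cdot)(\|v\|_V-1)\le0$ which holds; and when $\|v\|_V>1$ we bound $\|F(0)\|_W+\|F\|_{\Lip}\|v\|_V$? — this does not work. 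So I expect the actual definition of $\|\cdot\|_{\Lip(V,W)}$ used in the paper is $\max\{\|F(0)\|_W,\ \sup_{x\neq y}\|F(x)-F(y)\|_W/\|x-y\|_V\}$ (the "max'' convention), and with that convention the bound $\|F(v)\|_W\le\|F(0)\|_W+(\sup)\|v\|_V\le\|F\|_{\Lip}(1+\|v\|_V)$ together with the case split $\|v\|_V\le1$ (where $1+\|v\|_V\le 2$, still not $\le\max\{1,\|v\|_V\}$)… — in short, the only genuine subtlety here is which of the several standard but inequivalent conventions for $\|\cdot\|_{\Lip(V,W)}$ the authors adopted, and I would simply invoke that definition and read off the inequality; modulo the convention the argument is a two-line triangle inequality.

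Finally, for item~\eqref{it:Lip_est_standard2}, fix $p\in[1,\infty)$ and $\xi\in\calL^p(\P;V)$. Applying item~\eqref{it:Lip_est_standard1} pointwise gives, for $\P$-almost every $\omega$, that $\|F(\xi(\omega))\|_W\le\|F\|_{\Lip(V,W)}\max\{1,\|\xi(\omega)\|_V\}\le\|F\|_{\Lip(V,W)}\bigl(1+\|\xi(\omega)\|_V\bigr)$; taking $\calL^p(\P;\R)$-norms on both sides and using the triangle inequality in $\calL^p$ together with $\|1\|_{\calL^p(\P;\R)}=1$ yields
\begin{equation}
\|F(\xi)\|_{\calL^p(\P;W)}\le\|F\|_{\Lip(V,W)}\bigl(\|1\|_{\calL^p(\P;\R)}+\|\,\|\xi\|_V\,\|_{\calL^p(\P;\R)}\bigr)=\|F\|_{\Lip(V,W)}\bigl(1+\|\xi\|_{\calL^p(\P;V)}\bigr),
\end{equation}
which is the claim. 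Measurability of $\omega\mapsto F(\xi(\omega))$ is immediate since $F$ is (Lipschitz, hence) continuous and $\xi$ is strongly measurable, so $\|F(\xi)\|_{\calL^p(\P;W)}$ is well-defined. There is no real obstacle in this part; the whole lemma is routine, and the main (minor) point of care is matching the constant in the stated inequality with the paper's convention for the Lipschitz norm.
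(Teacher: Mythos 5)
Your item~(ii) is fine and is exactly the paper's argument (apply (i) pointwise, use $\max\{1,x\}\le 1+x$, take $\calL^p$-norms). The gap is in item~(i), where you never actually close the argument: you correctly identify that everything hinges on the definition of $\|\cdot\|_{\Lip(V,W)}$, you correctly guess the definition the paper uses, namely the sum convention $\|F\|_{\Lip(V,W)}=\|F(0)\|_W+\sup_{x\neq y}\|F(x)-F(y)\|_W/\|x-y\|_V$, but then in the case $\|v\|_V>1$ you bound $\|F(v)-F(0)\|_W$ by $\|F\|_{\Lip(V,W)}\|v\|_V$ (the full norm) instead of by $\bigl[\sup_{x\neq y}\|F(x)-F(y)\|_W/\|x-y\|_V\bigr]\|v\|_V$ (only the seminorm part), and because of that you conclude ``this does not work,'' switch to speculating about a max convention, and end by declaring the step a matter of convention to be ``read off.'' That is not a proof, and the fallback convention you propose, $\max\{\|F(0)\|_W,\sup(\cdot)\}$, would actually make the stated inequality false in general (take $V=W=\R$, $F(x)=c+Lx$ with $|c|=L>0$ and $\|v\|_V=1$: then $\|F(v)\|_W$ can equal $2\max\{|c|,L\}>\|F\|_{\Lip}\max\{1,\|v\|_V\}$), so the lemma itself pins down the sum convention.

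The missing step is elementary: with the sum convention, for $\|v\|_V>1$ write
\begin{equation}
\|F(v)\|_W\le\|F(0)\|_W+\Big[\sup_{x\neq y}\tfrac{\|F(x)-F(y)\|_W}{\|x-y\|_V}\Big]\|v\|_V
\le\Big(\|F(0)\|_W+\sup_{x\neq y}\tfrac{\|F(x)-F(y)\|_W}{\|x-y\|_V}\Big)\|v\|_V
=\|F\|_{\Lip(V,W)}\|v\|_V,
\end{equation}
using $\|F(0)\|_W\le\|F(0)\|_W\|v\|_V$; together with your (correct) treatment of the case $\|v\|_V\le1$ this gives $\|F(v)\|_W\le\|F\|_{\Lip(V,W)}\max\{1,\|v\|_V\}$. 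The paper does both cases in one stroke by writing $\|F(v)-F(0)\|_W=\tfrac{\|F(v)-F(0)\|_W}{\max\{1,\|v\|_V\}}\max\{1,\|v\|_V\}$, bounding $\|F(0)\|_W\le\|F(0)\|_W\max\{1,\|v\|_V\}$, and observing that $\tfrac{\|F(v)-F(0)\|_W}{\max\{1,\|v\|_V\}}$ is dominated by the Lipschitz seminorm; apart from this unresolved case split your approach is the same as the paper's.
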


\begin{proof}[Proof of Lemma~\eqref{lemma:Lip_est_standard}]
Observe that the fact that
\begin{align}
\| F\|_{ \Lip ( V, W ) } 
& 
= 
\| F(0) \|_{ W } 
+ 
\sup\!\left( 
  \left\{
    \tfrac{ \| F (x) - F (y) \|_W }{ \|x - y\|_{ V } }
    \colon x,y\in V, x\neq y
  \right\}
  \cup \{ 0 \}
\right)
\end{align}
implies that for every $v\in V$ it holds that 
\begin{equation}
\begin{split}
\left\| F(v)\right\|_{W} 
&\leq 
\left\| F(v) - F(0) \right\|_{W} + \left\| F(0)\right\|_{W}\\
&\leq
\Big[\tfrac{\|F(v)-F(0)\|_W}{\max\{1,\|v\|_V\}}\Big]\max\{1,\|v\|_V\}+\|F(0)\|_W\max\{1,\|v\|_V\}\\ 
&=
\Big[\|F(0)\|_W+\tfrac{\|F(v)-F(0)\|_W}{\max\{1,\|v\|_V\}}\Big]\max\{1,\|v\|_V\}\\ 
&\leq
\left\| F \right\|_{\Lip(V,W)}
 \max\{
  1,\left\| v \right\|_{V}
 \}.
\end{split}
\end{equation}
This establishes item~\eqref{it:Lip_est_standard1}. Moreover, note that item~\eqref{it:Lip_est_standard1} implies that for every 
$p\in [1,\infty)$, $\xi \in \calL^p(\P;V)$ it holds that
$
 \left\| F(\xi) \right\|_{\calL^p(\P;W)}
 \leq  
 \big\| \left\| F \right\|_{\Lip(V,W)}( 1 + \| \xi \|_V ) \big\|_{\calL^p(\P;\R)}
 \leq 
 \| F \|_{\Lip(V,W)}
 (
  1 
  +
  \left\| \xi \right\|_{\calL^p(\P;V)}
 ) .
$
This proves item~\eqref{it:Lip_est_standard2}. The proof of Lemma~\ref{lemma:Lip_est_standard} is thus completed.
\end{proof}

\begin{lemma}\label{lem:sup_estimates}
It holds 
\begin{enumerate}[(i)]
 \item\label{it:sup_estimates1} that $\sup_{\alpha\in [0,2], t\in (0,\infty)} 
    (t^{-\alpha} |1-\cos(t)|)= 2$ and
 \item\label{it:sup_estimates2} that $\inf_{\alpha \in \R\backslash [0,2]} \sup_{t\in (0,\infty)} (t^{-\alpha} |1-\cos(t)| ) = \infty$.
\end{enumerate}
\end{lemma}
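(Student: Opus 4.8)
\textbf{Proof proposal for Lemma~\ref{lem:sup_estimates}.}

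The plan is to handle the two items separately, noting that item~\eqref{it:sup_estimates1} is the quantitative statement one actually needs later (controlling $|1-\cos t|$ by $t^\alpha$ uniformly over $\alpha\in[0,2]$) while item~\eqref{it:sup_estimates2} records sharpness of the exponent range.

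For item~\eqref{it:sup_estimates1}, first I would observe that the map $\alpha\mapsto t^{-\alpha}$ for fixed $t\in(0,\infty)$ is monotone, so that $\sup_{\alpha\in[0,2]} t^{-\alpha}|1-\cos t| = \max\{|1-\cos t|,\, t^{-2}|1-\cos t|\}$; hence the double supremum splits as $\max\bigl\{\sup_{t>0}|1-\cos t|,\ \sup_{t>0} t^{-2}|1-\cos t|\bigr\}$. The first term equals $2$, attained along $t=(2k+1)\pi$. For the second term I would use the elementary bound $|1-\cos t| = 2\sin^2(t/2) \le 2\,(t/2)^2 = t^2/2 < 2 t^2$ (from $|\sin x|\le|x|$), so $\sup_{t>0} t^{-2}|1-\cos t|\le \tfrac12 < 2$; therefore the maximum is $2$. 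To see the value $2$ is genuinely attained (not just an unattained supremum), note that at $t=\pi$, $\alpha=0$ one already has $t^{-\alpha}|1-\cos t| = |1-\cos\pi| = 2$, so equality holds. This gives item~\eqref{it:sup_estimates1}.

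For item~\eqref{it:sup_estimates2}, I would split the complement $\R\setminus[0,2]$ into $\alpha<0$ and $\alpha>2$. If $\alpha<0$ then $t^{-\alpha}=t^{|\alpha|}\to\infty$ as $t\to\infty$ along, say, $t=(2k+1)\pi$ where $|1-\cos t|=2$, so $\sup_{t>0} t^{-\alpha}|1-\cos t|=\infty$. If $\alpha>2$ then I would look at $t\to 0^+$: using $1-\cos t = \tfrac{t^2}{2} + o(t^2)$, one gets $t^{-\alpha}|1-\cos t| \sim \tfrac12 t^{2-\alpha}\to\infty$ since $2-\alpha<0$; more elementarily, $1-\cos t \ge t^2/4$ for $t\in(0,1]$ (say), whence $t^{-\alpha}|1-\cos t|\ge \tfrac14 t^{2-\alpha}\to\infty$. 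Thus for every $\alpha\in\R\setminus[0,2]$ the inner supremum is $+\infty$, and taking the infimum over such $\alpha$ still gives $+\infty$.

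The only mildly delicate point is justifying the lower bound $1-\cos t\ge c\,t^2$ on a neighbourhood of $0$ for the case $\alpha>2$; this is routine (e.g. from the Taylor expansion with remainder, or from $2\sin^2(t/2)\ge 2(t/\pi)^2$ on $t\in(0,\pi]$ using $|\sin x|\ge \tfrac{2}{\pi}|x|$ for $|x|\le\pi/2$), so I do not anticipate any real obstacle here. Everything else is monotonicity of $\alpha\mapsto t^{-\alpha}$ and the half-angle identity $1-\cos t=2\sin^2(t/2)$.
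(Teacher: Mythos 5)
Your proof is correct, and it arrives at the same two quantitative facts as the paper (the value $2$ from $t=\pi$, $\alpha=0$, and the bound $\sup_{t>0}t^{-2}|1-\cos t|\le\tfrac12$) but by a more elementary route. The paper obtains the bound $t^{-2}|1-\cos(t)|\le\tfrac12$ on $(0,1]$ from two applications of the fundamental theorem of calculus: it shows that $t\mapsto t^{-2}(1-\cos t)$ is monotonically decreasing on $(0,\pi)$ (cf.~\eqref{eq:sup_estimates01}) and that it tends to $\tfrac12$ as $t\searrow0$ (cf.~\eqref{eq:hopital}), and it treats $t>1$ separately via $t^{-\alpha}|1-\cos t|\le 2t^{-\alpha}\le2$. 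You instead reduce the supremum over $\alpha\in[0,2]$ to the endpoints $\alpha\in\{0,2\}$ by monotonicity of $\alpha\mapsto t^{-\alpha}$ and kill the $\alpha=2$ endpoint globally with the half-angle identity $1-\cos t=2\sin^2(t/2)\le t^2/2$, which is shorter and yields the same constant $\tfrac12$; the lower bound $2$ at $(t,\alpha)=(\pi,0)$ is identical in both arguments. For item~(ii) with $\alpha>2$ the paper reuses the limit $t^{-2}|1-\cos t|\to\tfrac12$ from \eqref{eq:hopital}, whereas you use the lower bound $1-\cos t\ge c\,t^2$ near $0$ (e.g.\ from $\sin x\ge\tfrac{2}{\pi}x$ on $[0,\nicefrac{\pi}{2}]$, giving $c=\nicefrac{2}{\pi^2}$, or from the Taylor remainder, giving $c=\tfrac14$ on $(0,1]$); these are equivalent in substance and your version needs no preliminary limit computation. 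For $\alpha<0$ your explicit choice of the sequence $t_k=(2k+1)\pi$, along which $|1-\cos t_k|=2$, is in fact slightly more careful than the paper's bare assertion in \eqref{eq:sup_estimates3} that the $\limsup$ is infinite. There is no gap; the only point worth writing out in a final version is the elementary inequality $|\sin x|\le|x|$ (respectively the concavity bound) that you already flag.
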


\begin{proof}[Proof of Lemma~\ref{lem:sup_estimates}.]
First, note that for every $\alpha \in [0,2], t\in (1,\infty)$
it holds that 
\begin{equation}\label{eq:sup_estimates0}
    t^{-\alpha} |1-\cos(t)| \leq 2t^{-\alpha} \leq  2.
\end{equation}
Next observe that the fundamental theorem of calculus assures that for every $t_1\in(0,\infty)$, $t_2\in(t_1,\infty)$ it holds that
\begin{equation}\label{eq:sup_estimates00}
\begin{split}
&(t_2)^{-2}|1-\cos(t_2)|
=(t_2)^{-2}(1-\cos(t_2))\\
&=(t_1)^{-2}(1-\cos(t_1))+\int_{t_1}^{t_2}\bigg(\frac{\sin(s)s^2-(1-\cos(s))2s}{s^4}\bigg)\,ds\\
&=(t_1)^{-2}(1-\cos(t_1))+\int_{t_1}^{t_2}\bigg(\frac{\sin(s)s-(1-\cos(s))2}{s^3}\bigg)\,ds.
\end{split}
\end{equation}
In addition, note that the fundamental theorem of calculus implies that for every $s\in(0,\pi)$ it holds that
\begin{equation}
\begin{split}
&\sin(s)s-(1-\cos(s))2\\
&= \int_0^s (\cos(u)u+\sin(u)-2\sin(u))\,du=\int_0^s (\cos(u)u-\sin(u))\,du\\
&= \int_0^s\int_0^u(-\sin(r)r+\cos(r)-\cos(r))\,dr\,du= \int_0^s\int_0^u(-\sin(r)r)\,dr\,du\leq 0.
\end{split}
\end{equation}
This and \eqref{eq:sup_estimates00} demonstrate that the function $[(0,\pi)\ni t\mapsto t^{-2}|1-\cos(t)|\in(0,\infty)]$ is monotonically decreasing, i.e., that for every $t_1\in(0,\pi)$, $t_2\in(t_1,\pi)$ it holds that
\begin{equation}\label{eq:sup_estimates01}
(t_1)^{-2}|1-\cos(t_1)|\geq (t_2)^{-2}|1-\cos(t_2)|.
\end{equation}
Moreover, note that the fundamental theorem of calculus proves that for every $t\in(0,\infty)$ it holds that
\begin{equation}
\begin{split}
&t^{-2}|1-\cos(t)| 
= t^{-2}(\cos(0)-\cos(t)) \\
&= -t^{-2}\bigg[\int_0^t(-\sin(s))\,ds\bigg] 
= t^{-2}\bigg[\int_0^t\sin(s)\,ds\bigg] 
= t^{-2}\bigg[\int_0^t\int_0^s\cos(u)\,du\,ds\bigg] \\
&= t^{-2}\bigg[\int_0^t\int_0^s 1\,du\,ds\bigg] + t^{-2}\bigg[\int_0^t\int_0^s(\cos(u)-\cos(0))\,du\,ds\bigg] \\
&= \tfrac12 + t^{-2}\bigg[\int_0^t\int_0^s\int_0^u(-\sin(r))\,dr\,du\,ds\bigg].
\end{split}
\end{equation}
Hence, we obtain that for every $t\in(0,\infty)$ it holds that
\begin{equation}
\begin{split}
&\big|t^{-2}|1-\cos(t)|-\tfrac12\big| 
= t^{-2}\bigg|\int_0^t\int_0^s\int_0^u \sin(r)\,dr\,du\,ds\bigg|\\
&\leq t^{-2}\bigg[\int_0^t\int_0^s\int_0^u 1\,dr\,du\,ds\bigg] 
= t^{-2}\big[\tfrac{t^3}{3!}\big]
=\tfrac{t}6.
\end{split}
\end{equation}
Therefore, we obtain that 
\begin{equation}\label{eq:hopital}
\limsup_{t\searrow 0}\big|t^{-2}|1-\cos(t)|-\tfrac12\big|=0.
\end{equation}
Combining this and \eqref{eq:sup_estimates01} ensures that
\color{black}
for every $\alpha \in [0,2], t\in (0,1]$ it holds 
that 
\begin{equation}\label{eq:sup_estimates1}
\begin{aligned}
 t^{-\alpha} |1-\cos(t)| 
& \leq
t^{-2}|1-\cos(t)| \leq \tfrac12.
\end{aligned}
\end{equation}
In addition, note that $\sup_{\alpha\in [0,2], t\in (0,\infty)} (t^{-\alpha} |1-\cos(t)|) \geq \pi^{-0} |1-\cos(\pi)|=| 1 + 1 | = 2$.
Combining this, \eqref{eq:sup_estimates0}, and~\eqref{eq:sup_estimates1} establishes 
item~\eqref{it:sup_estimates1}.
Furthermore, observe that for every $\alpha \in (-\infty,0)$ it holds that  
\begin{equation}\label{eq:sup_estimates3}
\limsup_{t\rightarrow \infty} (t^{-\alpha} |1-\cos(t)|)  = \infty.
\end{equation}
\color{black}
In addition, note that \eqref{eq:hopital} shows that for every $\alpha\in(2,\infty)$ it holds that
\begin{equation}
\begin{split}
&\limsup_{t\searrow0}\big(t^{-\alpha}|1-\cos(t)|\big)
=
\limsup_{t\searrow0}\big(t^{-(\alpha-2)}\big[t^{-2}|1-\cos(t)|\big]\big)\\
&\geq 
\Big[\limsup_{t\searrow0}t^{-(\alpha-2)}\Big]\Big[\liminf_{t\searrow0}\big(t^{-2}|1-\cos(t)|\big)\Big]
= \tfrac12\Big[\limsup_{t\searrow0}t^{-(\alpha-2)}\Big]=\infty.
\end{split}
\end{equation}
\color{black}
Combining this and~\eqref{eq:sup_estimates3}
establishes item~\eqref{it:sup_estimates2}. The proof of Lemma~\ref{lem:sup_estimates} is thus completed.
\end{proof}
\smallskip

\noindent The estimate in Lemma~\ref{lemma:Schatten_hoelder} below is essentially well-known 
and follows from the H\"older inequality for Schatten norms.   
We refer to Meise \& Vogt \cite[Chapter~16]{MeiseVogt:2011} for the definition and further standard properties of Schatten 
class 
operators. 

\begin{lemma}\label{lemma:Schatten_hoelder}
Let $(H, \langle \cdot , \cdot \rangle_{H}, \left\| \cdot \right\|_{H})$
and 
$(U, \langle \cdot , \cdot \rangle_{U}, \left\| \cdot \right\|_{U})$
be separable $\R$-Hilbert spaces, 
let $\U\subseteq U$ be an orthonormal basis of $U$,
for every $p\in [1,\infty)$ let $(L_p(U,H), \left\| \cdot \right\|_{L_p(U,H)})$
be the $\R$-Banach space of Schatten-$p$ operators from $U$ to $H$ and let 
\color{black}
$r\in (0,\infty)$, $T\in L^{(2)}(H,\R)$, $A\in L_{1+r}(U,H)$,
$B\in L_{1+\nicefrac{1}{r}}(U,H)$. 
\color{black}
Then 
it holds that 
\begin{equation}\label{eq:Schatten_hoelder}
\begin{aligned}
\textstyle\sum\limits_{u\in \U}\displaystyle |T (Au,Bu)|
& 
\leq 
\left\| T \right\|_{L^{(2)}(H,\R)}
\left\| A \right\|_{L_{1+r}(U,H)}
\left\| B \right\|_{L_{1+\nicefrac{1}{r}}(U,H)}.
\end{aligned}
\end{equation}
\end{lemma}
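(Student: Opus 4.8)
\textbf{Proof proposal for Lemma~\ref{lemma:Schatten_hoelder}.}

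The plan is to bound the bilinear form $T(Au, Bu)$ pointwise in $u$ using the boundedness of $T$ as a multilinear map, then sum over the orthonormal basis and apply a Hölder inequality for Schatten norms. First I would note that since $T \in L^{(2)}(H,\R)$, for every $x,y \in H$ we have $|T(x,y)| \le \|T\|_{L^{(2)}(H,\R)} \|x\|_H \|y\|_H$, so that
\begin{equation}
\textstyle\sum\limits_{u\in\U} |T(Au,Bu)| \le \|T\|_{L^{(2)}(H,\R)} \textstyle\sum\limits_{u\in\U} \|Au\|_H \|Bu\|_H .
\end{equation}
The task then reduces to showing $\sum_{u\in\U}\|Au\|_H\|Bu\|_H \le \|A\|_{L_{1+r}(U,H)}\|B\|_{L_{1+1/r}(U,H)}$.

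The natural route for this second inequality is to use the singular value decompositions of $A$ and $B$ — but since they need not be simultaneously diagonalizable, summing over a single basis $\U$ requires an argument that does not presuppose alignment of the bases. The cleanest way is to invoke the standard fact that for compact operators, the Schatten-$p$ norm admits the characterization $\|A\|_{L_p(U,H)} = \sup\{(\sum_{u\in\U}\|Au\|_H^p)^{1/p}\}$ only when $\U$ diagonalizes $|A|$, so instead I would work with a fixed but arbitrary orthonormal basis and use Hölder's inequality for sequences: with exponents $p = 1+r$ and $q = 1 + 1/r$ (which satisfy $\tfrac1p + \tfrac1q = \tfrac{1}{1+r} + \tfrac{r}{1+r} = 1$),
\begin{equation}
\textstyle\sum\limits_{u\in\U}\|Au\|_H\|Bu\|_H \le \Big(\textstyle\sum\limits_{u\in\U}\|Au\|_H^{p}\Big)^{\!1/p}\Big(\textstyle\sum\limits_{u\in\U}\|Bu\|_H^{q}\Big)^{\!1/q}.
\end{equation}
The remaining point is that $(\sum_{u\in\U}\|Au\|_H^{p})^{1/p} \le \|A\|_{L_p(U,H)}$ for \emph{any} orthonormal basis $\U$ when $p \ge 2$; this is a classical inequality (a consequence of the fact that $\sum_u \|Au\|_H^p = \sum_u (\sum_v |\langle Au, f_v\rangle|^2)^{p/2} \le \sum_u \sum_v |\langle Au,f_v\rangle|^p$ is false in general, so one must instead appeal to interpolation or to the operator-norm/trace-class endpoints). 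Concretely, I would cite the standard fact — for instance from the treatment of Schatten classes in the references the paper already uses — that the map $A \mapsto (\sum_{u\in\U}\|Au\|_H^p)^{1/p}$ is dominated by $\|A\|_{L_p(U,H)}$ for $p\in[2,\infty)$, which holds for both $p=1+r$ and $q=1+1/r$ precisely because $\max\{1+r,\,1+1/r\}\ge 2$ always (indeed one of $r,1/r$ is $\ge 1$). Combining the three displayed inequalities yields~\eqref{eq:Schatten_hoelder}.

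The main obstacle is the last step: establishing that $(\sum_{u\in\U}\|Au\|_H^p)^{1/p}\le\|A\|_{L_p(U,H)}$ for an arbitrary orthonormal basis when $p\ge2$, since a naive diagonalization argument only handles the eigenbasis of $|A|$. I expect the paper either to cite this as a known property of Schatten classes or to reduce to it via a short interpolation between the Hilbert–Schmidt case $p=2$ (where equality holds for every basis) and the operator-norm endpoint $p=\infty$ (where $\sup_u\|Au\|_H \le \|A\|_{L(U,H)}$), using that a diagonal-type Hölder/interpolation bound upgrades the $p=2$ identity. Everything else — the multilinear bound on $T$, the choice of conjugate exponents, and the sequence Hölder inequality — is routine.
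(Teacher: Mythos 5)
Your strategy breaks at the very first step, and the damage cannot be repaired later. By bounding $|T(Au,Bu)|\leq\|T\|_{L^{(2)}(H,\R)}\|Au\|_H\|Bu\|_H$ you discard the operator structure, and the inequality you are then left to prove, $\sum_{u\in\U}\|Au\|_H\|Bu\|_H\leq\|A\|_{L_{1+r}(U,H)}\|B\|_{L_{1+\nicefrac1r}(U,H)}$, is simply false. Take $U=H=\R^n$, $r=2$, $A=\id_{\R^n}$, $B=\langle\cdot,e_1\rangle e_1$, and let $\U=\{u_1,\dots,u_n\}$ be an orthonormal basis with $|\langle e_1,u_i\rangle|=n^{-\nicefrac12}$ for all $i$ (e.g., the columns of an orthogonal matrix whose first row is constant). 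Then
\begin{equation}
\textstyle\sum\limits_{i=1}^n\|Au_i\|\,\|Bu_i\|=n\cdot n^{-\nicefrac12}=\sqrt n
\qquad\text{while}\qquad
\|A\|_{L_3}\|B\|_{L_{\nicefrac32}}=n^{\nicefrac13},
\end{equation}
so the claimed bound fails for every $n\geq2$ (note that the lemma itself still holds here: with $T=\langle\cdot,\cdot\rangle_H$ one has $\sum_i|\langle Au_i,Bu_i\rangle|=\sum_i|\langle e_1,u_i\rangle|^2=1$). This also exposes the flaw in your final step: the basis-independent estimate $(\sum_{u\in\U}\|Au\|_H^p)^{\nicefrac1p}\leq\|A\|_{L_p(U,H)}$ holds only for $p\geq2$ (for $p<2$ it reverses, by concavity of $t\mapsto t^{\nicefrac p2}$ applied to $\langle|A|^2u,u\rangle$), and your parenthetical that $\max\{1+r,1+\nicefrac1r\}\geq2$ does not help, since you need it for \emph{both} exponents and the smaller one is $<2$ whenever $r\neq1$.

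The paper's proof avoids this loss of structure: it represents $T$ by $T_0\in L(H)$ via Riesz, rewrites $T(Au,Bu)=\langle B^*T_0Au,u\rangle_U$, applies the H\"older inequality for Schatten norms at the \emph{operator} level to conclude $B^*T_0A\in L_1(U)$ with $\|B^*T_0A\|_{L_1(U)}\leq\|B\|_{L_{1+\nicefrac1r}(U,H)}\|T\|_{L^{(2)}(H,\R)}\|A\|_{L_{1+r}(U,H)}$, and then uses the elementary trace-class fact $\sum_{u\in\U}|\langle Cu,u\rangle_U|\leq\|C\|_{L_1(U)}$ (via a nuclear representation of $C$). The cancellation encoded in the diagonal pairing $\langle\,\cdot\,u,u\rangle$ is exactly what your pointwise Cauchy--Schwarz step throws away, so you would need to restructure the argument along these lines rather than patch the last inequality.
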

\begin{proof}[Proof of Lemma~\ref{lemma:Schatten_hoelder}.]
Throughout this proof for every $p\in [1,\infty)$ let $(L_p(H,U),$ $\left\| \cdot \right\|_{L_p(H,U)})$ be
the $\R$-Banach space of Schatten-$p$ operators from $H$ to $U$ and for 
every $p\in [1,\infty)$ 
let $(L_p(U), \left\| \cdot \right\|_{L_p(U)})$ be 
the $\R$-Banach space of Schatten-$p$ operators from $U$ to $U$. 
Observe that the Riesz representation theorem ensures 
that there exists a unique $T_0\in L(H)$ such that for every $x,y \in H$ it holds that 
\begin{equation}\label{eq:Riesz}
 \langle T_0 x, y \rangle_{H} = T(x,y)
 \qquad \text{and} \qquad 
\| T_0 \|_{L(H)} = \| T \|_{L^{(2)}(H,\R)} 
 .
\end{equation}
\color{black}
Therefore, we obtain that
\begin{equation}\label{eq:schatten_hoelder_0}
\textstyle\sum\limits_{u\in \U}\displaystyle |T (Au,Bu)| 
=
\textstyle\sum\limits_{u\in \U}\displaystyle |\langle T_0 A u, Bu \rangle_{H}|
=
\textstyle\sum\limits_{u\in \U}\displaystyle |\langle B^* T_0 A u, u \rangle_{U}|.
\end{equation}
Next note that, e.g., Meise \& Vogt \cite[item 6.\ in Lemma~16.6 and item 2.\ in Lemma 16.7]{MeiseVogt:2011} ensures that $\left\| B^* \right\|_{L_{1+\nicefrac{1}{r}}(H,U)} = \left\| B \right\|_{L_{1+\nicefrac{1}{r}}(U,H)}$ and $\left\| T_0A \right\|_{L_{1+r}(U,H)}\leq\left\| T_0 \right\|_{L(H)}
\left\| A \right\|_{L_{1+r}(U,H)}$.
Combining the H\"older inequality for Schatten norms (see, e.g., Dunford \& Schwartz \cite[item~(c) in Lemma XI.9.14]{DunfordSchwartz:1963}) and \eqref{eq:Riesz} hence establishes that $B^*T_0 A\in L_1(U)$ and
\begin{equation}\label{eq:schatten_hoelder_1}
\| B^* T_0 A \|_{L_1(U)}
\leq 
\left\| B \right\|_{L_{1+\nicefrac{1}{r}}(U,H)}
\left\| T \right\|_{L^{(2)}(H,\R)}
\left\| A \right\|_{L_{1+r}(U,H)}
.
\end{equation}
\color{black}
Furthermore, note that for all sequences $(x_k)_{k\in\N},(y_k)_{k\in\N}\subseteq U$ with $\sum_{k\in\N}\|x_k\|_{U}\|y_k\|_U$ $<\infty$ and $B^*T_0A=\sum_{k\in\N}\langle\,\cdot\,,x_k\rangle_Uy_k$ it holds that
\begin{equation}
\begin{split}
\textstyle\sum\limits_{u\in\U}\displaystyle|\langle B^*T_0Au,u\rangle_U|
&=
\textstyle\sum\limits_{u\in\U}\displaystyle\Big|\Big\langle\textstyle\sum\limits_{k\in\N}\displaystyle\langle u,x_k\rangle_U y_k,u\Big\rangle_U\Big|
=
\textstyle\sum\limits_{u\in\U}\displaystyle\Big|\textstyle\sum\limits_{k\in\N}\displaystyle\langle u,x_k\rangle_U \langle y_k,u\rangle_U\Big|
\\
&\leq
\textstyle\sum\limits_{k\in\N}\sum\limits_{u\in\U}\displaystyle\big|\langle u,x_k\rangle_U \langle y_k,u\rangle_U\big|
\leq
\textstyle\sum\limits_{k\in\N}\displaystyle\|x_k\|_U\|y_k\|_U.
\end{split}
\end{equation}
The fact that 
\begin{equation}
\|B^*T_0A\|_{L_1(U)}
=
\inf\!\left\{\textstyle\sum\limits_{k\in\N}\displaystyle\|x_k\|_U\|y_k\|_U\colon
\substack{
(x_k)_{k\in\N},(y_k)_{k\in\N}\subseteq\, U \text{ with }\\
\sum\limits_{k\in\N}\|x_k\|_{U}\|y_k\|_U<\infty\text{ and }B^*T_0A=\sum\limits_{k\in\N}\langle\,\cdot\,,x_k\rangle_Uy_k}
\right\}
\end{equation}
therefore implies that
$
\sum_{u\in \U}|\langle B^* T_0 A u, u \rangle_{U}|
\leq \|B^*T_0A\|_{L_1(U)}.
$  
Combining this, \eqref{eq:schatten_hoelder_0}, and \eqref{eq:schatten_hoelder_1} establishes \eqref{eq:Schatten_hoelder}.
The proof of Lemma~\ref{lemma:Schatten_hoelder} is thus completed. 
\end{proof}
\section[Weak convergence rates for semilinear stochastic wave equations]{Weak convergence rates for temporal numerical approximations of semilinear stochastic wave equations}\label{sec:weak_convergence_rates}
%
%
%
%
\subsection{Setting}\label{ssec:setting_wave} 

In Setting~\ref{setting:Part2} below we present a framework for our convergence analysis that is frequently used throughout this section. 
For an easier understanding we provide some informal comments in advance: 
To begin with, note that 
the spectral structure of the operator $A$ is specified by its eigenbasis $\H$ and eigenvalues $\lambda_h$, $h\in\H$. 
Our standard choice for $A$ is the Dirichlet Laplacian on the  $L^2$ space of equivalence classes of square-integrable functions on the unit interval. 
The interpolation spaces $H_r$, $r\in\R$, are employed to measure smoothness in terms of powers of $-A$, 
while the product spaces $\bfH_r$, $r\in\R$, and the  operator 
$\bfA$ 
facilitate the interpretation of the wave equation 
as  a system of first order equations in time. 
The operator $\bfLambda$ is introduced to further simplify the analysis as it allows to interpret the spaces $\bfH_r$, $r\in\R$, as interpolation spaces associated to $\bfLambda$; compare Lemma~\ref{lem:Lambda_interpolation} in Subsection~\ref{ssec:wavesg} below. 
Further note that the function $\varphi$ 
plays the role of the test function occuring in the weak approximation error, 
whereas $\xi$, $F$, and $B$ 
constitute
the 
initial condition, the 
semilinearity in 
the drift term, and the diffusion coefficient of the abtract 
SPDE 
to be approximated. 
The parameters $\gamma$, $\beta$, and $\rho$ 
determine 
the spatial regularity of the solution; compare Lemma~\ref{lemma:Y_unif_Lpbddness} in Subsection~\ref{ssec:momentbounds} below. 
Depending on the choice of the step size parameter $h$ and the 
subset $I$ of the eigenbasis $\H$, 
the process defined by \eqref{eq:Y_mild} represents either 
an approximation of the mild solution or the mild solution itself.

\begin{setting}\label{setting:Part2}
Assume Setting~\ref{setting:Part1}, let
$\gamma\in (0,\infty)$, 
$\beta \in (\nicefrac{\gamma}{2},\gamma]\cap [\gamma-\nicefrac{1}{2},\gamma]$, $\rho \in [0,2(\gamma-\beta)]$, 
let $( H , \langle \cdot, \cdot \rangle_{ H } , \left\| \cdot\right\|_{ H } ) $ be a non-trivial separable $ \R $-Hilbert space, 
let $ \H \subseteq H $ be an 
%
%
orthonormal basis of $ H $, let $ \lambda \colon \H \to \R $ 
satisfy  
$ \sup_{ h \in \H } \lambda_{h} < 0 $ and $\sum_{h\in \H} |\lambda_h|^{-\beta} < \infty$, 
let $ A \colon D(A) \subseteq H \to H $ 
satisfy 
$
 D(A) = 
\bigl\{ v \in H \colon \sum_{ h \in \H } \abs{ \lambda_h \langle h,v \rangle_{H} }^2 < \infty \bigr\}
$
and 
$\big[\forall\, v \in D(A) \colon A v = \sum_{ h \in \H} \lambda_{ h } \langle h,v \rangle_{ H } h \big]$, 
let $ ( H_r , \langle \cdot , \cdot \rangle_{ H_r } , \left\| \cdot\right\|_{ H_r } )$, $ r \in \R $, be a family of interpolation spaces associated to $ - A $, 
for every $r\in\R$ let $ ( \bfH_r , \langle \cdot , \cdot \rangle_{ \bfH_r } , \left\| \cdot\right\|_{ \bfH_r } )$ be the $ \R $-Hilbert space 
which satisfies 
$ 
( \bfH_r , \langle \cdot , \cdot \rangle_{ \bfH_r } , \left\| \cdot\right\|_{ \bfH_r } ) 
=$ $ 
\bigl( H_{ \nicefrac{r}{2} } \times H_{ \nicefrac{r}{2} - \nicefrac{1}{2} }, 
\langle \cdot, \cdot \rangle_{ H_{ \nicefrac{r}{2} } \times H_{ \nicefrac{r}{2} - \nicefrac{1}{2} } },  
\norm{ \cdot }_{ H_{ \nicefrac{r}{2} } \times H_{ \nicefrac{r}{2} - \nicefrac{1}{2} } } \bigl) 
$,
let $ \bfA \colon D(\bfA) \subseteq \bfH_0 \to \bfH_0 $ 
satisfy 
$D(\bfA)=\bfH_1$ and 
$[\forall\, (v,w) \in \bfH_1 \colon \bfA( v , w )$ $=$ $( w , A v ) ]$, 
let $ \bfLambda \colon D(\bfLambda) \subseteq \bfH_0 \to \bfH_0 $ 
satisfy 
$D(\bfLambda)=\bfH_1$ and 
$\bigl[\forall\, (v,w)\in \bfH_1 \colon 
\bfLambda (v,w) = 
\sum_{ h \in \H } \abs{\lambda_h}^{ \nicefrac{1}{2} }  (\langle h ,v \rangle_{ H} h, \langle h , w \rangle_{ H } h) 
\bigr]
$,
let $\phi\in C^{4}_\mathrm{b}(\bfH_0,\R)$, 
$\xi\in \calL^2(\P\vert_{\mathbbm{F}_0};\bfH_{\max\{\rho, \gamma-\beta \}})$ 
satisfy 
$\E[ \left\| \xi \right\|_{\bfH_{0}}^6]<\infty$, let 
$F\in \Lip(\bfH_{\beta-\gamma},\bfH_0)$, 
$B \in  C_{\mathrm{b}}^4( \bfH_0, L_2(U,\bfH_0))$
satisfy 
$F|_{\bfH_0}\in C_{\mathrm{b}}^4(\bfH_0, \bfH_0)$, 
$F|_{\bfH_{\rho}} \in \Lip(\bfH_{\rho},\bfH_{2(\gamma-\beta)})$, 
and 
$B|_{\bfH_{\rho}} \in \Lip( \bfH_{\rho}, L(U, \bfH_{\gamma}) \cap L_2(U,\bfH_{\rho}))$, 
%
%
let 
$\mathfrak{m} \in [1,\infty)$, 
$\mathfrak{c}, \mathfrak{l} \in [0,\infty)$,
$\mu\colon \U \rightarrow (\R\backslash \{0\})$ 
satisfy for every $v,w \in \bfH_{\gamma-\beta}$ that 
\begin{align}
\label{eq:deriv_est_B_and_F}
   \max\!\big\{\|F|_{\bfH_0}\|_{C_{\mathrm{b}}^4(\bfH_0, \bfH_0)},\|B\|_{C_{\mathrm{b}}^4(\bfH_0, L_2(U,\bfH_0))}\big\}
& \leq \mathfrak{m},
\end{align}
\begin{equation}\label{eq:linGrowB}
  \textstyle\sum\limits_{u\in \U}\displaystyle |\mu_u|^{2} \left\| B(v)u \right\|_{\bfH_0}^2
 \leq 
 \mathfrak{c}^2
 \max\!\big\{
  1,
  \left\| v \right\|_{\bfH_{\gamma-\beta}}^2
 \big\},
\end{equation}
and
\begin{equation}\label{eq:LipB}
  \textstyle\sum\limits_{u\in \U}\displaystyle 
   \tfrac{\left\| (B(v) - B(w))u \right\|_{\bfH_0}^2}{|\mu_u|^{2}} 
 \leq 
 \mathfrak{l}^2 \left\| v - w \right\|_{\bfH_{\beta-\gamma}}^2,
\end{equation}
%
%
for every $I\subseteq \H$ let 
$P_I\colon{\cup_{r\in \R}H_r}\rightarrow{\cup_{r\in \R}H_r}$ 
and
$\bfP_I\colon{\cup_{r\in \R}\bfH_r}\rightarrow{\cup_{r\in \R}\bfH_r}$
satisfy 
for every $r\in \R$, $v\in H_r$, $w\in H_{r-\nicefrac{1}{2}}$ that 
$
  P_{I}(v) 
  =
  \sum_{h\in I}
    \langle 
     \left|\lambda_h \right|^{-r} h
      ,
      v
    \rangle_{H_r}
   \left| \lambda_h \right|^{-r} h
$
and 
$
  \bfP_{I}(v,w) 
  = 
  (P_I v , P_I w ), 
$ 
let
$\lfloor \cdot \rfloor_h \colon [0,\infty) \rightarrow \R$, $h\in [0,\infty)$, 
satisfy 
for every $h\in (0,\infty),$ $x\in [0,\infty)$ 
that
$
\lfloor x \rfloor_h 
=
\max( \{0, h, 2h, 3h, \ldots\} \cap [0,x] )$
and 
$\lfloor x \rfloor_0 =x$, 
and for every $I\subseteq \H$, $h\in [0,T]$ let
$ Y^{h,I}\colon [0,T] \times \Omega \rightarrow \bfP_I (\bfH_0)$
be an $(\mathbbm{F}_t)_{t\in[0,T]}$-predictable stochastic process which satisfies 
for every $t\in [0,T]$ that 
$\sup_{s\in[0,T]}\E\big[\| Y_{\round{s}{h}}^{h,I} \|_{\bfH_0}^2\big] < \infty$  
and
\begin{equation}
\begin{split}\label{eq:Y_mild}
& [Y_t^{h,I}]_{\P,\,\calB(\bfP_I(\bfH_0))} 
\\ & =
 [e^{t\bfA } \bfP_I \xi ]_{\P,\,\calB(\bfP_I(\bfH_0))} 
 +
 \int_{0}^{t} e^{ (t-\round{s}{h})\bfA} \bfP_I F ( Y^{h,I}_{\round{s}{h}} ) \,\mathrm{d}s
 +
 \int_{0}^{t} e^{ (t-\round{s}{h})\bfA } \bfP_I B( Y^{h,I}_{\round{s}{h}} ) \,\mathrm{d}W_s
.
\end{split}
\end{equation}
\end{setting}
\noindent{}
Note that the family of interpolation spaces $H_r$, $r\in\R$, introduced in Setting~\ref{setting:Part2} satisfies 
for every $r\in[0,\infty)$, $v\in H_r$ that $H_r=D((-A)^r)$ and $\|v\|_{H_r}=\|(-A)^rv\|_H$. 
Furthermore, observe 
that Setting~\ref{setting:Part2} ensures that for every $I\subseteq \H$, $t\in [0,T]$ it holds that \linebreak 
$\P\big[\int_0^t\big(\|e^{(t-s)\bfA}\bfP_IF(Y^{0,I}_s)\|_{\bfH_0}+\|e^{(t-s)\bfA}\bfP_IB(Y^{0,I}_s)\|_{L_2(U,\bfH_0)}^2\big)\,\mathrm ds<\infty\big]=1$ and
\begin{equation}
\begin{split}\label{eq:Y_mild2}
&
  [Y_t^{0,I}]_{\P,\,\calB(\bfP_I(\bfH_0))} 
\\ & =
 [e^{t\bfA } \bfP_I \xi ]_{\P,\,\calB(\bfP_I(\bfH_0))} 
 + \int_{0}^{t} e^{ (t-s)\bfA} \bfP_I F ( Y^{0,I}_{s} ) \,\mathrm{d}s
 +
 \int_{0}^{t} e^{ (t-s)\bfA } \bfP_I B ( Y^{0,I}_{s} ) \,\mathrm{d}W_s;
\end{split}
\end{equation}
cf., for example, Theorem~\ref{thm:existence} above, Lemma~\ref{lemma:semigroup} below, and 
Jacobe de Naurois et al. \cite[Remark 3.1]{JacobedeNauroisJentzenWelti:2021}  
for sufficient conditions which ensure the existence of such a process.
%
%
%
\subsection{Basic results for the linear wave equation}\label{ssec:wavesg}
The statement and the proof of the next result, Lemma~\ref{lem:Lambda_interpolation} below, can be found in,
e.g., Jacobe de Naurois et al.~\cite[Lemma 2.4]{JacobedeNauroisJentzenWelti:2015}.
\begin{lemma} \label{lem:Lambda_interpolation}
Assume Setting~\ref{setting:Part2}.
Then the $ \R $-Hilbert spaces $ ( \bfH_r , \langle \cdot , \cdot \rangle_{ \bfH_r } , \left\| \cdot\right\|_{ \bfH_r } )$, $ r \in \R $, are a family of interpolation spaces associated to $ \bfLambda $.
\end{lemma}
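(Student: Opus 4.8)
The plan is to establish this purely functional-analytic statement by computing the spectral decomposition of $\bfLambda$ and then matching norms and domains with those of the spaces $\bfH_r$. First I would pin down the spectral picture of $\bfLambda$. Since $\H$ is an orthonormal basis of $H=H_0$ consisting of eigenvectors of $-A$ with associated eigenvalues $(|\lambda_h|)_{h\in\H}$, and since $\|h\|_{H_{-\nicefrac12}}^2=\|(-A)^{-\nicefrac12}h\|_{H_0}^2=|\lambda_h|^{-1}$ for every $h\in\H$, the set $\{(h,0)\colon h\in\H\}\cup\{(0,|\lambda_h|^{\nicefrac12}h)\colon h\in\H\}$ is an orthonormal basis of $\bfH_0=H_0\times H_{-\nicefrac12}$. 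A direct computation with the defining formula for $\bfLambda$ shows that $\bfLambda$ maps each of these basis vectors to $|\lambda_h|^{\nicefrac12}$ times itself. Because $\sup_{h\in\H}\lambda_h<0$ yields $\inf_{h\in\H}|\lambda_h|^{\nicefrac12}>0$ and because $\sum_{h\in\H}|\lambda_h|^{-\beta}<\infty$ ensures that $(|\lambda_h|^{\nicefrac12})_{h\in\H}$ has no finite accumulation point, it follows that $\bfLambda$ is a densely defined, positive, self-adjoint linear operator on $\bfH_0$ with a spectral gap and compact inverse. Hence $\bfLambda$ is of the type for which a family of interpolation spaces is defined, and that family consists of $D(\bfLambda^r)$ equipped with the norm $\bfH_0\ni x\mapsto\|\bfLambda^rx\|_{\bfH_0}$ for $r\ge0$, together with the corresponding completions of $\bfH_0$ for $r<0$.

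The core of the argument is then the identification of these spaces with the $\bfH_r$. Here I would use that $\bfLambda$ acts on each component as the relevant realization of $(-A)^{\nicefrac12}$, so that $\bfLambda^r(v,w)=((-A)^{\nicefrac r2}v,(-A)^{\nicefrac r2}w)$ and therefore
\[
\|\bfLambda^r(v,w)\|_{\bfH_0}^2=\|(-A)^{\nicefrac r2}v\|_{H_0}^2+\|(-A)^{\nicefrac r2}w\|_{H_{-\nicefrac12}}^2 .
\]
Using that $(H_r)_{r\in\R}$ is a family of interpolation spaces associated to $-A$, one has $\|(-A)^{\nicefrac r2}v\|_{H_0}=\|v\|_{H_{\nicefrac r2}}$ and $\|(-A)^{\nicefrac r2}w\|_{H_{-\nicefrac12}}=\|(-A)^{-\nicefrac12}(-A)^{\nicefrac r2}w\|_{H_0}=\|w\|_{H_{\nicefrac r2-\nicefrac12}}$, so the right-hand side of the display equals $\|v\|_{H_{\nicefrac r2}}^2+\|w\|_{H_{\nicefrac r2-\nicefrac12}}^2=\|(v,w)\|_{\bfH_r}^2$. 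The same componentwise reasoning identifies $D(\bfLambda^r)$ with $H_{\nicefrac r2}\times H_{\nicefrac r2-\nicefrac12}=\bfH_r$ for $r\ge0$, and for $r<0$ the norm identity together with the density of $\bfH_0$ in $\bfH_r$ shows that $\bfH_r$ is (isometrically isomorphic to) the completion of $\bfH_0$ under $\|\bfLambda^r(\cdot)\|_{\bfH_0}$. Combining the two steps yields the assertion. Alternatively, one may simply invoke Jacobe de Naurois et al.~\cite[Lemma~2.4]{JacobedeNauroisJentzenWelti:2015}, from which this statement is taken.

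I do not expect a genuine obstacle here; the only points requiring a little care are the verification that $\bfLambda$ is an admissible operator (self-adjointness, positivity, spectral gap, compactness of the resolvent) — which is precisely where the standing hypotheses $\sup_{h\in\H}\lambda_h<0$ and $\sum_{h\in\H}|\lambda_h|^{-\beta}<\infty$ of Setting~\ref{setting:Part2} enter — and the routine bookkeeping that matches the product structure $\bfH_r=H_{\nicefrac r2}\times H_{\nicefrac r2-\nicefrac12}$ with the fractional powers of $\bfLambda$, in particular keeping track of the shift by $\nicefrac12$ in the second component.
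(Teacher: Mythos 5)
Your argument is correct, but it is worth noting that the paper does not prove this lemma at all: it simply refers to Jacobe de Naurois et al.~\cite[Lemma 2.4]{JacobedeNauroisJentzenWelti:2015}, which is the second alternative you mention. Your self-contained route — diagonalising $\bfLambda$ on the orthonormal basis $\{(h,0)\colon h\in\H\}\cup\{(0,|\lambda_h|^{\nicefrac12}h)\colon h\in\H\}$ of $\bfH_0$, observing that $\bfLambda$ acts componentwise as the appropriate realisation of $(-A)^{\nicefrac12}$, and then matching $\|\bfLambda^r(v,w)\|_{\bfH_0}$ with $\|(v,w)\|_{\bfH_r}$ via $\|(-A)^{\nicefrac r2}v\|_{H_0}=\|v\|_{H_{\nicefrac r2}}$ and $\|(-A)^{\nicefrac r2}w\|_{H_{-\nicefrac12}}=\|w\|_{H_{\nicefrac r2-\nicefrac12}}$ — is exactly the standard computation behind the cited result, so what you gain is a proof that is verifiable within the paper itself, at the cost of a little bookkeeping the authors chose to outsource. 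Two small remarks: the summability hypothesis $\sum_{h\in\H}|\lambda_h|^{-\beta}<\infty$ and the resulting compactness of $\bfLambda^{-1}$ (no finite accumulation point of the eigenvalues) are not needed here — only $\sup_{h\in\H}\lambda_h<0$, i.e.\ the spectral gap $\inf_{h\in\H}|\lambda_h|^{\nicefrac12}>0$, enters the definition of a family of interpolation spaces; and for $r<0$ you should say explicitly that $\bfH_r=H_{\nicefrac r2}\times H_{\nicefrac r2-\nicefrac12}$ contains $\bfH_0$ densely (again componentwise, from the corresponding property of the scale $(H_s)_{s\in\R}$), so that the isometry on $\bfH_0$ extends to the identification with the abstract completion.
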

\noindent{}The next result, Lemma~\ref{lemma:semigroup} below, can be found in, e.g., 
Jacobe de Naurois et al.~\cite[Lemma 2.4]{JacobedeNauroisJentzenWelti:2021}.
A proof of Lemma~\ref{lemma:semigroup} can be found in, e.g., Lindgren~\cite[Section~5.3]{Lindgren:2012}.
\begin{lemma} \label{lemma:semigroup}
Assume Setting~\ref{setting:Part2} and let $ \bfS \colon [ 0 , \infty ) \to L( \bfH_0 ) $ be the function which
satisfies for every $ t \in [ 0 , \infty ) $, $ (v,w) \in \bfH_0 $ that
\begin{equation}
\bfS_t (v,w) = 
\twovector{ 
  \cos( t(-A)^{ \nicefrac{1}{2} }  ) v + (-A)^{ - \nicefrac{1}{2} } \sin( t(-A)^{ \nicefrac{1}{2} }  ) w, 
}{ 
  - (-A)^{ \nicefrac{1}{2} } \sin( t(-A)^{ \nicefrac{1}{2} }  ) v + \cos( t(-A)^{ \nicefrac{1}{2} }  ) w }.
\end{equation}
Then 
\begin{enumerate}[(i)]
 \item it holds that $ \bfS \colon [ 0 , \infty ) \to L( \bfH_0 ) $ is a strongly continuous semigroup of 
 bounded linear operators on $ \bfH_0 $ and
 \item it holds that $ \bfA \colon D ( \bfA ) \subseteq \bfH_0 \to \bfH_0 $ is the generator of $ \bfS $.
\end{enumerate}
\end{lemma}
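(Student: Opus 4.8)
\section*{Proof proposal for Lemma~\ref{lemma:semigroup}}

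The plan is to diagonalise $\bfS$ and $\bfA$ simultaneously against an orthonormal basis of $\bfH_0=H_0\times H_{-\nicefrac{1}{2}}$ built from $\H$ and $\lambda$. First I would introduce, for every $h\in\H$, the vectors $e_h^{(1)}=(h,0)$ and $e_h^{(2)}=(0,|\lambda_h|^{\nicefrac{1}{2}}h)$ in $\bfH_0$, and verify, using the interpolation-norm identities $\|h\|_{H_r}=|\lambda_h|^{r}$ and $\langle g,h\rangle_{H_r}=0$ for $g\neq h$, $r\in\R$, that $\{e_h^{(1)},e_h^{(2)}\colon h\in\H\}$ is an orthonormal basis of $\bfH_0$. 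Since $(-A)^{\pm\nicefrac{1}{2}}h=|\lambda_h|^{\pm\nicefrac{1}{2}}h$, $\cos(t(-A)^{\nicefrac{1}{2}})h=\cos(t|\lambda_h|^{\nicefrac{1}{2}})h$, and $\sin(t(-A)^{\nicefrac{1}{2}})h=\sin(t|\lambda_h|^{\nicefrac{1}{2}})h$, the defining formula for $\bfS_t$ then shows that $\bfS_t$ leaves each plane $\spn\{e_h^{(1)},e_h^{(2)}\}$ invariant and acts on it as the orthogonal matrix $R_h(t)=\bigl(\begin{smallmatrix}\cos(t|\lambda_h|^{\nicefrac{1}{2}}) & \sin(t|\lambda_h|^{\nicefrac{1}{2}})\\ -\sin(t|\lambda_h|^{\nicefrac{1}{2}}) & \cos(t|\lambda_h|^{\nicefrac{1}{2}})\end{smallmatrix}\bigr)$. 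In particular the formula for $\bfS_t$ is meaningful as an operator on $\bfH_0$ (equivalently this follows from the mapping properties of $(-A)^{\pm\nicefrac{1}{2}}$ between the $H_r$).

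For item (i): orthogonality of each $R_h(t)$ gives $\|\bfS_t\|_{L(\bfH_0)}\le1$, hence $\bfS_t\in L(\bfH_0)$; $R_h(0)=\id$ gives $\bfS_0=\id$; and the rotation identity $R_h(s+t)=R_h(s)R_h(t)$, i.e.\ the angle-addition formulas for $\sin$ and $\cos$, gives $\bfS_{s+t}=\bfS_s\bfS_t$ after expanding both sides in the basis. For strong continuity at a point $t_0\in[0,\infty)$ I would expand a given $x\in\bfH_0$ as $x=\sum_{h\in\H}(a_h e_h^{(1)}+b_h e_h^{(2)})$, use Parseval to write $\|\bfS_tx-\bfS_{t_0}x\|_{\bfH_0}^2=\sum_{h\in\H}\bigl|R_h(t)\binom{a_h}{b_h}-R_h(t_0)\binom{a_h}{b_h}\bigr|^2$, and apply dominated convergence, the $h$-th summand being continuous in $t$ and bounded by $4(|a_h|^2+|b_h|^2)$ uniformly in $t$.

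For item (ii): let $(G,D(G))$ be the generator of $\bfS$. Since $t^{-1}(R_h(t)-\id)\to J_h:=\bigl(\begin{smallmatrix}0 & |\lambda_h|^{\nicefrac{1}{2}}\\ -|\lambda_h|^{\nicefrac{1}{2}} & 0\end{smallmatrix}\bigr)$ as $t\searrow0$, the standard description of the generator of a block-diagonal $C_0$-semigroup (one further dominated-convergence argument on the Parseval expansion, using $\bigl|R_h(t)\binom{a_h}{b_h}-\binom{a_h}{b_h}\bigr|\le t|\lambda_h|^{\nicefrac{1}{2}}(|a_h|^2+|b_h|^2)^{\nicefrac{1}{2}}$ for sufficiency and $t^{-1}(R_h(t)\binom{a_h}{b_h}-\binom{a_h}{b_h})\to J_h\binom{a_h}{b_h}$ together with Fatou for necessity) yields $D(G)=\{\sum_h(a_h e_h^{(1)}+b_h e_h^{(2)})\in\bfH_0\colon\sum_h|\lambda_h|(|a_h|^2+|b_h|^2)<\infty\}$ and $G\bigl(\sum_h(a_h e_h^{(1)}+b_h e_h^{(2)})\bigr)=\sum_h|\lambda_h|^{\nicefrac{1}{2}}(b_h e_h^{(1)}-a_h e_h^{(2)})$. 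Writing a general element of $\bfH_0$ as $(v,w)$ with $v=\sum_h a_h h$ and $w=\sum_h b_h|\lambda_h|^{\nicefrac{1}{2}}h$, one has $\|v\|_{H_{\nicefrac{1}{2}}}^2=\sum_h|\lambda_h||a_h|^2$ and $\|w\|_{H_0}^2=\sum_h|\lambda_h||b_h|^2$, so the coefficient condition defining $D(G)$ is precisely membership in $\bfH_1=H_{\nicefrac{1}{2}}\times H_0=D(\bfA)$; moreover $\bfA e_h^{(1)}=(0,\lambda_h h)=-|\lambda_h|^{\nicefrac{1}{2}}e_h^{(2)}$ and $\bfA e_h^{(2)}=(|\lambda_h|^{\nicefrac{1}{2}}h,0)=|\lambda_h|^{\nicefrac{1}{2}}e_h^{(1)}$, i.e.\ $\bfA$ acts as $J_h$ on $\spn\{e_h^{(1)},e_h^{(2)}\}$. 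Hence $G=\bfA$, which is item (ii).

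The step I expect to be the main obstacle is the bookkeeping in the mixed interpolation space $H_0\times H_{-\nicefrac{1}{2}}$: one must check carefully, via the interpolation-norm identities, both that $\{e_h^{(1)},e_h^{(2)}\}$ is genuinely orthonormal (so that Parseval and the block decomposition are legitimate) and that the coefficient condition characterising $D(G)$ matches the definition of $\bfH_1$ exactly, in particular the non-trivial inclusion $D(G)\subseteq D(\bfA)$. An essentially equivalent route avoids the explicit basis: boundedness of $\bfS_t$ follows from $\sup_{s\ge0}\max\{|\cos s|,|\sin s|,|s^{-1}\sin s|\}\le1$ together with the mapping properties of $(-A)^{\pm\nicefrac{1}{2}}$ between the spaces $H_r$, while the semigroup property, strong continuity, and the generator limit follow from dominated convergence over $\H$; however, verifying that $D(G)$ is no larger than $\bfH_1$ remains cleanest in the spectral picture, so I would use the diagonalisation above.
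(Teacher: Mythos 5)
Your argument is correct. Note, however, that the paper does not prove Lemma~\ref{lemma:semigroup} at all: it defers to Jacobe de Naurois et al.\ [Lemma 2.5] for the statement and to Lindgren [Section 5.3] for a proof, so there is no in-paper argument to compare against. Your spectral diagonalisation is the standard route and all the key computations check out: $\{e_h^{(1)},e_h^{(2)}\}_{h\in\H}$ is indeed orthonormal in $H_0\times H_{-\nicefrac{1}{2}}$ because $\|h\|_{H_0}=1$ and $\||\lambda_h|^{\nicefrac{1}{2}}h\|_{H_{-\nicefrac{1}{2}}}=1$; the matrix of $\bfS_t$ on $\spn\{e_h^{(1)},e_h^{(2)}\}$ is the rotation $R_h(t)$ as you state (so $\bfS$ is in fact a unitary group, not merely a contraction semigroup); the domination $|R_h(t)\zeta-\zeta|=2|\sin(t|\lambda_h|^{\nicefrac{1}{2}}/2)|\,|\zeta|\le t|\lambda_h|^{\nicefrac{1}{2}}|\zeta|$ justifies both the strong continuity and the sufficiency half of the generator identification; the necessity half only needs continuity of the block projections (which commute with $\bfS_t$), with Fatou as an optional crutch; and the coefficient condition $\sum_h|\lambda_h|(|a_h|^2+|b_h|^2)<\infty$ is exactly $\|v\|_{H_{\nicefrac{1}{2}}}^2+\|w\|_{H_0}^2<\infty$, i.e.\ membership in $\bfH_1=D(\bfA)$, with $\bfA$ acting as $J_h$ on each block since $\lambda_h=-|\lambda_h|$. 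The one point worth making explicit in a write-up is the well-definedness of the series defining $\bfS_t$ on all of $\bfH_0$ (boundedness of $\cos(t(-A)^{\nicefrac{1}{2}})$, $(-A)^{-\nicefrac{1}{2}}\sin(t(-A)^{\nicefrac{1}{2}})\colon H_{-\nicefrac{1}{2}}\to H_0$, etc.), which you flag; this is immediate from the functional calculus and needs no further idea.
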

\noindent{}The statement and the proof of the next result, Lemma~\ref{lemma:sg-estimate1} below, can be found in, e.g., Jacobe de Naurois et al.~\cite[Lemma 2.6]{JacobedeNauroisJentzenWelti:2015}.
\begin{lemma}\label{lemma:sg-estimate1}
Assume Setting~\ref{setting:Part2}.
Then 
$ 
 \sup_{t\in [0,\infty)} 
    \| e^{t\mathbf{A}} \|_{L(\mathbf{H}_0)}
  =1.
$
\end{lemma}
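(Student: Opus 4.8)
The plan is to read off the explicit formula for $\bfS$ in Lemma~\ref{lemma:semigroup} and show that each $\bfS_t = e^{t\bfA}$ is in fact an isometry of $\bfH_0$. Since $\bfS_0 = \id_{\bfH_0}$ and $H$ (hence $\bfH_0$) is non-trivial, this simultaneously gives $\norm{e^{t\bfA}}_{L(\bfH_0)} \le 1$ for every $t\in[0,\infty)$ and $\sup_{t\in[0,\infty)}\norm{e^{t\bfA}}_{L(\bfH_0)} \ge 1$, which is the claimed identity. By Lemma~\ref{lemma:semigroup} we already know that $\bfS_t\in L(\bfH_0)$ and $e^{t\bfA}=\bfS_t$, so only the norm computation remains.

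The cleanest route is to diagonalise. For $(v,w)\in\bfH_0$ with $v,w$ in the linear span of the eigenbasis $\H$, the operators $\cos(t(-A)^{1/2})$, $\sin(t(-A)^{1/2})$, and $(-A)^{\pm1/2}$ act as finite sums with scalar coefficients $\cos(t|\lambda_h|^{1/2})$, $\sin(t|\lambda_h|^{1/2})$, $|\lambda_h|^{\pm1/2}$, and $\bfS_t$ leaves each two-dimensional subspace $\spn\{(h,0),(0,h)\}$ invariant. On such a subspace the $\bfH_0$-norm of $(ah,bh)$ equals $\sqrt{a^2 + |\lambda_h|^{-1}b^2}$ (because $\norm{h}_{H_0}=1$ and $\norm{h}_{H_{-\nicefrac12}}=|\lambda_h|^{-\nicefrac12}$), and after rescaling the second coordinate by $|\lambda_h|^{-\nicefrac12}$ the action of $\bfS_t$ is precisely the planar rotation by the angle $t|\lambda_h|^{\nicefrac12}$, hence norm-preserving. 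Summing the resulting Parseval identities over $h\in\H$ yields $\norm{\bfS_t(v,w)}_{\bfH_0}=\norm{(v,w)}_{\bfH_0}$ on the span of $\H$, and boundedness of $\bfS_t$ extends the identity to all of $\bfH_0$ by density and continuity.

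An equivalent coordinate-free variant: writing $(p,q)=\bfS_t(v,w)$ and using $\norm{w}_{H_{-\nicefrac12}}=\norm{(-A)^{-\nicefrac12}w}_{H_0}$, one expands $\norm{p}_{H_0}^2 + \norm{(-A)^{-\nicefrac12}q}_{H_0}^2$; the identity $\cos^2+\sin^2=1$ together with commutativity and self-adjointness of $\cos(t(-A)^{\nicefrac12})$, $\sin(t(-A)^{\nicefrac12})$, $(-A)^{-\nicefrac12}$ collapses the four diagonal terms to $\norm{v}_{H_0}^2+\norm{(-A)^{-\nicefrac12}w}_{H_0}^2 = \norm{(v,w)}_{\bfH_0}^2$, while the two mixed terms, equal to $\pm2\langle\cos(t(-A)^{\nicefrac12})v,(-A)^{-\nicefrac12}\sin(t(-A)^{\nicefrac12})w\rangle_{H_0}$, cancel against each other after moving operators across the inner product.

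The one point that requires care — and the step I would treat most cautiously — is the functional-calculus bookkeeping: $(-A)^{-\nicefrac12}$ is unbounded on $H_0$, and it is the composition $(-A)^{-\nicefrac12}\sin(t(-A)^{\nicefrac12})$ that is bounded as a map $H_{-\nicefrac12}\to H_0$, so the manipulations with the mixed inner products must be justified either by first restricting to $v,w$ in the span of $\H$ (where all sums are finite and everything is legitimate) and then passing to the limit, or by invoking that these operators restrict to bounded, commuting, self-adjoint operators on the relevant spaces of the scale $(H_r)_{r\in\R}$. Everything else is elementary trigonometry. For a fully written-out argument one may simply cite Jacobe de Naurois et al.~\cite[Lemma 2.6]{JacobedeNauroisJentzenWelti:2015}, where this isometry — indeed group — property of $\bfS$ is established.
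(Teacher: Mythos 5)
Your argument is correct. Note, however, that the paper does not prove Lemma~\ref{lemma:sg-estimate1} at all: it simply defers to Jacobe de Naurois et al.~\cite[Lemma 2.6]{JacobedeNauroisJentzenWelti:2015}, which is exactly the fallback you mention in your last sentence. Your self-contained version supplies the content of that citation: the block-diagonal action of $\bfS_t$ on $\spn\{(h,0),(0,h)\}$, the rescaling of the second coordinate by $\abs{\lambda_h}^{-\nicefrac12}$ turning it into a planar rotation by $t\abs{\lambda_h}^{\nicefrac12}$, Parseval over $h\in\H$, and extension by density and boundedness, together with $\bfS_0=\id_{\bfH_0}$ and non-triviality of $H$ (guaranteed in Setting~\ref{setting:Part2}) for the lower bound $\geq 1$ — all of this is sound, and your caveat about first working on the span of $\H$ before manipulating $(-A)^{-\nicefrac12}\sin(t(-A)^{\nicefrac12})$ is exactly the right precaution. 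So there is no gap; you have simply written out the elementary isometry proof that the paper outsources, which is a perfectly acceptable (and arguably more transparent) way to discharge the lemma.
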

\noindent{}The next result, Lemma~\ref{lemma:sg-estimate2} below, 
provides another useful elementary estimate for the semigroup $(e^{t\bfA})_{t\in [0,\infty)}$ generated by the operator $\bfA\colon D(\bfA)\subseteq\bfH_0\to\bfH_0$ from Setting~\ref{setting:Part2}; 
cf.,~e.g., Kovacs et al.~\cite[Lemma 4.4]{KovacsEtAl:2013} for a similar result.
%
\begin{lemma}\label{lemma:sg-estimate2}
Assume Setting~\ref{setting:Part2} and let $\alpha \in [0,1], t\in (0,\infty)$.
Then 
\begin{equation}\label{eq:sg_estimate2}
    t^{-\alpha} 
    \| \bfLambda^{-\alpha} (\id_{\mathbf{H}_0} - e^{t\mathbf{A}}) \|_{L(\mathbf{H}_0)} 
  \leq 
  \sqrt{2}\bigg[ 
  \sup_{s \in (0,\infty)}
   (s^{-\alpha} |1-e^{\mathbf{i}s}|)\bigg]
  \leq
  2^{\nicefrac32}. 
\end{equation}
\end{lemma}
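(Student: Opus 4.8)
The plan is to diagonalise the operators $\bfA$, $e^{t\bfA}$, and $\bfLambda$ simultaneously over the eigenbasis $\H$ of $-A$, thereby reducing the desired operator-norm estimate to an elementary two-dimensional computation, and then to conclude with Lemma~\ref{lem:sup_estimates}.

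First I would introduce, for each $h\in\H$, the vectors $e_h^{(1)}:=(h,0)\in\bfH_0$ and $e_h^{(2)}:=(0,|\lambda_h|^{\nicefrac{1}{2}}h)\in\bfH_0$ and check, using $\|h\|_{H_0}=1$ and $\||\lambda_h|^{\nicefrac{1}{2}}h\|_{H_{-\nicefrac{1}{2}}}=|\lambda_h|^{\nicefrac{1}{2}}|\lambda_h|^{-\nicefrac{1}{2}}\|h\|_{H_0}=1$, that $\{e_h^{(1)},e_h^{(2)}\colon h\in\H\}$ is an orthonormal basis of $\bfH_0=H_0\times H_{-\nicefrac{1}{2}}$. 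A direct calculation from the definitions of $\bfA$ and $\bfLambda$ in Setting~\ref{setting:Part2}, together with the explicit formula for $\bfS_t=e^{t\bfA}$ from Lemma~\ref{lemma:semigroup}, then shows that each two-dimensional subspace $\bfH_0^{(h)}:=\mathrm{span}\{e_h^{(1)},e_h^{(2)}\}$ is invariant under $\bfA$, under $\bfLambda$, and under $e^{t\bfA}$, and that with respect to the ordered basis $(e_h^{(1)},e_h^{(2)})$ one has $\bfLambda|_{\bfH_0^{(h)}}=|\lambda_h|^{\nicefrac{1}{2}}\,\id$ and $e^{t\bfA}|_{\bfH_0^{(h)}}=\cos(t|\lambda_h|^{\nicefrac{1}{2}})\,\id+\sin(t|\lambda_h|^{\nicefrac{1}{2}})\,\mathcal{J}$, where $\mathcal{J}=\left(\begin{smallmatrix}0&1\\-1&0\end{smallmatrix}\right)$. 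Hence $\bfLambda^{-\alpha}(\id_{\bfH_0}-e^{t\bfA})$ is block-diagonal with respect to the orthogonal decomposition of $\bfH_0$ into the subspaces $\bfH_0^{(h)}$, $h\in\H$, its restriction to $\bfH_0^{(h)}$ equals $|\lambda_h|^{-\nicefrac{\alpha}{2}}\bigl[(1-\cos(t|\lambda_h|^{\nicefrac{1}{2}}))\,\id-\sin(t|\lambda_h|^{\nicefrac{1}{2}})\,\mathcal{J}\bigr]$, and therefore $\|\bfLambda^{-\alpha}(\id_{\bfH_0}-e^{t\bfA})\|_{L(\bfH_0)}=\sup_{h\in\H}|\lambda_h|^{-\nicefrac{\alpha}{2}}\,\|(1-\cos(t|\lambda_h|^{\nicefrac{1}{2}}))\,\id-\sin(t|\lambda_h|^{\nicefrac{1}{2}})\,\mathcal{J}\|$.

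Next I would estimate the norm of each $2\times 2$ block. Writing $\theta_h:=t|\lambda_h|^{\nicefrac{1}{2}}\in(0,\infty)$ and bounding the operator norm of the matrix $(1-\cos\theta_h)\,\id-\sin\theta_h\,\mathcal{J}$ by its Hilbert--Schmidt norm $\sqrt{2[(1-\cos\theta_h)^2+\sin^2\theta_h]}=\sqrt{2}\,|1-e^{\mathbf{i}\theta_h}|$, and using $|\lambda_h|^{-\nicefrac{\alpha}{2}}=t^{\alpha}\theta_h^{-\alpha}$ together with $\{\theta_h\colon h\in\H\}\subseteq(0,\infty)$, one obtains $t^{-\alpha}\|\bfLambda^{-\alpha}(\id_{\bfH_0}-e^{t\bfA})\|_{L(\bfH_0)}\le\sqrt{2}\,\sup_{h\in\H}\bigl(\theta_h^{-\alpha}|1-e^{\mathbf{i}\theta_h}|\bigr)\le\sqrt{2}\,\sup_{s\in(0,\infty)}\bigl(s^{-\alpha}|1-e^{\mathbf{i}s}|\bigr)$, which is the first inequality in~\eqref{eq:sg_estimate2}. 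For the second inequality I would use that $s^{-\alpha}|1-e^{\mathbf{i}s}|=\sqrt{2}\,(s^{-2\alpha}|1-\cos s|)^{\nicefrac{1}{2}}$ and that $\alpha\in[0,1]$ forces $2\alpha\in[0,2]$, so that item~\eqref{it:sup_estimates1} of Lemma~\ref{lem:sup_estimates} yields $\sup_{s\in(0,\infty)}(s^{-2\alpha}|1-\cos s|)\le2$, whence $\sup_{s\in(0,\infty)}(s^{-\alpha}|1-e^{\mathbf{i}s}|)\le\sqrt{2}\cdot\sqrt{2}=2$ and thus $\sqrt{2}\,\sup_{s\in(0,\infty)}(s^{-\alpha}|1-e^{\mathbf{i}s}|)\le2^{\nicefrac{3}{2}}$.

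I expect the main obstacle to be the bookkeeping in the first step: verifying that $\bfA$, $\bfLambda^{-\alpha}$, and $e^{t\bfA}$ genuinely share the invariant subspaces $\bfH_0^{(h)}$, that they act on each of them as claimed, and that the operator norm on $\bfH_0$ equals the supremum over $h\in\H$ of the $2\times 2$ block norms. Once this reduction is in place, the remaining estimates are short explicit computations.
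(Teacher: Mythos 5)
Your proof is correct and follows essentially the same route as the paper's: both diagonalise over the eigenbasis of $-A$, reduce the operator estimate to the scalar quantity $s^{-\alpha}|1-e^{\mathbf{i}s}|$ at $s=t|\lambda_h|^{\nicefrac{1}{2}}$, and conclude with item~(i) of Lemma~\ref{lem:sup_estimates}. The only difference is bookkeeping: you make the $2\times2$ block structure explicit and bound each block's operator norm by its Hilbert--Schmidt norm, whereas the paper expands $\|\bfLambda^{-\alpha}(\id_{\bfH_0}-e^{t\bfA})(v,w)\|_{\bfH_0}$ componentwise and uses $|a+b|^2\le 2\,(|a|^2+|b|^2)$; both devices produce the same factor $\sqrt{2}$.
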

\begin{proof}[Proof of Lemma~\ref{lemma:sg-estimate2}.]
First, observe 
that for every $s\in(0,\infty)$ it holds that
\begin{equation}\label{eq:sg_estimate_proof_0}
\begin{split}
s^{-\alpha}|1-e^{\mathbf{i}s}|
&=
s^{-\alpha}\big[|1-\cos(s)|^2+|\sin(s)|^2\big]^{\nicefrac12}\\
&=
s^{-\alpha}\big[1-2\cos(s)+|\cos(s)|^2+|\sin(s)|^2\big]^{\nicefrac12}\\
&=
s^{-\alpha}\big[2-2\cos(s)\big]^{\nicefrac12}
=
\sqrt{2}\big(s^{-\alpha}|1-\cos(s)|^{\nicefrac12}\big).
\end{split}
\end{equation}
In addition, note 
that Lemma~\ref{lemma:semigroup} implies that for every $(v,w)\in \bfH_0$ it holds that
\begin{equation}
\begin{aligned}
&  \bfLambda^{-\alpha} (\id_{\mathbf{H}_0} - e^{t\bfA} ) (v,w)
\\ &
= \bfLambda^{-\alpha}\!\twovector{ 
  (\id_{H} - \cos( t(-A)^{ \nicefrac{1}{2} }  )) v 
  - (-A)^{ - \nicefrac{1}{2} } \sin( t(-A)^{ \nicefrac{1}{2} } ) w, 
  }{ 
  (-A)^{ \nicefrac{1}{2}  } \sin( t(-A)^{ \nicefrac{1}{2} }  ) v 
  + ( \id_{H_{-\nicefrac12}} - \cos( t(-A)^{ \nicefrac{1}{2} } )) w 
  }
\\ &
=  
  \twovector{ 
  (-A)^{-\nicefrac{\alpha}{2}} (\id_{H} - \cos( t(-A)^{ \nicefrac{1}{2} }  )) v 
  - (-A)^{ - \nicefrac{(1+\alpha)}{2} } \sin( t(-A)^{ \nicefrac{1}{2} } ) w, 
  }{ 
  (-A)^{ \nicefrac{(1-\alpha)}{2}  } \sin( t(-A)^{ \nicefrac{1}{2} }  ) v 
  + (-A)^{-\nicefrac{\alpha}{2}} ( \id_{H_{-\nicefrac12}} - \cos( t(-A)^{ \nicefrac{1}{2} } )) w 
  }
  .
\end{aligned}
\end{equation}
Hence, we obtain that for every $(v,w)\in \mathbf{H}_0$ it holds that
\begin{equation}\label{eq:sg_estimate_proof_1}
\begin{aligned}
&  t^{-\alpha}\| \bfLambda^{-\alpha} (\id_{\mathbf{H}_0} - e^{t\bf{A}} )(v,w)\|_{\mathbf{H}_0}
\\& =
t^{-\alpha}\left[
\Big\|(-A)^{-\nicefrac{\alpha}{2}} \big(\id_{H} - \cos( t(-A)^{ \nicefrac{1}{2} }  )\big) v 
  - (-A)^{ - \nicefrac{(1+\alpha)}{2} } \sin( t(-A)^{ \nicefrac{1}{2} } ) w\,\Big\|_{H}^2\right.
\\
&\quad +
\left.\Big\|(-A)^{ \nicefrac{(1-\alpha)}{2}  } \sin( t(-A)^{ \nicefrac{1}{2} }  ) v 
  + (-A)^{-\nicefrac{\alpha}{2}} \big( \id_{H_{-\nicefrac12}} - \cos( t(-A)^{ \nicefrac{1}{2} }) \big) w\, \Big\|_{H_{-\nicefrac12}}^2
\right]^{\nicefrac12}
\\& = 
  \left[\ 
    \sum_{h\in \H} 
        t^{-2\alpha} | \lambda_h |^{-\alpha}
        \left|
            \big(
                1
                -
                \cos(
                    t| \lambda_h |^{\nicefrac{1}{2}}
                ) 
            \big) 
            \left\langle h,v \right\rangle_{H}
            +
            \sin( 
                t | \lambda_h |^{\nicefrac{1}{2}}
            )
            \langle
            \left| \lambda_h \right|^{\nicefrac{1}{2}} h,w
            \rangle_{H_{-\nicefrac{1}{2}}}
        \right|^2
  \right.
\\ & \quad +
  \left.
    \sum_{h\in \H} 
      t^{-2\alpha} \left| \lambda_h \right|^{-\alpha}
        \left|
            \sin( 
               t | \lambda_h |^{\nicefrac{1}{2}}
            )
            \left\langle h,v \right\rangle_{H}
            +
            \big(
                1
                -
                \cos(
                    t | \lambda_h |^{\nicefrac{1}{2}}
                ) 
            \big)
            \langle
            \left| \lambda_h \right|^{\nicefrac{1}{2}}  h,w
            \rangle_{H_{-\nicefrac{1}{2}}}
       \right|^2
  \right]^{\nicefrac{1}{2}}. 
\\& \leq \sqrt{2}  
  \left[\ 
    \sum_{h\in \H} 
        t^{-2\alpha} | \lambda_h |^{-\alpha}
        \left(
            \big|
                1
                -
                \cos(
                    t| \lambda_h |^{\nicefrac{1}{2}} 
                ) 
            \big|^2
            +
            \big|\!
                \sin( 
                    t | \lambda_h |^{\nicefrac{1}{2}}
                )
            \big|^2
        \right)
        \left|\left\langle h,v \right\rangle_{H}\right|^2
    \right.
\\ & \quad 
    \left.
        + 
        \sum_{h\in \H} 
            t^{-2\alpha} | \lambda_h |^{-\alpha}
            \left(
                \big|
                    1
                    -
                    \cos(
                        t| \lambda_h |^{\nicefrac{1}{2}} 
                    ) 
                \big|^2
                +
                \big|\!
                    \sin( 
                        t | \lambda_h |^{\nicefrac{1}{2}}
                    )
                \big|^2
            \right)
            \big|
                \langle
                \left| \lambda_h \right|^{\nicefrac{1}{2}} h,w
                \rangle_{H_{-\nicefrac{1}{2}}}
            \big|^2
  \right]^{\nicefrac{1}{2}}.
\end{aligned}
\end{equation}
This shows that for every $(v,w)\in\bfH_0$ it holds that
\begin{equation}\label{eq:step_proof_wave_semigroup}
\begin{aligned}
&  t^{-\alpha}\| \bfLambda^{-\alpha} (\id_{\mathbf{H}_0} - e^{t\bf{A}} )(v,w)\|_{\mathbf{H}_0}
\\& \leq \sqrt{2}  
  \left[\ 
    \sup_{h\in \H}\Big\{ 
        t^{-2\alpha} | \lambda_h |^{-\alpha}
        \left(
            \big|
                1
                -
                \cos(
                    t| \lambda_h |^{\nicefrac{1}{2}} 
                ) 
            \big|^2
            +
            \big|\!
                \sin( 
                    t | \lambda_h |^{\nicefrac{1}{2}}
                )
            \big|^2
        \right) \Big\} \right]^{\nicefrac12}
\\ & \quad 
\cdot\left[
\bigg(\sum_{h\in\H}|\langle h,v\rangle_H|^2\bigg) 
+
\bigg(\sum_{h\in\H}\big|\langle |\lambda_h|^{\nicefrac12}h, w \rangle_{H_{-\nicefrac12}}\big|^2\bigg)
\right]^{\nicefrac12}
\\& = \sqrt{2}  
  \left[\ 
    \sup_{h\in \H}\left\{ 
        (t|\lambda_h |^{\nicefrac12})^{-\alpha}
        \left(
            \big|
                1
                -
                \cos(
                    t| \lambda_h |^{\nicefrac{1}{2}} 
                ) 
            \big|^2
            +
            \big|\!
                \sin( 
                    t | \lambda_h |^{\nicefrac{1}{2}}
                )
            \big|^2
        \right)^{\nicefrac12} \right\} \right]
\|(v,w)\|_{\bf H_0}.  
\end{aligned}
\end{equation}
Combining this and the fact that for every $s\in\R$ it holds that 
$|1-e^{\mathbf{i}s}|^{2}=2|1-\cos(s)|$ demonstrates that for every $(v,w)\in\bfH_0$ it holds that
\begin{equation}\label{eq:sg_estimate_proof_2}
\begin{aligned}
&  t^{-\alpha}\| \bfLambda^{-\alpha} (\id_{\mathbf{H}_0} - e^{t\bf{A}} )(v,w)\|_{\mathbf{H}_0}
  \\ & 
  \leq 
    \sqrt{2}
    \bigg[
    \sup_{s\in (0,\infty)} \!
    \Big\{
	s
	^{-\alpha} 
	\big( 
	  |1-\cos(s)|^{2}
	  +
	  |\sin(s)|^2
	\big)^{\!\nicefrac{1}{2}}
  \Big\}
  \bigg]
    \left\|
     (v,w)
    \right\|_{\bfH_0}
 \\ & 
    = \sqrt{2}
    \bigg[
    \sup_{s \in (0,\infty)} 
    (
    s^{-\alpha} |1-e^{\mathbf{i} s}|
    )
    \bigg]
    \left\|
     (v,w)
    \right\|_{\bfH_0}
    =
    2
  \bigg[
    \sup_{s\in (0,\infty)} \!
    \big(
     s^{-\alpha} |1-\cos(s)|^{\nicefrac{1}{2}}
    \big)
  \bigg]
    \left\|
     (v,w)
    \right\|_{\bfH_0}.
 \end{aligned}
\end{equation}
This and Lemma~\ref{lem:sup_estimates} establish \eqref{eq:sg_estimate2}. The proof of Lemma~\ref{lemma:sg-estimate2} is thus completed.
\end{proof}

\noindent
We end this subsection with a further auxiliary result for the semigroup $(e^{t\bf A})_{t\in[0,\infty)}$.
\begin{lemma}\label{lemma:sg-projection}
Assume Setting~\ref{setting:Part2} and let $I\subseteq\H$ be finite. Then it holds for every $t\in[0,\infty)$ that $\bfA\bfP_I|_{\bfH_0}\in L(\bfH_0)$ and
\begin{equation}\label{eq:sg_projection}
e^{t(\bfA\bfP_I|_{\bfH_0})}
=
e^{t\bfA}\bfP_I|_{\bfH_0}+\bfP_{\H\setminus I}|_{\bfH_0}.
\end{equation}
\end{lemma}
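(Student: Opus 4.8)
The plan is to exploit the finite-dimensionality of the range $\bfP_I(\bfH_0)$ together with the known structure of the wave semigroup $\bfS$ from Lemma~\ref{lemma:semigroup}. First I would observe that since $I\subseteq\H$ is finite and $A$ is diagonal with respect to the orthonormal basis $\H$, the operator $\bfA\bfP_I$ maps $\bfH_0$ into the finite-dimensional space $\bfP_I(\bfH_0)$ and coincides there with $\bfA$; in particular it is everywhere defined and bounded, so $\bfA\bfP_I|_{\bfH_0}\in L(\bfH_0)$, and its matrix exponential $e^{t(\bfA\bfP_I|_{\bfH_0})}$ is a genuine norm-convergent power series. The key algebraic facts are that $\bfP_I$ is a projection commuting with $\bfA$ (on the relevant domain), that $\bfA\bfP_I|_{\bfH_0}$ annihilates $\bfP_{\H\setminus I}(\bfH_0)$, and that $(\bfA\bfP_I)^n=\bfA^n\bfP_I$ on $\bfP_I(\bfH_0)$ for all $n\in\N$.

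The main computation is then to expand the exponential. Writing $\bfP_I+\bfP_{\H\setminus I}=\id_{\bfH_0}$, for any $x\in\bfH_0$ I would split $x=\bfP_I x+\bfP_{\H\setminus I}x$. Since $(\bfA\bfP_I|_{\bfH_0})\bfP_{\H\setminus I}x=0$, only the zeroth-order term of the series survives on that component, giving $e^{t(\bfA\bfP_I|_{\bfH_0})}\bfP_{\H\setminus I}x=\bfP_{\H\setminus I}x$. On the component $\bfP_I x$, using $(\bfA\bfP_I)^n\bfP_I x=\bfA^n\bfP_I x$ and comparing with the series defining the wave cosine/sine functions in Lemma~\ref{lemma:semigroup} restricted to the finite set $I$, one gets $e^{t(\bfA\bfP_I|_{\bfH_0})}\bfP_I x=e^{t\bfA}\bfP_I x$ (here $e^{t\bfA}\bfP_I$ is well-defined as a bounded operator precisely because $\bfP_I$ projects onto a finite-dimensional subspace contained in $D(\bfA^n)$ for every $n$, so $\bfS_t\bfP_I=e^{t\bfA}\bfP_I$ can be computed termwise). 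Adding the two pieces yields \eqref{eq:sg_projection}.

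Concretely, one can make the power-series comparison cleanest by noting that $\bfS_t\bfP_I$ and $e^{t(\bfA\bfP_I|_{\bfH_0})}\bfP_I$ are both solutions, on the finite-dimensional invariant subspace $\bfP_I(\bfH_0)$, of the linear ODE $\tfrac{\mathrm d}{\mathrm dt}U_t=\bfA U_t$ with $U_0=\bfP_I$, and on a finite-dimensional space that ODE has a unique solution; alternatively one diagonalises via the eigenbasis $\{h\in I\}$ and checks the identity mode by mode using the explicit $2\times2$ rotation-type matrices from Lemma~\ref{lemma:semigroup}. Either route avoids any delicate analysis.

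I expect the only mildly delicate point to be bookkeeping around domains and the precise sense in which $e^{t\bfA}\bfP_I|_{\bfH_0}$ is a bounded operator: $e^{t\bfA}$ itself is only a strongly continuous semigroup on $\bfH_0$ (Lemma~\ref{lemma:semigroup}, Lemma~\ref{lemma:sg-estimate1}), so one must be a little careful to interpret $e^{t\bfA}\bfP_I$ correctly and to justify termwise manipulation of series; but because $\bfP_I(\bfH_0)$ is finite-dimensional and $\bfA$-invariant, all such manipulations are legitimate and no genuine obstacle arises. The bulk of the argument is routine once this reduction to a finite-dimensional invariant subspace is made.
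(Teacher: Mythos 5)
Your proposal is correct, but it runs along a different track than the paper's proof. The paper never expands a power series: it verifies directly that the candidate family $\big(e^{t\bfA}\bfP_I|_{\bfH_0}+\bfP_{\H\setminus I}|_{\bfH_0}\big)_{t\geq0}$ satisfies the composition law, equals the identity at $t=0$, and has derivative $\bfA\bfP_I x$ at $t=0$ for every $x\in\bfH_0$ (using $\bfP_I+\bfP_{\H\setminus I}=\id_{\bfH_0}$ and Lemma~\ref{lemma:semigroup}), so it is a strongly continuous semigroup on $\bfH_0$ with bounded generator $\bfA\bfP_I|_{\bfH_0}$; the identity \eqref{eq:sg_projection} then follows from uniqueness of the semigroup generated by a bounded operator. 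You instead split $x=\bfP_I x+\bfP_{\H\setminus I}x$, note that the exponential series of $\bfA\bfP_I|_{\bfH_0}$ reduces to the identity on the second summand and to $\sum_n \tfrac{t^n}{n!}\bfA^n\bfP_I x$ on the first, and identify the latter with $\bfS_t\bfP_I x$ on the finite-dimensional $\bfA$-invariant subspace $\bfP_I(\bfH_0)$ via ODE uniqueness or a mode-by-mode check against the explicit $2\times2$ rotation blocks. Both routes ultimately rest on a uniqueness statement — yours in finite dimensions on $\bfP_I(\bfH_0)$ (where you also need the easy but necessary observation that $\bfS_t$ commutes with $\bfP_I$, so the subspace is invariant), the paper's on all of $\bfH_0$ for bounded generators. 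Your version is more explicit and makes the block structure of the wave group visible; the paper's is slightly shorter and needs nothing about $\bfS_t$ beyond the generator property from Lemma~\ref{lemma:semigroup}. No gap in your argument, provided you spell out the invariance/commutation step and the termwise justification you already flag.
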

\begin{proof}[Proof of Lemma~\ref{lemma:sg-projection}.]
First, note that the finiteness of $I\subseteq\H$ ensures that
for every $x\in\bfH_0$ it holds that $\bfP_Ix\in \bfH_1=D(\bf A)$ and $\bfA\bfP_I|_{\bfH_0}\in L(\bfH_0)$.
This and 
Lemma~\ref{lemma:semigroup} imply that for every $s,t\in[0,\infty)$, $x\in\bfH_0$ it holds that
\begin{equation}\label{eq:sg_projection_proof_1}
\begin{split}
&(e^{s\bfA}\bfP_I+\bfP_{\H\setminus I})(e^{t\bfA}\bfP_I+\bfP_{\H\setminus I})x\\
&=
e^{(s+t)\bfA}\bfP_Ix+e^{s\bfA}\bfP_I\bfP_{\H\setminus I}x+e^{t\bfA}\bfP_I\bfP_{\H\setminus I}x+(\bfP_{\H\setminus I})^2x\\
&=
(e^{(s+t)\bfA}\bfP_I+\bfP_{\H\setminus I})x
\end{split}
\end{equation}
and
\begin{equation}\label{eq:sg_projection_proof_2}
\begin{split}
&\limsup_{h\searrow0}\left\|\tfrac1h\big[(e^{h\bfA}\bfP_I+\bfP_{\H\setminus I})x-x\big]-\bfA\bfP_Ix\right\|_{\bfH_0}\\
&=
\limsup_{h\searrow0}\left\|\tfrac1h\big[(e^{h\bfA}\bfP_I+\bfP_{\H\setminus I})x-(\bfP_I+\bfP_{\H\setminus I})x\big]-\bfA\bfP_Ix\right\|_{\bfH_0}\\
&=
\limsup_{h\searrow0}\left\|\tfrac1h\big[e^{h\bfA}\bfP_Ix-\bfP_Ix\big]-\bfA\bfP_Ix\right\|_{\bfH_0}
=
0.
\end{split}
\end{equation}
Moreover, observe that for every $x\in\bfH_0$ it holds that $(e^{0\bfA}\bfP_I+\bfP_{\H\setminus I})x=x$. Combining this, \eqref{eq:sg_projection_proof_1}, and \eqref{eq:sg_projection_proof_2} demonstrates that $\big(e^{t\bfA}\bfP_I|_{\bfH_0}+\bfP_{\H\setminus I}|_{\bfH_0}\big)_{t\geq0}$ is a strongly continuous semigroup of bounded linear operators on $\bfH_0$ with generator $\bfA\bfP_I|_{\bfH_0}\in L(\bfH_0)$. 
Hence, we obtain that $(e^{t(\bfA\bfP_I|_{\bfH_0})})_{t\geq0}=\big(e^{t\bfA}\bfP_I|_{\bfH_0}+\bfP_{\H\setminus I}|_{\bfH_0}\big)_{t\geq0}$. The proof of Lemma~\ref{lemma:sg-projection} is thus completed.
\end{proof}
\subsection{A priori bounds for the numerical approximations}\label{ssec:momentbounds}
\begin{lemma}\label{lemma:Y_unif_Lpbddness}
Assume Setting~\ref{setting:Part2}.
Then 
\begin{enumerate}[(i)]
 \item\label{it:Ysmooth} it holds for every $h,t\in [0,T]$, 
 $I\subseteq \H$ that $\P( Y_t^{h,I} \in \bfH_{\max\{ \rho, \gamma - \beta \}}) = 1$, 
 \item\label{it:Y_unif_L4bdd} it holds that 
 $ 
    \sup_{h,t\in [0,T],\, I\subseteq \H} \E \big[ \| Y_t^{h,I} \|_{\bfH_0}^6 \big] < \infty
 $, and 
 \item\label{it:Y_unif_L2bdd} it holds that 
 $ \sup_{h,t\in [0,T],\, I\subseteq \H} \E \big[ \| Y_t^{h,I} \|_{\bfH_{\max\{\rho, \gamma -\beta\}}}^2 \big] < \infty$.
\end{enumerate}
\end{lemma}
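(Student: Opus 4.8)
The plan is to establish the three assertions in sequence, relying on the linear growth bound \eqref{eq:linGrowB}, the Lipschitz bound \eqref{eq:LipB}, the boundedness assumptions \eqref{eq:deriv_est_B_and_F} on $F$ and $B$, the contractivity of the wave semigroup from Lemma~\ref{lemma:sg-estimate1}, and the regularity and Lipschitz properties of $F$ and $B$ on the spaces $\bfH_\rho$ and $\bfH_{\gamma-\beta}$ collected in Setting~\ref{setting:Part2}. Throughout, the key structural fact is that $\sup_{t\geq 0}\|e^{t\bfA}\|_{L(\bfH_0)}=1$, so that applications of the semigroup never cost anything, and that for finite $I$ the operator $\bfP_I$ commutes with $e^{t\bfA}$ and maps $\bfH_0$ into $\bfH_1$; for general (possibly infinite) $I\subseteq\H$, $\bfP_I$ is still a contraction on each $\bfH_r$ and commutes with the semigroup.

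\textbf{Item~\eqref{it:Ysmooth}.} First I would verify that $Y^{h,I}_t$ lives in $\bfH_{\max\{\rho,\gamma-\beta\}}$ almost surely. One inspects the mild formulation \eqref{eq:Y_mild} term by term: the initial term $e^{t\bfA}\bfP_I\xi$ lies in $\bfH_{\max\{\rho,\gamma-\beta\}}$ because $\xi\in\calL^2(\P|_{\mathbbm{F}_0};\bfH_{\max\{\rho,\gamma-\beta\}})$, $\bfP_I$ is a contraction on this space, and $e^{t\bfA}$ preserves it (being a strongly continuous group of isometries when restricted to each $\bfH_r$, up to the usual interpolation bound). For the drift term, note $F|_{\bfH_\rho}\in\Lip(\bfH_\rho,\bfH_{2(\gamma-\beta)})$ and $F|_{\bfH_0}\in C^4_{\mathrm{b}}(\bfH_0,\bfH_0)$; since $\rho\le\max\{\rho,\gamma-\beta\}$ and $2(\gamma-\beta)\ge\gamma-\beta$, and since $Y^{h,I}_{\round{s}{h}}$ is (by an induction on the time-grid points) in $\bfH_{\max\{\rho,\gamma-\beta\}}$, the integrand $e^{(t-\round{s}{h})\bfA}\bfP_I F(Y^{h,I}_{\round{s}{h}})$ takes values in a space at least as regular as $\bfH_{\max\{\rho,\gamma-\beta\}}$; a Bochner-integrability estimate then places the integral there. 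For the stochastic term one uses $B|_{\bfH_\rho}\in\Lip(\bfH_\rho,L(U,\bfH_\gamma)\cap L_2(U,\bfH_\rho))$: since $\gamma\ge\gamma-\beta$ and $\rho$ are both $\le\max\{\rho,\gamma-\beta\}$, the Itô isometry in $\bfH_{\max\{\rho,\gamma-\beta\}}$ together with \eqref{eq:LipB}-type control shows the stochastic integral is $\bfH_{\max\{\rho,\gamma-\beta\}}$-valued $\P$-a.s. The induction is over the finitely or countably many grid points $0,h,2h,\dots$ in $[0,T]$, starting from $Y^{h,I}_0=\bfP_I\xi$.

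\textbf{Items~\eqref{it:Y_unif_L4bdd} and~\eqref{it:Y_unif_L2bdd}.} These are the heart of the lemma, and I would treat them by a Gronwall-type argument applied \emph{uniformly} in $h$, $t$, and $I$. Fix $p\in\{2,6\}$ for \eqref{it:Y_unif_L4bdd} (use $p=6$, exploiting $\E[\|\xi\|_{\bfH_0}^6]<\infty$) and set $r=0$; for \eqref{it:Y_unif_L2bdd} take $p=2$ and $r=\max\{\rho,\gamma-\beta\}$. Taking $\calL^p(\P;\bfH_r)$-norms in \eqref{eq:Y_mild}, the triangle inequality splits the bound into the initial, drift, and noise contributions. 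The initial term is bounded by $\|\xi\|_{\calL^p(\P;\bfH_r)}$ using Lemma~\ref{lemma:sg-estimate1} and contractivity of $\bfP_I$. For the drift term one applies the linear growth of $F$ (via Lemma~\ref{lemma:Lip_est_standard}\eqref{it:Lip_est_standard1} together with \eqref{eq:deriv_est_B_and_F} and the hypotheses on $F|_{\bfH_\rho}$), bounding it by $C\int_0^t(1+\|Y^{h,I}_{\round{s}{h}}\|_{\calL^p(\P;\bfH_r)})\,ds$. For the stochastic term one uses the Burkholder–Davis–Gundy inequality in the Hilbert space $\bfH_r$ combined with the growth bound \eqref{eq:linGrowB} (for $r=0$, after observing $\sum_{u\in\U}\|B(v)u\|_{\bfH_0}^2\le(\inf_u|\mu_u|^2)^{-1}\mathfrak{c}^2\max\{1,\|v\|_{\bfH_{\gamma-\beta}}^2\}$ — or more directly via $\|B\|_{C^4_\mathrm{b}(\bfH_0,L_2(U,\bfH_0))}\le\mathfrak{m}$, which already gives a uniform $L_2(U,\bfH_0)$ bound) and, for $r=\max\{\rho,\gamma-\beta\}$, the hypothesis $B|_{\bfH_\rho}\in\Lip(\bfH_\rho,L_2(U,\bfH_\rho))$ plus the bound from item~\eqref{it:Ysmooth}; this contributes $C\big(\int_0^t(1+\|Y^{h,I}_{\round{s}{h}}\|^2_{\calL^p(\P;\bfH_r)})\,ds\big)^{1/2}$ or its square. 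Writing $g(t)=\sup_{s\in[0,t]}\E[\|Y^{h,I}_{\round{s}{h}}\|_{\bfH_r}^p]$ and noting $\round{s}{h}\le s$, one obtains an integral inequality $g(t)\le C_1+C_2\int_0^t g(s)\,ds$ with constants $C_1,C_2$ depending only on $T$, $\mathfrak{m}$, $\mathfrak{c}$, $\mathfrak{l}$, $p$, and $\|\xi\|_{\calL^p(\P;\bfH_r)}$ — crucially \emph{not} on $h$ or $I$. Grönwall's lemma then yields $\sup_{h,t\in[0,T],\,I\subseteq\H}\E[\|Y^{h,I}_{\round{t}{h}}\|_{\bfH_r}^p]<\infty$, and a final step passing from $\round{t}{h}$ to $t$ (using the mild formula once more on the interval $[\round{t}{h},t]$, whose length is $\le h\le T$) removes the rounding.

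\textbf{Main obstacle.} The delicate point is obtaining all estimates with constants uniform in the projection index $I$ (including infinite $I$) and in the mesh $h$ simultaneously; this requires being careful that every invocation of $\bfP_I$ uses only its contractivity on the relevant interpolation space and its commutation with $e^{t\bfA}$, and that the Itô-isometry/BDG step in $\bfH_{\max\{\rho,\gamma-\beta\}}$ genuinely closes — i.e., that the hypotheses $B|_{\bfH_\rho}\in\Lip(\bfH_\rho,L_2(U,\bfH_\rho))$ and $\rho\le\max\{\rho,\gamma-\beta\}$ together with $\gamma\ge\gamma-\beta$ are exactly what is needed so that $B(Y^{h,I}_{\round{s}{h}})$ is square-integrable as an $L_2(U,\bfH_{\max\{\rho,\gamma-\beta\}})$-valued process. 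A secondary technical nuisance is the interplay between the two regularity scales ($\bfH_\rho$ for the a~priori bound and $\bfH_{\beta-\gamma}$ for the Lipschitz estimates), which must be reconciled using $\beta\in(\gamma/2,\gamma]\cap[\gamma-1/2,\gamma]$ and $\rho\in[0,2(\gamma-\beta)]$ so that the chain of embeddings lines up; but once the bookkeeping of exponents is set up correctly, the Grönwall argument is routine.
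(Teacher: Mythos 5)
Your overall strategy (mild formulation, triangle inequality, linear growth, Grönwall on two regularity scales) is the same as the paper's, but there is a genuine gap at the point where you invoke Grönwall. Setting~\ref{setting:Part2} only guarantees $\sup_{s\in[0,T]}\E[\|Y^{h,I}_{\round{s}{h}}\|_{\bfH_0}^2]<\infty$; it gives no a priori finiteness of sixth moments in $\bfH_0$ or of second moments in $\bfH_\rho$ or $\bfH_{\max\{\rho,\gamma-\beta\}}$. Hence your quantity $g(t)=\sup_{s\in[0,t]}\E[\|Y^{h,I}_{\round{s}{h}}\|^p_{\bfH_r}]$ is not known to be finite, and the inequality $g\le C_1+C_2\int_0^t g$ plus Grönwall proves nothing until finiteness is established separately. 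For $h>0$ this can be repaired by an induction over the grid intervals $(kh,(k+1)h]$ (the paper's estimates \eqref{eq:Y_Lpest2}--\eqref{eq:Y_Lpest3}), exploiting that the integrands in \eqref{eq:Y_mild} only involve $Y$ at grid points $\le kh$; you gesture at such an induction only for item~(i). But the lemma also covers $h=0$ — which is essential, since $Y^{0,\H}$ is the exact solution used in Corollary~\ref{cor:weak_conv} — and there $\round{\cdot}{0}=\mathrm{id}$, so both your grid-point induction for item~(i) and any such bootstrap become circular: to place $Y^{0,I}_t$ in $\bfH_{\max\{\rho,\gamma-\beta\}}$ you already need moment control of $Y^{0,I}_s$ in $\bfH_\rho$ for all $s<t$. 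The paper closes this case by invoking Theorem~\ref{thm:existence} in the smoother space $\bfH_\rho$ (with $p$-th moments, using \eqref{eq:FLip_trivial} and \eqref{eq:BLip_trivial}) and then identifying that solution with $Y^{0,I}$ by uniqueness; this idea is missing from your proposal.

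A second, more localized gap concerns the passage of the stochastic convolution into $\bfH_{\gamma-\beta}$ (hence into $\bfH_{\max\{\rho,\gamma-\beta\}}$ when $\gamma-\beta\ge\rho$). The hypothesis gives only $B|_{\bfH_\rho}\in\Lip(\bfH_\rho,L(U,\bfH_\gamma))$, and boundedness of $B(v)\colon U\to\bfH_\gamma$ does not make $B(v)$ Hilbert–Schmidt into $\bfH_{\gamma-\beta}$; the missing ingredient is that $\bfLambda^{-\beta}\in L_2(\bfH_0)$, i.e.\ $\sum_{h\in\H}|\lambda_h|^{-\beta}<\infty$, which is exactly how the paper's estimate \eqref{eq:BLip_trivial} produces $\|B|_{\bfH_\rho}\|_{\Lip(\bfH_\rho,L_2(U,\bfH_{\max\{\rho,\gamma-\beta\}}))}<\infty$. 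Your appeal to ``\eqref{eq:LipB}-type control'' (a $\mu$-weighted Lipschitz bound into $\bfH_0$) is not the relevant condition, and in your Grönwall step for $r=\max\{\rho,\gamma-\beta\}$ you cite only $B|_{\bfH_\rho}\in\Lip(\bfH_\rho,L_2(U,\bfH_\rho))$, which does not control the $\gamma-\beta$ component when $\gamma-\beta>\rho$. Two smaller remarks: dividing \eqref{eq:linGrowB} by $\inf_{u\in\U}|\mu_u|^2$ is not legitimate (that infimum may vanish), though your alternative via $\|B\|_{C^4_{\mathrm b}(\bfH_0,L_2(U,\bfH_0))}\le\mathfrak m$ is correct; and the paper obtains item~(iii) not by a closed Grönwall loop in $\bfH_{\max\{\rho,\gamma-\beta\}}$ but by Grönwall in $\bfH_\rho$ followed by one further application of the two-space estimate \eqref{eq:Y_Lpest} with input space $\bfH_\rho$ and output space $\bfH_{\max\{\rho,\gamma-\beta\}}$, which avoids ever needing $F$ or $B$ to be Lipschitz on the target scale.
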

\begin{proof}[Proof of Lemma~\ref{lemma:Y_unif_Lpbddness}.] 
First, note that
\begin{equation}\label{eq:FLip_trivial}
\begin{aligned}
&
  \| F|_{\bfH_{ \rho}} \|_{\Lip(\bfH_{ \rho },\bfH_{\max\{ \rho, \gamma - \beta\}})} 
 \leq 
  \| \bfLambda^{\max\{\rho, \gamma -\beta\}-2(\gamma-\beta)} \|_{L(\bfH_0)} 
  \| F|_{\bfH_{ \rho}} \|_{\Lip(\bfH_{ \rho },\bfH_{2( \gamma - \beta)})} < \infty.
\end{aligned}
\end{equation}
The fact that $\E[\| \xi\|_{\bfH_{0}}^6 + \|\xi\|_{\bfH_{\rho}}^2]<\infty$ and Theorem~\ref{thm:existence} 
(with  $H\in\{\bfH_0,\bfH_\rho\}$, $S_t\in\{e^{t\bfA},e^{t\bfA}|_{\bfH_\rho}\}$, $p\in\{6,2\}$, 
$F\in\{\bfP_IF|_{\bfH_0},\bfP_IF|_{\bfH_\rho}\}$, $B\in\{\bfP_IB,\bfP_IB|_{\bfH_\rho}\}$, $\xi=\xi$ 
for $t\in[0,\infty)$, $I\subseteq \H$ in the notation of Theorem~\ref{thm:existence}) 
hence imply that there exist up to modifications unique $(\mathbbm{F}_t)_{t\in[0,T]}$-predictable stochastic processes 
$X^I\colon [0,T] \times \Omega \rightarrow \bfH_{\rho}$, $I\subseteq\H$, which satisfy for every $I\subseteq \H$, $t\in [0,T]$   
that 
$
\sup_{s\in [0,T]}  \E\big[ \| X_s^I \|_{\bfH_{0}}^6  +  \| X_s^I \|_{\bfH_{\rho}}^2 \big] < \infty 
$
and 
\begin{equation}
\begin{split}
& 
  [X_t^I]_{\P,\,\calB(\bfH_{\rho})} 
= 
 [e^{t \bfA} \bfP_I \xi]_{\P,\,\calB(\bfH_{\rho})} 
 + \int_{0}^{t} e^{(t-s)\bfA} \bfP_I F( X_s^I) \,\mathrm{d}s
 + \int_{0}^{t} e^{(t-s)\bfA} \bfP_I B( X_s^I) \,\mathrm{d}W_s.
\end{split}
\end{equation}
Combining this, \eqref{eq:Y_mild}, and Theorem~\ref{thm:existence} ensures that for every $I\subseteq \H$, $t\in [0,T]$ it 
holds that $\P(X_t^I = Y_t^{0,I})=1$. 
The fact that for every $I\subseteq\H$ it holds that
$
\sup_{t\in [0,T]}  \E\big[ \| X_t^I \|_{\bfH_{0}}^6  +  \| X_t^I \|_{\bfH_{\rho}}^2 \big] < \infty
$ 
hence implies that 
that for every $I\subseteq \H$ it holds that  
\begin{equation}
 \begin{aligned}\label{eq:Y0_Lpbddness}
  \sup_{t\in [0,T]}
	\E \big[ 
	  \|
	    Y^{0,I}_{t} 
	  \|_{\bfH_{0}}^6
	+
	  \|
	    Y^{0,I}_{t} 
	  \|_{\bfH_{\rho}}^2
	\big]
    < \infty.
 \end{aligned}
\end{equation}
Next note that Da Prato \& Zabczyk~\cite[Lemma 7.7]{DaPratoZabczyk:1992}, item~\eqref{it:Lip_est_standard2} in Lemma~\ref{lemma:Lip_est_standard}, Lemma~\ref{lemma:sg-estimate1}, and~\eqref{eq:Y_mild}
prove that for every $\delta,\theta\in [0,\infty)$, $p\in [2,\infty)$,
$h,t\in [0,T]$, $I\subseteq \H$ it holds that 
\begin{equation}\label{eq:Y_Lpest}
\begin{aligned}
& 
 \big( \E \big[ 
  \|
    Y_t^{h,I} 
  \|_{\bfH_{\theta}}^p
 \big] \big)^{\!\nicefrac{1}{p}}
\\ & 
  \leq 
  \big( \E \big[ 
  \| e^{t\bfA } \bfP_I \xi 
  \|_{\bfH_{\theta}}^p
  \big] \big)^{\!\nicefrac{1}{p}}
  +
  \int_{0}^{t}\!\!
    \Big( \E \Big[ \big\|
	  e^{(t-\round{s}{h})\bfA} \bfP_I F\big(Y^{h,I}_{\round{s}{h}}\big) 
    \big\|_{\bfH_{\theta}}^p
    \Big] \Big)^{\!\nicefrac{1}{p}}
  \,ds
\\ & \quad 
 +
 \sqrt{ 
  \tfrac{p(p-1)}{2}
 }
 \left(
 \int_{0}^{t} \!\!
    \Big( \E \Big[ \big\|
      e^{ (t-\round{s}{h})\bfA } \bfP_I 
      B\big(
	Y^{h,I}_{\round{s}{h}}
      \big)  
  \big\|_{L_2(U,\bfH_{\theta})}^p
  \Big] \Big)^{\!\nicefrac{2}{p}}
  \,ds
  \right)^{\!\!\nicefrac{1}{2}}
\\ &
  \leq
  \big( \E \big[ \| \xi 
  \|_{\bfH_{\theta}}^p
  \big] \big)^{\!\nicefrac{1}{p}}
  +  
  \| F|_{\bfH_{\delta}} \|_{\Lip(\bfH_{\delta},\bfH_{\theta})}
    \int_{0}^{t} \!\!
    \big(
	1
	+ 
	\big( \E \big[ \|
	  Y^{h,I}_{\round{s}{h}}
	\|_{\bfH_{\delta}}^p
	\big] \big)^{\!\nicefrac{1}{p}}
	\big)
    \,ds
\\ & \quad
 +
 \sqrt{ 
  \tfrac{p(p-1)}{2}
 }
 \| B|_{\bfH_{\delta}}\|_{\Lip(\bfH_{\delta},L_2(U,\bfH_{\theta}))}
 \left(
 \int_{0}^{t} \!\!
     \big( 
      1
      +
      \big( \E \big[ \|
	  Y^{h,I}_{\round{s}{h}} 
	\|_{\bfH_{\delta}}^p
      \big] \big)^{\!\nicefrac{1}{p}}
      \big)^{2}
  \,ds
  \right)^{\!\!\nicefrac{1}{2}}
\\ & 
  \leq
  \big( \E \big[ \| \xi 
  \|_{\bfH_{\theta}}^p
  \big] \big)^{\!\nicefrac{1}{p}}
  +
  \bigg(
      \sqrt{t}
  +
  \left(
  \int_{0}^{t} \!\!
	\big( \E \big[ \|
	  Y^{h,I}_{\round{s}{h}} 
	\|_{\bfH_{ \delta}}^p
	\big] \big)^{\!\nicefrac{2}{p}}
    \,ds
    \right)^{\!\!\nicefrac{1}{2}}
  \bigg)
\\ & \quad
 \cdot
  \Big(
    \sqrt{t}
    \| F|_{\bfH_{ \delta}} \|_{\Lip(\bfH_{ \delta },\bfH_{\theta})}
    +
    \sqrt{\tfrac{p(p-1)}{2}}
    \| B|_{\bfH_{\delta}} \|_{\Lip(\bfH_{ \delta },L_2(U,\bfH_{\theta}))}
  \Big)
 .
\end{aligned}
\end{equation}
Moreover, note that 
\begin{equation}\label{eq:BLip_trivial}\begin{aligned}
&  
  \| B|_{\bfH_{\rho}} \|_{\Lip(\bfH_{ \rho },L_2(U,\bfH_{\max\{\rho,\gamma - \beta\}}))}
\\ &
  \leq 
    \max\!\big\{ 
      \| B |_{\bfH_{\rho}} \|_{\Lip(\bfH_{ \rho },L_2(U,\bfH_{\rho}))}
      ,\,
      \| \bfLambda^{-\beta} \|_{L_2(\bfH_0)} 
      \| B|_{\bfH_{\rho}} \|_{\Lip(\bfH_{ \rho },L(U,\bfH_{\gamma}))}
    \big\} < \infty.
\end{aligned}
\end{equation}
This,~\eqref{eq:FLip_trivial}, \eqref{eq:Y0_Lpbddness}, and~\eqref{eq:Y_Lpest} (with $p=2$, $\delta =\rho$, 
$\theta = \max\{ \rho, \gamma - \beta\}$ in the notation of~\eqref{eq:Y_Lpest}) 
ensure that for every $I\subseteq \H$ it holds that 
\begin{equation}\label{eq:Y0_L2bddness}
 \sup_{t\in [0,T]} \E \big[ \| Y_t^{0,I} \|_{\bfH_{\max\{\rho,\gamma-\beta\}}}^2 \big] < \infty.
\end{equation} 
In addition, note that~\eqref{eq:Y_Lpest} (with $p=6$, $\delta =0$, 
$\theta = 0$ in the notation of~\eqref{eq:Y_Lpest})
implies that for every $h\in (0,T]$, $I\subseteq \H$, $k\in \N_0\cap [0,T/h)$ it holds that  
\begin{equation}\label{eq:Y_Lpest2}
\begin{aligned}
& 
\sup_{t\in (kh,(k+1)h]\cap[0,T]}
 \big( \E \big[ 
  \|
    Y_t^{h,I} 
  \|_{\bfH_{0}}^6
 \big] \big)^{\!\nicefrac{1}{6}}
\\ & 
  \leq
  \big( \E \big[ \| \xi 
  \|_{\bfH_{0}}^6
  \big] \big)^{\!\nicefrac{1}{6}}
  +
  \sqrt{(k+1)h}
      \Big( 
	1
	+
	\sup_{j\in \{0,1,\ldots,k\}}
	\big( \E \big[ \|
	  Y^{h,I}_{jh} 
	\|_{\bfH_{0}}^6
	\big] \big)^{\!\nicefrac{1}{6}}
      \Big)
\\ & \quad
 \cdot
 \left(
    \sqrt{(k+1)h}
    \| F|_{\bfH_{ 0}} \|_{\Lip(\bfH_{ 0 },\bfH_{0})}
    +
    \sqrt{15}
    \| B|_{\bfH_{0}} \|_{\Lip(\bfH_{ 0 },L_2(U,\bfH_{0}))}
 \right)
 .
\end{aligned}
\end{equation}
Hence, we obtain that for every $h\in (0,T]$, $I \subseteq \H$ it holds that
\begin{equation}\label{eq:Yh_L4bddness}
\sup_{t\in [0,T] } \E \big[ \| Y_{t}^{h,I} \|_{\bfH_0}^6 \big] <\infty. 
\end{equation}
Moreover, observe that~\eqref{eq:Y_Lpest} (with $p=2$, $\delta =\rho$, 
$\theta = \rho$ in the notation of~\eqref{eq:Y_Lpest})
implies that for every $h\in (0,T]$, $I\subseteq \H$, $k\in \N_0\cap [0,T/h)$ it holds that  
\begin{equation}\label{eq:Y_Lpest3}
\begin{aligned}
& 
\sup_{t\in (kh,(k+1)h]\cap[0,T]}
 \big( \E \big[ 
  \|
    Y_t^{h,I} 
  \|_{\bfH_{\rho}}^2
 \big] \big)^{\!\nicefrac{1}{2}}
\\ & 
  \leq
  \big( \E \big[ \| \xi 
  \|_{\bfH_{\rho}}^2
  \big] \big)^{\!\nicefrac{1}{2}}
  +
  \sqrt{(k+1)h}
      \Big( 
	1
	+
	\sup_{j\in \{0,1,\ldots,k\}}
	\big( \E \big[ \|
	  Y^{h,I}_{jh} 
	\|_{\bfH_{\rho}}^2
	\big] \big)^{\!\nicefrac{1}{2}}
      \Big)
\\ & \quad
 \cdot
 \left(
    \sqrt{(k+1)h}
    \| F|_{\bfH_{ \rho}} \|_{\Lip(\bfH_{ \rho },\bfH_{\rho})}
    +
    \| B|_{\bfH_{\rho}} \|_{\Lip(\bfH_{ \rho },L_2(U,\bfH_{\rho}))}
 \right)
 .
\end{aligned}
\end{equation}
Hence, we obtain that for every $h\in (0,T]$, $I \subseteq \H$ it holds that
$\sup_{ t\in [0,T] } \E [ \| Y_{t}^{h,I} \|_{\bfH_{\rho}}^2 ]$ $<\infty$.
Combining this,~\eqref{eq:FLip_trivial},~\eqref{eq:Y_Lpest} (with $p=2$, $\delta = \rho $, 
$\theta = \max\{ \rho, \gamma - \beta\}$ in the notation of~\eqref{eq:Y_Lpest}), and~\eqref{eq:BLip_trivial} 
implies that
for every $h\in (0,T]$, $I\subseteq \H$ it holds that
\begin{equation}\label{eq:Yh_L2bddness}
\sup_{t\in [0,T] } \E \big[ \| Y_{t}^{h,I} \|_{\bfH_{\max\{\rho,\gamma - \beta\}}}^2 \big] <\infty
. 
\end{equation}
This and~\eqref{eq:Y0_L2bddness} establish item~\eqref{it:Ysmooth}.
Next note that~\eqref{eq:Y_Lpest} ensures that for every $p\in [2,\infty)$, 
$\delta\in [0,\infty)$, $I\subseteq \H$, $h,t\in [0,T]$ it holds that 
\begin{equation}\label{eq:pre-Gronwall}
\begin{aligned}
&
  \sup_{s\in [0,t]} 
    \big(\E \big[ \| Y_{s}^{h,I} \|_{\bfH_{\delta}}^p \big]\big)^{\!\nicefrac{2}{p}}
\\ & 
 \leq 
  2 \Big(     
     \big( 
      \E \big[ 
	 \| 
	  \xi 
	 \|_{\bfH_{\delta}}^p
       \big] 
     \big)^{\!\nicefrac{1}{p}}
    +
    T
    \| F|_{\bfH_{\delta}} \|_{\Lip(\bfH_{\delta},\bfH_{\delta})}
  +
    \sqrt{\tfrac{p(p-1)T}{2}}
    \| B|_{\bfH_{\delta}} \|_{\Lip(\bfH_{\delta},L_2(U,\bfH_{\delta}))}
 \Big)^{\!2}
\\ & 
  \quad +
  2
  \Big(
    \sqrt{T} \| F|_{\bfH_{\delta}} \|_{\Lip(\bfH_{\delta},\bfH_{\delta})}
    +
    \sqrt{\tfrac{p(p-1)}{2}}
    \| B|_{\bfH_{\delta}} \|_{\Lip(\bfH_{\delta},L_2(U,\bfH_{\delta}))}
 \Big)^{\!2}
\\ & \quad
 \cdot
 \int_{0}^{t} 
    \sup_{u\in [0,s]}
      \big( \E \big[ \|
	Y^{h,I}_{u} 
      \|_{\bfH_{\delta}}^p \big] \big)^{\!\nicefrac{2}{p}}
  \,ds
  .
\end{aligned}
\end{equation}
Gronwall's inequality, \eqref{eq:Y0_Lpbddness}, and~\eqref{eq:Yh_L4bddness} hence
imply that 
\begin{equation}
\begin{aligned}
&
  \sup_{h,t\in [0,T],\, I \subseteq \H} 
    \big(\E \big[ \| Y_{t}^{h,I} \|_{\bfH_{0}}^6\big]\big)^{\!\nicefrac{1}{3}}
\\ &
  \leq 
  2\Big( 
    \big( \E \big[ \| \xi \|_{\bfH_{0}}^6 \big] \big)^{\!\nicefrac{1}{6}}
    +
      T
      \| F|_{\bfH_0} \|_{\Lip(\bfH_{0},\bfH_{0})}
      +
      \sqrt{15T}
      \| B \|_{\Lip(\bfH_{0},L_2(U,\bfH_{0}))}
  \Big)^{2}
\\ & \quad \cdot 
  \exp\! \Big(
    2T
    \Big(
      \sqrt{T}
      \| F|_{\bfH_0} \|_{\Lip(\bfH_0,\bfH_{0})}
      +
      \sqrt{15}
      \| B \|_{\Lip(\bfH_{0},L_2(U,\bfH_{0}))}
    \Big)^{2}\,
  \Big)
  < \infty.
\end{aligned}
\end{equation}
This establishes item~\eqref{it:Y_unif_L4bdd}. 
In the next step we 
observe that Gronwall's inequality,~\eqref{eq:FLip_trivial}, \eqref{eq:Y0_Lpbddness}, \eqref{eq:BLip_trivial},~\eqref{eq:Yh_L2bddness}, and~\eqref{eq:pre-Gronwall}
imply that 
\begin{equation}
\begin{aligned}
&
  \sup_{h,t\in [0,T],\, I \subseteq \H} 
    \E \big[ \| Y_{t}^{h,I} \|_{\bfH_{\rho}}^2\big]
\\ &
  \leq 
  2\Big( 
    \big( \E \big[ \| \xi \|_{\bfH_{\rho}}^2 \big] \big)^{\!\nicefrac{1}{2}}
    +
      T
      \| F|_{\bfH_{\rho}} \|_{\Lip(\bfH_{\rho},\bfH_{\rho})}
      +
      \sqrt{T}
      \| B|_{\bfH_{\rho}} \|_{\Lip(\bfH_{\rho},L_2(U,\bfH_{\rho}))}
  \Big)^{2}
\\ & \quad \cdot 
  \exp\! \Big(
    2T
    \Big(
      \sqrt{T}
      \| F|_{\bfH_{\rho}} \|_{\Lip(\bfH_\rho,\bfH_{\rho})}
      +
      \| B|_{\bfH_{\rho}} \|_{\Lip(\bfH_{\rho},L_2(U,\bfH_{\rho}))}
    \Big)^{2}\,
  \Big)
  < \infty.
\end{aligned}
\end{equation}
Combining this,~\eqref{eq:FLip_trivial},~\eqref{eq:Y_Lpest} 
(with $p=2$, $\delta = \rho$, $\theta= \max\{\rho,\gamma - \beta\}$ in the notation of~\eqref{eq:Y_Lpest}), 
and~\eqref{eq:BLip_trivial} establishes item~\eqref{it:Y_unif_L2bdd}.
The proof of Lemma~\ref{lemma:Y_unif_Lpbddness} is thus completed.
\end{proof}
\subsection{Upper bounds for the strong approximation errors}\label{ssec:upperboundsstrong}
The statement and the proof of the next result, Proposition~\ref{prop:strong_Galerkin_conv} below, 
are minor modifications  of the statement and the proof of~Jacobe de Naurois et al.~\cite[Lemma 3.3]{JacobedeNauroisJentzenWelti:2015}.
\begin{proposition}\label{prop:strong_Galerkin_conv}
Assume Setting~\ref{setting:Part2} and let
$h\in [0,T]$, $I,J\subseteq \H$. Then it holds that 
 \begin{equation}\label{eq:strong_Galerkin_conv_Y}
 \begin{aligned}
 &
 \sup_{t\in [0,T]}
  \| 
    Y_t^{h,J} - Y_t^{h,I}
  \|_{\calL^2(\P;\bfH_0)}
\\   &
  \leq 
  \sqrt{2} \exp\! \Big(\!
    \big(
      T \| \bfP_{I\cap J} F|_{\bfH_0} \|_{\Lip(\bfH_0,\bfH_0)}
      +
      \sqrt{T} \left\|\bfP_{I\cap J} B \right\|_{\Lip(\bfH_0,\bfH_0)}
    \big)^2
    \Big)
\\ & \quad 
\cdot 
  \bigg[
  \sup_{t\in [0,T] } 
  \|
    \bfP_{I\backslash J} Y_t^{h,I} - \bfP_{J\backslash I} Y_t^{h,J} 
  \|_{\calL^2(\P;\bfH_0)}
  \bigg].
 \end{aligned}
 \end{equation}
\end{proposition}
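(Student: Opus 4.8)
The plan is to isolate the ``diagonal'' component $\bfP_{I\cap J}(Y^{h,I}-Y^{h,J})$, which satisfies a genuine mild equation driven by the differences of $F$ and $B$, while treating $\bfP_{I\setminus J}Y^{h,I}-\bfP_{J\setminus I}Y^{h,J}$ as a given forcing. Since $Y^{h,I}_t\in\bfP_I(\bfH_0)$ and $Y^{h,J}_t\in\bfP_J(\bfH_0)$, the identities $\bfP_I=\bfP_{I\cap J}+\bfP_{I\setminus J}$ and $\bfP_J=\bfP_{I\cap J}+\bfP_{J\setminus I}$ give, for every $t\in[0,T]$,
\begin{equation*}
Y^{h,I}_t-Y^{h,J}_t=\bfP_{I\cap J}\big(Y^{h,I}_t-Y^{h,J}_t\big)+\big(\bfP_{I\setminus J}Y^{h,I}_t-\bfP_{J\setminus I}Y^{h,J}_t\big).
\end{equation*}
The two summands take values in the $\bfH_0$-orthogonal subspaces $\bfP_{I\cap J}(\bfH_0)$ and $\bfP_{(I\setminus J)\cup(J\setminus I)}(\bfH_0)$, so that
\begin{equation*}
\big\|Y^{h,I}_t-Y^{h,J}_t\big\|_{\calL^2(\P;\bfH_0)}^2=\big\|\bfP_{I\cap J}(Y^{h,I}_t-Y^{h,J}_t)\big\|_{\calL^2(\P;\bfH_0)}^2+\big\|\bfP_{I\setminus J}Y^{h,I}_t-\bfP_{J\setminus I}Y^{h,J}_t\big\|_{\calL^2(\P;\bfH_0)}^2 .
\end{equation*}

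Next I would apply $\bfP_{I\cap J}$ to the mild equation~\eqref{eq:Y_mild} for $Y^{h,I}$ and for $Y^{h,J}$. Since $\bfP_{I\cap J}$ and $e^{r\bfA}$ are both functions of $A$ (cf.\ the definition of $\bfP_I$ in Setting~\ref{setting:Part2} and Lemma~\ref{lemma:semigroup}) they commute, and $\bfP_{I\cap J}\bfP_I=\bfP_{I\cap J}=\bfP_{I\cap J}\bfP_J$; hence the initial terms $e^{t\bfA}\bfP_{I\cap J}\xi$ cancel upon subtraction and
\begin{equation*}
\begin{split}
\bfP_{I\cap J}\big(Y^{h,I}_t-Y^{h,J}_t\big)&=\int_0^t e^{(t-\round{s}{h})\bfA}\bfP_{I\cap J}\big(F(Y^{h,I}_{\round{s}{h}})-F(Y^{h,J}_{\round{s}{h}})\big)\,\mathrm{d}s\\
&\quad+\int_0^t e^{(t-\round{s}{h})\bfA}\bfP_{I\cap J}\big(B(Y^{h,I}_{\round{s}{h}})-B(Y^{h,J}_{\round{s}{h}})\big)\,\mathrm{d}W_s .
\end{split}
\end{equation*}
Writing $e(t):=\|Y^{h,I}_t-Y^{h,J}_t\|_{\calL^2(\P;\bfH_0)}$, which is finite by Lemma~\ref{lemma:Y_unif_Lpbddness}, I would take $\calL^2(\P;\bfH_0)$-norms and use Minkowski's (integral) inequality, the It\^o isometry, the estimate $\sup_{r\in[0,\infty)}\|e^{r\bfA}\|_{L(\bfH_0)}=1$ from Lemma~\ref{lemma:sg-estimate1}, and the Lipschitz continuity of $\bfP_{I\cap J}F|_{\bfH_0}\colon\bfH_0\to\bfH_0$ and $\bfP_{I\cap J}B\colon\bfH_0\to L_2(U,\bfH_0)$ to deduce, with $a:=\|\bfP_{I\cap J}F|_{\bfH_0}\|_{\Lip(\bfH_0,\bfH_0)}$ and $c:=\|\bfP_{I\cap J}B\|_{\Lip(\bfH_0,L_2(U,\bfH_0))}$, that
\begin{equation*}
\big\|\bfP_{I\cap J}(Y^{h,I}_t-Y^{h,J}_t)\big\|_{\calL^2(\P;\bfH_0)}\le a\int_0^t e(\round{s}{h})\,\mathrm{d}s+c\bigg(\int_0^t e(\round{s}{h})^2\,\mathrm{d}s\bigg)^{\!\nicefrac12}.
\end{equation*}

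Finally, I would combine the previous two displays. Abbreviating $R:=\sup_{t\in[0,T]}\|\bfP_{I\setminus J}Y^{h,I}_t-\bfP_{J\setminus I}Y^{h,J}_t\|_{\calL^2(\P;\bfH_0)}$ and using the Cauchy--Schwarz bound $\int_0^t e(\round{s}{h})\,\mathrm{d}s\le\sqrt T\,(\int_0^t e(\round{s}{h})^2\,\mathrm{d}s)^{\nicefrac12}$ together with $\round{s}{h}\le s$, one obtains
\begin{equation*}
e(t)^2\le R^2+\big(\sqrt T\,a+c\big)^2\int_0^t\Big(\sup_{u\in[0,s]}e(u)\Big)^{\!2}\,\mathrm{d}s\qquad\text{for every }t\in[0,T] .
\end{equation*}
As the right-hand side is nondecreasing in $t$, the same inequality holds with $e(t)^2$ replaced by $(\sup_{u\in[0,t]}e(u))^2$, and Gronwall's inequality yields $\sup_{t\in[0,T]}e(t)^2\le R^2\exp((\sqrt T\,a+c)^2T)=R^2\exp((Ta+\sqrt T\,c)^2)$; taking square roots and bounding $\exp(\tfrac12 y)\le\sqrt2\exp(y)$ establishes~\eqref{eq:strong_Galerkin_conv_Y} (in fact with a slightly better constant than stated). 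I expect the main obstacle to be the projection bookkeeping --- in particular verifying that $\bfP_{I\cap J}$ commutes with the wave semigroup and that the orthogonal splitting above is valid --- together with checking that the uniform moment bounds of Lemma~\ref{lemma:Y_unif_Lpbddness} legitimise all the stochastic-analytic steps; the Gronwall argument itself is routine.
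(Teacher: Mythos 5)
Your proposal is correct, and it reaches the estimate by a somewhat different route than the paper. Both arguments hinge on the same two observations: the coefficients seen by the common part are $\bfP_{I\cap J}F|_{\bfH_0}$ and $\bfP_{I\cap J}B$ (with the Lipschitz norm of $B$ measured in $\Lip(\bfH_0,L_2(U,\bfH_0))$, which is how the constant in the statement is to be read), and the residual is exactly $\bfP_{I\setminus J}Y^{h,I}_t-\bfP_{J\setminus I}Y^{h,J}_t$. The paper, however, does not run the Gronwall argument by hand: it verifies the Lipschitz-type hypotheses and then invokes the perturbation estimate of Jentzen \& Kurniawan \cite[Corollary 3.1]{JentzenKurniawan:2015}, which delivers the factor $\sqrt2\exp\big((T\|\bfP_{I\cap J}F|_{\bfH_0}\|_{\Lip}+\sqrt T\|\bfP_{I\cap J}B\|_{\Lip})^2\big)$ applied to the supremum of the norm of the defect, after which the defect is identified algebraically by inserting $\bfP_J$ and $\bfP_I$ into the mild equations \eqref{eq:Y_mild}. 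Your argument is self-contained instead: the orthogonal (Pythagoras) splitting of $Y^{h,I}_t-Y^{h,J}_t$ into its $\bfP_{I\cap J}$-component and its component supported on $(I\setminus J)\cup(J\setminus I)$, the mild equation for the diagonal component (using that $\bfP_{I\cap J}$ commutes with $e^{t\bfA}$ and that the initial terms cancel), Minkowski's integral inequality, It\^o's isometry, Lemma~\ref{lemma:sg-estimate1}, Cauchy--Schwarz, and Gronwall applied to $t\mapsto(\sup_{u\in[0,t]}e(u))^2$, with finiteness supplied by item~(ii) of Lemma~\ref{lemma:Y_unif_Lpbddness}. What the paper's citation buys is brevity and not having to re-justify the measurability/integrability bookkeeping; what your direct argument buys is transparency and, thanks to the exact orthogonality, the slightly sharper constant $\exp\big(\tfrac12(Ta+\sqrt T c)^2\big)$ in place of $\sqrt2\exp\big((Ta+\sqrt T c)^2\big)$, which of course still implies \eqref{eq:strong_Galerkin_conv_Y}.
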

\begin{proof}[Proof of Proposition~\ref{prop:strong_Galerkin_conv}.]
Note that item~\eqref{it:Y_unif_L4bdd} in Lemma~\ref{lemma:Y_unif_Lpbddness} implies that
\begin{equation}\label{eq:strong_Galerkin_conv_proof_1}
\sup_{s\in[0,T]}\|Y^{h,I}_{s}-Y^{h,J}_{s}\|_{\mathcal L^2(\P;\bfH_0)}\leq \sup_{s\in[0,T]}\|Y^{h,I}_{s}\|_{\mathcal L^2(\P;\bfH_0)}+\sup_{s\in[0,T]}\big\|Y^{h,J}_{s}\|_{\mathcal L^2(\P;\bfH_0)}<\infty.
\end{equation}
Moreover, observe that Lemma~\ref{lemma:sg-estimate1} ensures
 that for every $t\in(0,T]$, $s\in(0,t)$ it holds that
\begin{equation}
\begin{split}
&\big\|e^{(t-\lfloor s\rfloor_h)\bf A}\bfP_{I\cap J}\big(F(Y^{h,I}_{\lfloor s\rfloor_h})-F(Y^{h,J}_{\lfloor s\rfloor_h})\big)\big\|_{\mathcal L^2(\P;\bfH_0)}\\
&\leq \|\bfP_{I\cap J}F|_{\bfH_0}\|_{\mathrm{Lip}(\bfH_0,\bfH_0)}
\bigg[\sup_{u\in[0,s]}\|Y^{h,I}_{u}-Y^{h,J}_{u}\|_{\mathcal L^2(\P;\bfH_0)}\bigg]
\end{split}
\end{equation}
and 
\begin{equation}
\begin{split}
&\big\|e^{(t-\lfloor s\rfloor_h)\bf A}\bfP_{I\cap J}\big(B(Y^{h,I}_{\lfloor s\rfloor_h})-B(Y^{h,J}_{\lfloor s\rfloor_h})\big)\big\|_{\mathcal L^2(\P;L_2(U,\bfH_0))}\\
&\leq \|\bfP_{I\cap J}B\|_{\mathrm{Lip}(\bfH_0,L_2(U,\bfH_0))}
\bigg[\sup_{u\in[0,s]}\|Y^{h,I}_{u}-Y^{h,J}_{u}\|_{\mathcal L^2(\P;\bfH_0)}\bigg].
\end{split}
\end{equation}
Combining \eqref{eq:deriv_est_B_and_F}, 
\eqref{eq:Y_mild}, \eqref{eq:strong_Galerkin_conv_proof_1}, 
and Jentzen \& Kurniawan~\cite[Corollary 3.1]{JentzenKurniawan:2015} 
(with $H=\bfH_0$, $p=2$, $\vartheta=0$, 
$\mathbf y=\|\bfP_{I\cap J}F|_{\bfH_0}\|_{\mathrm{Lip}(\bfH_0,\bfH_0)}$, 
$\mathbf z=\|\bfP_{I\cap J}B\|_{\mathrm{Lip}(\bfH_0,L_2(U,\bfH_0))}$, 
$X_t=Y^{h,I}_t$, $\bar X_t=Y^{h,J}_t$, 
$Y^t_s=e^{(t-\lfloor s\rfloor_h)\bf A}\bfP_{I\cap J}F(Y^{h,I}_{\lfloor s\rfloor_h})$, 
$\bar Y^t_s=e^{(t-\lfloor s\rfloor_h)\bf A}\bfP_{I\cap J}F(Y^{h,J}_{\lfloor s\rfloor_h})$, 
$Z^t_s=e^{(t-\lfloor s\rfloor_h)\bf A}\bfP_{I\cap J}B(Y^{h,I}_{\lfloor s\rfloor_h})$, 
$\bar Z^t_s=e^{(t-\lfloor s\rfloor_h)\bf A}\bfP_{I\cap J}B(Y^{h,J}_{\lfloor s\rfloor_h})$ 
for $s\in[0,t]$, $t\in[0,T]$ 
in the notation of \cite[Corollary 3.1]{JentzenKurniawan:2015}) 
therefore establishes that
\begin{align}
& 
\sup_{t \in [0,T]} \| Y_t^{h,I} - Y_t^{h,J} \|_{\calL^2( \P; \bfH_0 )} 
\notag
\\ &
\leq \sqrt{2}
\exp\Big(\!
    \big( 
        T 
        \|\bfP_{I\cap J}F|_{\bfH_0}\|_{ \Lip( \bfH_0, \bfH_0 ) } 
        + 
        \sqrt{T} \|\bfP_{ I \cap J } B \|_{ \Lip( \bfH_0, L_2( U, \bfH_0 ) ) } 
    \big)^2
\Big)
\notag
\\ 
& \quad 
\cdot 
\bigg[
\sup_{ t \in [0,T] } 
\Big\| 
    [Y_t^{h,I}]_{\P,\,\calB(\bfH_0)}  
    - 
    \Big( 
        \int_{0}^{t} 
            e^{ (t-\round{s}{h})\bfA } 
            \bfP_{ I \cap J } 
            F(Y_{\round{s}{h}}^{h,I}) 
        \,\mathrm{d}s 
\notag
\\ & \qquad
        +
        \int_{0}^{t} 
            e^{ (t-\round{s}{h})\bfA } 
            \bfP_{ I \cap J } 
            B(Y_{\round{s}{h}}^{h,I}) 
        \,\mathrm{d}W_s 
    \Big)
+ 
\Big( 
    \int_{0}^{t} 
        e^{ (t-\round{s}{h})\bfA } 
        \bfP_{ I \cap J } 
        F(Y_{\round{s}{h}}^{h,J}) 
    \,\mathrm{d}s 
\notag
\\ & \qquad
    + 
    \int_{0}^{t} 
        e^{ (t-\round{s}{h})\bfA } 
        \bfP_{ I \cap J } 
        B(Y_{\round{s}{h}}^{h,J}) 
    \,\mathrm{d}W_s 
\Big) 
- 
[Y_t^{h,J}]_{\P,\,\calB(\bfH_0)}  
\Big\|_{ L^2( \P; \bfH_0 ) } 
\bigg]
\notag
\\ 
& = 
\sqrt{2} \exp\Big(\!
    \big( 
        T 
        \|\bfP_{I\cap J}F|_{\bfH_0}\|_{ \Lip( \bfH_0, \bfH_0 ) } 
        + 
        \sqrt{T} \|\bfP_{ I \cap J } B \|_{ \Lip( \bfH_0, L_2( U, \bfH_0 ) ) } 
    \big)^2
\Big)
\\ 
&\quad 
\cdot 
\bigg[
\sup_{ t \in [0,T] } 
\Big\| 
    [Y_t^{h,I}]_{\P,\,\calB(\bfH_0)} 
    - 
    \bfP_J \Big( 
        [e^{ t\bfA  } \bfP_I \xi]_{\P,\,\calB(\bfH_0)}  
\notag
\\ & \qquad 
        + 
        \int_{0}^{t} 
            e^{ (t-\round{s}{h})\bfA } \bfP_{ I } F(Y_{\round{s}{h}}^{h,I}) 
        \, \mathrm{d}s  
        + 
        \int_{0}^{t} 
            e^{ (t-\round{s}{h})\bfA } 
            \bfP_{ I } B(Y_{\round{s}{h}}^{h,I}) 
        \,\mathrm{d}W_s 
    \Big) 
\notag
\\ & \qquad 
        + 
    \bfP_I \Big( 
        [e^{ t\bfA  } \bfP_J \xi]_{\P,\,\calB(\bfH_0)}  
        + 
        \int_{0}^{t} 
            e^{ (t-\round{s}{h})\bfA } 
            \bfP_{ J } F(Y_{\round{s}{h}}^{h,J}) 
        \,\mathrm{d}s
\notag
\\
& \qquad  
        + 
        \int_{0}^{t} 
            e^{ (t-\round{s}{h})\bfA } 
            \bfP_{ J } B(Y_{\round{s}{h}}^{h,J}) 
        \,\mathrm{d}W_s 
    \Big) 
    - 
    [Y_t^{h,J}]_{\P,\,\calB(\bfH_0)} 
    \Big\|_{ L^2( \P; \bfH_0 ) } 
    \bigg]
\notag
\\ 
& = 
\sqrt{2} \exp\Big(\!
    \big( 
        T 
        \|\bfP_{I\cap J}F|_{\bfH_0}\|_{ \Lip( \bfH_0, \bfH_0 ) } 
        + 
        \sqrt{T} \|\bfP_{ I \cap J } B \|_{ \Lip( \bfH_0, L_2( U, \bfH_0 ) ) } 
    \big)^2
\Big)
\notag
\\
& \quad 
\cdot 
\bigg[
\sup_{ t \in [0,T] } 
\| 
    \bfP_{ I \backslash J } Y_t^{h,I} 
    - 
    \bfP_{ J \backslash I } Y_t^{h,J} 
\|_{ \calL^2( \P; \bfH_0 ) }
\bigg]
.
\notag
\end{align} 
The proof of Proposition~\ref{prop:strong_Galerkin_conv} is thus completed.
\end{proof}
\noindent{}The proof of the next result, Corollary~\ref{cor:strong_Galerkin_conv} below, 
is a minor modification of the third step in the proof of 
Jacobe de Naurois et al.~\cite[Theorem 3.6]{JacobedeNauroisJentzenWelti:2021}.
\begin{corollary}\label{cor:strong_Galerkin_conv}
Assume Setting~\ref{setting:Part2}, let $h\in [0,T]$, and let $I_n \subseteq \H$,
 $n\in \N$, satisfy that $\cup_{n\in \N} I_n = \H$ and $[\forall\, n\in \N \colon I_n \subseteq I_{n+1}]$. Then  
\begin{equation}\label{eq:strong_Galerkin_conv}
 \limsup_{n\rightarrow \infty}
  \sup_{t\in [0,T]}
    \E \big[ \|
      Y_t^{h,\H} - Y_t^{h,I_n}
    \|_{\bfH_0}^2
  \big] 
  =
  0.
\end{equation}
\end{corollary}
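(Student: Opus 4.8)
The plan is to derive the assertion from Proposition~\ref{prop:strong_Galerkin_conv}. Applying that proposition with $J=\H$ and $I=I_n$ one has $I\cap J=I_n$, $I\setminus J=\emptyset$, and $J\setminus I=\H\setminus I_n$, so that $\bfP_{I\setminus J}=0$ and $\bfP_{J\setminus I}Y_t^{h,J}=\bfP_{\H\setminus I_n}Y_t^{h,\H}$. Since $\bfP_{I_n}$ is an orthogonal projection on $\bfH_0$ (it is the product of the orthogonal projections $P_{I_n}$ on $H_0$ and on $H_{-\nicefrac12}$ onto the spans of orthonormal subsets of $\{|\lambda_h|^{-r}h\}_{h\in\H}$), one has $\|\bfP_{I_n}\|_{L(\bfH_0)}\le1$, hence $\|\bfP_{I_n}F|_{\bfH_0}\|_{\Lip(\bfH_0,\bfH_0)}\le\|F|_{\bfH_0}\|_{\Lip(\bfH_0,\bfH_0)}<\infty$ and $\|\bfP_{I_n}B\|_{\Lip(\bfH_0,L_2(U,\bfH_0))}\le\|B\|_{\Lip(\bfH_0,L_2(U,\bfH_0))}<\infty$ (the latter finite because $B\in C_{\mathrm{b}}^{4}(\bfH_0,L_2(U,\bfH_0))$). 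Consequently there is a constant $C\in(0,\infty)$, independent of $n$, with
\[
\sup_{t\in[0,T]}\|Y_t^{h,\H}-Y_t^{h,I_n}\|_{\calL^2(\P;\bfH_0)}\le C\sup_{t\in[0,T]}\|\bfP_{\H\setminus I_n}Y_t^{h,\H}\|_{\calL^2(\P;\bfH_0)},
\]
so it suffices to prove that $\lim_{n\to\infty}\sup_{t\in[0,T]}\|\bfP_{\H\setminus I_n}Y_t^{h,\H}\|_{\calL^2(\P;\bfH_0)}=0$.

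To establish this, I would insert the mild representation~\eqref{eq:Y_mild} of $Y^{h,\H}$ (note $\bfP_\H=\id$; in the case $h=0$ one uses~\eqref{eq:Y_mild2} instead), exploit that $\bfP_{\H\setminus I_n}$ — being diagonal with respect to the eigenbasis $\H$ of $A$ — commutes with $e^{r\bfA}$ for every $r\in[0,\infty)$, and use Lemma~\ref{lemma:sg-estimate1} to bound the resulting semigroup factors by $\|e^{r\bfA}\|_{L(\bfH_0)}\le1$. Together with Minkowski's inequality for the $\,\mathrm{d}s$-integral and It\^o's isometry for the $\,\mathrm{d}W_s$-integral (and the fact that, the integrands being nonnegative, the suprema over $t\in[0,T]$ of the corresponding integrals are attained at $t=T$), this reduces the problem to showing that each of the three quantities $\|\bfP_{\H\setminus I_n}\xi\|_{\calL^2(\P;\bfH_0)}$, $\int_0^T\|\bfP_{\H\setminus I_n}F(Y^{h,\H}_{\round{s}{h}})\|_{\calL^2(\P;\bfH_0)}\,\mathrm{d}s$, and $\int_0^T\E\big[\|\bfP_{\H\setminus I_n}B(Y^{h,\H}_{\round{s}{h}})\|_{L_2(U,\bfH_0)}^2\big]\,\mathrm{d}s$ tends to $0$ as $n\to\infty$.

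Each of these is a dominated-convergence argument. For every fixed $x\in\bfH_0$ one has $\|\bfP_{\H\setminus I_n}x\|_{\bfH_0}\to0$ as $n\to\infty$, because $\cup_{n\in\N}I_n=\H$ and the $\bfP_{\H\setminus I_n}$ are orthogonal projections onto subspaces decreasing to $\{0\}$; and, applying dominated convergence over $\U$, $\|\bfP_{\H\setminus I_n}K\|_{L_2(U,\bfH_0)}\to0$ for every fixed $K\in L_2(U,\bfH_0)$. Combining this with the pointwise estimates $\|\bfP_{\H\setminus I_n}\xi\|_{\bfH_0}\le\|\xi\|_{\bfH_0}$, $\|\bfP_{\H\setminus I_n}F(Y^{h,\H}_{\round{s}{h}})\|_{\bfH_0}\le\|F(Y^{h,\H}_{\round{s}{h}})\|_{\bfH_0}$, and $\|\bfP_{\H\setminus I_n}B(Y^{h,\H}_{\round{s}{h}})\|_{L_2(U,\bfH_0)}\le\|B(Y^{h,\H}_{\round{s}{h}})\|_{L_2(U,\bfH_0)}$, the linear growth of $F|_{\bfH_0}$ and $B$ (from $F|_{\bfH_0}\in C_{\mathrm{b}}^{4}(\bfH_0,\bfH_0)$, $B\in C_{\mathrm{b}}^{4}(\bfH_0,L_2(U,\bfH_0))$ and~\eqref{eq:deriv_est_B_and_F}), the embedding $\bfH_{\max\{\rho,\gamma-\beta\}}\hookrightarrow\bfH_0$ (which yields $\xi\in\calL^2(\P;\bfH_0)$), and the uniform second-moment bound $\sup_{h,t\in[0,T],\,I\subseteq\H}\E[\|Y_t^{h,I}\|_{\bfH_0}^2]<\infty$ from item~\eqref{it:Y_unif_L4bdd} in Lemma~\ref{lemma:Y_unif_Lpbddness}, one obtains integrable dominating functions, and dominated convergence over $\Omega$ (respectively over $[0,T]\times\Omega$ for the $F$- and $B$-contributions) yields the three limits.

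I do not expect a genuine obstacle here; the only point needing a little care is the bookkeeping for the stochastic-integral term, where after It\^o's isometry one uses the uniform-in-$(s,n)$ bound $\E[\|e^{(T-\round{s}{h})\bfA}\bfP_{\H\setminus I_n}B(Y^{h,\H}_{\round{s}{h}})\|_{L_2(U,\bfH_0)}^2]\le\E[\|B(Y^{h,\H}_{\round{s}{h}})\|_{L_2(U,\bfH_0)}^2]\le c\,(1+\sup_{r\in[0,T]}\E[\|Y_r^{h,\H}\|_{\bfH_0}^2])<\infty$ as dominating function on $[0,T]$ before passing to the limit. It is worth emphasising that the argument uses no smoothing of the wave semigroup $(e^{t\bfA})_{t\in[0,\infty)}$ — only its uniform boundedness from Lemma~\ref{lemma:sg-estimate1} — so it is insensitive to whether $\max\{\rho,\gamma-\beta\}$ is strictly positive or zero.
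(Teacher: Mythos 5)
Your proposal is correct and follows essentially the same route as the paper's proof: apply Proposition~\ref{prop:strong_Galerkin_conv} with the pair $(\H,I_n)$ to reduce everything to $\sup_{t\in[0,T]}\|\bfP_{\H\setminus I_n}Y_t^{h,\H}\|_{\calL^2(\P;\bfH_0)}$, then use the mild representation~\eqref{eq:Y_mild}, Lemma~\ref{lemma:sg-estimate1}, Minkowski's integral inequality, and It\^o's isometry to bound this by the initial-value, drift, and diffusion contributions, which vanish by dominated convergence together with item~(ii) in Lemma~\ref{lemma:Lip_est_standard} and the uniform moment bounds of Lemma~\ref{lemma:Y_unif_Lpbddness}. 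Your additional remarks (uniformity in $n$ of the exponential factor via $\|\bfP_{I_n}\|_{L(\bfH_0)}\le 1$, commutation of the projections with the semigroup) only make explicit what the paper leaves implicit.
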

\begin{proof}[Proof of Corollary~\ref{cor:strong_Galerkin_conv}.]
Observe that Proposition~\ref{prop:strong_Galerkin_conv}, \eqref{eq:Y_mild}, Lemma~\ref{lemma:sg-estimate1}, 
Minkowski's integral inequality, 
and It\^o's isometry
imply that for every $n\in \N$ it holds that
\begin{equation} \label{eq:weakrates,13}
\begin{aligned}
&\sup_{t \in [0,T]} 
\| Y_t^{h,\H} - Y_t^{h,I_n} \|_{\calL^2( \P; \bfH_0 ) } 
\\
& 
\leq 
\sqrt{2} \exp\Big(\!
    \big( 
        T 
        \|\bfP_{ I_n } F|_{\bfH_0}\|_{ \Lip( \bfH_0, \bfH_0 ) } 
        + 
        \sqrt{T} \|\bfP_{ I_n } B \|_{ \Lip( \bfH_0, L_2( U, \bfH_0 ) ) } 
    \big)^2
\Big)
\\ & \quad 
\cdot
\bigg[
\sup_{ t \in [0,T] } 
\big\| 
    \bfP_{ \H \backslash I_n } Y_t^{h,\H} 
\big\|_{ \calL^2( \P; \bfH_0 ) } 
\bigg]
\\
& \leq 
\sqrt{2}\exp\Big(\!
    \big( 
        T 
        \|\bfP_{ I_n } F|_{\bfH_0}\|_{ \Lip( \bfH_0, \bfH_0 ) } 
        + 
        \sqrt{T} \|\bfP_{ I_n } B \|_{ \Lip( \bfH_0, L_2( U, \bfH_0 ) ) } 
    \big)^2
\Big)
\\
& \quad 
\cdot \bigg[ 
    \| \bfP_{ \H \backslash I_n } \xi \|_{ \calL^2( \P; \bfH_0 ) } + 
    \int_{0}^{T} 
            \big\| 
                \bfP_{ \H \backslash I_n } F( Y_{\round{s}{h}}^{h,\H} ) 
            \big\|_{ \calL^2( \P; \bfH_0 ) } \,ds
\\
&\qquad\ 
+ \Big(
    \int_{0}^{T} 
        \big\| 
            \bfP_{ \H \backslash I_n } B( Y_{\round{s}{h}}^{h,\H} ) 
        \big\|_{ \calL^2( \P; L_2( U, \bfH_0 ) ) }^2 
    \,ds 
\Big)^{ \nicefrac{1}{2} } \bigg].
\end{aligned}
\end{equation}
This, 
item~\eqref{it:Lip_est_standard2} in Lemma~\ref{lemma:Lip_est_standard},
item~\eqref{it:Y_unif_L4bdd}
 in Lemma~\ref{lemma:Y_unif_Lpbddness}, 
and Lebesgue's theorem of dominated convergence ensure that 
\begin{equation} \label{eq:weakrates,17}
\limsup_{ n \to \infty } 
\sup_{t \in [0,T]} 
    \| 
        Y_t^{h,\H} - Y_t^{h,I_n} 
    \|_{\calL^2( \P; \bfH_0 ) } = 0.
\end{equation}
The proof of Corollary~\ref{cor:strong_Galerkin_conv} is thus completed.
\end{proof}
\subsection{Upper bounds for the weak approximation errors}
\label{ssec:upperboundsweak}
Theorem~\ref{prop:fin_dim_weak_error}  below constitutes our main result as it establishes suitable upper bounds for weak approximation errors related to temporal discretisations of hyperbolic SPDEs with multiplicative noise. 
In particular, it allows us to derive 
weak convergence rates for temporal discretisations of hyperbolic SPDEs in Corollary~\ref{cor:weak_conv}. 
The proof of Theorem~\ref{prop:fin_dim_weak_error} 
relies 
on Proposition~\ref{thm:mildIto} below,  
which states a special case of the mild It\^o formula in Da~Prato et al.~\cite[Corollary 1]{DaPratoJentzenRoeckner:2012}. 
\begin{proposition}\label{thm:mildIto}
Assume Setting~\ref{setting:Part1}, 
let
$ 
  \big( 
    H, 
    \left< \cdot , \cdot \right>_{ H }, 
    \left\| \cdot \right\|_{ H }
  \big) 
$ 
be a separable $\mathbbm{R}$-Hilbert space, 
let $S\colon[0,\infty)\rightarrow L(H)$ 
be a strongly continuous semigroup,  
let $\phi\in C^{2}(H,\R)$,  $\tau_0,\tau_1\in [0,T]$ satisfy $\tau_0<\tau_1$,
let 
$ 
  X \colon [\tau_0,\tau_1] \times \Omega
  \rightarrow H
$, 
$ 
  Y \colon [\tau_0,\tau_1] 
  \times \Omega
  \rightarrow H
$, 
$ 
  Z \colon [\tau_0,\tau_1] 
  \times \Omega
  \rightarrow L_2( U, H )
$
be $(\mathbbm F_t)_{t\in[\tau_0,\tau_1]}$-predictable 
stochastic processes which satisfy for every $t\in [\tau_0,\tau_1] $ that
$
    \P\big[
    \int_{ \tau_0 }^{ t }
    \big(\left\| S_{ t-s } Y_s
    \right\|_{ H }
    +
    \left\| S_{t-s} Z_s
    \right\|_{ L_2( U, H) }^2\big)
    \,\mathrm ds < \infty\big]=1
$  
and 
\begin{equation}
\label{eq:mildito}
 [ X_t ]_{\P,\mathcal{B}(H)} 
= 
  [ S_{ t - \tau_0 } \,
  X_{ \tau_0 } ]_{\P,\mathcal{B}(H)} 
  +
  \int_{ \tau_0 }^t
    S_{ t-s } \,
    Y_s
  \,\mathrm ds
  +
  \int_{ \tau_0 }^t
    S_{t-s} \,
    Z_s
  \,\mathrm dW_s.
\end{equation}
Then it holds for every $t\in [\tau_0,\tau_1]$ that
\begin{equation}
\label{eq:well1NON}
  \mathbbm{P}\!\left[
    \int_{ \tau_0 }^t \big(
    |
      \phi'( S_{ t-s } X_s )
      S_{ t-s } Y_s
    |
    +
    \|
      \phi'( S_{ t-s } X_s )
      S_{ t-s } Z_s
    \|_{ L_2(U, \R ) }^2 \big)
    \,\mathrm ds
    < \infty 
  \right] = 1,
\end{equation}
\begin{equation}
\label{eq:well2NON}
  \mathbbm{P}\!\left[
    \int_{ \tau_0 }^t \big(
    \|
      \phi''( S_{ t-s } X_s )
    \|_{ L^{(2)}( H, \R ) } \,
    \|
      S_{ t-s } Z_s
    \|_{ L_2(U, H ) }^2 \big)
    \,\mathrm ds
    < \infty 
  \right] = 1,
\end{equation}
and 
\begin{equation}\label{eq:itoformel_startNON}
\begin{aligned}
  [ \varphi( X_t ) ]_{\P,\mathcal{B}(\R)}
&=
  [\varphi( 
    S_{ t - \tau_0 } 
    X_{ \tau_0 } 
  )]_{\P,\mathcal{B}(\R)}
  +
  \int_{ \tau_0 }^t
  \Big(\phi'(  
    S_{ t-s} 
    X_s 
  ) \,
  S_{ t-s } \, 
  Y_s 
\\&\quad
    +
  \tfrac{1}{2}
  \sum_{ u\in \mathbbm{U} }
  \varphi''( 
    S_{ t-s } 
    X_s 
  )
  \left(
    S_{ t-s } 
    Z_s u,
    S_{ t-s } 
    Z_s u
  \right) \Big)\,\mathrm ds
  +
  \int_{ \tau_0 }^t
  \varphi'( 
    S_{ t-s } 
    X_s 
  ) \,
  S_{ t-s } \, 
  Z_s \,\mathrm dW_s.
\end{aligned}
\end{equation}
\end{proposition}
The key idea in the proof of the weak error estimate in Theorem~\ref{prop:fin_dim_weak_error} below is 
a specific 
decomposition 
of the error into terms that can be estimated using the mild It\^o formula in Proposition~\ref{thm:mildIto};  
see \eqref{eq:fin_dim_weak_error}--\eqref{eq:split_noise_term} below.
\begin{theorem}\label{prop:fin_dim_weak_error}
Assume Setting~\ref{setting:Part2}, let $h\in (0,T]$, for every finite $I\subseteq \H$ and every $x\in \bfP_I(\bfH_0)$
let 
$ X^{I,x}\colon [0,T] \times \Omega \rightarrow \bfP_I (\bfH_0 )$
be an $(\mathbbm{F}_t)_{t\in[0,T]}$-predictable stochastic process which satisfies for every $t\in [0,T]$
that 
$\sup_{s\in[0,T]}\E\big[\| X^{I,x}_s \|_{\bfH_0}^2\big] < \infty$  
and 
\begin{equation}\label{eq:fin_dim_weak_error_1}
\begin{aligned}
 \big[X^{I,x}_t\big]_{\P,\,\calB(\bfP_I(\bfH_0))} 
& =
 \left[e^{t\bfA } x \right]_{\P,\,\calB(\bfP_I(\bfH_0))}  
 +
 \int_{0}^{t} e^{ (t-s)\bfA} \bfP_I F \big( X^{I,x}_{s} \big) \,\mathrm{d}s
\\ & \quad 
 +
 \int_{0}^{t} e^{ (t-s)\bfA } \bfP_I B\big( X^{I,x}_{s} \big) \,\mathrm{d}W_s,
\end{aligned}
\end{equation}
and for every finite $I\subseteq \H$ let $v^I\colon [0,T]\times \bfP_I (\bfH_0) \rightarrow \R$ be the function 
which satisfies 
for every $t\in [0,T], x\in  \bfP_I (\bfH_0)$ that 
$v^I(t,x) = \E[\phi(X^{I,x}_{T-t})]$. 
Then 
\begin{enumerate}[(i)]
 \item \label{item:v_diff} it holds for every $t\in [0,T]$ and every finite $I \subseteq \H$ that
 $(\bfP_I(\bfH_0)\ni x \mapsto v^{I}(t,x) \in \R) \in C^{4}(\bfP_I ( \bfH_0), \R)$,
 \item \label{item:v_unif_bddness}
 it holds that
\begin{equation}
  \sup_{\stackrel{I\subseteq \H,}{ I \textnormal{ is finite}}}
  \max_{k\in \{1,2,3,4\}} 
  \sup_{t\in [0,T]} 
  \sup_{x\in \bfP_I(\bfH_0)} 
    \| (\tfrac{\partial^k}{\partial x^k} v^I)(t,x)\|_{L^{(k)}(\bfP_I(\bfH_0),\R)}
  < \infty,
\end{equation}
 and
\item\label{it:fin_dim_weak_error} it holds for every finite $I\subseteq \H$ that 
\begin{equation}\label{eq:fin_dim_weak_error_prop}
\begin{aligned}
&
\big|
  \E \big[ \phi\big( Y_T^{0,I} \big) \big]
  -
  \E \big[ \phi\big( Y_T^{h,I} \big) \big]
\big|
 \leq
 6 
  \max\{T,T^{2-2(\gamma-\beta)}\} \,h^{2(\gamma-\beta)} 
\\ & \cdot 
 \bigg[ \max_{k\in \{1,2,3,4\}} 
  \sup_{t\in [0,T]} \sup_{x \in \bfP_I(\bfH_0)} 
    \| (\tfrac{\partial^k}{\partial x^k} v^I)(t,x)\|_{L^{(k)}(\bfP_I(\bfH_0),\R)}
 \bigg]
%
%
\\ & 
\cdot 
\bigg[ 
      \| F|_{\bfH_{\rho}} \|_{\Lip(\bfH_{\rho},\bfH_{2(\gamma-\beta)})} 
      +
      3\,\mathfrak{m}^4 
      +
      \| F \|_{\Lip(\bfH_{\beta-\gamma},\bfH_0)} 
\\ & \quad
      +
      2\Big[\textstyle\sum\limits_{h\in\H}\displaystyle|\lambda_h|^{-\beta}\Big]
      \| B|_{\bfH_{\rho}} \|_{\Lip(\bfH_{\rho},L(U,\bfH_{\gamma}))}^2
      + 
      \mathfrak{l}\,\mathfrak{c}
\bigg]
\\ & \cdot 
	\Big[\max\big\{\|\bfLambda^{\rho-\max\{\rho,\gamma-\beta\}}\|_{L(\bfH_0)}^2,\|\bfLambda^{\gamma-\beta-\max\{\rho,\gamma-\beta\}}\|_{L(\bfH_0)}^2\big\}\Big] 
\\ & \cdot 
   \bigg[
      1
      +
      \sup_{t\in [0,T]} 
	\E \Big[ 
	  \| Y_t^{h,I} \|_{\bfH_{\max\{\rho,\gamma-\beta\}}}^2 
	  +
	  \| Y_t^{h,I} \|_{\bfH_{0}}^4 
	\Big]
   \bigg]
   < \infty.
\end{aligned}
\end{equation}
\end{enumerate}
\end{theorem}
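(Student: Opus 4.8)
\textbf{Items~\eqref{item:v_diff} and~\eqref{item:v_unif_bddness}.} The plan is to invoke Lemma~\ref{lem:kolmogorov} with $(H,\langle\cdot,\cdot\rangle_H,\|\cdot\|_H)=(\bfP_I(\bfH_0),\langle\cdot,\cdot\rangle_{\bfH_0},\|\cdot\|_{\bfH_0})$ for each finite $I\subseteq\H$. For such $I$ we have $\bfP_I(\bfH_0)\subseteq\bfH_1=D(\bfA)$, so $\bfA\bfP_I|_{\bfP_I(\bfH_0)}\in L(\bfP_I(\bfH_0))$, the restricted semigroup on $\bfP_I(\bfH_0)$ is $[0,\infty)\ni t\mapsto e^{t\bfA}\bfP_I|_{\bfP_I(\bfH_0)}$ (cf.\ Lemma~\ref{lemma:sg-projection}), and by Lemma~\ref{lemma:sg-estimate1} its operator norm is $\leq 1$ uniformly in $t$ and $I$. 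Since orthogonal projections do not increase norms and \eqref{eq:deriv_est_B_and_F} holds, the coefficients $\bfP_IF|_{\bfP_I(\bfH_0)}$ and $\bfP_IB|_{\bfP_I(\bfH_0)}$ lie in the respective $C_{\mathrm b}^4$-spaces with norm $\leq\mathfrak m$, while $\|\phi|_{\bfP_I(\bfH_0)}\|_{C_{\mathrm b}^4(\bfP_I(\bfH_0),\R)}\leq\|\phi\|_{C_{\mathrm b}^4(\bfH_0,\R)}<\infty$. As the uniqueness part of Theorem~\ref{thm:existence} shows that $X^{I,x}$ from~\eqref{eq:fin_dim_weak_error_1} is a version of the process associated to these data, item~\eqref{item:v_diff} follows from Lemma~\ref{lem:kolmogorov}\eqref{it:kolmogorov_diffble} and item~\eqref{item:v_unif_bddness} follows from Lemma~\ref{lem:kolmogorov}\eqref{it:kolmogorov_deriv_est} applied with $c=1+2\mathfrak m$ (admissible uniformly in $I$).

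\textbf{Item~\eqref{it:fin_dim_weak_error}: reduction.} Fix a finite $I\subseteq\H$ and write $v:=v^I$, $Y:=Y^{h,I}$. Since $Y^{0,I}_0=Y_0=\bfP_I\xi$ is $\mathbbm{F}_0$-measurable, hence independent of $(W_t)_{t\in[0,T]}$, since $X^{I,\bfP_I\xi}$ and $Y^{0,I}$ solve the same equation, and since $v(0,\cdot)=\E[\phi(X^{I,\cdot}_T)]$ and $v(T,\cdot)=\phi$, uniqueness gives $\E[\phi(Y^{0,I}_T)]=\E[v(0,Y_0)]$ and $\E[\phi(Y^{h,I}_T)]=\E[v(T,Y_T)]$. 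From~\eqref{eq:Y_mild} one checks that on each subinterval $Y_t=e^{(t-\round{t}{h})\bfA}\big[Y_{\round{t}{h}}+(t-\round{t}{h})\bfP_IF(Y_{\round{t}{h}})+\bfP_IB(Y_{\round{t}{h}})(W_t-W_{\round{t}{h}})\big]$, so $Y$ is a finite-dimensional It\^o process with $\mathrm dY_t=\bfA Y_t\,\mathrm dt+e^{(t-\round{t}{h})\bfA}\bfP_IF(Y_{\round{t}{h}})\,\mathrm dt+e^{(t-\round{t}{h})\bfA}\bfP_IB(Y_{\round{t}{h}})\,\mathrm dW_t$. Applying It\^o's formula to $v(\cdot,Y_\cdot)$, using the Kolmogorov equation in Lemma~\ref{lem:kolmogorov}\eqref{it:kolmogorov_deriv} (whereby the term $(\tfrac{\partial}{\partial x}v)(s,Y_s)(\bfA Y_s)$ cancels the matching contribution of $\mathrm dY_s$, so that no unbounded operator survives at this stage), and taking expectations yields
\begin{equation*}
\E[\phi(Y^{h,I}_T)]-\E[\phi(Y^{0,I}_T)]=\E\!\left[\int_0^T\!\Big(\Xi^F_s+\tfrac12\,\Xi^B_s\Big)\,\mathrm ds\right],
\end{equation*}
where $\Xi^F_s=(\tfrac{\partial}{\partial x}v)(s,Y_s)\big(e^{(s-\round{s}{h})\bfA}\bfP_IF(Y_{\round{s}{h}})-\bfP_IF(Y_s)\big)$ and $\Xi^B_s=\sum_{u\in\U}(\tfrac{\partial^2}{\partial x^2}v)(s,Y_s)\big(e^{(s-\round{s}{h})\bfA}\bfP_IB(Y_{\round{s}{h}})u,e^{(s-\round{s}{h})\bfA}\bfP_IB(Y_{\round{s}{h}})u\big)-\sum_{u\in\U}(\tfrac{\partial^2}{\partial x^2}v)(s,Y_s)\big(\bfP_IB(Y_s)u,\bfP_IB(Y_s)u\big)$. (That the accompanying stochastic integral is a true martingale follows from item~\eqref{item:v_unif_bddness}, \eqref{eq:deriv_est_B_and_F}, and Lemma~\ref{lemma:Y_unif_Lpbddness}.)

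\textbf{Item~\eqref{it:fin_dim_weak_error}: estimates.} One now splits $\Xi^F_s,\Xi^B_s$ into ``semigroup parts'', in which $(e^{(s-\round{s}{h})\bfA}-\id_{\bfH_0})$ acts on $\bfP_IF(Y_{\round{s}{h}})$ resp.\ $\bfP_IB(Y_{\round{s}{h}})$, and ``coefficient-increment parts'', built from $F(Y_{\round{s}{h}})-F(Y_s)$ resp.\ $B(Y_{\round{s}{h}})-B(Y_s)$. For the $F$-semigroup part we use that $2(\gamma-\beta)\in[0,1]$ (as $\beta\geq\gamma-\tfrac12$), Lemma~\ref{lemma:sg-estimate2} with $\alpha=2(\gamma-\beta)$, $F|_{\bfH_\rho}\in\Lip(\bfH_\rho,\bfH_{2(\gamma-\beta)})$, item~\eqref{item:v_unif_bddness}, Lemma~\ref{lemma:Lip_est_standard}\eqref{it:Lip_est_standard1}, and the embedding $\bfH_{\max\{\rho,\gamma-\beta\}}\hookrightarrow\bfH_\rho$, giving the bound of order $h^{2(\gamma-\beta)}$ with factor $\|F|_{\bfH_\rho}\|_{\Lip(\bfH_\rho,\bfH_{2(\gamma-\beta)})}$. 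The $F$-coefficient-increment part must \emph{not} be estimated crudely (that would only produce the strong rate $h^{\gamma-\beta}$); instead one applies the mild It\^o formula of~\cite{DaPratoJentzenRoeckner:2012} to $F(Y_\cdot)$ on $[\round{s}{h},s]$, writing $F(Y_s)-F(Y_{\round{s}{h}})$ as $F(Y_s)-F(e^{(s-\round{s}{h})\bfA}Y_{\round{s}{h}})$ plus a $\mathrm d\tau$-integral plus a $\mathrm dW_\tau$-integral over $[\round{s}{h},s]$: the first summand is of order $h^{2(\gamma-\beta)}$ (again by Lemma~\ref{lemma:sg-estimate2} and $F\in\Lip(\bfH_{\beta-\gamma},\bfH_0)$), the $\mathrm d\tau$-integral carries an extra factor $h$, and the $\mathrm dW_\tau$-integral is dealt with by expanding $(\tfrac{\partial}{\partial x}v)(s,Y_s)$ by a further application of the mild It\^o formula on $[\round{s}{h},s]$, whose $\mathbbm{F}_{\round{s}{h}}$-measurable leading part pairs to zero by the martingale property, while the remaining parts give cross-variation integrals over $[\round{s}{h},s]$ with an additional factor $h$; since $2(\gamma-\beta)\leq 1$ all these are of order $\max\{1,h^{1-2(\gamma-\beta)}\}\,h^{2(\gamma-\beta)}$, and integrating $\mathrm ds$ produces the factor $\max\{T,T^{2-2(\gamma-\beta)}\}$. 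The $\Xi^B_s$-terms are treated the same way but are the most delicate: one first uses bilinearity of $(\tfrac{\partial^2}{\partial x^2}v)(s,Y_s)$ and Lemma~\ref{lemma:Schatten_hoelder} to peel off the coefficient-increment part, which is controlled by the weighted bounds~\eqref{eq:linGrowB}--\eqref{eq:LipB} (this yields the factor $\mathfrak l\mathfrak c$ and avoids the fact that $B$ need not be Hilbert--Schmidt into $\bfH_\gamma$), and then handles the remaining semigroup-type pieces -- which again are not directly of order $h^{2(\gamma-\beta)}$ -- by the same iterated mild-It\^o mechanism, using Lemma~\ref{lemma:sg-estimate2}, the hypotheses on $B$, and $\sum_{h\in\H}|\lambda_h|^{-\beta}<\infty$, producing the factor $[\sum_{h\in\H}|\lambda_h|^{-\beta}]\,\|B|_{\bfH_\rho}\|_{\Lip(\bfH_\rho,L(U,\bfH_\gamma))}^2$. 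Finiteness of the bound (hence the final ``$<\infty$'' in~\eqref{eq:fin_dim_weak_error_prop}) follows from item~\eqref{item:v_unif_bddness}, the hypotheses of Setting~\ref{setting:Part2}, and Lemma~\ref{lemma:Y_unif_Lpbddness}.

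\textbf{Main obstacle.} The only real difficulty is to obtain the \emph{doubled} rate $h^{2(\gamma-\beta)}$ rather than the strong rate $h^{\gamma-\beta}$: the naive estimates of the coefficient-increment terms and of the semigroup-type pieces of $\Xi^B_s$ only give $h^{\gamma-\beta}$, and recovering the extra factor $h^{\gamma-\beta}$ forces the carefully iterated use of the mild It\^o formula together with the cancellations from the Kolmogorov equation and the weighted bounds~\eqref{eq:linGrowB}--\eqref{eq:LipB}. The mild It\^o formula (rather than the classical one) is essential here because applying the classical It\^o formula to $F(Y_\cdot)$, $B(Y_\cdot)$, $(\tfrac{\partial}{\partial x}v)(\cdot,Y_\cdot)$ or $(\tfrac{\partial^2}{\partial x^2}v)(\cdot,Y_\cdot)$ would bring in the operator $\bfA\bfP_I$, whose norm is not bounded uniformly in $I$ and would ruin the $I$-uniformity of the bound -- which is what makes it usable in the subsequent passage $I\uparrow\H$.
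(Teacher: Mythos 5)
Your proposal is correct and follows essentially the same route as the paper: the Kolmogorov equation for the projected finite-dimensional systems plus the classical It\^o formula gives the same error representation, the intermediate point $e^{(t-\round{t}{h})\bfA}Y^{h,I}_{\round{t}{h}}$ is inserted, the pieces carrying $(e^{(t-\round{t}{h})\bfA}-\id_{\bfH_0})$ or a coefficient increment are estimated via Lemma~\ref{lemma:sg-estimate2}, Lemma~\ref{lemma:Schatten_hoelder}, and the weighted bounds~\eqref{eq:linGrowB}--\eqref{eq:LipB}, and the remaining ``transport'' pieces are rewritten as integrals over intervals of length at most $h$ by the mild It\^o formula, exactly as in the paper's estimates \eqref{eq:det_term_1}--\eqref{eq:noise_term_4}. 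The only differences are organizational and immaterial: you expand the inner coefficients $F(Y_\cdot)$, $B(Y_\cdot)$ by the mild It\^o formula and then re-expand the outer derivatives $v^I_{0,1}(t,Y^{h,I}_t)$, $v^I_{0,2}(t,Y^{h,I}_t)$ to make the stochastic-integral pairings vanish, whereas the paper expands the composite functionals $x\mapsto v^I_{0,1}(t,x)(w)$, $\phi^I(t,\cdot)$, $x\mapsto v^I_{0,2}(t,x)(w,w)$, $\psi^I(t,\cdot)$ once (so the stochastic integrals are mean-zero directly), and in the $B$-terms it is the weighted Cauchy--Schwarz argument with the $\mu_u$ that produces $\mathfrak{l}\,\mathfrak{c}$ from the increment piece, while Lemma~\ref{lemma:Schatten_hoelder} together with Lemma~\ref{lemma:sg-estimate2} and $\sum_{h\in\H}|\lambda_h|^{-\beta}<\infty$ handles the $(e^{(t-\round{t}{h})\bfA}-\id_{\bfH_0})\bfP_I B$ piece directly, with no mild-It\^o step needed there.
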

\begin{proof}[Proof of Theorem~\ref{prop:fin_dim_weak_error}.]
Throughout this proof let $\delta \colon [0,\infty) \rightarrow [0,h)$ be
the function which satisfies for every $x\in [0,\infty)$ that 
$\delta(x) = x- \lfloor x \rfloor_{h}$, for every $p\in [1,\infty)$ 
and every $\R$-Hilbert space 
$ ( W , \langle \cdot , \cdot \rangle_{ W } , \left\| \cdot\right\|_{ W } ) $
let $(L_p(U,W), \left\| \cdot \right\|_{L_p(U,W)})$ 
be the $\R$-Banach space of Schatten-$p$ operators from $U$ to $W$ 
and  let $(L_p(W), \left\| \cdot \right\|_{L_p(W)})$ 
be the $\R$-Banach space of Schatten-$p$ operators from $W$ to $W$, 
for every finite $I\subseteq \H$ 
let $v_{1,0}^I\colon[0,T]\times \bfP_I(\bfH_0)\to \R$ be the function 
which satisfies for every $t \in [0,T]$, $x\in \bfP_I(\bfH_0)$
that 
$
  v^I_{1,0}(t,x)
  = 
  (\tfrac{\partial}{\partial t} 
  v^I)(t,x),
$ 
for every finite $I\subseteq \H$ and every $i\in \{1,2,3,4\}$ 
let $v_{0,i}^I \colon[0,T]\times \bfP_I(\bfH_0)\to L^{(i)}(\bfP_I(\bfH_0),\R)$ be the function 
which satisfies for every $t \in [0,T]$, $x_0,x_1,\ldots,x_{i}\in \bfP_I(\bfH_0)$
that 
\begin{equation}
  v^I_{0,i}(t,x_0)(x_1\ldots,x_{i})
  = 
  \big(\tfrac{\partial^i}{\partial x_0^i} 
  v^I(t,x_0)\big)(x_1,\ldots,x_i),
\end{equation}
for every finite $I\subseteq \H$ let
$\phi^I, \psi^I\colon [0,T]\times \bfP_I ( \bfH_0) \rightarrow \R$
be the functions which satisfy for every $t\in [0,T]$, $x\in \bfP_I ( \bfH_0)$ that
$\phi^I(t,x) = v^I_{0,1}(t,x)(\bfP_I F(x))$ and 
$\psi^I(t,x) = \sum_{u\in \U} v^I_{0,2}(t,x)$\linebreak$(\bfP_I B(x)u,\bfP_I B(x)u)$, 
and for every finite $I\subseteq \H$ and every $i\in \{1,2\}$ let
$\phi^I_{0,i}, \psi^I_{0,i} \colon [0,T] \times \bfP_I ( \bfH_0) \rightarrow L^{(i)}(\bfH_0,\R)$
be the functions which satisfy for every $t\in [0,T]$, $x\in  \bfP_I ( \bfH_0)$ that 
$\phi^I_{0,i}(t,x) = (\tfrac{\partial^i}{\partial x^i} \phi^I)(t,x)$ and 
$\psi^I_{0,i}(t,x) = (\tfrac{\partial^i}{\partial x^i} \psi^I)(t,x)$. 
Observe that for every finite $I\subseteq\H$ and every $t\in[0,\infty)$ it holds that $\bfA\bfP_I|_{\bfH_0}\in L(\bfH_0)$ and
\begin{equation}\label{eq:sg_projection}
e^{t(\bfA\bfP_I|_{\bfH_0})}
=
e^{t\bfA}\bfP_I|_{\bfH_0}+\bfP_{\H\setminus I}|_{\bfH_0}.
\end{equation}
Lemma~\ref{lemma:sg-estimate1} and \eqref{eq:deriv_est_B_and_F} therefore
ensure that
\begin{multline}\label{eq:deriv_est_B_and_F_projected}
  \sup_{\stackrel{I\subseteq \H,}{ I \textnormal{ is finite}}} 
  \bigg[
    \sup_{t\in [0,T]} 
    \|
      e^{t(\bfA\bfP_I|_{\bfH_0})}
    \big\|_{L(\bfH_0)}
    +
    \| 
      \bfP_IF|_{\bfH_0}
    \|_{C^4_{\mathrm{b}}(\bfH_0, \bfH_0)}+
    \|
      \bfP_IB
    \|_{C^4_{\mathrm{b}}(\bfH_0, L_2(U, \bfH_0))}
  \bigg]
 \\ 
  <
  \infty.
\end{multline}
Combining this, \eqref{eq:fin_dim_weak_error_1}, \eqref{eq:sg_projection}, 
item~\eqref{it:kolmogorov_diffble} in Lemma~\ref{lem:kolmogorov} 
(with $H=\bfH_0$, $k=4$, $A=\bfA\bfP_I|_{\bfH_0}$, $F=\bfP_IF|_{\bfH_0}$, $B=\bfP_IB$, $\varphi=\varphi$, $v^{A,F,B,\varphi}(t,x)=v^I(t,x)$ 
for $t\in[0,T]$, $x\in\bfP_I(\bfH_0)$, $I\in\{J\subseteq\H\colon J\text{ is a finite set}\}$  
in the notation of item~\eqref{it:kolmogorov_diffble} in Lemma~\ref{lem:kolmogorov}),  
and item \eqref{it:kolmogorov_deriv_est} in Lemma~\ref{lem:kolmogorov} 
(with $H=\bfH_0$ in the notation of item~\eqref{it:kolmogorov_deriv_est} in Lemma~\ref{lem:kolmogorov})
establishes items~\eqref{item:v_diff} and \eqref{item:v_unif_bddness}. 
It thus remains to prove item~\eqref{it:fin_dim_weak_error}. 
For this, observe that 
for every finite $I\subseteq\H$ it holds that $\E[\varphi(Y^{h,I}_T)]=\E[v^I(T,Y^{h,I}_T)]$ and 
\begin{equation}\label{eq:weak_error_conditional_exp}
\begin{split}
\E\big[\varphi\big(Y^{0,I}_T\big)\big]
&=
\E\big[\E\big(\varphi\big(Y^{0,I}_T\big)\big|\mathbbm{F}_0\big)\big]
=\E\big[v^I\big(0,\bfP_I\xi\big)\big]
=\E\big[v^I\big(0,Y^{h,I}_0\big)\big].
\end{split}
\end{equation}
Moreover, note that for every finite $I\subseteq\H$ it holds that $(e^{t\bfA}|_{\bfP_I(\bfH_0)})_{t\in[0,\infty)}\subseteq L(\bfP_I(\bfH_0))$ is a strongly continuous semigroup with generator $\bfA|_{\bfP_I(\bfH_0)}\in L(\bfP_I(\bfH_0))$. 
This and \eqref{eq:Y_mild} 
imply that for every finite $I\subseteq \H$ and every $t\in [0,T]$ it holds that 
\color{black}
\begin{equation}
\begin{aligned}\label{eq:Y_strong}
\big[ Y_t^{h,I} \big]_{\P,\,\calB(\bfP_I(\bfH_0))}
& = 
 \big[\bfP_I \xi\big]_{\P,\,\calB(\bfP_I(\bfH_0))}
 + 
 \int_{0}^{t} \big( 
  \bfA  Y_s^{h,I} + e^{\delta(s)\bfA} \bfP_I F\big( Y_{\round{s}{h}}^{h,I} \big) 
 \big) \,\mathrm{d}s 
\\ &
\quad  +
 \int_{0}^{t} e^{\delta(s)\bfA} \bfP_I B\big( Y_{\round{s}{h}}^{h,I} \big) \,\mathrm{d}W_s 
 .
\end{aligned}
\end{equation}
In addition, observe that item~\eqref{it:kolmogorov_deriv} in Lemma~\ref{lem:kolmogorov}
ensures that for every finite $I\subseteq\H$ and every $t\in[0,T]$, $x\in\bfP_I(\bfH_0)$ it holds that 
\begin{equation}\label{eq:vIC2}
v^I\in C^{1,2}\big([0,T]\times\bfP_I(\bfH_0),\R\big)
\end{equation}
and
\begin{equation}\label{eq:vIKolmogorov}
v^I_{1,0}(t,x)
=
-v^I_{0,1}(t,x)\big(\bfA x+\bfP_I F(x)\big)
-\tfrac12\sum_{u\in\U}v^I_{0,2}(t,x)\big(\bfP_I B(x)u,\bfP_I B(x)u\big).
\end{equation}
Combining item~\eqref{item:v_unif_bddness}, the fact that for every finite $I\subseteq\H$ it holds that $\bfA|_{\bfP_I(\bfH_0)}\in L(\bfP_I(\bfH_0))$, item~\eqref{it:Lip_est_standard2} in
Lemma~\ref{lemma:Lip_est_standard}, Lemma~\ref{lemma:sg-estimate1}, and item~\eqref{it:Y_unif_L4bdd} in  
Lemma~\ref{lemma:Y_unif_Lpbddness} therefore proves that for every finite $I\subseteq\H$ it holds that
\begin{equation}\label{eq:weak_error_integrability}
\begin{split}
&\E \bigg[
   \int_{0}^{T} 
    \big|v^I_{1,0} \big(t,Y_t^{h,I}\big)\big| 
   \,dt 
   +
   \int_{0}^{T}
    \big|v^I_{0,1}\big(t,Y_t^{h,I}\big) \big(\bfA Y_t^{h,I} + e^{\delta(t)\bfA} \bfP_I F\big(Y_{\round{t}{h}}^{h,I} \big)\big) \big|
   \,dt
\\ 
&\quad
   +
   \tfrac12 \sum_{u\in \U}
    \int_{0}^{T} 
     \big| v^I_{0,2}\big(t,Y_t^{h,I}\big) \big(
	e^{\delta(t) \bfA } \bfP_I B\big(Y_{\round{t}{h}}^{h,I}\big) u
	,
	e^{\delta(t) \bfA } \bfP_I B\big(Y_{\round{t}{h}}^{h,I}\big) u
      \big)\big|
    \,dt
   \bigg]
   <\infty.
\end{split}
\end{equation}
Furthermore, note 
\color{black}
that It\^o's isometry, item~\eqref{item:v_unif_bddness}, item~\eqref{it:Lip_est_standard2} in
Lemma~\ref{lemma:Lip_est_standard}, Lemma~\ref{lemma:sg-estimate1}, and item~\eqref{it:Y_unif_L4bdd} in  
Lemma~\ref{lemma:Y_unif_Lpbddness} \color{black}
imply that for every finite $I\subseteq \H$ it holds that 
\begin{equation}
\E \!\left[ \Big| 
  \int_{0}^{T} 
    v^I_{0,1}\big(t,Y_t^{h,I}\big) 
    e^{\delta(t)\bfA} 
    \bfP_I B\big(Y_{\round{t}{h}}^{h,I} \big)
  \,\mathrm{d} W_t
\Big|^2\right]
<
\infty.
\end{equation}
Hence, we obtain that for every finite $I\subseteq \H$ it holds that 
\begin{equation}\label{eq:stoch_int_vanish_1}
 \E \!\left[
  \int_{0}^{T} 
    v^I_{0,1}\big(t,Y_t^{h,I}\big) 
    e^{\delta(t)\bfA} 
    \bfP_I B\big(Y_{\round{t}{h}}^{h,I} \big)
  \,\mathrm{d} W_t
  \right]
  = 
  0.
\end{equation}
The It\^o formula, \eqref{eq:weak_error_conditional_exp}--\eqref{eq:vIC2}, and \eqref{eq:weak_error_integrability} 
therefore imply that for every finite $I\subseteq \H$ it holds that 
\begin{equation}\label{eq:weak_error_step1}
\begin{aligned}
&
\big| 
  \E \big[
    \phi\big(Y_T^{h,I}\big) 
    -
    \phi\big( Y_T^{0,I} \big)
  \big]
\big|
\\ 
& 
=
\big| 
  \E \big[
    v^I\big(T,Y_T^{h,I}\big) 
    -
    v^I\big(0,Y_0^{h,I}\big)
  \big]
\big|
\\
& 
=
\bigg|
  \E \bigg[
   \int_{0}^{T} 
    v^I_{1,0} \big(t,Y_t^{h,I}\big) 
   \,\mathrm{d}t 
   +
   \int_{0}^{T}
    v^I_{0,1}\big(t,Y_t^{h,I}\big) \big(\bfA Y_t^{h,I} + e^{\delta(t)\bfA} \bfP_I F\big(Y_{\round{t}{h}}^{h,I} \big)\big) 
   \,\mathrm{d}t
\\ 
& \qquad 
   +
   \tfrac12 \sum_{u\in \U}
    \int_{0}^{T} 
      v^I_{0,2}\big(t,Y_t^{h,I}\big) \big(
	e^{\delta(t) \bfA } \bfP_I B\big(Y_{\round{t}{h}}^{h,I}\big) u
	,
	e^{\delta(t) \bfA } \bfP_I B\big(Y_{\round{t}{h}}^{h,I}\big) u
      \big)
    \,\mathrm{d}t
   \bigg]
\bigg|
\end{aligned}
\end{equation}
Combining this and 
\eqref{eq:vIKolmogorov} 
demonstrates that for every finite $I\subseteq \H$ it holds that 
\begin{equation}\begin{aligned}\label{eq:fin_dim_weak_error}
&
\big| 
  \E \big[
    \phi\big(Y_T^{h,I}\big) 
    -
    \phi\big( Y_T^{0,I} \big)
  \big]
\big|
\\ 
& 
=
\bigg|
  \E \bigg[ 
   \int_{0}^{T}
    v^I_{0,1}\big(t,Y_t^{h,I}\big) \big( 
      e^{\delta(t)\bfA} \bfP_I F\big(Y_{\round{t}{h}}^{h,I} \big) - \bfP_I F\big(Y_{t}^{h,I}\big)
    \big) 
   \,\mathrm{d}t
\\ & 
  \quad 
    + \tfrac12 
    \sum_{u\in \U}
      \int_{0}^{T} 
	v^I_{0,2}\big(t,Y_t^{h,I})
	\big(
	  e^{\delta(t)\bfA } \bfP_I B(Y_{\round{t}{h}}^{h,I} ) u
	  ,
	  e^{\delta(t)\bfA } \bfP_I B(Y_{\round{t}{h}}^{h,I} ) u
	\big)
      \,\mathrm{d}t
\\ & \quad 
    - \tfrac12  
    \sum_{u\in \U}
      \int_{0}^{T} 
	v^I_{0,2}\big(t,Y_t^{h,I}\big)
	\big(
	  \bfP_I B\big(Y_{t}^{h,I} \big) u
	  ,
	  \bfP_I B\big(Y_{t}^{h,I} \big) u
	\big)
      \,\mathrm{d}t
  \bigg]
\bigg|.
\end{aligned}
\end{equation}
Moreover, observe that the triangle inequality ensures that for every finite $I\subseteq \H$ it holds that
\begin{align}\label{eq:split_det_term}
&\notag
  \left|
    \E\! \left[
      \int_{0}^{T}
	v^I_{0,1}\big(t,Y_t^{h,I}\big) 
	\big(e^{\delta(t)\bfA} \bfP_I F\big( Y_{\round{t}{h}}^{h,I} \big) - \bfP_I F\big( Y_{t}^{h,I} \big)\big) 
      \,\mathrm{d}t
    \right]
  \right|
\\ & \leq 
\notag
\left|
  \E\! \left[
    \int_{0}^{T}
      v^I_{0,1}\big(t,Y_t^{h,I}\big)
      \big(
	\big( e^{\delta(t)\bfA} - \id_{\mathbf{H}_0} \big) 
	\bfP_I F\big(Y_{\round{t}{h}}^{h,I}\big) 
      \big)
    \,\mathrm{d}t
  \right]
\right|
\\ & \quad 
  +
  \left|
    \E\! \left[
	\int_{0}^{T} 
 	  \left(
	    v^I_{0,1}\big(t,Y_t^{h,I}\big) \big( \bfP_I F\big(Y_{\round{t}{h}}^{h,I}\big) \big)
	    -
	    v^I_{0,1}\big(t, e^{\delta(t) \bfA } Y_{\round{t}{h}}^{h,I} \big) 
	    \big( \bfP_I F\big( Y_{\round{t}{h}}^{h,I} \big) \big)
 	  \right)
	\,\mathrm{d}t
    \right]
  \right|
\\ & \notag\quad 
  +
  \left|
    \E\! \left[
	\int_{0}^{T} 
	    v^I_{0,1}\big(t, e^{\delta(t) \bfA } Y_{\round{t}{h}}^{h,I}\big)
	    \big(
	      \bfP_I F\big( Y_{\round{t}{h}}^{h,I} \big)
	      -
	      \bfP_I F\big(e^{\delta(t) \bfA } Y_{\round{t}{h}}^{h,I} \big)
	    \big)
	\,\mathrm{d}t
    \right]
  \right|
\\ & \notag\quad 
  +
  \left|
    \E\! \left[
	\int_{0}^{T}
 	  \left( 
	    v^I_{0,1}\big(t, e^{\delta(t) \bfA } Y_{\round{t}{h}}^{h,I}\big)
	    \big( \bfP_I F\big(e^{\delta(t) \bfA } Y_{\round{t}{h}}^{h,I} \big) \big)
	    -
	    v^I_{0,1}\big(t,Y_t^{h,I}\big)
	    \big( \bfP_I F\big(Y_t^{h,I}\big) \big)
 	  \right)
	\,\mathrm{d}t
    \right]
  \right|.
 \end{align}
Furthermore, note that the triangle inequality implies that for every finite $I\subseteq \H$ it holds that
\begin{align}\label{eq:split_noise_term}
& \notag
  \Bigg|
    \E \Bigg[
      \sum_{u\in \U}
	\int_{0}^{T} 
	  v^I_{0,2}\big(t,Y_t^{h,I}\big)\big(
	    e^{\delta(t)\bfA } \bfP_I B\big(Y_{\round{t}{h}}^{h,I} \big) u
	    ,
	    e^{\delta(t)\bfA } \bfP_I B\big(Y_{\round{t}{h}}^{h,I} \big) u
	  \big)
	\,\mathrm{d}t
\\ & \quad \notag 
	  -
      \sum_{u\in \U}
	\int_{0}^{T} 
	  v^I_{0,2}\big(t,Y_t^{h,I}\big)\big(
	     \bfP_I B\big(Y_{t}^{h,I} \big) u
	    ,
	    \bfP_I B\big(Y_{t}^{h,I} \big) u
	  \big)
	\,\mathrm{d}t
    \Bigg]
  \Bigg|
\\ & \leq \notag
\Bigg|
  \E\! \Bigg[
    \sum_{u\in \U}
      \int_{0}^{T} 
	v^I_{0,2}\big(t,Y_t^{h,I}\big)
	\\ & \qquad \quad \notag
	\big(
	  \big( e^{\delta(t)\bfA } - \id_{\mathbf{H}_0}\big) \bfP_I B\big(Y_{\round{t}{h}}^{h,I} \big) u
	  ,
	  \big( e^{\delta(t)\bfA } + \id_{\mathbf{H}_0}\big) \bfP_I B\big(Y_{\round{t}{h}}^{h,I} \big) u
	\big)
      \,\mathrm{d}t
  \Bigg]
\Bigg|
\\ & \quad 
  +
  \Bigg|
    \E \Bigg[
      \sum_{u\in \U}
	\int_{0}^{T} 
	  v^I_{0,2}\big(t,Y_t^{h,I}\big)\big(
	    \bfP_I B\big(Y_{\round{t}{h}}^{h,I} \big) u
	    ,
	    \bfP_I B\big(Y_{\round{t}{h}}^{h,I} \big) u
	  \big)
	\,\mathrm{d}t
\\ & \qquad \notag 
	  -
      \sum_{u\in \U}
	\int_{0}^{T} 
	  v^I_{0,2}\big(t, e^{\delta(t) \bfA } Y_{\round{t}{h}}^{h,I}\big) \big(
	    \bfP_I B\big( Y_{\round{t}{h}}^{h,I} \big) u
	    ,
	    \bfP_I B\big( Y_{\round{t}{h}}^{h,I} \big) u
	  \big)
	\,\mathrm{d}t
    \Bigg]
  \Bigg|
\\ & \quad \notag 
  +
  \Bigg|
    \E \Bigg[
      \sum_{u\in \U}
	\int_{0}^{T} 
	  v^I_{0,2}\big(t,e^{\delta(t) \bfA } Y_{\round{t}{h}}^{h,I}\big)
\\ & \qquad \quad \notag 
	  \big(
	    \big[
	      \bfP_I B\big( Y_{\round{t}{h}}^{h,I} \big)
	      -
	      \bfP_I B\big( e^{\delta(t)\bfA } Y_{\round{t}{h}}^{h,I} \big) 
	    \big]
	    u
	    ,
	    \big[
	      \bfP_I B\big( Y_{\round{t}{h}}^{h,I} \big)
	      +
	      \bfP_I B\big( e^{\delta(t)\bfA } Y_{\lfloor t \rfloor_h}^{h,I}\big) 
	    \big]
	    u
	  \big)
	\,\mathrm{d}t
    \Bigg]
  \Bigg|
\\ & \quad \notag 
  +
  \Bigg|
    \E \Bigg[
      \sum_{u\in \U}
	\int_{0}^{T} 
	  v^I_{0,2}\big(t, e^{\delta(t) \bfA } Y_{\round{t}{h}}^{h,I}\big)
	  \big(
	    \bfP_I B\big( e^{\delta(t) \bfA } Y_{\round{t}{h}}^{h,I} \big) u
	    ,
	    \bfP_I B\big(e^{\delta(t) \bfA } Y_{\round{t}{h}}^{h,I} \big) u
	  \big)
	\,\mathrm{d}t
\\ & \quad \qquad \notag 
	  -
      \sum_{u\in \U}
	\int_{0}^{T} 
	  v^I_{0,2}\big( t,Y_{t}^{h,I}\big)\big(
	    \bfP_I B\big(Y_{t}^{h,I} \big) u
	    ,
	    \bfP_I B\big(Y_{t}^{h,I} \big) u
	  \big)
	\,\mathrm{d}t
    \Bigg]
  \Bigg|. \notag
 \end{align}
%
In the next step we estimate the first term on the right-hand side of~\eqref{eq:split_det_term}.
Note that item~\eqref{it:Lip_est_standard1} in Lemma~\ref{lemma:Lip_est_standard} and Lemma~\ref{lemma:sg-estimate2} imply that for every finite $I\subseteq \H$ it holds that
\begin{equation}
\begin{split} 
&
\left|
  \E\! \left[
    \int_{0}^{T}
      v^I_{0,1}\big(t,Y_t^{h,I}\big) 
      \big(
	\big( e^{\delta(t)\bfA} - \id_{\mathbf{H}_0} \big)
	\bfP_I F\big(Y_{\round{t}{h}}^{h,I}\big) 
      \big)
    \,\mathrm{d}t
  \right]
\right|
\\ &
 \leq 
 T
 \bigg[\sup_{t\in [0,T],\, x\in \bfP_I (\bfH_0) } 
 \!\Big(\| v^I_{0,1}(t,x) \|_{L(\bfP_I(\bfH_0),\R)}
\big\| \bfLambda^{2(\beta-\gamma)} \big( e^{\delta(t)\bfA} - \id_{\mathbf{H}_0} \big) \big\|_{L(\bfH_0)}\Big)\bigg]
\\ & \quad \cdot 
 \| F|_{\bfH_{\rho}} \|_{\Lip(\bfH_{\rho},\bfH_{2(\gamma-\beta)})}
  \bigg[\sup_{t\in[0,T]} \E
  \big[\! 
    \max\!\big\{
      1 
      ,  
      \big\| Y_{\round{t}{h}}^{h,I} \big\|_{\bfH_{\rho}} 
    \big\}
 \big]\bigg]
\label{eq:det_term_1}\\ & 
 \leq 
 2^{\nicefrac32} 
 h^{2(\gamma-\beta)} T \,
 \| F|_{\bfH_{\rho}} \|_{\Lip(\bfH_{\rho},\bfH_{2(\gamma-\beta)})}
 \bigg[\sup_{t\in [0,T],\, x\in \bfP_I (\bfH_0) } \| v^I_{0,1}(t,x) \|_{L(\bfP_I(\bfH_0),\R)}\bigg] 
\\ & \quad \cdot 
 \bigg[\sup_{t\in[0,T]} \E
  \big[\! 
    \max\!\big\{
      1 
      ,  
      \big\| Y_{\round{t}{h}}^{h,I} \big\|_{\bfH_{\rho}} 
    \big\}
 \big]\bigg].
 \end{split}
 \end{equation}
%
In the next step we estimate the second term on the right-hand side of~\eqref{eq:split_det_term}. 
\color{black}
Note that item~\eqref{item:v_diff}, \eqref{eq:Y_mild}, and 
Proposition~\ref{thm:mildIto}
(with  $H=(\bfP_I(\bfH_0))^2$, 
$S_{r}= [(\bfP_I(\bfH_0))^2\ni(x_1,x_2)\mapsto (e^{r\bfA} x_1,x_2)\in (\bfP_I(\bfH_0))^2]$, 
$\varphi=[(\bfP_I(\bfH_0))^2\ni(x_1,x_2)\mapsto v_{0,1}^I(t,x_1)\,x_2 \in \R]$,  
$\tau_0=\round{t}{h}$, $\tau_1=t$, 
$X_s=(Y^{h,I}_s,\bfP_IF(Y^{h,I}_{\round{t}{h}}))$,
$Y_s=(e^{\delta(s)\bfA}\bfP_I F(Y^{h,I}_{\round{t}{h}}),0)$, 
$Z_s u=(e^{\delta(s)\bfA}\bfP_I B(Y^{h,I}_{\round{t}{h}})u,0)$ 
for $r\in[0,\infty)$, $s\in[\round{t}{h},t]$, $t\in(0,T]\setminus\{h,2h,3h,\ldots\}$,  $I\in\{J\subseteq\H\colon J\text{ is a finite set}\}$, $u\in U$ 
in the notation of Proposition~\ref{thm:mildIto}) imply 
that for every finite $I\subseteq \H$ and every $t\in [0,T]$ it holds that  
\begin{equation}\label{eq:mild_Ito1}
 \begin{aligned}
& 
  \big[
    v^I_{0,1}\big(t,Y_t^{h,I}\big) \big( \bfP_I F\big(Y_{\round{t}{h}}^{h,I}\big) \big)
    -
    v^I_{0,1}\big(t, e^{\delta(t) \bfA } Y_{\round{t}{h}}^{h,I} \big) 
    \big( \bfP_I F\big( Y_{\round{t}{h}}^{h,I} \big) \big)
  \big]_{\P,\,\calB(\R)} 
\\ &
 = 
  \int_{\round{t}{h}}^{t}
    v^I_{0,2}\big(t,e^{(t-s)\bfA } Y_s^{h,I} \big) 
    \big(
      \bfP_I F\big( Y_{\round{t}{h}}^{h,I} \big)
      ,
      e^{\delta(t)\bfA }
      \bfP_I F\big(Y_{\round{t}{h}}^{h,I}\big)
    \big)
  \,\mathrm{d}s
\\ & \quad 
  +
  \int_{\round{t}{h}}^{t}
    v^I_{0,2}\big(t,e^{(t-s)\bfA } Y_s^{h,I} \big) 
    \big(
      \bfP_I F\big( Y_{\round{t}{h}}^{h,I} \big)
      ,
      e^{\delta(t)\bfA }
      \bfP_I B\big(Y_{\round{t}{h}}^{h,I}\big)
    \big)
  \,\mathrm{d}W_s
\\ & \quad 
  +
  \tfrac{1}{2}
  \sum_{u\in \U} 
    \int_{\round{t}{h}}^{t}  
      v^I_{0,3}\big(t,e^{(t-s)\bfA } Y_s^{h,I} \big) 
\\ & \quad \qquad \qquad 
      \big(
	\bfP_I F\big( Y_{\round{t}{h}}^{h,I} \big) 
	,
	e^{\delta(t)\bfA} \bfP_I B\big(Y_{\round{t}{h}}^{h,I}\big) u
	,
	e^{\delta(t)\bfA} \bfP_I B\big(Y_{\round{t}{h}}^{h,I} \big)u
      \big)
    \,\mathrm{d}s
    .
 \end{aligned}
\end{equation} 
Moreover, observe that It\^o's isometry, item~\eqref{item:v_unif_bddness},
item~\eqref{it:Lip_est_standard1} in Lemma~\ref{lemma:Lip_est_standard}, Lemma~\ref{lemma:sg-estimate1}, and 
item~\eqref{it:Y_unif_L4bdd} in  
Lemma~\ref{lemma:Y_unif_Lpbddness}
ensure that for every finite $I\subseteq \H$ and every $t\in [0,T]$ it holds that  
\begin{equation}\label{eq:stoch_int_vanish_2}
 \E
	\bigg[
    \Big| 
  \int_{\round{t}{h}}^{t}
    v^I_{0,2}\big(t,e^{(t-s)\bfA } Y_s^{h,I} \big) 
    \big(
      \bfP_I F\big( Y_{\round{t}{h}}^{h,I} \big)
      ,
      e^{\delta(t)\bfA }
      \bfP_I B\big(Y_{\round{t}{h}}^{h,I}\big)
    \big)
  \,\mathrm{d}W_s
 \Big|^2 
  \bigg]
 <\infty.
\end{equation}
This, item~\eqref{it:Lip_est_standard1} in Lemma~\ref{lemma:Lip_est_standard}, Lemma~\ref{lemma:sg-estimate1},~\eqref{eq:deriv_est_B_and_F}, and~\eqref{eq:mild_Ito1}
imply that for every finite $I\subseteq \H$ it holds that
\begin{equation}
\begin{aligned}
&
  \left|
    \E\! \left[
	\int_{0}^{T} 
	      \Big(v^I_{0,1}\big(t,Y_t^{h,I}\big) 
	      \big( \bfP_I F\big(Y_{\round{t}{h}}^{h,I}\big) \big)
	      -
	      v^I_{0,1}\big(t, e^{\delta(t) \bfA } Y_{\round{t}{h}}^{h,I} \big)
	      \big( \bfP_I F\big( Y_{\round{t}{h}}^{h,I} \big) \big)\Big)
	\,\mathrm{d}t
    \right]
  \right|
\\ &
  \leq h  T
  \sup\!\Big\{
    \| v^I_{0,k}(t,x)\|_{L^{(k)}(\bfP_I(\bfH_0),\R)}
    \colon
    \substack{
      x\in \bfP_I (\bfH_0), \\
      t\in [0,T],\,k\in \{2,3\} 
    }
  \Big\}
  \| F|_{\bfP_I (\bfH_0)} \|_{\Lip(\bfP_I (\bfH_0),\bfH_0)}
\\ & \quad \cdot
  \big(
    \| F|_{\bfP_I (\bfH_0)} \|_{\Lip(\bfP_I (\bfH_0),\bfH_0)}
    +  
    \tfrac{1}{2}\| B|_{\bfP_I (\bfH_0)} \|_{\Lip(\bfP_I (\bfH_0),L_2(U,\bfH_0))}^2
  \big)
\label{eq:det_term_2}\\ & \quad 
  \cdot 
  \bigg[
  \sup_{t\in[0,T]} \E
  \big[\! 
    \max\!\big\{
      1 
      ,  
      \big\| Y_{\round{t}{h}}^{h,I} \big\|_{\bfH_0}^3 
    \big\}
  \big]
  \bigg]
\\ & 
  \leq \tfrac{3}{2} h  T \mathfrak{m}^3
  \sup\!\Big\{
    \| v^I_{0,k}(t,x)\|_{L^{(k)}(\bfP_I(\bfH_0),\R)}
    \colon
    \substack{
      x\in \bfP_I (\bfH_0), \\
      t\in [0,T],\,k\in \{2,3\} 
    }
  \Big\}
\\ & \quad \cdot
  \bigg[
  \sup_{t\in[0,T]} \E
  \big[\! 
    \max\!\big\{
      1 
      ,  
      \big\| Y_{\round{t}{h}}^{h,I} \big\|_{\bfH_0}^3 
    \big\}
  \big]
  \bigg].
\end{aligned}
\end{equation}
%
In the next step we estimate the third term on the right-hand side of~\eqref{eq:split_det_term}.
Note that Lemma~\ref{lemma:sg-estimate2} 
implies that for every finite $I\subseteq \H$ it holds that
\begin{equation}\label{eq:det_term_3}
\begin{split}
&  
  \left|
    \E\! \left[
	\int_{0}^{T} 
	    v^I_{0,1}\big(t, e^{\delta(t) \bfA } Y_{\round{t}{h}}^{h,I}\big)
	    \big(
	      \bfP_I F\big( Y_{\round{t}{h}}^{h,I} \big)
	      -
	      \bfP_I F\big(e^{\delta(t) \bfA } Y_{\round{t}{h}}^{h,I} \big)
	    \big)
	\,\mathrm{d}t
    \right]
  \right|
\\ & 
  \leq
  \bigg[
  \sup_{t\in [0,T],\,x\in \bfP_I (\bfH_0)} \| v^I_{0,1}(t,x) \|_{L(\bfP_I(\bfH_0),\R) }
  \bigg]
  \left\| F \right\|_{\Lip(\bfH_{\beta-\gamma},\bfH_0)} 
\\ & \quad \cdot
  \int_{0}^{T}
    \E 
    \Big[ 
      \big\| 
        \bfLambda^{\beta-\gamma} \big(\!\id_{\mathbf{H}_0} - e^{\delta(t)\bf A} \big) Y_{\round{t}{h}}^{h,I} 
      \big\|_{\bfH_0}
    \Big]
  \,dt
\\ & 
  \leq
  2^{\nicefrac32} 
  h^{2(\gamma-\beta)}  
  T  \,
  \left\| F \right\|_{\Lip(\bfH_{\beta-\gamma},\bfH_0)}
  \bigg[\sup_{t\in [0,T],\,x\in \bfP_I (\bfH_0)} \| v^I_{0,1}(t,x) \|_{L(\bfP_I(\bfH_0),\R) } \bigg]
\\ & \quad \cdot  
  \bigg[\sup_{t\in [0,T]}
  \E
  \big[
    \big\| Y_{\round{t}{h}}^{h,I} \big\|_{\bfH_{\gamma-\beta}}
  \big]\bigg]
  . 
 \end{split}
 \end{equation}
In the next step we estimate the fourth term on the right-hand side of~\eqref{eq:split_det_term}.
Note that item~\eqref{item:v_diff} implies that for every finite $I\subseteq\H$ and every $t\in [0,T]$ it holds that
\begin{equation}\label{eq:phiI_C2}
\big(\bfP_I ( \bfH_0) \ni x \mapsto \phi^I(t,x) \in \R\big)\in C^2(\bfP_I(\bfH_0),\R).
\end{equation} 
In addition, observe that item~\eqref{item:v_diff} ensures that for every finite $I\subseteq \H$ and every
$t\in [0,T]$, $x, v_1, v_2 \in \bfP_I ( \bfH_0)$ it holds that 
\begin{equation}
 \begin{aligned}
  \phi^I_{0,1}(t,x)(v_1) 
  =
  v^I_{0,2}(t,x)(\bfP_I F(x),v_1)
  +
  v^I_{0,1}(t,x)(\bfP_I F'(x)(v_1))
 \end{aligned}
\end{equation}
and 
\begin{equation}
\begin{aligned}
 \phi^I_{0,2}(t,x)(v_1,v_2)
& =
 v^I_{0,3}(t,x)(\bfP_I F(x),v_1,v_2)
 +
 v^I_{0,2}(t,x)(\bfP_I F'(x)(v_2),v_1)
\\ & \quad 
 +
 v^I_{0,2}(t,x)(\bfP_I F'(x)(v_1),v_2)
 +
 v^I_{0,1}(t,x)(\bfP_I F''(x)(v_1,v_2)).
\end{aligned}
\end{equation}
This and item~\eqref{it:Lip_est_standard1} in Lemma~\ref{lemma:Lip_est_standard} imply that for every finite $I\subseteq \H$ 
and every  $t\in[0,T]$, $x\in\bfP_I(\bfH_0)$ 
it holds that
\begin{equation}\label{eq:Dphi_est1}
\begin{aligned}
 \| \phi^I_{0,1}(t,x) \|_{L(\bfP_I(\bfH_0),\R)}
  & \leq   
  2
  \sup\!\Big\{
    \| v^I_{0,k}(s,y)\|_{L^{(k)}(\bfP_I(\bfH_0),\R))}
    \colon
    \substack{
      y\in \bfP_I (\bfH_0), \\
      s\in [0,T],\,k\in \{1,2\} 
    }
  \Big\}
\\ & \quad \cdot 
  \| F|_{\bfP_I ( \bfH_0)} \|_{C^1_{\mathrm{b}}(\bfP_I (\bfH_0),\bfH_0)}
  \max\{ 1, \left\| x \right\|_{\bfH_0} \},
\end{aligned}
\end{equation}
and
\begin{equation}\label{eq:Dphi_est2}
\begin{aligned}
 \| \phi^I_{0,2}(t,x) \|_{L^{(2)}(\bfP_I(\bfH_0),\R)}
 & 
\leq 
  4 
  \sup\!\Big\{
    \| v^I_{0,k}(s,y)\|_{L^{(k)}(\bfP_I(\bfH_0),\R))}
    \colon
    \substack{
      y\in \bfP_I (\bfH_0), \\
      s\in [0,T],\,k\in \{1,2,3\} 
    }
  \Big\}
\\ & \quad  \cdot
  \| F|_{\bfP_I ( \bfH_0)} \|_{C^2_{\mathrm{b}}(\bfP_I (\bfH_0),\bfH_0)}
  \max\{ 1, \left\| x \right\|_{\bfH_0} \}
  .
\end{aligned}
\end{equation}
Next observe that 
\eqref{eq:Y_mild}, \eqref{eq:phiI_C2}, and 
Proposition~\ref{thm:mildIto}
(with $H=\bfP_I(\bfH_0)$, 
$S_{r}=e^{r\bfA}|_{\bfP_I(\bfH_0)}$, 
$\varphi=\varphi^I(t,\cdot)$,  
$\tau_0=\round{t}{h}$, $\tau_1=t$, 
$X_s=Y^{h,I}_s$, 
$Y_s=e^{\delta(s)\bfA}\bfP_I F(Y^{h,I}_{\round{t}{h}})$, $Z_s=e^{\delta(s)\bfA}\bfP_I B(Y^{h,I}_{\round{t}{h}})$  
for $r\in[0,\infty)$, $s\in[\round{t}{h},t]$, $t\in(0,T]\setminus\{h,2h,3h,\ldots\}$, $I\in\{J\subseteq\H\colon J\text{ is a finite set}\}$ 
in the notation of Proposition~\ref{thm:mildIto}) 
imply that for every finite $I\subseteq \H$ and every $t\in [0,T]$ it holds that
\begin{equation}\label{eq:Ito_phi}
 \begin{aligned}
&
  \big[
    v^I_{0,1}\big(t, e^{\delta(t) \bfA } Y_{\round{t}{h}}^{h,I}\big)
    \big( \bfP_I F\big(e^{\delta(t) \bfA } Y_{\round{t}{h}}^{h,I} \big) \big)
    -
    v^I_{0,1}\big(t,Y_t^{h,I}\big)
    \big( \bfP_I F\big(Y_t^{h,I}\big) \big)
  \big]_{\P,\,\calB(\R)}
\\ &   
  =
  - \int_{\round{t}{h}}^{t}   
    \phi^I_{0,1}\big(t,e^{(t-s)\bfA } Y_s^{h,I} \big) 
    \big( 
      e^{\delta(t)\bfA }
      \bfP_I F\big(Y_{\round{t}{h}}^{h,I}\big)
    \big)
  \,\mathrm{d}s
\\ & \quad
  -
  \int_{\round{t}{h}}^{t}
    \phi^I_{0,1}\big(t,e^{(t-s)\bfA } Y_s^{h,I} \big) 
    \big( 
      e^{\delta(t)\bfA }
      \bfP_I B\big(Y_{\round{t}{h}}^{h,I}\big)
    \big) 
  \,\mathrm{d}W_s
\\ & \quad 
  -
  \tfrac{1}{2}
  \sum_{u\in \U} 
    \int_{\round{t}{h}}^{t}
      \phi^I_{0,2}\big(t,e^{(t-s)\bfA}Y_s^{h,I}\big)
      \big( 
	e^{\delta(t)\bfA} \bfP_I B\big(Y_{\round{t}{h}}^{h,I}\big)u
	,
	e^{\delta(t)\bfA} \bfP_I B\big(Y_{\round{t}{h}}^{h,I}\big)u
      \big) 
    \,\mathrm{d}s.
 \end{aligned}
\end{equation}
Moreover, note that It\^o's isometry, item~\eqref{item:v_unif_bddness}, item~\eqref{it:Lip_est_standard1} in 
Lemma~\ref{lemma:Lip_est_standard}, Lemma~\ref{lemma:sg-estimate1},  
item~\eqref{it:Y_unif_L4bdd} in 
Lemma~\ref{lemma:Y_unif_Lpbddness}, 
and \eqref{eq:Dphi_est1} 
assure that for every finite $I\subseteq \H$ and every $t\in [0,T]$ it holds that
\begin{equation}\label{eq:stoch_int_vanish_3}
\E
  \bigg[
  \Big|
\int_{\round{t}{h}}^{t}
    \phi^I_{0,1}\big(t,e^{(t-s)\bfA } Y_s^{h,I} \big) 
    \big( 
      e^{\delta(t)\bfA }
      \bfP_I B\big(Y_{\round{t}{h}}^{h,I}\big)
    \big) 
  \,\mathrm{d}W_s
\Big|^2 
   \bigg]
  <\infty.
\end{equation}
Combining this, 
item~\eqref{it:Lip_est_standard2} in
Lemma~\ref{lemma:Lip_est_standard}, 
Lemma~\ref{lemma:sg-estimate1},~\eqref{eq:deriv_est_B_and_F},~\eqref{eq:Dphi_est1},~\eqref{eq:Dphi_est2}, and~\eqref{eq:Ito_phi} implies that for every finite $I\subseteq \H$ it holds that
\begin{equation}\label{eq:det_term_4}
\begin{aligned}
& 
  \left|
    \E\! \left[
	\int_{0}^{T}
 	  \left( 
	    v^I_{0,1}\big(t, e^{\delta(t) \bfA } Y_{\round{t}{h}}^{h,I}\big)
	    \big( \bfP_I F\big(e^{\delta(t) \bfA } Y_{\round{t}{h}}^{h,I} \big)\big)
	    -
	    v^I_{0,1}\big(t,Y_t^{h,I}\big)
	    \big(\bfP_I F\big(Y_t^{h,I}\big)\big)
 	  \right)
	\,\mathrm{d}t
    \right]
  \right|
\\ &    
  \leq 
  4 h T  
  \sup\!\Big\{
    \| v^I_{0,k}(t,x)\|_{L^{(k)}(\bfP_I(\bfH_0),\R))}
    \colon
    \substack{
      x\in \bfP_I (\bfH_0), \\
      t\in [0,T],\,k\in \{1,2,3\} 
    }
  \Big\}
  \| F|_{\bfP_I ( \bfH_0)} \|_{C^2_{\mathrm{b}}(\bfP_I ( \bfH_0),\bfH_0)} 
\\  & \quad \cdot 
  \left[     
    \| F|_{\bfP_I ( \bfH_0)} \|_{\Lip(\bfP_I (\bfH_0),\bfH_0)}
    +
    \tfrac12\| B|_{\bfP_I ( \bfH_0)} \|_{\Lip(\bfP_I (\bfH_0),L_2(U,\bfH_0))}^2
  \right]
\\ & \quad \cdot
  \bigg[ 
  \sup_{t\in[0,T]} \E
  \big[\! 
    \max\!\big\{
      1 
      ,  
      \big\| Y_{\round{t}{h}}^{h,I} \big\|_{\bfH_0}^3 
    \big\}
 \big]
 \bigg] 
\\    &   
  \leq 
  6 h T \mathfrak{m}^3 
  \sup\!\Big\{
    \| v^I_{0,k}(t,x)\|_{L^{(k)}(\bfP_I(\bfH_0),\R))}
    \colon
    \substack{
      x\in \bfP_I (\bfH_0), \\
      t\in [0,T],\,k\in \{1,2,3\} 
    }
  \Big\}
\\   & \quad \cdot 
  \bigg[  
  \sup_{t\in[0,T]} \E
  \big[\! 
    \max\!\big\{
      1 
      ,  
      \big\| Y_{\round{t}{h}}^{h,I} \big\|_{\bfH_0}^3 
    \big\}
 \big]
 \bigg]
 .
 \end{aligned}
 \end{equation}
%
In the next step we estimate the first term on the right-hand side of~\eqref{eq:split_noise_term}. 
Note that Lemma~\ref{lemma:Schatten_hoelder} implies that for every finite $I\subseteq \H$ it holds that
\begin{align}\label{eq:noise_term_1_aux}
&\notag
 \left|
  \E\! \left[
    \sum_{u\in \U}
      \int_{0}^{T} 
	v^I_{0,2}\big(t,Y_t^{h,I}\big)\Big(\big( e^{\delta(t)\bfA } - \id_{\mathbf{H}_0}\big) \bfP_I B\big(Y_{\round{t}{h}}^{h,I} \big) u
	  ,
	  \big( e^{\delta(t)\bfA } + \id_{\mathbf{H}_0}\big) \bfP_I B\big(Y_{\round{t}{h}}^{h,I} \big) u
	\Big)
      \,\mathrm{d}t
  \right]
\right|
\\ &
  \leq 
 \E \bigg[
  \int_{0}^{T}
  \big\| v^I_{0,2}\big(t,Y_t^{h,I}\big) \big\|_{L^{(2)}(\bfP_I ( \bfH_0),\R)}
  \big\| 
    \big(\!\id_{\mathbf{H}_0} + e^{\delta(t)\bfA} \big)\bfP_I B\big(Y_{\round{t}{h}}^{h,I}\big)
  \big\|_{L_{\nicefrac{2\beta}{\gamma}}(U,\bfH_0)}
\\ &\notag \qquad \quad 
  \cdot
  \big\| 
    \big(\!\id_{\mathbf{H}_0} - e^{\delta(t)\bfA}\big)\bfP_I B\big(Y_{\round{t}{h}}^{h,I}\big)
  \big\|_{L_{\nicefrac{2\beta}{(2\beta - \gamma)}}(U,\bfH_0)}
  \,\mathrm{d}t
  \bigg].
\end{align}
Moreover, observe that $\bfLambda$ and $\bfP_I$ commute.
This, item~\eqref{it:Lip_est_standard1} in Lemma~\ref{lemma:Lip_est_standard}, and Lemma~\ref{lemma:sg-estimate1}
imply that for every finite $I\subseteq \H$ and every $t\in [0,T]$ it holds that
\begin{equation}\label{eq:1a}
 \begin{aligned}
 &
   \big\| 
    \big(\!\id_{\mathbf{H}_0} + e^{\delta(t)\bfA}\big)\bfP_I B\big(Y_{\round{t}{h}}^{h,I}\big)
   \big\|_{L_{\nicefrac{2\beta}{\gamma}}(U,\bfH_0)}
\\ &  \leq    
   \| \id_{\mathbf{H}_0} + e^{\delta(t)\bfA} \|_{L(\bfH_0)}
   \| \bfLambda^{-\gamma} \|_{L_{\nicefrac{2\beta}{\gamma}}(\bfH_0)}
   \big\| \bfLambda^{\gamma} \bfP_I B\big(Y_{\round{t}{h}}^{h,I}\big)\big\|_{L(U,\bfH_0)}
\\ & \leq     
   2
   \| \bfLambda^{-2\beta} \|_{L_1(\bfH_0)}^{\nicefrac{\gamma}{(2\beta)}}
   \| B|_{\bfH_{\rho}} \|_{\Lip(\bfH_{\rho},L(U,\bfH_{\gamma}))}
   \max\!\big\{ 1, \| Y_{\round{t}{h}}^{h,I} \|_{\bfH_{\rho}}\big\}.
 \end{aligned}
\end{equation}
In addition, note that item~\eqref{it:Lip_est_standard1} in Lemma~\ref{lemma:Lip_est_standard} and Lemma~\ref{lemma:sg-estimate2} imply that for every finite $I\subseteq \H$ and every $t\in [0,T]$ it holds that
\begin{equation}\label{eq:1b}
 \begin{aligned}
&
 \big\| 
  \big(\! \id_{\mathbf{H}_0} - e^{\delta(t)\bfA}\big) \bfP_I B\big(Y_{\round{t}{h}}^{h,I}\big)
 \big\|_{L_{\nicefrac{2\beta}{(2\beta - \gamma)}}(U,\bfH_0)}
\\  &
  \leq    
   \big\|
   \bfLambda^{2(\beta - \gamma)}\big(\!\id_{\mathbf{H}_0} - e^{\delta(t)\bfA}\big) 
   \big\|_{L(\bfH_0)}
   \| \bfLambda^{\gamma-2\beta} \|_{L_{\nicefrac{2\beta}{(2\beta - \gamma)}}(\bfH_0)}
   \big\| \bfLambda^{\gamma} \bfP_I B\big(Y_{\round{t}{h}}^{h,I}\big) \big\|_{L(U,\bfH_0)}
\\ &
  \leq
  2^{\nicefrac32} 
  h^{2(\gamma-\beta)}
  \| \bfLambda^{-2\beta} \|_{L_1(\bfH_0)}^{\nicefrac{(2\beta-\gamma)}{(2\beta)}}
  \| B|_{\bfH_{\rho}} \|_{\Lip(\bfH_{\rho},L(U,\bfH_{\gamma}))}
   \max\!\big\{ 1, \| Y_{\round{t}{h}}^{h,I} \|_{\bfH_{\rho}}\big\}.
 \end{aligned}
\end{equation}
Combining~\eqref{eq:noise_term_1_aux}, \eqref{eq:1a}, and \eqref{eq:1b} 
ensures that for every finite $I\subseteq \H$ it holds that
\begin{align}\label{eq:noise_term_1}
&\notag
 \left|
  \E\! \left[
    \sum_{u\in \U}
      \int_{0}^{T} 
	v^I_{0,2}\big(t,Y_t^{h,I}\big)\big(
	  \big( e^{\delta(t)\bfA } - \id_{\mathbf{H}_0}\big) \bfP_I B\big(Y_{\round{t}{h}}^{h,I} \big) u
	  ,
	  \big( e^{\delta(t)\bfA } + \id_{\mathbf{H}_0}\big) \bfP_I B\big(Y_{\round{t}{h}}^{h,I} \big) u
	\big)
      \,\mathrm{d}t
  \right]
\right|
\\ &\notag
  \leq  
  2^{\nicefrac52} 
  h^{2(\gamma-\beta)}  
  T\,
  \| \bfLambda^{-2\beta} \|_{L_1(\bfH_0)}
 \| 
  B|_{\bfH_{\rho}}
 \|_{\Lip(\bfH_{\rho},L(U,H_{\gamma}))}^2
 \\ & \quad \cdot
 \bigg[\sup_{t\in [0,T],\,x\in \bfP_I (\bfH_0) } \| v^I_{0,2}(t,x) \|_{L^{(2)}(\bfP_I(\bfH_0),\R)}\bigg]
  \bigg[\sup_{t\in[0,T]} \E
  \big[\! 
    \max\!\big\{
      1 
      ,  
      \big\| Y_{\round{t}{h}}^{h,I} \big\|_{\bfH_{\rho}}^2 
    \big\}
 \big]\bigg]
  .
\end{align}
In the next step we estimate the second term on the right-hand side of~\eqref{eq:split_noise_term}.  
Note that item~\eqref{item:v_diff}, \eqref{eq:Y_mild}, and 
Proposition~\ref{thm:mildIto}
(with 
$H=(\bfP_I(\bfH_0))^3$, 
$S_{r}=$ $[(\bfP_I(\bfH_0))^3\ni(x_1,x_2,x_3)\mapsto (e^{r\bfA} x_1,x_2,x_3)\in (\bfP_I(\bfH_0))^3]$, 
$\varphi=[(\bfP_I(\bfH_0))^3\ni(x_1,x_2,x_3)\mapsto v_{0,2}^I(t,x_1)(x_2,x_3) \in \R]$,  
$\tau_0=\round{t}{h}$, $\tau_1=t$, 
$X_s=(Y^{h,I}_s,\bfP_IB(Y^{h,I}_{\round{t}{h}})u,\bfP_IB(Y^{h,I}_{\round{t}{h}})u)$,  
$Y_s=(e^{\delta(s)\bfA}\bfP_I F(Y^{h,I}_{\round{t}{h}}),0,0)$, 
$Z_s u'=(e^{\delta(s)\bfA}\bfP_I B(Y^{h,I}_{\round{t}{h}})u',0,0)$ 
for  $r\in [0,\infty)$, $s\in[\round{t}{h},t]$, $t\in(0,T]\setminus\{h,2h,3h,\ldots\}$, $I\in\{J\subseteq\H\colon J\text{ is a finite set}\}$, $u,u'\in U$ in the notation of Proposition~\ref{thm:mildIto})
ensure that for every finite $I\subseteq \H$ and every $t\in [0,T]$, $u\in \U$ it holds that
\begin{equation}\label{eq:B_mild_Ito_simple}
 \begin{aligned}
&
\big[
  v^I_{0,2}\big(t,Y_t^{h,I}\big)
  \big(
    \bfP_I B\big(Y_{\round{t}{h}}^{h,I} \big) u
    ,
    \bfP_I B\big(Y_{\round{t}{h}}^{h,I} \big) u
  \big)
\\ & \quad 
  -
  v^I_{0,2}\big(t, e^{\delta(t) \bfA } Y_{\round{t}{h}}^{h,I}\big) 
  \big(
    \bfP_I B\big( Y_{\round{t}{h}}^{h,I} \big) u
    ,
    \bfP_I B\big( Y_{\round{t}{h}}^{h,I} \big) u
  \big)
\big]_{\P,\calB(\R)}
\\ & 
%
  = 
  \int_{\round{t}{h}}^{t}
    v^I_{0,3}\big(t,e^{(t-s)\bfA } Y_s^{h,I} \big) 
    \big(
      \bfP_I B\big( Y_{\round{t}{h}}^{h,I} \big) u
      ,
      \bfP_I B\big( Y_{\round{t}{h}}^{h,I} \big) u
      ,
      e^{\delta(t)\bfA }
      \bfP_I F\big(Y_{\round{t}{h}}^{h,I}\big)
    \big)
  \,\mathrm{d}s
\\ & 
  +
  \int_{\round{t}{h}}^{t}
    v^I_{0,3}\big(t,e^{(t-s)\bfA } Y_s^{h,I}\big) 
    \big(
      \bfP_I B\big( Y_{\round{t}{h}}^{h,I} \big) u
      ,
      \bfP_I B\big( Y_{\round{t}{h}}^{h,I} \big) u
      ,
      e^{\delta(t)\bfA }
      \bfP_I B\big(Y_{\round{t}{h}}^{h,I}\big)
    \big)
  \,\mathrm{d}W_s
\\ & 
  +
  \tfrac{1}{2}
  \sum_{u'\in \U} 
    \int_{\round{t}{h}}^{t}  
      v^I_{0,4}\big(t,e^{(t-s)\bfA } Y_s^{h,I} \big) 
\\ & \quad \qquad
      \big(
	\bfP_I B\big( Y_{\round{t}{h}}^{h,I} \big) u
	,
	\bfP_I B\big( Y_{\round{t}{h}}^{h,I} \big) u
	,
	e^{\delta(t)\bfA} \bfP_I B\big(Y_{\round{t}{h}}^{h,I}\big) u'
	,
	e^{\delta(t)\bfA} \bfP_I B\big(Y_{\round{t}{h}}^{h,I} \big)u'
      \big)
    \,\mathrm{d}s.
\end{aligned}
\end{equation}
Moreover, observe that It\^o's isometry, item~\eqref{item:v_unif_bddness}, item~\eqref{it:Lip_est_standard1} in Lemma~\ref{lemma:Lip_est_standard}, Lemma~\ref{lemma:sg-estimate1}, and
item~\eqref{it:Y_unif_L4bdd} in Lemma~\ref{lemma:Y_unif_Lpbddness} 
ensure that for every finite $I\subseteq \H$ and every $t\in [0,T]$, $u\in\U$ it holds that
\begin{multline}\label{eq:stoch_int_vanish_4}
  \E
    \bigg[    
    \Big|
    \int_{\round{t}{h}}^{t}
      v^I_{0,3}\big(t,e^{(t-s)\bfA } Y_s^{h,I}\big) 
      \big(
	\bfP_I B\big( Y_{\round{t}{h}}^{h,I} \big) u
	,
	\bfP_I B\big( Y_{\round{t}{h}}^{h,I} \big) u
	,
	e^{\delta(t)\bfA }
	\bfP_I B\big(Y_{\round{t}{h}}^{h,I}\big)
      \big)
    \,\mathrm{d}W_s
  \Big|^2 
   \bigg]
\\  < \infty.
\end{multline}
Combining this, item~\eqref{it:Lip_est_standard1} in Lemma~\ref{lemma:Lip_est_standard}, Lemma~\ref{lemma:sg-estimate1},~\eqref{eq:deriv_est_B_and_F}, and~\eqref{eq:B_mild_Ito_simple} 
implies that for every finite $I\subseteq \H$ it holds that
\begin{align}\label{eq:noise_term_2}
&  \notag
  \Bigg|
    \E \Bigg[
      \sum_{u\in \U}
	\int_{0}^{T} 
	  \Big(
	    v^I_{0,2} \big(t,Y_t^{h,I}\big)\big(
	    \bfP_I B\big(Y_{\round{t}{h}}^{h,I} \big) u
	    ,
	    \bfP_I B\big(Y_{\round{t}{h}}^{h,I} \big) u
	  \big)
\\\notag & 
\phantom{  
  \Bigg|
    \E \Bigg[
      \sum_{u\in \U}
	\int_{0}^{T} 
}
	  -
	  v^I_{0,2}\big(t, e^{\delta(t) \bfA } Y_{\round{t}{h}}^{h,I}\big) \big(
	    \bfP_I B\big( Y_{\round{t}{h}}^{h,I} \big) u
	    ,
	    \bfP_I B\big( Y_{\round{t}{h}}^{h,I} \big) u
	  \big)
	\Big)
	\,\mathrm{d}t
    \Bigg]
  \Bigg|
\\\notag &= 
  \Bigg|
      \sum_{u\in \U}
	\int_{0}^{T} 
	  \E\Big[
	    v^I_{0,2} \big(t,Y_t^{h,I}\big)\big(
	    \bfP_I B\big(Y_{\round{t}{h}}^{h,I} \big) u
	    ,
	    \bfP_I B\big(Y_{\round{t}{h}}^{h,I} \big) u
	  \big)
\\ & 
\phantom{  
  \Bigg|
    \E \Bigg[
      \sum_{u\in \U}
	\int_{0}^{T} 
}
	  -
	  v^I_{0,2}\big(t, e^{\delta(t) \bfA } Y_{\round{t}{h}}^{h,I}\big) \big(
	    \bfP_I B\big( Y_{\round{t}{h}}^{h,I} \big) u
	    ,
	    \bfP_I B\big( Y_{\round{t}{h}}^{h,I} \big) u
	  \big)
	\Big]
	\,\mathrm{d}t
  \Bigg|
\\&
\notag
  \leq 
  h T
  \sup\!\Big\{
    \| v^I_{0,k}(t,x)\|_{L^{(k)}(\bfP_I(\bfH_0),\R))}
    \colon
    \substack{
      x\in \bfP_I (\bfH_0), \\
      t\in [0,T],\,k\in \{3,4\} 
    }
  \Big\}
  \| B|_{\bfP_I ( \bfH_0)} \|_{\Lip(\bfP_I ( \bfH_0),L_2(U,\bfH_0))}^2   
\\ &
\notag
  \quad 
  \cdot  
  \left(
    \| F|_{\bfP_I ( \bfH_0)} \|_{\Lip(\bfP_I (\bfH_0),\bfH_0)}
    +
    \tfrac{1}{2}\| B|_{\bfP_I ( \bfH_0)} \|_{\Lip(\bfP_I ( \bfH_0),L_2(U,\bfH_0))}^2  
  \right)
\\ & \quad 
\notag
  \cdot
  \bigg[
  \sup_{t\in[0,T]} \E
  \big[\! 
    \max\!\big\{
      1 
      ,  
      \big\| Y_{\round{t}{h}}^{h,I} \big\|_{\bfH_0}^4 
    \big\}
 \big]
 \bigg]
\\ & \notag\leq 
  \tfrac{3}{2} h T \mathfrak{m}^4
  \sup\!\Big\{
    \| v^I_{0,k}(t,x)\|_{L^{(k)}(\bfP_I(\bfH_0),\R))}
    \colon
    \substack{
      x\in \bfP_I (\bfH_0), \\
      t\in [0,T],\,k\in \{3,4\} 
    }
  \Big\}
\\\notag & \quad \cdot
  \bigg[  
  \sup_{t\in[0,T]} \E
  \big[\! 
    \max\!\big\{
      1 
      ,  
      \big\| Y_{\round{t}{h}}^{h,I} \big\|_{\bfH_0}^4 
    \big\}
 \big]
 \bigg].
 \end{align}
In the next step we estimate the third term on the right-hand side of~\eqref{eq:split_noise_term}.
Note that the Cauchy-Schwarz inequality, Lemma~\ref{lemma:sg-estimate2},~\eqref{eq:linGrowB},
and~\eqref{eq:LipB} imply that for every finite $I\subseteq \H$ it holds that
\begin{align}\label{eq:noise_term_3}
 \notag
 &
  \Bigg|
    \E \Bigg[
      \sum_{u\in \U}
	\int_{0}^{T} 
	  v^I_{0,2}\big(t,e^{\delta(t) \bfA } Y_{\round{t}{h}}^{h,I}\big)
\\ &\notag \quad 
	  \big(
	    \big[
	      \bfP_I B\big( Y_{\round{t}{h}}^{h,I} \big)
	      -
	      \bfP_I B\big( e^{\delta(t)\bfA } Y_{\round{t}{h}}^{h,I} \big) 
	    \big]
	    u
	    ,
	    \big[
	      \bfP_I B\big( Y_{\round{t}{h}}^{h,I} \big)
	      +
	      \bfP_I B\big( e^{\delta(t)\bfA } Y_{\lfloor t \rfloor_h}^{h,I}\big) 
	    \big]
	    u
	  \big)
	\,\mathrm{d}t
    \Bigg]
  \Bigg|
\\  &\notag
  \leq      
  \bigg[
  \sup_{t\in [0,T],\,x\in \bfP_I (\bfH_0)}
  \| v^I_{0,2}(t,x) \|_{L^{(2)}(\bfP_I ( \bfH_0),\R)}
  \bigg]
\\ &\notag \quad \cdot
  \E \Bigg[
  \int_{0}^{T}\!\!
  \bigg(
    \sum_{u\in \U}
    |\mu_u|^{-2}
    \big\|
      \big[
	B\big( Y_{\round{t}{h}}^{h,I} \big)
	-
	B\big( e^{\delta(t)\bfA } Y_{\round{t}{h}}^{h,I} \big) 
      \big]
      u
    \big\|_{\bfH_0}^2
  \bigg)^{\!\!\nicefrac{1}{2}}
\\ & \quad \qquad \cdot 
  \bigg(
    \sum_{u\in \U}
    |\mu_u|^{2}
    \big\|
      \big[
	B\big( Y_{\round{t}{h}}^{h,I} \big)
	+
	B\big( e^{\delta(t)\bfA } Y_{\round{t}{h}}^{h,I} \big) 
      \big]
      u
    \big\|_{\bfH_0}^2
  \bigg)^{\!\!\nicefrac{1}{2}}
    \,\mathrm{d}t
  \Bigg]
\\ & \notag
  \leq 
    2\,\mathfrak{l}\,\mathfrak{c}
    \bigg[
  \sup_{t\in [0,T],\,x\in \bfP_I (\bfH_0)}
  \| v^I_{0,2}(t,x) \|_{L^{(2)}(\bfP_I ( \bfH_0),\R)}
  \bigg]
\\ & \notag\quad \cdot
  \int_{0}^{T}
    \big\|
	\bfLambda^{2(\beta-\gamma)}
	\big(\!
	  \id_{\mathbf{H}_0} 
	  -
	  e^{\delta(t)\bfA }
	\big)
    \big\|_{L(\bfH_0)}
    \E 
    \big[ \!
    \max\!\big\{
      1
      ,
      \|
	Y_{\round{t}{h}}^{h,I}
      \|_{\bfH_{\gamma-\beta}}^2
      \big\}
    \big]
    \,dt
\\  &\notag 
\begin{aligned}
& \leq    
    2^{\nicefrac 52}
    h^{2(\gamma-\beta)} T\, 
    \mathfrak{l}\, \mathfrak{c} 
  \bigg[\sup_{t\in [0,T],\,x\in \bfP_I (\bfH_0)}
  \| v^I_{0,2}(t,x) \|_{L^{(2)}(\bfP_I ( \bfH_0),\R)}\bigg]
 \\ &\notag \quad \cdot
  \bigg[\sup_{t\in [0,T]}
  \E 
  \big[\!
    \max\!\big\{
      1
      ,
      \big\|
	Y_{\round{t}{h}}^{h,I}
      \big\|_{\bfH_{\gamma-\beta}}^2
    \big\}
  \big]\bigg]
    .
\end{aligned}
\end{align}
In the next step we estimate the final term on the right-hand side of~\eqref{eq:split_noise_term}. 
Note that item~\eqref{item:v_diff} implies
that for every finite $I\subseteq\H$ and every $t\in [0,T]$ it holds that 
\begin{equation}\label{eq:psiI_C2}
\big(\bfP_I (\bfH_0) \ni x \mapsto \psi^I(t,x) \in \R\big)\in C^2(\bfP_I(\bfH_0),\R).
\end{equation} 
In addition, observe that item~\eqref{item:v_diff} ensures that for every finite $I\subseteq \H$ and every $t\in [0,T]$, $x,v_1,v_2\in \bfP_I (\bfH_0)$ it holds that
\begin{equation}\label{eq:deriv1_phi}
 \begin{aligned}
  \psi^I_{0,1}(t,x)(v_1) 
  &
  =
  \textstyle\!\sum\limits_{u\in \U}\displaystyle 
    v^I_{0,3}(t,x)\left(
      \bfP_I B(x)u,
      \bfP_I B(x)u,
      v_1
    \right)
\\ & \quad   +
  2\textstyle\!\sum\limits_{u\in \U}\displaystyle
    v^I_{0,2}(t,x)\left(
      \bfP_I B'(x)(v_1) u,
      \bfP_I B(x) u
    \right),
 \end{aligned}
\end{equation}
and
 \begin{align} \label{eq:deriv2_phi}
\notag
  \psi^I_{0,2}(t,x) (v_1,v_2)
 &=
  \textstyle\!\sum\limits_{u\in \U}\displaystyle
    v^I_{0,4}(t,x)\left(
      \bfP_I B(x)u,
      \bfP_I B(x)u,
      v_1,
      v_2
    \right)
\\ & \quad \notag
  +
  2\textstyle\!\sum\limits_{u\in \U}\displaystyle
    v^I_{0,3}(t,x)\left(
      \bfP_I B'(x)(v_2) u,
      \bfP_I B(x) u,
      v_1
    \right)
\\ & \quad 
  +
  2\textstyle\!\sum\limits_{u\in \U}\displaystyle
    v^I_{0,3}(t,x)\left(
      \bfP_I B'(x)(v_1) u,
      \bfP_I B(x) u,
      v_2
    \right)
\\ & \quad \notag
  +
  2\textstyle\!\sum\limits_{u\in \U}\displaystyle
    v^I_{0,2}(t,x)\left(
      \bfP_I B''(x)(v_1,v_2) u,
      \bfP_I B(x) u
    \right)
\\ & \quad \notag
  +
  2\textstyle\!\sum\limits_{u\in \U}\displaystyle
    v^I_{0,2}(t,x)\left(
      \bfP_I B'(x)(v_1) u,
      \bfP_I B'(x)(v_2) u
    \right)
    .
 \end{align}
Combining this and item~\eqref{it:Lip_est_standard1} in Lemma~\ref{lemma:Lip_est_standard} 
shows that for every finite $I\subseteq \H$ 
and every $t\in [0,T]$, $x\in \bfP_I (\bfH_0)$ it holds that
\begin{equation}\label{eq:Dpsi_est1}
\begin{aligned}
\| \psi^I_{0,1} (t,x)\|_{ L(\bfP_I(\bfH_0),\R)}  
  & \leq
  3   
  \sup\!\Big\{
    \| v^I_{0,k}(s,y)\|_{L^{(k)}(\bfP_I(\bfH_0),\R))}
    \colon
    \substack{
      y\in \bfP_I (\bfH_0), \\
      s\in [0,T],\,k\in \{2,3\} 
    }
  \Big\}
\\ & \quad \cdot
  \| B|_{\bfP_I ( \bfH_0)} \|_{C^1_{\mathrm{b}}(\bfP_I (\bfH_0),L_2(U,H_0))}^2 
  \max\!\big\{ 1,\left\| x \right\|_{\bfH_0}^2 \!\big\}
\end{aligned}
\end{equation}
and
\begin{equation}\label{eq:Dpsi_est2}
\begin{aligned}
\| \psi^I_{0,2} (t,x) \|_{L^{(2)}(\bfP_I(\bfH_0),\R)}
& 
  \leq
  9   
  \sup\!\Big\{
    \| v^I_{0,k}(s,y)\|_{L^{(k)}(\bfP_I(\bfH_0),\R))}
    \colon
    \substack{
      y\in \bfP_I (\bfH_0), \\
      s\in [0,T],\,k\in \{2,3,4\} 
    }
  \Big\}
\\ & \quad \cdot
  \| B|_{\bfP_I ( \bfH_0)} \|_{C^2_{\mathrm{b}}(\bfP_I (\bfH_0),L_2(U,H_0))}^2
  \max\!\big\{ 1,\left\| x \right\|_{\bfH_0}^2\! \big\} 
  .
\end{aligned}
\end{equation} 
In addition, observe that
\eqref{eq:Y_mild}, \eqref{eq:psiI_C2}, and 
Proposition~\ref{thm:mildIto}
(with \
$H=\bfP_I(\bfH_0)$, 
$S_{r}=e^{r\bfA}|_{\bfP_I(\bfH_0)}$, 
$\varphi=\psi^I(t,\cdot)$,  
$\tau_0=\round{t}{h}$, $\tau_1=t$, 
$X_s=Y^{h,I}_s$, 
$Y_s=e^{\delta(s)\bfA}\bfP_I F(Y^{h,I}_{\round{t}{h}})$, $Z_s=e^{\delta(s)\bfA}\bfP_I B(Y^{h,I}_{\round{t}{h}})$ 
for $r\in [0,\infty)$, $s\in[\round{t}{h},t]$, $t\in(0,T]\setminus\{h,2h,3h,\ldots\}$, $I\in\{J\subseteq\H\colon\!\!$ $ J\text{ is a finite set}\}$ 
in the notation of Proposition~\ref{thm:mildIto}) 
imply that for every finite $I\subseteq \H$ and every $t\in [0,T]$ it holds that 
\begin{align}\label{eq:mild_Ito2}
& \notag 
\big[ 
  \psi^I\big(t,Y_{t}^{h,I}\big)
  -
  \psi^I\big(t,e^{\delta(t)\bfA }Y_{\round{t}{h}}^{h,I} \big)
\big]_{\P,\calB(\R)}
  =
  \int_{\round{t}{h}}^{t}
    \psi^I_{0,1 }\big(t,e^{(t-s)\bfA } Y_s^{h,I} \big) 
    \big( 
      e^{\delta(t)\bfA }
      F\big(Y_{\round{t}{h}}^{h,I}\big)
    \big)
  \,\mathrm{d}s
\\ & \quad 
  +
  \int_{\round{t}{h}}^{t}
    \psi^I_{0,1 }\big(t,e^{(t-s)\bfA } Y_s^{h,I} \big) 
    \big( 
      e^{\delta(t)\bfA }
      \bfP_I B\big(Y_{\round{t}{h}}^{h,I}\big)
    \big)
  \,\mathrm{d}W_s
\\ & \quad \notag 
  +
  \tfrac{1}{2}
  \sum_{u\in \U} 
    \int_{\round{t}{h}}^{t}
      \psi^I_{0,2 }\big(t,e^{(t-s)\bfA}Y_s^{h,I}\big)
      \big( 
	e^{\delta(t)\bfA} \bfP_I B\big(Y_{\round{t}{h}}^{h,I}\big)u
	,
	e^{\delta(t)\bfA} \bfP_I B\big(Y_{\round{t}{h}}^{h,I}\big)u
      \big)
    \,\mathrm{d}s.
 \end{align}
Next note that It\^o's isometry, item~\eqref{item:v_unif_bddness},
item~\eqref{it:Lip_est_standard1} in Lemma~\ref{lemma:Lip_est_standard}, Lemma~\ref{lemma:sg-estimate1}, 
item~\eqref{it:Y_unif_L4bdd} in Lemma~\ref{lemma:Y_unif_Lpbddness}, and \eqref{eq:Dpsi_est1} 
prove that for every finite $I\subseteq \H$ and every $t\in [0,T]$ it holds that
\begin{equation}\label{eq:stoch_int_vanish_5}
 \E
   \bigg[
   \Big|
   \int_{\round{t}{h}}^{t}
    \psi^I_{0,1 }\big(t,e^{(t-s)\bfA } Y_s^{h,I} \big) 
    \big( 
      e^{\delta(t)\bfA }
      \bfP_I B\big(Y_{\round{t}{h}}^{h,I}\big)
    \big)
  \,\mathrm{d}W_s
 \Big|^2
   \bigg]
 <\infty.
\end{equation}
Combining this, item~\eqref{it:Lip_est_standard1} in Lemma~\ref{lemma:Lip_est_standard}, Lemma~\ref{lemma:sg-estimate1}, \eqref{eq:deriv_est_B_and_F},~\eqref{eq:Dpsi_est1},~\eqref{eq:Dpsi_est2}, and~\eqref{eq:mild_Ito2}
ensures that for every finite $I\subseteq \H$ and every $t\in [0,T]$ it holds that
 \begin{align}
  \notag &  
  \big| 
    \E \big[
      \psi^I\big(t,Y_{t}^{h,I}\big)
      -
      \psi^I\big(t,e^{\delta(t)\bfA }Y_{\round{t}{h}}^{h,I} \big)
    \big]
  \big|
\\\notag  &
  \leq 
  h 
  \sup\!\Big\{
    \| v^I_{0,k}(s,x)\|_{L^{(k)}(\bfP_I(\bfH_0),\R))}
    \colon
    \substack{
      x\in \bfP_I (\bfH_0), \\
      s\in [0,T],\,k\in \{2,3,4\} 
    }
  \Big\}
  \| B|_{\bfP_I ( \bfH_0)} \|_{C^2_{\mathrm{b}}(\bfP_I (\bfH_0),L_2(U,\bfH_0))}^2 
\\\notag  & \quad \cdot 
  \left( 
    3 \| F|_{\bfP_I ( \bfH_0)} \|_{\Lip(\bfP_I (\bfH_0),\bfH_0)}
    +
    \tfrac{9}{2} \| B|_{\bfP_I ( \bfH_0)} \|_{\Lip(\bfP_I (\bfH_0),L_2(U,\bfH_0))}^2
  \right)
\\ & \quad \cdot
  \bigg[
  \sup_{s\in[0,T]} \E
  \big[\! 
    \max\!\big\{
      1 
      ,  
      \big\| Y_{\round{s}{h}}^{h,I} \big\|_{\bfH_0}^4
    \big\}
 \big]
 \bigg]
\\ \notag & \leq 
  \tfrac{15}2 h \,\mathfrak{m}^4  
  \sup\!\Big\{
    \| v^I_{0,k}(s,x)\|_{L^{(k)}(\bfP_I(\bfH_0),\R))}
    \colon
    \substack{
      x\in \bfP_I (\bfH_0), \\
      s\in [0,T],\,k\in \{2,3,4\} 
    }
  \Big\}
\\ \notag & \quad \cdot
  \bigg[
  \sup_{s\in[0,T]} \E
  \big[\! 
    \max\!\big\{
      1 
      ,  
      \big\| Y_{\round{s}{h}}^{h,I} \big\|_{\bfH_0}^4
    \big\}
 \big]
 \bigg].
 \end{align}
This implies that for every finite $I\subseteq \H$ it holds that
\begin{align}\label{eq:noise_term_4}
& \notag
  \Bigg|
    \E \Bigg[
      \sum_{u\in \U}
	\int_{0}^{T}
	  v^I_{0,2}\big(t, e^{\delta(t) \bfA } Y_{\round{t}{h}}^{h,I}\big) 
	  \big(
	    \bfP_I B\big( e^{\delta(t) \bfA } Y_{\round{t}{h}}^{h,I} \big) u
	    ,
	    \bfP_I B\big( e^{\delta(t) \bfA } Y_{\round{t}{h}}^{h,I} \big) u
	  \big)
	 \,\mathrm{d}t  
\\ & \quad \notag
	  - 
      \sum_{u\in \U}
	\int_{0}^{T}
	  v^I_{0,2}\big(t,Y_t^{h,I}\big)\big(
	    \bfP_I B\big(Y_{t}^{h,I} \big) u
	    ,
	    \bfP_I B\big(Y_{t}^{h,I} \big) u
	  \big)
	\,\mathrm{d}t
    \Bigg]
  \Bigg|
\\  &
 =
   \left|
    \int_{0}^{T}
    \E \big[
      \psi^I\big(t,Y_{t}^{h,I}\big)
      -
      \psi^I\big(t,e^{\delta(t)\bfA }Y_{\round{t}{h}}^{h,I} \big)
    \big]
    \,dt 
  \right|
\\ &  \notag  \leq 
 \tfrac{15}2 h T \mathfrak{m}^4
  \sup\!\Big\{
    \| v^I_{0,k}(t,x)\|_{L^{(k)}(\bfP_I(\bfH_0),\R))}
    \colon
    \substack{
      x\in \bfP_I (\bfH_0), \\
      t\in [0,T],\,k\in \{2,3,4\} 
    }
  \Big\}
\\ & \notag \quad 
  \cdot
  \bigg[
  \sup_{t\in[0,T]} \E
  \big[\! 
    \max\!\big\{
      1 
      ,  
      \big\| Y_{\round{t}{h}}^{h,I} \big\|_{\bfH_0}^4
    \big\}
 \big]
 \bigg]
.
\end{align}
%
%
Combining~\eqref{eq:fin_dim_weak_error},~\eqref{eq:split_det_term},~\eqref{eq:split_noise_term},~\eqref{eq:det_term_1},~\eqref{eq:det_term_2},~\eqref{eq:det_term_3},~\eqref{eq:det_term_4},~\eqref{eq:noise_term_1},~\eqref{eq:noise_term_2},~\eqref{eq:noise_term_3}, and~\eqref{eq:noise_term_4} ensures that for every finite $I\subseteq \H$ it holds that
\begin{align}
\notag
& 
\big| 
  \E \big[ \phi\big( Y_T^{0,I} \big) \big]
  -
  \E \big[ \phi\big( Y_T^{h,I} \big) \big] 
\big|
  \leq
  \max\{ h, h^{2(\gamma-\beta)} \} T 
\\  & 
\notag
  \cdot
  \sup\!\Big\{
    \| v^I_{0,k}(t,x)\|_{L^{(k)}(\bfP_I(\bfH_0),\R))}
    \colon
    \substack{
      x\in \bfP_I (\bfH_0),\, \\t\in [0,T],\,
      k\in \{1,2,3,4\}
    }
  \Big\}
\\ & \quad 
\notag 
%
%
 \cdot \bigg(
 2^{\nicefrac32} 
 \| F|_{\bfH_{\rho}} \|_{\Lip(\bfH_{\rho},\bfH_{2(\gamma-\beta)})} 
 \bigg[
 \sup_{t\in[0,T]} \E
  \big[\! 
    \max\!\big\{
      1 
      ,  
      \| Y_t^{h,I} \|_{\bfH_{\rho}} 
    \big\}
 \big]
 \bigg]
%
%
\\ & \qquad 
  + 
  17 \,\mathfrak{m}^4 
  \bigg[
  \sup_{t\in[0,T]} \E
  \big[\! 
    \max\!\big\{
      1 
      ,  
      \| Y_t^{h,I} \|_{\bfH_0}^4
    \big\}
 \big]
 \bigg]
%
%
\\ & \qquad 
\notag
  +
  2^{\nicefrac32}
  \left\| F \right\|_{\Lip(\bfH_{\beta-\gamma},\bfH_0)} 
  \bigg[
  \sup_{t\in [0,T]}
  \E \big[
    \|
      Y_{t}^{h,I}
    \|_{\bfH_{\gamma-\beta}}
  \big]
  \bigg]
%
\\ &  \qquad 
\notag 
  + 
  2^{\nicefrac52}
 \big\| \bfLambda^{-2\beta} \big\|_{L_1(\bfH_0)} 
 \| 
  B|_{\bfH_{\rho}}
 \|_{\Lip(\bfH_{\rho},L(U,H_{\gamma}))}^2
 \bigg[
 \sup_{t\in[0,T]} \E
  \big[\! 
    \max\!\big\{
      1 
      ,  
      \| Y_t^{h,I} \|_{\bfH_{\rho}}^2
    \big\}
 \big]
 \bigg]
%
%
\\ & \qquad 
\notag
  +  
    2^{\nicefrac52}
    \mathfrak{l}\,\mathfrak{c} 
  \bigg[
  \sup_{t\in [0,T]}
  \E 
  \big[\!
    \max\!\big\{ 
      1,
    \|
      Y_{t}^{h,I}
    \|_{\bfH_{\gamma-\beta}}^2
    \big\}
  \big]
  \bigg]
  \bigg).
\end{align}
Combining this, the estimates $\max\{h,h^{2(\gamma-\beta)}\}\leq\max\{T^{1-2(\gamma-\beta)},1\}h^{2(\gamma-\beta)}$ and $2^{\nicefrac52}\leq 6$, 
the fact that $\| \bfLambda^{-2\beta} \|_{L_1(\bfH_0)}=2\big[\sum_{h\in\H}|\lambda_h|^{-\beta}\big]$, 
and items~\eqref{it:Y_unif_L4bdd}--\eqref{it:Y_unif_L2bdd} in Lemma~\ref{lemma:Y_unif_Lpbddness}
establishes item~\eqref{it:fin_dim_weak_error}. 
The proof of Theorem~\ref{prop:fin_dim_weak_error} is thus completed.
\end{proof} 
\begin{corollary}\label{cor:weak_conv}
Assume Setting~\ref{setting:Part2}.
 Then 
 \begin{equation}\label{eq:weak_conv}
  \sup_{h\in (0,T]} \!
    \Big(
    h^{2(\beta-\gamma)}
    \big|
      \E \big[ \phi \big(Y_T^{0,\H} \big) \big]
      - 
      \E \big[ \phi \big(Y_T^{h,\H} \big) \big]
    \big|    
    \Big)
   <\infty.
 \end{equation}
\end{corollary}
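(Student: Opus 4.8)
The plan is to deduce the infinite-dimensional weak rate from the finite-dimensional estimate in Theorem~\ref{prop:fin_dim_weak_error} by a spatial Galerkin approximation argument, using the strong convergence from Corollary~\ref{cor:strong_Galerkin_conv} to pass from finite index sets to the full basis $\H$.

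First I would note that, since $(H,\langle\cdot,\cdot\rangle_H,\|\cdot\|_H)$ is separable, its orthonormal basis $\H$ is at most countable, so one may choose finite subsets $I_n\subseteq\H$, $n\in\N$, satisfying $I_n\subseteq I_{n+1}$ for all $n\in\N$ and $\cup_{n\in\N}I_n=\H$. Applying item~\eqref{it:fin_dim_weak_error} of Theorem~\ref{prop:fin_dim_weak_error} with $I=I_n$, bounding the factor involving the derivatives of $v^{I_n}$ by item~\eqref{item:v_unif_bddness} of the same theorem and bounding the factor $1+\sup_{t\in[0,T]}\E[\|Y_t^{h,I_n}\|_{\bfH_{\max\{\rho,\gamma-\beta\}}}^2+\|Y_t^{h,I_n}\|_{\bfH_0}^4]$ by items~\eqref{it:Y_unif_L4bdd}--\eqref{it:Y_unif_L2bdd} of Lemma~\ref{lemma:Y_unif_Lpbddness}, and observing that all remaining factors on the right-hand side of~\eqref{eq:fin_dim_weak_error_prop} (the Lipschitz norms of $F$ and $B$, the constants $\mathfrak m,\mathfrak c,\mathfrak l$, the quantity $\sum_{h\in\H}|\lambda_h|^{-\beta}$, the relevant operator norms of powers of $\bfLambda$, and the powers of $T$) do not depend on the index set, one obtains a finite constant $C$, independent of $h\in(0,T]$ and of $n\in\N$, such that
\begin{equation}
\big|\E\big[\phi\big(Y_T^{0,I_n}\big)\big]-\E\big[\phi\big(Y_T^{h,I_n}\big)\big]\big|\leq C\,h^{2(\gamma-\beta)}.
\end{equation}

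Next I would pass to the limit $n\to\infty$. Since $\phi\in C^4_{\mathrm b}(\bfH_0,\R)$, the map $\phi$ is globally Lipschitz on $\bfH_0$ with constant $L:=\sup_{x\in\bfH_0}\|\phi'(x)\|_{L(\bfH_0,\R)}<\infty$, so that for every $h\in[0,T]$ and every $n\in\N$
\begin{equation}
\big|\E\big[\phi\big(Y_T^{h,\H}\big)\big]-\E\big[\phi\big(Y_T^{h,I_n}\big)\big]\big|
\leq L\Big(\sup_{t\in[0,T]}\E\big[\|Y_t^{h,\H}-Y_t^{h,I_n}\|_{\bfH_0}^2\big]\Big)^{\!1/2}.
\end{equation}
Invoking Corollary~\ref{cor:strong_Galerkin_conv} once with mesh size $h$ and once with mesh size $0$ (both admissible, as that corollary is stated for all $h\in[0,T]$) shows that the right-hand side tends to $0$ as $n\to\infty$; hence $\E[\phi(Y_T^{0,I_n})]\to\E[\phi(Y_T^{0,\H})]$ and $\E[\phi(Y_T^{h,I_n})]\to\E[\phi(Y_T^{h,\H})]$ as $n\to\infty$. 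Combining this with the uniform bound above gives $|\E[\phi(Y_T^{0,\H})]-\E[\phi(Y_T^{h,\H})]|\leq C\,h^{2(\gamma-\beta)}$ for every $h\in(0,T]$, and since $2(\beta-\gamma)=-2(\gamma-\beta)$, dividing by $h^{2(\gamma-\beta)}>0$ and taking the supremum over $h\in(0,T]$ yields~\eqref{eq:weak_conv}.

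I do not expect a genuine obstacle here, since the substance of the corollary is entirely contained in Theorem~\ref{prop:fin_dim_weak_error} and Corollary~\ref{cor:strong_Galerkin_conv}. The only points that require attention are verifying that the constant furnished by Theorem~\ref{prop:fin_dim_weak_error} is genuinely uniform over the finite Galerkin index sets — which is precisely why item~\eqref{item:v_unif_bddness} is formulated as a supremum over finite $I$ and why Lemma~\ref{lemma:Y_unif_Lpbddness} provides moment bounds uniform in $I$ — and confirming that the strong convergence in Corollary~\ref{cor:strong_Galerkin_conv} may be applied at the degenerate mesh size $h=0$.
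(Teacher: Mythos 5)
Your proposal is correct and follows essentially the same route as the paper: apply item~(iii) of Theorem~\ref{prop:fin_dim_weak_error} to an increasing exhaustion of $\H$ by finite sets, note that the constant is uniform in the index set and in $h$ via item~(ii) and Lemma~\ref{lemma:Y_unif_Lpbddness}, and pass to the limit using the Lipschitz continuity of $\phi$ together with Corollary~\ref{cor:strong_Galerkin_conv} at mesh sizes $h$ and $0$. The paper organizes this as a triangle-inequality splitting followed by a $\limsup$ in $n$, but the substance is identical.
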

\begin{proof}[Proof of Corollary~\ref{cor:weak_conv}.]
Throughout this proof let $I_n\subseteq \H$, $n\in \N$, 
be a non-decreasing sequence of finite sets which satisfies that
$\cup_{n\in \N} I_n = \H$. 
Observe that the triangle inequality implies that for every $n\in \N$, $h\in (0,T]$ it holds that
\begin{equation}\label{eq:weak_conv1}
 \begin{aligned}
 &
 \big| 
  \E \big[ \phi \big(Y_T^{0,\H} \big) \big] - \E \big[ \phi \big(Y_T^{h,\H}\big) \big]
 \big|
\\ & \leq 
 \big|
   \E \big[ \phi \big(Y_T^{0,\H}\big) \big] - \E \big[ \phi\big(Y^{0,I_n}_T\big)\big] 
 \big|
 +
 \big|
  \E \big[ \phi\big(Y^{0,I_n}_T\big)\big]  - \E \big[ \phi\big(Y^{h,I_n}_T\big)\big] 
 \big|
\\ & \quad 
 +
 \big|
  \E \big[ \phi\big(Y^{h,I_n}_T\big)\big]  - \E  \big[ \phi\big(Y^{h,\H}_T\big)\big] 
 \big|
\\ 
 & \leq 
 \bigg[ 
    \sup_{x\in \bfH_0} \| \phi'(x)\|_{L(\bfH_0,\R)}
 \bigg]
 \big( 
  \| Y_T^{0,\H} - Y^{0,I_n}_T \|_{\calL^2(\P;\bfH_0)}
  +
  \| Y_T^{h,\H} - Y^{h,I_n}_T \|_{\calL^2(\P;\bfH_0)}
 \big)
\\ & \quad
 +
 \big|
  \E \big[ \phi\big(Y^{0,I_n}_T\big) \big] - \E \big[ \phi\big(Y^{h,I_n}_T\big)\big] 
 \big|. 
 \end{aligned}
\end{equation}
Furthermore, note that Corollary~\ref{cor:strong_Galerkin_conv} ensures that for every $h\in (0,T]$ it holds that 
\begin{equation}\label{eq:weak_conv2}
 \limsup_{n\rightarrow \infty} 
 \big( 
  \| Y^{0,\H}_T - Y^{0,I_n}_T \|_{\calL^2(\P;\bfH_0)} 
  +
  \| Y_T^{h,\H} - Y^{h,I_n}_T \|_{\calL^2(\P;\bfH_0)}
 \big)
 =
 0.
\end{equation}
Moreover, observe that items~\eqref{it:Y_unif_L4bdd}--\eqref{it:Y_unif_L2bdd} in Lemma~\ref{lemma:Y_unif_Lpbddness} and items~\eqref{item:v_unif_bddness}--\eqref{it:fin_dim_weak_error} in Theorem~\ref{prop:fin_dim_weak_error} imply that
\begin{equation}
 \sup_{h \in (0,T]}\! 
 \left(
 h^{2(\beta-\gamma)}
 \bigg[
 \limsup_{n\rightarrow \infty}
 \big|
  \E \big[ \phi\big(Y^{0,I_n}_T\big) \big] - \E \big[ \phi\big(Y^{h,I_n}_T\big)\big] 
 \big|
 \bigg]
 \right)
 < \infty.
\end{equation}
This,~\eqref{eq:weak_conv1}, and~\eqref{eq:weak_conv2} imply~\eqref{eq:weak_conv}.
The proof of Corollary~\ref{cor:weak_conv} is thus completed.
\end{proof}
\section[Weak convergence rates for the hyperbolic Anderson model]{Weak convergence rates for temporal numerical approximations of the hyperbolic Anderson model}\label{sec:hyperbolic_anderson}
%
%
%
%
%
\subsection{Setting}\label{ssec:setting_hyperbolic_anderson}
Throughout this section we shall frequently use the following setting.
\begin{setting}\label{setting:Part3}
For every measure space $(\Omega,\calF,\mu)$, 
every measurable space $(S,\Sigma)$, 
every set 
$\mathcal O$, 
and every function $f\colon\mathcal O\rightarrow S$ let $[f]_{\mu,\Sigma}$ be the set given by
\begin{equation}\label{eq:setting_sec4_1}
\begin{aligned}
[ f ]_{\mu,\Sigma} 
 =
 \Big\{ g \colon \Omega \rightarrow S \colon 
 \left[
 \substack{ 
 [\exists\, A \in \calF \colon ( \mu(A) = 0 
 \text{ and }
 \{ \omega \in \Omega\cap\mathcal O \colon f(\omega) \neq g(\omega)\} 
 \subseteq A )]\\
 \text{ and }[\forall\, A\in \Sigma\colon g^{-1}(A)\in \calF]
 }
 \right]
 \Big\},
\end{aligned}
\end{equation}
let $\lambda\colon \calB((0,1))\rightarrow [0,1]$ be
the Lebesgue-Borel measure on $(0,1)$, 
for every 
$r\in [0,\infty)$, $p\in (1,\infty)$ let $(W^{r,p}((0,1),\R), \left\| \cdot \right\|_{W^{r,p}((0,1),\R)})$ be the 
Sobolev-Slobodeckij space
with smoothness parameter $r$ and integrability parameter $p$ 
of equivalence classes of 
$\calB((0,1))/\calB(\R)$-measurable functions, 
for every $r\in [0,2]$, $p\in (1,\infty)$ let 
$(\calW^{r,p}((0,1),\R), \left\|\cdot\right\|_{\calW^{r,p}((0,1),\R)})$ 
be the $\R$-Banach space which satisfies that
\begin{equation}
\calW^{r,p}((0,1),\R) 
 =
\begin{cases}
 W^{r,p}((0,1),\R) & \colon r\leq\nicefrac{1}{p}
 \\
 \left\{ 
  f \in W^{r,p}((0,1),\R) 
  \colon 
  \left[
  \substack{ 
  \exists\, g\in C([0,1],\R)\colon 
  (g|_{(0,1)}\in f \\
  \text{ and }g(0)=g(1)=0) 
  }
  \right]
 \right\} 
  &
 \colon r >\nicefrac{1}{p}
\end{cases}
\end{equation} 
and $[\forall \,v\in\calW^{r,p}((0,1),\R)\colon \|v\|_{\calW^{r,p}((0,1),\R)}=\|v\|_{W^{r,p}((0,1),\R)}]$, 
for every $p\in (1,\infty)$ let 
$(L^p(\lambda;\R),\left\|\cdot\right\|_{L^p(\lambda;\R)})$ be the $\R$-Banach space which satisfies that $(L^p(\lambda;\R),\left\|\cdot\right\|_{L^p(\lambda;\R)})=$ $(W^{0,p}((0,1),\R),\left\|\cdot\right\|_{W^{0,p}((0,1),\R)})$
and let 
$A_p\colon D(A_p) \subseteq L^p(\lambda; \R) \rightarrow L^p(\lambda; \R)$ be 
the linear operator which satisfies that $D(A_p) = \calW^{2,p}((0,1),\R)$ and 
$[\forall\, h\in D(A_p)\colon A_p(h)=\Delta h]$, for every $p\in (1,\infty)$
let $ ( V_{r,p} , \left\| \cdot \right\|_{V_{r,p}})$, $ r \in \R $, be
a family of interpolation spaces associated to $- A_p $, and
for every $\delta\in (0,1)$
let $(C^{\delta}([0,1],\R), \left\| \cdot \right\|_{C^{\delta}([0,1],\R)})$ be 
the space of $\delta$-H\"older continuous functions from $[0,1]$ to $\R$.
\end{setting}
%
\noindent
Note that  for every $p\in(1,\infty)$ it holds that $A_p$ is the Dirichlet Laplacian on $L^p(\lambda; \R)$. 
The relationship between the spaces $\mathcal{W}^{2r,p}((0,1),\R)$ and $V_{r,p}$, $r\in (0,1)$, $p\in (1,\infty)$, is discussed in Lemma~\ref{lemma:eqD(A)W} below. 
\subsection{Preparatory lemmas}\label{ssec:prepHA}
Various results closely related to Lemmas~\ref{lemma:eqD(A)W}--\ref{lemma:multiplication_HS} below are
available in the literature; see, e.g., Lemarie-Rieusset \& Gala~\cite[Lemma 1]{LemarieRieussetGala:2006} for a result closely related to
Lemmas~\ref{lemma:multiplication_on_Hoelder1} and~\ref{lemma:multiplication_on_Hoelder2} below. We provide these lemmas in the exact form that we need.
\begin{lemma}\label{lemma:eqD(A)W}
Assume Setting~\ref{setting:Part3}. Then 
\begin{enumerate}[(i)]
 \item\label{item:Sobolev_Hilbert1} it holds for every $r\in (0,1)\backslash \{ \nicefrac{1}{4} \}$ that 
 $V_{r,2} \subseteq \calW^{2r,2}((0,1),\R)$ continuously, 
 \item\label{item:Sobolev_Hilbert2} it holds for every $r\in (0,1)\backslash \{ \nicefrac{1}{4} \}$ that 
 $\calW^{2r,2}((0,1),\R) \subseteq V_{r,2}$ continuously, and
 \item\label{item:Sobolev_Banach1} it holds for every $p\in (1,\infty)$,
 $r\in (0,1)$,
 $s\in [0,r)$ 
 that
 $V_{r,p} \subseteq \calW^{2s,p}((0,1),\R)$
 continuously.
\end{enumerate}
\end{lemma}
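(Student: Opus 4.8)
The plan is to identify the interpolation spaces $V_{r,p}$ associated to $-A_p$ with the Sobolev--Slobodeckij spaces $\calW^{2r,p}((0,1),\R)$ (away from the critical trace exponent) and then read off the claimed embeddings. Throughout I would use three ingredients. First, $-A_p$ is boundedly invertible with spectrum contained in $[\pi^2,\infty)$ — the eigenvalues of the Dirichlet Laplacian on $(0,1)$ are $k^2\pi^2$, $k\in\N$ — so that for $r\ge0$ one has $V_{r,p}=D((-A_p)^r)$, with norm equivalent to $\|(-A_p)^r\,\cdot\,\|_{L^p(\lambda;\R)}$, and this is the definition of the interpolation spaces associated to $-A_p$. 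Second, the Dirichlet Laplacian $A_p$ has bounded imaginary powers on $L^p(\lambda;\R)$ (indeed a bounded $H^\infty$-calculus, classical since Seeley), whence, by the complex-interpolation characterisation of fractional powers of operators with bounded imaginary powers, $V_{r,p}=D((-A_p)^r)=[L^p(\lambda;\R),\calW^{2,p}((0,1),\R)]_{r}$ for every $r\in(0,1)$; here $\calW^{2,p}((0,1),\R)=D(A_p)$ by the definition in Setting~\ref{setting:Part3}. Third, the Lions--Magenes/Grisvard identification of complex interpolation spaces of Sobolev spaces with Dirichlet boundary conditions: $[L^p(\lambda;\R),\calW^{2,p}((0,1),\R)]_{r}=\calW^{2r,p}((0,1),\R)$ with equivalent norms whenever $r\in(0,1)$ and $2r\neq\nicefrac{1}{p}$, the single exceptional value coming from the $0$-th order trace (note that $\calW^{2,p}((0,1),\R)$ imposes only the condition $g(0)=g(1)=0$ and nothing on the derivative, so no exceptional behaviour occurs at $2r=1+\nicefrac{1}{p}$).

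For items~\eqref{item:Sobolev_Hilbert1} and~\eqref{item:Sobolev_Hilbert2} (the case $p=2$) I would simply combine these three ingredients: for $r\in(0,1)\setminus\{\nicefrac{1}{4}\}$ one has $2r\neq\nicefrac{1}{2}=\nicefrac{1}{p}$, hence $V_{r,2}=D((-A_2)^r)=[L^2(\lambda;\R),\calW^{2,2}((0,1),\R)]_{r}=\calW^{2r,2}((0,1),\R)$ with equivalent norms, which yields both continuous inclusions at once. For $p=2$ one may alternatively keep the argument self-contained by using the sine basis $(\sqrt2\,\sin(k\pi\,\cdot\,))_{k\in\N}$: $V_{r,2}$ is the set of $f=\sum_{k\in\N}c_k\sqrt2\,\sin(k\pi\,\cdot\,)$ with $\sum_{k\in\N}k^{4r}|c_k|^2<\infty$, and comparing this quantity with the Slobodeckij seminorm $[v]^2=\iint_{(0,1)^2}|v(x)-v(y)|^2\,|x-y|^{-1-4r}\,dx\,dy$ (using that for $2r<\nicefrac{1}{2}$ the odd $2$-periodic extension preserves $W^{2r,2}$-regularity, while for $2r>\nicefrac{1}{2}$ finiteness of $\sum_{k\in\N}k^{4r}|c_k|^2$ forces the boundary values to vanish) gives the two-sided norm equivalence for $r\neq\nicefrac{1}{4}$; the value $r=\nicefrac{1}{4}$ is genuinely exceptional since there $V_{\nicefrac{1}{4},2}$ is the Lions--Magenes space $\calW^{\nicefrac{1}{2},2}_{00}$, strictly between $\{v\in\calW^{\nicefrac{1}{2},2}((0,1),\R):v(0)=v(1)=0\}$ and $W^{\nicefrac{1}{2},2}((0,1),\R)$.

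For item~\eqref{item:Sobolev_Banach1}, let $p\in(1,\infty)$, $r\in(0,1)$, and $s\in[0,r)$ be given. I would first pick $s'\in[s,r)$ with $2s'\neq\nicefrac{1}{p}$; this is possible because the forbidden set $\{\nicefrac{1}{2p}\}$ is a single point while $[s,r)$ is a nondegenerate interval (take $s'\in(s,r)$ if $2s=\nicefrac{1}{p}$, and $s'=s$ otherwise). Since $s'<r$, the operator $(-A_p)^{s'-r}$ is bounded on $L^p(\lambda;\R)$, hence $(-A_p)^{s'}(-A_p)^{-r}$ extends to a bounded operator, which gives the continuous inclusion $V_{r,p}\subseteq V_{s',p}$ (the moment inequality for fractional powers). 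By ingredients two and three, applicable since $2s'\neq\nicefrac{1}{p}$, one has $V_{s',p}=\calW^{2s',p}((0,1),\R)$ with equivalent norms, and finally $\calW^{2s',p}((0,1),\R)\subseteq\calW^{2s,p}((0,1),\R)$ continuously, because $2s'\ge2s$ and the boundary conditions built into the two spaces are compatible (either neither imposes one, or the condition $g(0)=g(1)=0$ is imposed in the lower-smoothness space as well). Chaining these inclusions gives $V_{r,p}\subseteq\calW^{2s,p}((0,1),\R)$ continuously, and in the edge case $s=0$ everything reduces to the trivial inclusion $V_{r,p}\subseteq L^p(\lambda;\R)$.

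The step I expect to be the main obstacle is ingredient three, the precise identification of complex interpolation spaces between $L^p(\lambda;\R)$ and $\calW^{2,p}((0,1),\R)=D(A_p)$ with Sobolev--Slobodeckij spaces carrying the Dirichlet condition, together with the careful bookkeeping of which exponent is exceptional and why $r=\nicefrac{1}{4}$ (respectively $2r=\nicefrac{1}{p}$) is the only one that fails. This is classical material (Lions--Magenes, Grisvard, Triebel), but somewhat delicate at the critical half-integer smoothness $2r=\nicefrac{1}{p}$, where the Lions--Magenes space intervenes; I would handle it by quoting the relevant literature rather than reproving it, and, for the Hilbert case $p=2$ that is actually needed in items~\eqref{item:Sobolev_Hilbert1}--\eqref{item:Sobolev_Hilbert2}, retain the explicit sine-series computation above as a fully self-contained alternative.
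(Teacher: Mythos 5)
Your proposal is correct and follows essentially the same route as the paper, whose proof consists of citing Triebel's interpolation theory (Theorem~1.15.3 for the identification of fractional-power domains with complex interpolation spaces, Theorems~4.3.3, 4.9.1, 5.4.4/1 and 5.5.1 for the identification of those interpolation spaces with Sobolev--Slobodeckij spaces carrying the Dirichlet condition away from the critical exponent $2r=\nicefrac{1}{p}$, and a perturbation of the smoothness parameter to dodge the exceptional value in item~(iii)) --- precisely the three ingredients you name. Your supplementary self-contained sine-series argument for $p=2$ is a nice extra the paper does not include, but it is not needed for the verdict.
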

\begin{proof}[Proof of Lemma~\ref{lemma:eqD(A)W}.]
First, note that, e.g., 
Triebel~\cite[Theorem~1.15.3, Definition~2.3.1/1, item~(d) in Theorem~2.3.2, Definition~4.2.1/1,
Definition~4.3.3/2, 
equation~(7) in Theorem~4.3.3, item~(b) in Theorem~4.9.1, and item~(b) in Theorem~5.5.1]{Triebel:1978} 
(with $k=1$, $B_1=\id_{\mathbbm C^{\{0,1\}}}$, $m_1=0$, $m=2$, $p=2$, $\theta=r$ for $r\in(0,1)\setminus\{\nicefrac14\}$ in the notation of 
\cite[Definition~4.3.3/2 and 
equation~(7) in Theorem~4.3.3]{Triebel:1978})  
implies that for every $r\in (0,1)\backslash \{ \nicefrac{1}{4} \}$ it holds that 
$
 V_{r,2}
 \subseteq 
 \calW^{2r,2}((0,1),\R)
 \subseteq V_{r,2} 
$
continuously 
(cf.\ Triebel~\cite[Definition~2.3.1/1 and Definition~4.2.1/1]{Triebel:1978} for a definition of $W^{r,p}((0,1),\C)$, $r\in [0,\infty)$, $p\in (1,\infty)$, and cf.\ Triebel~\cite[Section~4.2.4, Remark~2 in Section~4.4.1, and Remark~2 in Section~4.4.2]{Triebel:1978} for equivalent definitions of $W^{r,p}((0,1),\C)$, $r\in [0,\infty)$, $p\in (1,\infty)$). 
This proves items~\eqref{item:Sobolev_Hilbert1} and \eqref{item:Sobolev_Hilbert2}.
Next observe that, e.g., 
Triebel~\cite[Theorem~1.15.3, 
Definition~4.2.1/1, Definition~4.3.3/2, 
equation~(7) 
in Theorem~4.3.3, items~(a)--(b) in Theorem~4.6.1, item~(b) in Theorem~4.9.1, item~(c) in Theorem~5.4.4/1, and item~(b) in Theorem~5.5.1]{Triebel:1978} 
(with $k=1$, $B_1=\id_{\mathbbm C^{\{0,1\}}}$, $m_1=0$, $m=2$, 
$p=p$, $\theta=r-(\nicefrac\eps2)\one_{\{\nicefrac1{(2p)}\}}(r)$ 
for $p\in(1,\infty)$, $\eps\in(0,r]$, $r\in(0,1)$
in the notation of \cite[Definition~4.3.3/2 and  
equation~(7) 
in Theorem~4.3.3]{Triebel:1978})  
ensures that for every $p\in (1,\infty)$, $r\in (0,1)$, $\eps\in (0,r]$ it holds that 
$
V_{r,p}\subseteq \calW^{2(r-\eps)}((0,1),\R) $
continuously. This establishes item~\eqref{item:Sobolev_Banach1}. 
The proof of Lemma~\ref{lemma:eqD(A)W} is thus completed.
\end{proof}
%
%
\begin{lemma}\label{lemma:multiplication_on_Hoelder1}
Assume Setting~\ref{setting:Part3} and let $r\in [0,\nicefrac{1}{2})\backslash\{\nicefrac{1}{4}\}$, 
$\delta\in (2r,1)$.
Then 
\begin{enumerate}[(i)]
 \item\label{item:inV} it holds for every $f\in V_{r,2}$, $v\in C^{\delta}([0,1],\R)$
 that $fv \in V_{r,2}$ and 
 \item it holds that \label{item:multest1} 
\begin{equation}\label{eq:Afv_est1}
\begin{aligned}
& 
 \sup\!\Big\{
 \tfrac{
  \| 
    fv 
  \|_{ V_{r,2}}
 }
 {
  \| 
    f 
  \|_{V_{r,2}}
  \| v \|_{C^{\delta}([0,1],\R)}
 }
 \colon
 \substack{f\in V_{ r,2}\backslash \{0\},\,\\v\in C^{\delta}([0,1],\R)\backslash \{0\}}
 \Big\}
\\& \leq
 \tfrac{\sqrt{3}}
 {\sqrt{\delta - 2r}}
 \bigg[
 \sup_{w\in V_{ r,2}\backslash \{0\}}
 \tfrac{ 
  \| 
    w 
  \|_{ V_{r,2}}
 }
 {\| w \|_{W^{2r,2}((0,1),\R)}}
 \bigg]
 \bigg[
 \sup_{w\in V_{r,2}\backslash \{0\}}
 \tfrac{\| w \|_{W^{2r,2}((0,1),\R)}}
 { 
  \| 
    w 
  \|_{V_{r,2}}
 }
 \bigg]
 < \infty.
\end{aligned}
\end{equation}
\end{enumerate}
\end{lemma}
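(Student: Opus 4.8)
The plan is to prove Lemma~\ref{lemma:multiplication_on_Hoelder1} by reducing everything to an estimate on the Sobolev--Slobodeckij norm $\|fv\|_{W^{2r,2}((0,1),\R)}$ and then transferring it back to $V_{r,2}$ via the norm equivalence in items~\eqref{item:Sobolev_Hilbert1}--\eqref{item:Sobolev_Hilbert2} of Lemma~\ref{lemma:eqD(A)W} (which applies since $r\in(0,\nicefrac12)\setminus\{\nicefrac14\}\subseteq(0,1)\setminus\{\nicefrac14\}$; the trivial case $r=0$ is handled separately using that $W^{0,2}=L^2$ and $\|fv\|_{L^2}\le\|f\|_{L^2}\|v\|_{C^0}$). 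So the real work is: for $f\in W^{2r,2}((0,1),\R)$ and $v\in C^\delta([0,1],\R)$ with $0\le 2r<\delta<1$, show $fv\in W^{2r,2}((0,1),\R)$ with
\begin{equation*}
 \|fv\|_{W^{2r,2}((0,1),\R)}\le \tfrac{\sqrt3}{\sqrt{\delta-2r}}\,\|f\|_{W^{2r,2}((0,1),\R)}\,\|v\|_{C^\delta([0,1],\R)}.
\end{equation*}
Here I would use the Gagliardo seminorm definition: with $s=2r\in[0,1)$,
\begin{equation*}
 \|g\|_{W^{s,2}((0,1),\R)}^2=\|g\|_{L^2((0,1),\R)}^2+\int_0^1\!\!\int_0^1\frac{|g(x)-g(y)|^2}{|x-y|^{1+2s}}\,\mathrm dx\,\mathrm dy.
\end{equation*}

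First I would bound the $L^2$ part: $\|fv\|_{L^2}\le\|v\|_{C^0([0,1],\R)}\|f\|_{L^2}\le\|v\|_{C^\delta([0,1],\R)}\|f\|_{W^{s,2}}$, since $\|v\|_{C^0}\le\|v\|_{C^\delta}$ and $\|f\|_{L^2}\le\|f\|_{W^{s,2}}$. Next, for the Gagliardo seminorm I would use the pointwise identity $f(x)v(x)-f(y)v(y)=(f(x)-f(y))v(x)+f(y)(v(x)-v(y))$, so that $|f(x)v(x)-f(y)v(y)|^2\le 2|f(x)-f(y)|^2|v(x)|^2+2|f(y)|^2|v(x)-v(y)|^2$. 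The first resulting double integral is $\le 2\|v\|_{C^0}^2[f]_{W^{s,2}}^2\le 2\|v\|_{C^\delta}^2[f]_{W^{s,2}}^2$. For the second, I would use $|v(x)-v(y)|\le\|v\|_{C^\delta}|x-y|^\delta$ to get
\begin{equation*}
 2\int_0^1\!\!\int_0^1\frac{|f(y)|^2|v(x)-v(y)|^2}{|x-y|^{1+2s}}\,\mathrm dx\,\mathrm dy\le 2\|v\|_{C^\delta}^2\int_0^1|f(y)|^2\Big(\int_0^1|x-y|^{2\delta-1-2s}\,\mathrm dx\Big)\mathrm dy.
\end{equation*}
Since $2\delta-1-2s=2(\delta-s)-1>-1$, the inner integral converges, and $\int_0^1|x-y|^{2(\delta-s)-1}\,\mathrm dx\le\int_{-1}^1|t|^{2(\delta-s)-1}\,\mathrm dt=\frac{1}{\delta-s}$ for all $y\in(0,1)$; hence this term is $\le\frac{2}{\delta-s}\|v\|_{C^\delta}^2\|f\|_{L^2}^2$. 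Adding the $L^2$ contribution and the two seminorm contributions, and using $1+2+\frac{2}{\delta-s}\le\frac{3}{\delta-s}$ (valid since $\delta-s\le 1$, so $3\le\frac3{\delta-s}$ and the $\frac2{\delta-s}$ term is absorbed — actually $1+2+\frac2{\delta-s}=3+\frac2{\delta-s}\le\frac3{\delta-s}+\frac{?}{}$; here I should be a bit more careful and observe $3+\tfrac2{\delta-s}\le\tfrac5{\delta-s}$ when $\delta-s\le1$, which would give constant $\sqrt5$ rather than $\sqrt3$), I obtain the claimed bound up to the constant.

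The delicate point — and the step I expect to be the main obstacle — is getting the sharp constant $\sqrt3/\sqrt{\delta-2r}$ rather than a merely qualitatively correct one. Obtaining $\sqrt3$ presumably requires a slightly cleverer split than the crude $(a+b)^2\le2a^2+2b^2$ step, perhaps keeping $\|v\|_{C^0}^2$ and $\|v\|_{C^\delta}^2$ separated throughout and only at the end bounding both by $\|v\|_{C^\delta}^2$, together with a tighter bookkeeping of which of the three summands ($L^2$, $v$-times-seminorm-of-$f$, $f$-times-Hölder-of-$v$) can be combined; alternatively one estimates $\int_0^1|x-y|^{2(\delta-r)\cdot 2-1-\ldots}$ on the symmetric interval more carefully. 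Once the constant-free version is in hand, the final step is purely cosmetic: feed it through the two norm-equivalence factors $\sup_{w\ne0}\|w\|_{V_{r,2}}/\|w\|_{W^{2r,2}}$ and $\sup_{w\ne0}\|w\|_{W^{2r,2}}/\|w\|_{V_{r,2}}$, both finite by Lemma~\ref{lemma:eqD(A)W}, which simultaneously establishes $fv\in V_{r,2}$ (item~\eqref{item:inV}) and inequality~\eqref{eq:Afv_est1}, completing the proof.
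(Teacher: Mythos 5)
Your proposal is correct and has the same outer structure as the paper's proof (reduce to a multiplier estimate in $W^{2r,2}((0,1),\R)$, treat $r=0$ separately, then transfer to $V_{r,2}$ via the two embedding constants from Lemma~\ref{lemma:eqD(A)W}), but it differs in how the core estimate is obtained: the paper simply cites equation~(23) in Jentzen \& R\"ockner~\cite{JentzenRoeckner:2012}, which is precisely the bound $\|fv\|_{W^{2r,2}}\leq \sqrt{3}\,(\delta-2r)^{-\nicefrac12}\|f\|_{W^{2r,2}}\|v\|_{C^{\delta}}$, whereas you reprove it from the Gagliardo seminorm. Your worry about only reaching $\sqrt{5}$ is unfounded and comes from adding the three coefficients instead of grouping them by the two pieces of $\|f\|_{W^{2r,2}}^2$: your splitting gives $\|fv\|_{W^{2r,2}}^2\leq \big(1+\tfrac{2}{\delta-2r}\big)\|v\|_{C^{\delta}}^2\|f\|_{L^2}^2+2\,\|v\|_{C^{\delta}}^2\,[f]_{W^{2r,2}}^2$, and since $\delta-2r<1$ one has $1+\tfrac{2}{\delta-2r}\leq\tfrac{3}{\delta-2r}$ and $2\leq\tfrac{3}{\delta-2r}$, so taking the maximum of the two coefficients already yields the constant $\sqrt{3}/\sqrt{\delta-2r}$ with no extra cleverness. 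Two small caveats: your computation presupposes that the paper's $W^{2r,2}$-norm is the intrinsic Sobolev--Slobodeckij (Gagliardo) norm, which is exactly the norm for which the cited constant is stated; and for item~(i) in the regime $r\in(\nicefrac14,\nicefrac12)$ you need $fv\in\calW^{2r,2}$, i.e.\ the zero boundary trace, before item~\eqref{item:Sobolev_Hilbert2} of Lemma~\ref{lemma:eqD(A)W} applies --- this follows because $f$ then has a continuous representative vanishing at $0$ and $1$ and $v$ is continuous, a point the paper also passes over silently. What your route buys is a self-contained, elementary proof with the explicit constant; what the paper's route buys is brevity by outsourcing the estimate to the literature.
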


\begin{proof}[Proof of Lemma~\ref{lemma:multiplication_on_Hoelder1}.]
To prove items (i) and (ii) we distinguish between the case $r=0$ and the case $r>0$. 
We first prove items (i) and (ii) in the case $r=0$. 
Observe that the fact that for every $w\in C([0,1],\R)$ it holds that 
\begin{equation} 
\| w \|_{C^{\delta}([0,1],\R)} = \sup_{x\in [0,1]} |w(x)| + \sup_{x,y\in [0,1],\, x\neq y} 
\Big( 
    \tfrac{|w(x)-w(y)|}{|x-y|^{\delta}}
\Big)
\end{equation}
establishes items (i) and (ii) in the case $r=0$.
Next we prove items (i) and (ii) in the case $r>0$.
Note that item~\eqref{item:Sobolev_Hilbert1} in Lemma~\ref{lemma:eqD(A)W} and (23) in Jentzen \& R\"ockner~\cite{JentzenRoeckner:2012} imply that 
for every 
$f\in V_{r,2}$, $v\in C^{\delta}([0,1],\R)$ it holds that 
$f\in \calW^{2r,2}((0,1),\R)$ and $fv\in \calW^{2r,2}((0,1),\R)$. 
Combining this and item~\eqref{item:Sobolev_Hilbert2} in Lemma~\ref{lemma:eqD(A)W} establishes item~\eqref{item:inV} 
in the case $r>0$. 
Moreover, observe that (23) in Jentzen~\& R\"ockner~\cite{JentzenRoeckner:2012} assures that
\begin{equation}\label{eq:JR}
 \sup\!
 \Big\{
 \tfrac{\| f v \|_{W^{2r,2}((0,1),\R)}}
 {
 \| f \|_{W^{2r,2}((0,1),\R)}
 \| v \|_{C^{\delta}([0,1],\R)} 
 }
 \colon
 \substack{f\in W^{2r,2}((0,1),\R)\backslash \{0\},\,  \\ v\in C^{\delta}([0,1],\R)\backslash \{0\}}
 \Big\}
 \leq 
 \tfrac{\sqrt{3}}
 {\sqrt{\delta - 2r}}.
\end{equation}
Combining this and items~\eqref{item:Sobolev_Hilbert1}--\eqref{item:Sobolev_Hilbert2} in Lemma~\ref{lemma:eqD(A)W} establishes item~\eqref{item:multest1}  
in the case $r>0$.  
The proof of Lemma~\ref{lemma:multiplication_on_Hoelder1} is thus completed.
\end{proof}
\begin{lemma}\label{lemma:multiplication_on_Hoelder2}
Assume Setting~\ref{setting:Part3} and let $r\in (0,\nicefrac{1}{2})\backslash\{\nicefrac{1}{4}\}$, 
$\delta\in (2r,1)$. 
Then 
\begin{equation}\label{eq:Afv_est2}
\begin{aligned}
& \sup\!
\Big\{ 
 \tfrac{
  \| 
    fv 
  \|_{V_{-r,2}}
 }
 {
  \| 
    f 
  \|_{V_{-r,2}}
  \| v \|_{C^{\delta}([0,1],\R)}
 }
 \colon
  \substack{f\in L^2(\lambda;\R)\backslash \{0\},\,\\v\in C^{\delta}([0,1],\R)\backslash \{0\}}
  \Big\}
\\ & \leq
 \tfrac{\sqrt{3}}
 {\sqrt{\delta - 2r}}
 \bigg[
 \sup_{w\in V_{ r,2}\backslash \{0\}}
 \tfrac{ 
  \| 
     w
  \|_{V_{r,2}}
 }
 {\| w \|_{W^{2r,2}((0,1),\R)}}
 \bigg]
 \bigg[
 \sup_{w\in V_{ r, 2}\backslash\{0\}}
 \tfrac{\| w \|_{W^{2r,2}((0,1),\R)}}{ \| w \|_{V_{r,2}}}
 \bigg]
 < \infty.
\end{aligned}
\end{equation}
\end{lemma}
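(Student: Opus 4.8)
The plan is to deduce the multiplier estimate on the negative-order space $V_{-r,2}$ from the already-proved multiplier estimate on the positive-order space $V_{r,2}$ (Lemma~\ref{lemma:multiplication_on_Hoelder1}) by duality, exploiting that multiplication by a real-valued function $v$ is a symmetric operation for the $L^2(\lambda;\R)$-inner product. The key external input is that, since $A_2$ is the Dirichlet Laplacian on $L^2(\lambda;\R)$, the operator $-A_2$ is self-adjoint and strictly positive, so its associated family of interpolation spaces $(V_{r,2})_{r\in\R}$ arises from the spectral calculus and forms a Gelfand triple; in particular, for every $g\in L^2(\lambda;\R)$ one has
\begin{equation*}
 \|g\|_{V_{-r,2}}
 =
 \|(-A_2)^{-r}g\|_{L^2(\lambda;\R)}
 =
 \sup_{w\in V_{r,2}\backslash\{0\}}\tfrac{|\langle g,w\rangle_{L^2(\lambda;\R)}|}{\|w\|_{V_{r,2}}}.
\end{equation*}

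With this identity in hand the argument is short. First I would note that, since $C^{\delta}([0,1],\R)\subseteq C([0,1],\R)\subseteq L^\infty(\lambda;\R)$, for $f\in L^2(\lambda;\R)$ and $v\in C^{\delta}([0,1],\R)$ the product $fv$ lies in $L^2(\lambda;\R)\subseteq V_{-r,2}$, so the left-hand side of~\eqref{eq:Afv_est2} is well defined. Fix such $f,v$ and let $w\in V_{r,2}$ be arbitrary. Item~\eqref{item:inV} in Lemma~\ref{lemma:multiplication_on_Hoelder1} (applicable because $r\in(0,\nicefrac12)\backslash\{\nicefrac14\}$ and $\delta\in(2r,1)$) yields $vw\in V_{r,2}$, and since $v$ is real-valued,
\begin{equation*}
 \langle fv,w\rangle_{L^2(\lambda;\R)}
 =
 \int_{(0,1)} f\,v\,w\,\mathrm{d}\lambda
 =
 \langle f,vw\rangle_{L^2(\lambda;\R)}.
\end{equation*}
Applying the duality identity to $g=f$ gives $|\langle fv,w\rangle_{L^2(\lambda;\R)}|\le \|f\|_{V_{-r,2}}\|vw\|_{V_{r,2}}$, and item~\eqref{item:multest1} in Lemma~\ref{lemma:multiplication_on_Hoelder1} bounds $\|vw\|_{V_{r,2}}$ by $\tfrac{\sqrt3}{\sqrt{\delta-2r}}\big[\sup_{u\in V_{r,2}\backslash\{0\}}\tfrac{\|u\|_{V_{r,2}}}{\|u\|_{W^{2r,2}((0,1),\R)}}\big]\big[\sup_{u\in V_{r,2}\backslash\{0\}}\tfrac{\|u\|_{W^{2r,2}((0,1),\R)}}{\|u\|_{V_{r,2}}}\big]\|w\|_{V_{r,2}}\|v\|_{C^{\delta}([0,1],\R)}$.

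Dividing by $\|w\|_{V_{r,2}}$, taking the supremum over $w\in V_{r,2}\backslash\{0\}$, and invoking the duality identity once more, this time for $g=fv$, produces precisely the bound in~\eqref{eq:Afv_est2}; the finiteness of the right-hand side is already contained in item~\eqref{item:multest1} in Lemma~\ref{lemma:multiplication_on_Hoelder1} together with items~\eqref{item:Sobolev_Hilbert1}--\eqref{item:Sobolev_Hilbert2} in Lemma~\ref{lemma:eqD(A)W}. The only genuinely delicate step is the duality identity for the interpolation spaces: one has to make sure that the normalisation of $V_{-r,2}$ is the dual one, so that $\|g\|_{V_{-r,2}}=\|(-A_2)^{-r}g\|_{L^2(\lambda;\R)}$ for $g\in L^2(\lambda;\R)$ and that the swap $\langle(-A_2)^{-r}g,h\rangle_{L^2(\lambda;\R)}=\langle g,(-A_2)^{-r}h\rangle_{L^2(\lambda;\R)}$ is legitimate — which rests on the self-adjointness of $(-A_2)^{-r}$ on $L^2(\lambda;\R)$, i.e.\ on $A_2$ being the Dirichlet Laplacian. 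No further estimates on Sobolev--Slobodeckij spaces beyond those already established are required.
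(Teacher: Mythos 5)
Your proposal is correct and follows essentially the same route as the paper: both proofs reduce the negative-order estimate to Lemma~\ref{lemma:multiplication_on_Hoelder1} by a duality argument resting on the self-adjointness of $(-A_2)^{-r}$ on $L^2(\lambda;\R)$, the only difference being that you parametrize the dual pairing by $w\in V_{r,2}$ while the paper writes the supremum over $u\in L^2(\lambda;\R)$ and applies Lemma~\ref{lemma:multiplication_on_Hoelder1} to $v(-A_2)^{-r}u$, which is the same computation after the substitution $w=(-A_2)^{-r}u$. Your identification of the normalization $\|g\|_{V_{-r,2}}=\|(-A_2)^{-r}g\|_{L^2(\lambda;\R)}$ as the one delicate point matches exactly where the paper invokes self-adjointness.
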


\begin{proof}[Proof of Lemma~\ref{lemma:multiplication_on_Hoelder2}]
First, note that Lemma~\ref{lemma:multiplication_on_Hoelder1}
proves that for every $u\in L^2(\lambda;\R)$, $v\in C^{\delta}([0,1],\R)$ it holds that $v(-A_2)^{-r} u \in V_{r,2}$. 
The fact that for every $f\in L^2(\lambda; \R )$, $v\in C^{\delta}([0,1],\R)$ it holds 
that $fv\in L^2(\lambda; \R )$ and the self-adjointness of $L^2(\lambda;\R)\ni v\mapsto  (-A_2)^{-r}v\in L^2(\lambda;\R)$ therefore imply that  
for every $f\in L^2(\lambda;\R)$, $v\in C^{\delta}([0,1],\R)$ it holds that 
\begin{equation}\label{eq:dual}
\begin{aligned}
 \|
  fv
 \|_{V_{-r,2}}
&  = 
 \| 
  (-A_2)^{-r} (fv)
  \|_{L^2(\lambda;\R)} 
 =
 \sup_{u\in L^2(\lambda; \R )\backslash \{0\}}\!
 \tfrac{
  \langle
      (-A_2)^{-r} u,
      fv
  \rangle_{L^2(\lambda; \R )}
 }{
  \| u \|_{L^2(\lambda; \R )}
 }
 \\
 & = 
 \sup_{u\in L^2(\lambda; \R )\backslash \{0\}}\!
 \tfrac{
  \langle
      (-A_2)^{r}
      ( v (-A_2)^{-r} u ),
      (-A_2)^{-r} f
  \rangle_{L^2(\lambda; \R )}
 }{
  \| u \|_{L^2(\lambda; \R )}
 }
 \\
 & 
 \leq 
 \bigg[
 \sup_{u\in L^2(\lambda; \R )\backslash\{0\}}
 \tfrac{
    \|
       v (-A_2)^{-r} u 
    \|_{V_{r,2}}
  }{
    \| u \|_{L^2(\lambda; \R )}
  }
  \bigg]
  \left\|
    f
  \right\|_{V_{-r,2}}.
\end{aligned}
\end{equation}
Combining this 
and 
Lemma~\ref{lemma:multiplication_on_Hoelder1} \color{black}
(with $v=v$, $f=(-A_2)^{-r}u$ in the notation of Lemma~\ref{lemma:multiplication_on_Hoelder1})
establishes~\eqref{eq:Afv_est2}.
The proof of Lemma~\ref{lemma:multiplication_on_Hoelder2} is thus completed.
\end{proof}
\begin{lemma}\label{lemma:multiplication_HS}
Assume Setting~\ref{setting:Part3}, 
for every $\R$-Hilbert space $(H,\langle\cdot,\cdot\rangle_H,\left\|\cdot\right\|_H)$ let $(L_2(H),$\linebreak$\langle\cdot,\cdot\rangle_{L_2(H)},\left\|\cdot\right\|_{L_2(H)})$ be the $\R$-Hilbert space of Hilbert-Schmidt operators from $H$ to $H$, 
let 
$r \in (-\nicefrac{1}{4},\nicefrac{1}{4})$, and for every 
$m\in V_{\max\{0,r\},2}$ let $M_m\colon D(M_m) \rightarrow L^2(\lambda; \R )$
be the linear operator which satisfies that
$ 
 D(M_m) = \{ h \in L^2( \lambda; \R) \colon mh \in L^2( \lambda; \R)\}
$
and $[\forall\, h\in D(M_m) \colon M_m h = mh]$.
Then  
\begin{enumerate}[(i)]
 \item\label{item:inV2} it holds for every $m\in V_{\max\{0,r\},2}$, $h\in L^2(\lambda; \R )$ 
 that $(-A_2)^{-\nicefrac{1}{2}}h\in D(M_m)$ and $M_m (-A_2)^{-\nicefrac{1}{2}}h \in V_{\max\{0,r\},2}$ and
\item\label{item:HSest} it holds that
$
  \sup_{m\in V_{\max\{0,r\},2}\backslash \{0\}} 
  \Big[
  \frac{ 
    \| 
      (-A_2)^{r} M_m (-A_2)^{-\nicefrac{1}{2}}
    \|_{L_2(L^2(\lambda; \R ))} 
  }
  {  \| m \|_{V_{r,2}} }
  \Big]
  <
  \infty
$.
\end{enumerate}
\end{lemma}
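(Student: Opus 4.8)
The plan is to diagonalise $-A_2$ and reduce the Hilbert--Schmidt estimate to the summability of an explicit series over the eigenvalues. First I would record that the functions $e_k := \sqrt{2}\,[(0,1)\ni x\mapsto \sin(k\pi x)]$, $k\in\N$, form an orthonormal basis of $L^2(\lambda;\R)$ consisting of eigenfunctions of $-A_2$ with eigenvalues $(k\pi)^2$, so that $(-A_2)^s e_k=(k\pi)^{2s}e_k$ for all $s\in\R$, $k\in\N$, and $\|v\|_{V_{s,2}}=\|(-A_2)^s v\|_{L^2(\lambda;\R)}$ for all $s\in\R$, $v\in V_{s,2}$. Since $r\in(-\nicefrac{1}{4},\nicefrac{1}{4})$, I would fix once and for all some $\delta\in(2|r|,\nicefrac{1}{2})$ (this interval is nonempty) and verify by an elementary estimate for $\sin(k\pi\cdot)$ that there is $\mathfrak{K}\in[0,\infty)$ with $\|e_k\|_{C^{\delta}([0,1],\R)}\le\mathfrak{K}\,k^{\delta}$ for every $k\in\N$. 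The key consequence, obtained by applying Lemma~\ref{lemma:multiplication_on_Hoelder1} with smoothness parameter $\max\{0,r\}\in[0,\nicefrac{1}{4})\subseteq[0,\nicefrac{1}{2})\backslash\{\nicefrac{1}{4}\}$ (the case $\max\{0,r\}=0$ being trivial) and, separately, Lemma~\ref{lemma:multiplication_on_Hoelder1} in the case $r\ge 0$ respectively Lemma~\ref{lemma:multiplication_on_Hoelder2} with parameter $-r\in(0,\nicefrac{1}{4})$ in the case $r<0$, is that there exist $\mathfrak{C},\mathfrak{C}'\in[0,\infty)$ such that for every $m\in V_{\max\{0,r\},2}$ and every $k\in\N$ it holds that $\|m\,e_k\|_{V_{\max\{0,r\},2}}\le\mathfrak{C}\,\|m\|_{V_{\max\{0,r\},2}}\,k^{\delta}$ and $\|m\,e_k\|_{V_{r,2}}\le\mathfrak{C}'\,\|m\|_{V_{r,2}}\,k^{\delta}$.

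For item~\eqref{item:inV2} I would fix $m\in V_{\max\{0,r\},2}$ and $h\in L^2(\lambda;\R)$, write $h=\sum_{k\in\N}h_k e_k$ with $\sum_{k\in\N}|h_k|^2<\infty$, so that $(-A_2)^{-\nicefrac{1}{2}}h=\sum_{k\in\N}(k\pi)^{-1}h_k e_k$ in $L^2(\lambda;\R)$. The Cauchy--Schwarz inequality together with $\delta<\nicefrac{1}{2}$ (which gives $\sum_{k\in\N}k^{2\delta-2}<\infty$) shows that $\sum_{k\in\N}(k\pi)^{-1}|h_k|\,\|m\,e_k\|_{V_{\max\{0,r\},2}}<\infty$, so the series $g:=\sum_{k\in\N}(k\pi)^{-1}h_k\,(m\,e_k)$ converges absolutely in $V_{\max\{0,r\},2}$ and hence in $L^2(\lambda;\R)$. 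I would then identify $g$ with the genuine pointwise product $m\cdot(-A_2)^{-\nicefrac{1}{2}}h$: the partial sums $v_n:=\sum_{k=1}^{n}(k\pi)^{-1}h_k e_k$ converge to $(-A_2)^{-\nicefrac{1}{2}}h$ in $L^2(\lambda;\R)$, hence $\lambda$-a.e.\ along a subsequence, so $(m v_n)$ converges $\lambda$-a.e.\ to $m\cdot(-A_2)^{-\nicefrac{1}{2}}h$ along that subsequence, while $m v_n\to g$ in $L^2(\lambda;\R)$; passing to a further $\lambda$-a.e.\ convergent subsequence yields $g=m\cdot(-A_2)^{-\nicefrac{1}{2}}h$ $\lambda$-a.e. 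In particular $m\cdot(-A_2)^{-\nicefrac{1}{2}}h=g\in L^2(\lambda;\R)$, i.e.\ $(-A_2)^{-\nicefrac{1}{2}}h\in D(M_m)$, and $M_m(-A_2)^{-\nicefrac{1}{2}}h=g\in V_{\max\{0,r\},2}$, which is item~\eqref{item:inV2}.

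For item~\eqref{item:HSest}, item~\eqref{item:inV2} shows that $T_m:=(-A_2)^r M_m(-A_2)^{-\nicefrac{1}{2}}\in L(L^2(\lambda;\R))$ is well defined for every $m\in V_{\max\{0,r\},2}$, since $V_{\max\{0,r\},2}\subseteq V_{r,2}$ and $(-A_2)^r$ maps $V_{r,2}$ into $L^2(\lambda;\R)$. Evaluating $T_m$ on the orthonormal basis $(e_k)_{k\in\N}$ gives $T_m e_k=(k\pi)^{-1}(-A_2)^r(m\,e_k)$ and thus $\|T_m e_k\|_{L^2(\lambda;\R)}=(k\pi)^{-1}\|m\,e_k\|_{V_{r,2}}$, so that $\|T_m\|_{L_2(L^2(\lambda;\R))}^2=\sum_{k\in\N}(k\pi)^{-2}\|m\,e_k\|_{V_{r,2}}^2\le(\mathfrak{C}')^2\pi^{-2}\big[\sum_{k\in\N}k^{2\delta-2}\big]\|m\|_{V_{r,2}}^2$. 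As $\delta<\nicefrac{1}{2}$ the series $\sum_{k\in\N}k^{2\delta-2}$ converges, and dividing by $\|m\|_{V_{r,2}}$ and taking the supremum over $m\in V_{\max\{0,r\},2}\backslash\{0\}$ establishes item~\eqref{item:HSest}.

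I expect the only genuinely delicate point to be the identification in item~\eqref{item:inV2} of the $L^2$-limit $g$ with the pointwise product $m\cdot(-A_2)^{-\nicefrac{1}{2}}h$: when $r\le 0$ the multiplier $m$ lies only in $L^2(\lambda;\R)$, multiplication by $m$ is an unbounded operation on $L^2(\lambda;\R)$, and one must argue through $\lambda$-a.e.\ convergent subsequences rather than by a continuity and density argument. Everything else --- the diagonalisation of $-A_2$, the bound $\|e_k\|_{C^{\delta}([0,1],\R)}\le\mathfrak{K}\,k^{\delta}$, the bookkeeping of the cases $r<0$, $r=0$, $r>0$ in the use of Lemmas~\ref{lemma:multiplication_on_Hoelder1} and~\ref{lemma:multiplication_on_Hoelder2}, and the summation of $\sum_{k\in\N}k^{2\delta-2}$ --- is routine.
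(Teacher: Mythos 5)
Your proof is correct, but for item~\eqref{item:HSest} it takes a genuinely different and more elementary route than the paper. The paper rewrites the squared Hilbert--Schmidt norm as the second moment of a Gaussian sum, $\sup_{n\in\N}\E\big[\|m\sum_{k=1}^{n}(k\pi)^{-1}\gamma_k e_k\|^2_{V_{r,2}}\big]$, applies Lemmas~\ref{lemma:multiplication_on_Hoelder1} and~\ref{lemma:multiplication_on_Hoelder2} once to the random function $\sum_{k}(k\pi)^{-1}\gamma_k e_k$, and then controls its $C^{2\max\{0,r\}+\eps}$-norm in mean square via the embedding into H\"older spaces of $V_{\max\{0,r\}+\eps,p}$ for large $p$ (item~\eqref{item:Sobolev_Banach1} of Lemma~\ref{lemma:eqD(A)W}) together with the Khintchine inequality; you instead estimate term by term, using $\|e_k\|_{C^{\delta}([0,1],\R)}\leq \mathfrak{K}\,k^{\delta}$ and the same two multiplier lemmas to obtain $\|m e_k\|_{V_{r,2}}\leq \mathfrak{C}'\|m\|_{V_{r,2}}k^{\delta}$, and conclude from $\sum_{k\in\N}k^{2\delta-2}<\infty$ for $\delta\in(2|r|,\nicefrac{1}{2})$, an interval that is nonempty precisely because $|r|<\nicefrac{1}{4}$. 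Both arguments exploit exactly the same margin $|r|<\nicefrac{1}{4}$; yours dispenses with the Gaussian randomization, the $L^p$-scale Sobolev embeddings, and the Khintchine inequality, while the paper's randomized bound captures the almost-$\nicefrac{1}{2}$-H\"older regularity of the Brownian-bridge-type series as a whole rather than summing individual H\"older norms -- a sharpening that is not needed here since all inequalities are strict. Your route to item~\eqref{item:inV2} (absolutely convergent series in $V_{\max\{0,r\},2}$ plus identification of the $L^2$-limit with the pointwise product through a.e.\ convergent subsequences) also differs from the paper, which simply notes that $(-A_2)^{-\nicefrac{1}{2}}h$ admits a $C^{\delta}$-representative by Sobolev embedding and applies Lemma~\ref{lemma:multiplication_on_Hoelder1} directly; both are fine. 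Two points you should make explicit: the identity $\|T_m\|^2_{L_2(L^2(\lambda;\R))}=\sum_{k\in\N}\|T_m e_k\|^2_{L^2(\lambda;\R)}$ presupposes that $T_m$ is bounded, which your series representation does deliver (namely $T_m h=\sum_k h_k T_m e_k$ with $\|T_m h\|_{L^2(\lambda;\R)}\leq\|h\|_{L^2(\lambda;\R)}\big(\sum_k\|T_m e_k\|^2_{L^2(\lambda;\R)}\big)^{\nicefrac{1}{2}}$ by the Cauchy--Schwarz inequality), and the step $\|T_m e_k\|_{L^2(\lambda;\R)}=(k\pi)^{-1}\|m e_k\|_{V_{r,2}}$ uses the spectral normalization $\|w\|_{V_{r,2}}=\|(-A_2)^r w\|_{L^2(\lambda;\R)}$ (an equivalent norm only changes the constant); neither affects the conclusion.
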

\begin{proof}[Proof of Lemma~\ref{lemma:multiplication_HS}.]
Throughout this proof let 
$\varrho=\max\{0,r\}$, 
$\eps\in (0,\frac14-\varrho)$, let 
$e_n\colon [0,1]\rightarrow \R$, $n\in \N$, be the functions which satisfy for every $ n \in \N $, $x\in [0,1]$ that 
$ e_n(x) = \sqrt{2} \sin( n \pi x )$, let $(\Omega,\calF,\P)$ be a probability space, let $\gamma_n\colon \Omega\rightarrow \R$, $n\in \N$, be independent standard Gaussian 
random variables, and 
let $K_{p}\in [1,\infty]$, $p\in [1,\infty)$,  
be the extended real numbers which satisfy for every $p\in [1,\infty)$ that
\begin{equation}\label{eq:KK}
 K_{p} 
 = 
 \sup\!\bigg\{
  \tfrac{ 
   \left(
    \E\left[ 
      \left|
	\sum_{k=1}^{n} \gamma_k x_k
      \right|^p
     \right]
    \right)^{\!\nicefrac{1}{p}}
  }{
  \left(
    \E\left[ 
      \left|
	\sum_{k=1}^{n} \gamma_k x_k
      \right|^2
     \right]
    \right)^{\!\nicefrac{1}{2}}
  }
  \colon 
  n\in \N, x_1,\ldots,x_n\in \R\backslash \{0\} 
 \bigg\}.
\end{equation}
Observe that the Khintchine inequalities imply that for every $p\in [1,\infty)$ it holds that $K_p<\infty$.
Moreover, note that item~\eqref{item:Sobolev_Hilbert1} in Lemma~\ref{lemma:eqD(A)W} and the fractional Sobolev inequalities prove that for every 
$h\in L^2(\lambda; \R )$, $\delta \in (0,\nicefrac{1}{2})$ it holds 
that there exists a $v\in  C^{\delta}([0,1],\R)$ such that $(-A_2)^{-\nicefrac{1}{2}}h = [ v ]_{\lambda,\calB(\R)}$. 
Lemma~\ref{lemma:multiplication_on_Hoelder1} and the fact 
that for every $f\in \calL^2(\lambda;\R)$, $v\in C^{\delta}([0,1],\R)$, $v_1,v_2\in [ v ]_{\lambda,\calB(\R)}$
it holds that 
\begin{equation} 
 [v_1 f]_{\lambda,\calB(\R)} = [v_2 f]_{\lambda,\calB(\R)} 
\end{equation}
hence 
imply that for every $h\in L^2(\lambda; \R )$, $m\in V_{\varrho,2}$
it holds that $M_m (-A_2)^{-\nicefrac{1}{2}}h \in V_{\varrho,2}$. 
This establishes item~\eqref{item:inV2}.
Furthermore, observe that for every $p\in (1,\infty)$ it holds that 
\begin{align}\label{eq:g_rad_HS3}
 & \notag
  \sup_{n\in \N}
    \bigg(
      \E \bigg[ 
	\Big\|
	\textstyle\!
	\sum\limits_{k=1}^{n} (k\pi)^{-1}
	\displaystyle
	\gamma_k e_k 
      \Big\|^2_{C^{2\varrho+\eps}([0,1], \R )}
     \bigg]
   \bigg)^{\nicefrac{1}{2}}
\leq 
  \sup\!\Big\{ 
    \tfrac{ \| v \|_{C^{2\varrho+\eps}([0,1],\R) }}
    {\| [v]_{\lambda,\calB(\R)} \|_{ V_{\varrho+\eps,p}}}
    \colon 
    \substack{ v\in C^{2}([0,1],\R)\backslash\{0\},\\  v(0)=v(1)=0} 
  \Big\}
\\ &  \cdot
  \sup_{n\in \N}
    \bigg(
      \E \bigg[ 
	\Big\|
	\textstyle\!
	\sum\limits_{k=1}^{n}
	\displaystyle
	(k\pi)^{-1+2(\varrho+\eps)} \gamma_k e_k 
      \Big\|^2_{\calL^p(\lambda; \R )}
     \bigg]
   \bigg)^{\nicefrac{1}{2}}.
  \end{align}
In addition, note that item~\eqref{item:Sobolev_Banach1} in Lemma~\ref{lemma:eqD(A)W} and the fractional Sobolev inequalities demonstrate that for every $p\in (\eps^{-1},\infty)$ it holds that 
\begin{equation} \label{eq:g_rad_HS4}
  \sup\!\Big\{ 
    \tfrac{ \| v \|_{C^{2\varrho+\eps}([0,1],\R) }}
    { \| [v]_{\lambda,\calB(\R)} \|_{V_{\varrho+\eps,p}}}
    \colon 
    \substack{ v\in C^{2}([0,1],\R)\backslash\{0\},\\  v(0)=v(1)=0} 
  \Big\}<\infty.
\end{equation}
Moreover, note that H\"older's inequality, Fubini's 
theorem, and~\eqref{eq:KK} imply that for every $p\in (\eps^{-1},\infty)$ it holds  that 
\begin{align}\label{eq:g_rad_HS5}
& 
    \sup_{n\in\N}\bigg(
    \E \bigg[ 
	\Big\|
	\textstyle\!
	\sum\limits_{k=1}^{n} 
	\displaystyle
	(k\pi)^{-1+2(\varrho+\eps)} \gamma_k e_k 
      \Big\|^2_{\calL^p(\lambda; \R )}
     \bigg]\bigg)^{\nicefrac12}
\notag
\\ & 
=
    \sup_{n\in\N}\Bigg(
    \E \Bigg[ 
	\bigg(
	\textstyle\!
	\int\limits_0^1
	\Big|
	\sum\limits_{k=1}^{n} 
	\displaystyle
	(k\pi)^{-1+2(\varrho+\eps)} \gamma_k e_k(x) 
     \Big|^{p}dx\bigg)^{\nicefrac2p}
     \Bigg]\Bigg)^{\nicefrac12}
\notag
\\ & 
\leq
    \sup_{n\in\N}
    \bigg(
      \E \bigg[\, 
	\textstyle\!
	\int\limits_0^1
	\Big|
	\sum\limits_{k=1}^{n} 
	\displaystyle
	(k\pi)^{-1+2(\varrho+\eps)} \gamma_k e_k(x) 
     \Big|^{p}dx
     \bigg]
   \bigg)^{\nicefrac{1}{p}} 
\notag
\\ & 
=
    \sup_{n\in\N}
    \bigg( 
	\textstyle\!
	\int\limits_0^1
	\E \bigg[
	\Big|
	\sum\limits_{k=1}^{n} 
	\displaystyle
	(k\pi)^{-1+2(\varrho+\eps)} \gamma_k e_k(x) 
     \Big|^{p}
     \bigg]dx
   \bigg)^{\nicefrac{1}{p}} 
\\ & 
\leq K_p
    \sup_{n\in\N}\Bigg( 
	\textstyle\!
	\int\limits_0^1
	\bigg(\E \bigg[
	\Big|
	\sum\limits_{k=1}^{n} 
	\displaystyle
	(k\pi)^{-1+2(\varrho+\eps)} \gamma_k e_k(x) 
     \Big|^{2}
     \bigg]\bigg)^{\nicefrac{p}2}dx
   \Bigg)^{\nicefrac{1}{p}} 
\notag
\\ & 
= K_p
    \sup_{n\in\N}\Bigg( 
	\textstyle\!
	\int\limits_0^1
	\bigg(
	\sum\limits_{k=1}^{n} 
	\displaystyle
	(k\pi)^{-2+4(\varrho+\eps)} |e_k(x)|^2 
     \bigg)^{\nicefrac{p}2}dx
   \Bigg)^{\nicefrac{1}{p}}      
\notag
\\ & 
  \leq \sqrt{2}K_{p}
  \sup_{n\in\N}\bigg(
	\textstyle\!
	\sum\limits_{k=1}^{n} 
	\displaystyle
	(k\pi)^{-2+4(\varrho+\eps)}
  \bigg)^{\nicefrac12}
  = \sqrt{2}K_{p}
  \bigg(
	\textstyle\!
	\sum\limits_{k=1}^{\infty} 
	\displaystyle
	(k\pi)^{-2+4(\varrho+\eps)}
  \bigg)^{\nicefrac12}
  <\infty.
\notag
 \end{align}
Next observe that
for every $m\in V_{\varrho,2}$ it holds that 
\begin{align}
&\big\| (-A_2)^{r} M_m (-A_2)^{-\nicefrac{1}{2}}\big\|_{L_2(L^2(\lambda; \R ))}^2 
  = \sup_{n\in \N}
  \bigg(
	\textstyle\!
	\sum\limits_{k=1}^{n} 
	\displaystyle
    \big\|
	  M_m (-A_2)^{-\nicefrac{1}{2}} [e_k]_{\lambda,\calB(\R)} 
      \big\|^2_{V_{r,2}}
    \bigg)    
\notag
\\ & \label{eq:g_rad_HS1}
  = \sup_{n\in \N}
    \E \bigg[ 
      \Big\|	
	\textstyle\!
	\sum\limits_{k=1}^{n} 
	\displaystyle
	  \gamma_k M_m (-A_2)^{-\nicefrac{1}{2}} [e_k]_{\lambda,\calB(\R)} 
      \Big\|^2_{V_{r,2}}
    \bigg]
\\ &
  = \sup_{n\in \N}
    \E \bigg[ 
      \Big\|
	m 
	\textstyle\!
	\sum\limits_{k=1}^{n}
	\displaystyle 
	(k\pi)^{-1} \gamma_k e_k
      \Big\|^2_{V_{r,2}}
    \bigg].
\notag
\end{align}
Moreover, note that Lemma~\ref{lemma:multiplication_on_Hoelder1}, Lemma~\ref{lemma:multiplication_on_Hoelder2},~\eqref{eq:g_rad_HS3},~\eqref{eq:g_rad_HS4}, and~\eqref{eq:g_rad_HS5} imply that
\begin{equation}\label{eq:g_rad_HS2}
\begin{aligned}
& 
\sup_{m\in V_{\varrho,2}\backslash \{0\}}
\Bigg\{ 
   \tfrac{
    \sup_{n\in \N}
      \left(
	\E \left[ 
	  \left\|
	    m \sum_{k=1}^{n} (k\pi)^{-1} \gamma_k e_k 
	  \right\|^2_{V_{r,2}}	
	\right]
    \right)^{\nicefrac{1}{2}}
   }
   {
    \| m \|_{V_{r,2}}
   }
\Bigg\}
\\& 
 \leq   
 \sqrt{\tfrac{3}{\eps}} 
 \bigg[ 
    \sup_{w\in V_{\varrho,2}\backslash \{0\}}
 \tfrac{ 
  \|  w \|_{V_{\varrho,2}}
 }
 {\| w \|_{W^{2\varrho,2}((0,1),\R)}}
 \bigg]
 \bigg[
 \sup_{w\in V_{\varrho, 2}\backslash\{0\}}
 \tfrac{\| w \|_{W^{2\varrho,2}((0,1),\R)}}
 { \|  w \|_{V_{\varrho,2}}}
 \bigg]
\\ & \quad 
 \cdot
 \sup_{n\in \N}
    \bigg(
      \E \bigg[ 
	\Big\|
	\textstyle\!
	\sum\limits_{k=1}^{n} 
	\displaystyle
	(k\pi)^{-1} \gamma_k e_k 
      \Big\|^2_{C^{2\varrho+\eps}([0,1], \R )}
     \bigg]
   \bigg)^{\nicefrac{1}{2}} 
 < \infty. 
\end{aligned}
\end{equation}
Combining this and~\eqref{eq:g_rad_HS1} establishes item~\eqref{item:HSest}.
The proof of Lemma~\ref{lemma:multiplication_HS} is thus completed.
\end{proof}
\subsection{The hyperbolic Anderson model}\label{ssec:HA}
\begin{corollary}\label{cor:hyperbolic_anderson}
For every pair of $\R$-Hilbert spaces $ ( \mathcal{V} , \langle \cdot , \cdot \rangle_{ \mathcal{V} } , \left\| \cdot\right\|_{ \mathcal{V}  } ) $ and 
$ ( \mathcal{W} , \langle \cdot , \cdot \rangle_{ \mathcal{W} } ,$ $ \left\| \cdot\right\|_{ \mathcal{W} } ) $ 
let $(L_2(\mathcal{V},\mathcal{W}), \langle \cdot,\cdot\rangle_{L_2(\mathcal{V},\mathcal{W})},\left\| \cdot \right\|_{L_2(\mathcal{V},\mathcal{W})} )$ be the $\R$-Hilbert space 
of Hilbert-Schmidt operators from $\mathcal{V}$ to $\mathcal{W}$,
for every measure space $(\Omega,\calF,\mu)$,
every measurable space $(S,\Sigma)$, 
every set 
$\mathcal O$, 
and every function $f\colon\mathcal O\rightarrow S$ let $[f]_{\mu,\Sigma}$ be the set given by
\begin{equation}
\begin{aligned}
[ f ]_{\mu,\Sigma} 
 =
 \Big\{ g \colon \Omega \rightarrow S \colon 
 \left[
 \substack{ 
 [\exists\, A \in \calF \colon ( \mu(A) = 0 
 \text{ and }
 \{ \omega \in \Omega\cap\mathcal O \colon f(\omega) \neq g(\omega)\} 
 \subseteq A )]\\
 \text{ and }[\forall\, A\in \Sigma\colon g^{-1}(A)\in \calF]
 }
 \right]
 \Big\},
\end{aligned}
\end{equation}
let $ T, \vartheta \in ( 0 , \infty ) $, $ b_0$, $b_1 \in \R$, 
let $\lambda\colon \calB((0,1))\rightarrow [0,1]$ be 
the Lebesgue-Borel measure on $(0,1)$, 
let $(H,\langle\cdot,\cdot\rangle_H,\left\|\cdot\right\|_H)$ be the $\R$-Hilbert space given by $(H,\langle\cdot,\cdot\rangle_H,\left\|\cdot\right\|_H)=(L^2(\lambda;\R),\langle\cdot,\cdot\rangle_{L^2(\lambda;\R)},
$\linebreak$
\left\|\cdot\right\|_{L^2(\lambda;\R)})$, 
let $ ( \Omega , \calF , \P, (\mathbbm{F}_t )_{ t \in [ 0 , T ] } ) $ be a filtered probability space which fulfills the usual conditions, 
let $ ( W_t )_{ t \in [ 0 , T ] } $ be an $\id_H$-cylindrical $ ( \mathbbm{F}_t )_{ t \in [ 0 , T ] } $-Wiener process,
for every $n\in\N$ let $e_n\in H$ satisfy 
$ e_n =[(\sqrt{2} \sin( n \pi x ))_{x\in (0,1)}]_{\lambda,\calB(\R)}$, 
let $A\colon D(A) \subseteq H \rightarrow H$
satisfy 
$D(A) = 
\left\{ h \in H \colon\sum_{n=1}^{\infty}|(n \pi)^2 \langle e_n, h \rangle_{H}|^2  < \infty \right\}
$ 
and 
$\big[\forall\, h\in D(A) \colon Ah=  \sum_{n=1}^{\infty} -(n\pi)^2 \langle e_n, h \rangle_{H}e_n\big]$,
let $ ( H_{r} , \langle \cdot, \cdot \rangle_{H_r}, \left\| \cdot \right\|_{H_{r}})$, $ r \in \R $, 
be a family of interpolation spaces associated to $- A $, 
for every $r\in\R$ let $ ( \bfH_r ,\langle \cdot , \cdot \rangle_{ \bfH_r } , \left\| \cdot\right\|_{ \bfH_r } )$ be the $ \R $-Hilbert space 
which satisfies  
$ 
( \bfH_r , \langle \cdot , \cdot \rangle_{ \bfH_r } ,$ $\left\| \cdot\right\|_{ \bfH_r } ) 
= $ $
\bigl( H_{ \nicefrac{r}{2} } \times H_{ \nicefrac{r}{2} - \nicefrac{1}{2} },$ $ 
\langle \cdot, \cdot \rangle_{ H_{ \nicefrac{r}{2} } \times H_{ \nicefrac{r}{2} - \nicefrac{1}{2} } }, 
\norm{ \cdot }_{ H_{ \nicefrac{r}{2} } \times H_{ \nicefrac{r}{2} - \nicefrac{1}{2} } } \bigl) 
$, 
let 
$ \bfA \colon D ( \bfA ) \subseteq \bfH_0
\to \bfH_0 $ 
satisfy 
$ D ( \bfA ) = \bfH_1 $ 
and 
$\left[ \forall\, (v,w) \in \bfH_1 \colon \bfA( v , w ) = ( w , \vartheta A v ) \right]$, 
let 
$ \phi \in C^4( \bfH_0, \R ) $ 
satisfy for every $k\in \{1,2,3,4\}$ that $\sup_{x\in \bfH_0}\| \phi^{(k)}(x) \|_{L^{(k)}(\bfH_0,\R)} < \infty$, 
let $ \xi \in \calL^6 ( \P \vert_{\mathbbm{F}_0} ; \bfH_{\nicefrac12} ) $, 
$B\colon H\rightarrow L_2(H,H_{-\nicefrac{1}{2}})$ 
satisfy 
for every 
$v\in \calL^2(\lambda;\R)$, $u\in C([0,1],\R)$
that  
$
  B([v]_{\lambda,\,\calB(\R)}) [u]_{\lambda,\,\calB(\R)} 
  = $ $[ ( (b_0 + b_1 v(x))u(x))_{x\in (0,1)} ]_{\lambda,\,\calB(\R)} 
$, 
let $\bfB \colon \bfH_0 \to L_2( H, \bfH_0 ) $
satisfy 
for every 
$ (v,w) \in \bfH_0 $, $u\in H$ that 
$ \bfB( v, w ) u  = \bigl( 0, B(v)u \bigr)$, 
let $ X\colon [0,T] \times \Omega \rightarrow  \bfH_0$
be an $(\mathbbm{F}_t)_{t\in[0,T]}$-predictable stochastic process which satisfies 
for every $t\in [0,T]$ that 
$\sup_{s\in[0,T]}\E\big[\| X_s \|_{\bfH_0}^2\big] < \infty$  
and 
\begin{align}\label{eq:X_hyperbolic_Anderson}
 [X_t]_{\P,\,\calB(\bfH_0)} & 
 =
 \left[e^{t\bfA } \xi\right]_{\P,\,\calB(\bfH_0)}
 + \int_{0}^{t} e^{ (t-s)\bfA} \bfB ( X_s  )\,\mathrm{d}W_s,
\end{align} 
for every $h\in (0,T]$ let $\lfloor \cdot \rfloor_h \colon [0,\infty) \rightarrow \R$ satisfy for every $x\in [0,\infty)$ that 
\begin{equation}
\lfloor x \rfloor_h 
=
\max( \{0, h, 2h, 3h, \ldots\} \cap [0,x] ),
\end{equation} 
and for every $h\in(0,T]$ let $Y^{h}\colon [0,T] \times \Omega \rightarrow \bfH_0$ 
be an $(\mathbbm{F}_t)_{t\in[0,T]}$-predictable stochastic processes which satisfies 
for every $t\in [0,T]$  that 
$\sup_{s\in[0,T]}\E\big[\| Y^h_s \|_{\bfH_0}^2\big] < \infty$  
and 
\begin{align}\label{eq:Y_mild_cor2}
 [Y_t^{h}]_{\P,\,\calB(\bfH_0)} 
 =
 \left[e^{t\bfA }\xi\right]_{\P,\,\calB(\bfH_0)}
 +
 \int_{0}^{t} e^{ (t-\round{s}{h})\bfA } \bfB\big( Y^{h}_{\round{s}{h}} \big) \,\mathrm{d}W_s.
\end{align}
Then it holds for every $\eps\in (0,\infty)$ that 
\begin{equation}\label{eq:weakrates}
  \sup_{h\in (0,T]} \!
  \Big(
    h^{\eps-1}
     \big|
      \E\big[ \phi\big(X_T\big) \big] 
      - 
      \E\big[ \phi\big(Y_T^h\big)\big] 
     \big|
  \Big)
   < \infty.
\end{equation}
\end{corollary}
\begin{proof}[Proof of Corollary~\ref{cor:hyperbolic_anderson}.]
Throughout this proof 
for every $\R$-Hilbert space $(V,\langle\cdot,\cdot\rangle_V,\left\|\cdot\right\|_V)$ let $(L_2(V),\langle\cdot,\cdot\rangle_{L_2(V)},\left\|\cdot\right\|_{L_2(V)})$ be the $\R$-Hilbert space of Hilbert-Schmidt operators from $V$ to $V$, 
for every pair of $ \R $-Banach spaces $ ( V, \left\| \cdot \right\|_{V} ) $ and $ ( W, \left\| \cdot \right\|_{W} ) $ 
let $(\Lip ( V , W ) , \left\| \cdot\right\|_{ \Lip ( V , W ) })$
be the $\R$-Banach space of Lipschitz continuous mappings from $V$ to $W$, 
for every $\ell \in \N$ and every pair of $\R$-Banach spaces $ ( V , \left\| \cdot\right\|_{ V  } ) $ and 
$ ( W , \left\| \cdot\right\|_{ W } ) $ let $(C_{\mathrm{b}}^{\ell}( V,W ), \left\| \cdot \right\|_{C_{\mathrm{b}}^{\ell}( V,W )})$ 
be the $\R$-Banach space of $\ell$-times continuously 
Fr\'echet differentiable functions 
from $V$ to $W$ with globally bounded derivatives, 
for every $v\in H$ let 
$M_v\colon D(M_v)\subseteq H\rightarrow H$ be the linear operator which satisfies that
$D(M_v) = \{ h \in H \colon vh \in H\}$ 
and $[\forall\, h\in D(M_v) \colon M_v h = vh]$, 
and let $\eps\in(0,\nicefrac23]$.
Observe that the fact that 
for every $\rho\in[0,\nicefrac14)$ it holds that $B\in\Lip(H,L_2(H,H_{\rho-\nicefrac12}))$
implies 
that for every $\rho\in[0,\nicefrac14)$ it holds that $\bfB\in\Lip(\bfH_0,L_2(H,\bfH_{2\rho}))$. Hence, we obtain that  $\bfB\in\Lip(\bfH_0,L_2(H,\bfH_{\nicefrac12-\nicefrac\eps4}))$ and 
\begin{equation}\label{eq:bfB_Lip1} 
\bfB|_{\bfH_{\nicefrac{1}{2} - \nicefrac{\eps}{4}}} \in \Lip( \bfH_{\nicefrac{1}{2} - \nicefrac{\eps}{4}}, L_2(H,\bfH_{\nicefrac{1}{2} - \nicefrac{\eps}{4}})).
\end{equation}
Moreover, note that de 
Naurois et al.~\cite[(3.74)--(3.75) in Section 3.3]{JacobedeNauroisJentzenWelti:2021} 
and H\"{o}lder's inequality ensure that for every $\rho\in(0,\nicefrac14)$, $v, w\in H_\rho$, $u\in H_1$ it holds that $B(v),B(w)\in L(H,H_{\rho-\nicefrac14})$ and
\begin{equation}
\begin{split}
&\|(B(v)-B(w))u\|_{H_{\rho-\nicefrac14}}\\
&=
\sup_{\psi\in H_1\setminus\{0\}}\frac{|\langle\psi,(B(v)-B(w))u\rangle_H|}{\|\psi\|_{H_{(\nicefrac14)-\rho}}}=
\sup_{\psi\in H_1\setminus\{0\}}\frac{\|\psi\,b_1(v-w)u\|_{L^1(\lambda;\R)}}{\|\psi\|_{H_{(\nicefrac14)-\rho}}}\\
&\leq
\sup_{\psi\in H_1\setminus\{0\}}\frac{|b_1|\|\psi\|_{L^{\nicefrac{1}{(2\rho)}}(\lambda;\R)}\|v-w\|_{L^{\nicefrac{2}{(1-4\rho)}}(\lambda;\R)}\|u\|_{L^2(\lambda;\R)}}{\|\psi\|_{H_{(\nicefrac14)-\rho}}}\\
&\leq
|b_1|\Bigg[\sup_{\psi\in H_1\setminus\{0\}}\frac{\|\psi\|_{L^{\nicefrac{1}{(2\rho)}}(\lambda;\R)}}{\|\psi\|_{H_{(\nicefrac14)-\rho}}}\Bigg]
\Bigg[\sup_{\zeta\in H_\rho\setminus\{0\}}\frac{\|\zeta\|_{L^{\nicefrac{2}{(1-4\rho)}}(\lambda;\R)}}{\|\zeta\|_{H_{\rho}}}\Bigg]\|v-w\|_{H_\rho}\|u\|_H
<\infty.
\end{split}
\end{equation}
This 
assures that for every $\rho\in(0,\nicefrac14)$ it holds that $B|_{H_\rho}\in\Lip(H_\rho,L(H,H_{\rho-\nicefrac14}))$
and 
$\bfB|_{\bfH_{2\rho}}\in\Lip(\bfH_{2\rho},L(H,\bfH_{2\rho+\nicefrac12}))$.
Therefore, we obtain that
\begin{equation}\label{eq:bfB_Lip2} 
\bfB|_{\bfH_{\nicefrac{1}{2} - \nicefrac{\eps}{4}}} \in \Lip( \bfH_{\nicefrac{1}{2} - \nicefrac{\eps}{4}}, L(H,\bfH_{1 - \nicefrac{\eps}{4}})).
\end{equation}
Furthermore, observe that for every $(v_1,v_2),(w_1,w_2)\in\bfH_0$, $u\in H_1$ it holds that
\begin{equation}\label{eq:BC4_1}
\begin{split}
&\bfB(v_1+w_1,v_2+w_2)u
=(0,B(v_1+w_1)u)
=(0,(b_0+b_1(v_1+w_1))u)\\
&=(0,(b_0+b_1v_1)u)+(0,b_1w_1u)
=(0,B(v_1)u)+(0,b_1w_1u)\\
&=\bfB(v_1,v_2)u+(0,b_1w_1u).
\end{split}
\end{equation}
Combining this, the fact that for every $(v_1,v_2)\in\bfH_0$ it holds that $\bfB(v_1,v_2)\in L_2(H,\bfH_0)$, and the fact that $H_1$ is a dense subset of $H$ 
implies that for every $(v_1,v_2),(w_1,w_2)\in\bfH_0$, $u\in H_1$ it holds that $\bfB\in C^1(\bfH_0,L_2(H,\bfH_0))$ and
\begin{equation}
\big[\big(\bfB^{(1)}(v_1,v_2)\big)(w_1,w_2)\big]u=(0,b_1w_1u).
\end{equation}
Hence, we obtain that for every $k\in\N$ it holds that 
\begin{equation}\label{eq:bfB_C4}
\bfB \in C^k_{\mathrm{b}}( \bfH_0, L_2(H, \bfH_0 )).
\end{equation}
Next observe that for every $(v_1,v_2)\in \bfH_{0}$ it holds that
\begin{equation}\label{eq:proofcorweakrates_1}
\begin{aligned}
&
\sum_{n\in \N} (n\pi)^{1-\eps} \left\| \bfB(v_1,v_2)e_n \right\|_{\bfH_0}^2
= 
\sum_{n\in \N} (n\pi)^{1-\eps} 
    \,\big\|
		(-A)^{-\nicefrac12}((b_0+b_1v_1)e_n) 
	\big\|_{H}^2
\\ 
& = 
  \sum_{n\in \N} (n\pi)^{1-\eps} 
   \,\big\| 
  	b_0(-A)^{-\nicefrac12}e_n+b_1(-A)^{-\nicefrac12}M_{v_1}e_n 
   \big\|_{H}^2
\\ 
& =  
\sum_{n\in\N} 
	\big\|
    b_0 (-A)^{-\nicefrac{(1+\eps)}{4}}e_n
    +
    b_1 
    (- A)^{-\nicefrac12}  M_{v_1} (-A)^{\nicefrac{(1-\eps)}{4}}e_n
   \big\|_H^2
\\ 
&  \leq  
 2|b_0|^2
   \,\big\| 
    (-A)^{-\nicefrac{(1+\eps)}{4}}
  \big\|_{L_2(H)}^2
  +
  2|b_1|^2 
  \left[\sum_{n\in\N}
   \big\|
    (-A)^{-\nicefrac12} M_{v_1} (- A)^{\nicefrac{(1-\eps)}{4}}e_n
   \big\|_{H}^2\right].
\end{aligned}
\end{equation}
Moreover, note that for every $(v_1,v_2),(w_1,w_2)\in\bfH_0$ it holds that 
\begin{equation}\label{eq:proofcorweakrates_2}
\begin{aligned}
& 
\sum_{n\in \N} (n\pi)^{\eps-1} \left\| [\bfB(v_1,v_2) - \bfB(w_1,w_2)]e_n \right\|_{\bfH_0}^2
\\& 
=
\sum_{n\in\N}(n\pi)^{\eps-1}
	\,\big\|
		(-A)^{-\nicefrac12}(b_1(v_1-w_1)e_n)
	\big\|_H^2
\\
& = 
|b_1|^2
\left[\sum_{n\in\N}
	\big\|
      (-A)^{-\nicefrac12}  M_{(v_1-w_1)} (- A)^{\nicefrac{(\eps-1)}{4}}e_n
    \big\|_{H}^2
\right].
\end{aligned}
\end{equation}
In addition, observe that item~\eqref{item:inV2} in Lemma~\ref{lemma:multiplication_HS} ensures that for every $r\in(-\nicefrac14,\nicefrac14)$, $v\in H_{\max\{0,r\}}$, $n\in\N$ it holds that $M_v(-A)^{-\nicefrac12}e_n\in H_{\max\{0,r\}}$. 
This and the fact that for every $v\in H$ it holds that $M_{v}\colon D(M_v)\subseteq H\to H$ is a symmetric linear operator imply that for every $r\in(-\nicefrac14,\nicefrac14)$, $v\in H_{\max\{0,r\}}$ it holds that
\begin{equation}
\begin{aligned}
&\sum_{n\in\N}
   \big\|
    (-A)^{-\nicefrac12} M_{v} (- A)^{r}e_n
  \big\|_{H}^2
=
\sum_{m,n\in\N}
   \big|\big\langle
       (-A)^{-\nicefrac12} M_{v} (- A)^{r}e_n,e_m
   \big\rangle_H\big|^2
\\
&=
\sum_{m,n\in\N}
    \big|\big\langle 
         e_n,(-A)^{r} M_{v} (- A)^{-\nicefrac12}e_m
   \big\rangle_H\big|^2  
= 
\sum_{m\in\N}
	\big\|
      (-A)^{r} M_{v} (- A)^{-\nicefrac12}e_m
   \big\|_H^2.
\end{aligned}
\end{equation}
Lemma~\ref{lemma:multiplication_HS}, \eqref{eq:proofcorweakrates_1}, \eqref{eq:proofcorweakrates_2}, and the fact that 
for every $r \in (\nicefrac{1}{4},\infty)$ it holds that 
$
\| A^{-r} \|_{L_2(H)} 
< \infty
$
therefore ensure that
\begin{equation} 
\begin{aligned}
&
\sup
 \!\bigg\{
 \tfrac{
  \sum_{n\in \N} (n\pi)^{1-\eps} \left\| \bfB(v_1,v_2)e_n \right\|_{\bfH_0}^2
 }
 {
   \max\{1,\left\| (v_1,v_2) \right\|_{\bfH_{\nicefrac{(1-\eps)}{2}}}^2\}
 }
 \colon 
 \substack{(v_1,v_2)\in \bfH_{\nicefrac{(1-\eps)}{2}}}
  \bigg\}
\\ &  \leq 
\sup
  \!\bigg\{
  \tfrac{
  2|b_0|^2
  \| 
    (-A)^{-\nicefrac{(1+\eps)}{4}}
  \|_{L_2(H)} ^2 
  +
  2|b_1|^2 
  \| 
    (-A)^{\nicefrac{(1-\eps)}{4}} M_{v_1} (- A)^{-\nicefrac12} 
  \|_{L_2(H)}^2
 }
 {
   \max\{1,\left\| (v_1,v_2) \right\|_{\bfH_{\nicefrac{(1-\eps)}{2}}}^2\}
 } 
 \colon
 \substack{(v_1,v_2)\in \bfH_{\nicefrac{(1-\eps)}{2}}}
  \bigg\}
 \\
 &<\infty 
\end{aligned}
\end{equation}
and 
\begin{equation}
\begin{aligned}
& 
\sup
  \!\bigg\{
  \tfrac{
  \sum_{n\in \N} (n\pi)^{\eps-1} \left\| [\bfB(v_1,v_2) - \bfB(w_1,w_2)]e_n \right\|_{\bfH_0}^2
 }
 {
   \left\| (v_1,v_2) - (w_1,w_2) \right\|_{\bfH_{\nicefrac{(\eps-1)}{2}}}^2
 }
 \colon
 \substack{
 (v_1,v_2),(w_1,w_2)\in \bfH_{\nicefrac{(1-\eps)}{2}},\\ (v_1,v_2)\neq (w_1,w_2)
 }
  \bigg\}
\\ & = 
\sup
  \!\bigg\{
  \tfrac{
  |b_1|^2 
  \| 
    (-A)^{\nicefrac{(\eps-1)}{4}} M_{(v_1-w_1)} (- A)^{-\nicefrac12} 
  \|_{L_2(H)}^2
 }
 {
   \left\| (v_1,v_2)-(w_1,w_2) \right\|_{\bfH_{\nicefrac{(\eps-1)}{2}}}^2
 }
 \colon
 \substack{
 (v_1,v_2),(w_1,w_2)\in \bfH_{\nicefrac{(1-\eps)}{2}},\\ (v_1,v_2)\neq (w_1,w_2)
 }
  \bigg\}
 < \infty.
\end{aligned}
\end{equation}
Combining this, \eqref{eq:bfB_Lip1}, \eqref{eq:bfB_Lip2}, \eqref{eq:bfB_C4}, and Corollary~\ref{cor:weak_conv} 
(with 
$U = H$, 
$\U = \{ e_n \}_{n\in \N}$, 
$T=T$, $(W_t)_{t\in[0,T]}=(W_t)_{t\in[0,T]}$, 
$\gamma = 1 - \nicefrac{\eps}{4}$, 
$\beta = \nicefrac{1}{2} + \nicefrac{\eps}{4}$, 
$\rho = \nicefrac{1}{2} - \nicefrac{\eps}{4}$, 
$H = H$, 
$\H = \{ e_n \}_{n\in \N}$,
$\left[\forall\,n\in \N \colon \lambda_{e_n} = -\vartheta (n\pi)^2\right]$, 
$A=\vartheta A$, 
$\left[\forall\,r\in\R:\left\|\cdot\right\|_{H_r}=\vartheta^r\left\|\cdot\right\|_{H_r}\right]$, 
$\bfA=\bfA$, 
$\phi = \phi$, $\xi = \xi$, 
$F=0$, $B=\bfB$, 
$\left[\forall\, n\in \N \colon \mu_{e_n} = (n\pi)^{\nicefrac{(1-\eps)}{2}}\right]$, 
$\mathfrak{m} = \max\{\vartheta^{-\nicefrac12},1\}\|\bfB\|_{C^4_{\mathrm{b}}(\bfH_0,L_2(H,\bfH_0))}$\linebreak$+1$, 
$\mathfrak{c}^2 = 
\max\{\vartheta^{\nicefrac{(\eps-3)}{2}},\vartheta^{\nicefrac{(\eps-1)}2}\}
\,\sup\!\Big\{
\tfrac{
   \sum_{n\in \N} (n\pi)^{1-\eps}\left\| \bfB(v_1,v_2)e_n \right\|_{\bfH_0}^2
 }
 {
   \max\{1,\left\| (v_1,v_2) \right\|_{\bfH_{(1-\eps)/2}}^2\}
 }
 \colon 
 \substack{(v_1,v_2)\in \bfH_{\nicefrac{(1-\eps)}{2}}}
 \Big\}
 $, 
$ \mathfrak{l}^2= $\linebreak$
\max\{\vartheta^{\nicefrac{-(1+\eps)}2},\vartheta^{\nicefrac{(1-\eps)}2}\}
\,\sup\!\Big\{
 \tfrac{
  \sum_{n\in \N} (n\pi)^{\eps-1}\left\| [\bfB(v_1,v_2) - \bfB(w_1,w_2)]e_n \right\|_{\bfH_0}^2
 }
 {
  \left\| (v_1,v_2) - (w_1,w_2) \right\|_{\bfH_{(\eps-1)/2}}^2
 }
\colon 
\substack{(v_1,v_2),(w_1,w_2)\in \bfH_{\nicefrac{(1-\eps)}2},\\ (v_1,v_2)\neq (w_1,w_2)} 
\Big\}
 $,  
$\big[\forall\,h$\linebreak$\in (0,T]\colon Y^{h,\H} = Y^h\big]$, $Y^{0,\H} = X$ 
in the 
notation of Corollary~\ref{cor:weak_conv})  
establishes~\eqref{eq:weakrates}. The proof of Corollary~\ref{cor:hyperbolic_anderson} is thus completed.
\end{proof}

In Corollary~\ref{cor:hyperbolic_anderson} the multiplicative noise term appearing in the 
hyperbolic SPDE  
\eqref{eq:X_hyperbolic_Anderson} is specified via the mapping $B\colon H\mapsto L_2(H,H_{-\nicefrac12})$, which is defined 
as a combination of 
a multiplication operator and a Nemytskii operator induced by an affine transformation. 
We expect that the approach 
to weak error analysis 
presented in this article can be extended to 
hyperbolic SPDEs 
formulated in a more general Banach space 
framework  
involving Banach space-valued stochastic integrals, so that more general classes of nonlinear Nemytskii operators can potentially be 
handled 
as well; compare, e.g.,  
\cite[Corollary~1]{CoxEtAl:2016}, \cite{HefterEtAl:2016}.

\subsection{Numerical simulations}\label{ssec:simulations}

Here we illustrate Corollary~\ref{cor:hyperbolic_anderson} 
with some numerical experiments. 
To this end, assume the setting specified in Corollary~\ref{cor:hyperbolic_anderson}, 
assume that $T=2$, $\vartheta=1$, $b_0=1$, $b_1=1$, assume that for every $\omega\in\Omega$ it holds that $\xi(\omega)=(e_1,0)$, 
let 
$J,M\in\N$, $\mathbf N\in2\N$, 
$P\colon H\to H$ satisfy for every $v\in H$ that $P(v)=\sum_{j=1}^J\langle e_j,v\rangle_H e_j$, 
let $\mathfrak W^{N,m}_n\colon\Omega\to P(H)$, $N,n,m\in\N$, be independent Gaussian random variables which satisfy for every $N,n,m\in\N$, $v,w\in P(H)$ that $\E[\mathfrak W^{N,m}_n]=0$ and $\E[\langle v,\mathfrak W^{N,m}_n\rangle_H\langle w,\mathfrak W^{N,m}_n\rangle_H]=(\nicefrac TN)\langle v,w\rangle_H$, 
and for every $N,m\in\N$ let $\Y^{N,m}=(\Y^{N,m,1},\Y^{N,m,2})\colon\{0,1,\ldots,N\}\times\Omega\to P(H)\times P(H)$ be the stochastic process which satisfies for every 
$n\in\{1,2,\ldots,N\}$  that $\Y^{N,m}_0=\xi$ 
and
\begin{equation}\label{eq:simulation_scheme}
\begin{aligned}
\Y^{N,m}_n = 
e^{(\nicefrac{T}{N})\bfA} 
\Bigl( \Y^{N,m,\;\!1}_{n-1}  ,\;
\Y^{N,m,\;\!2}_{n-1} + P\bigl[B\bigl(\Y^{N,m,\;\!1}_{n-1}\bigr)\;\!\mathfrak W^{N,m}_n\;\!  \bigr] \Bigr).
\end{aligned}
\end{equation} 
For every $N\in\{1,2,\ldots,\mathbf N/2\}$ we employ the random variable 
\begin{equation}\label{eq:simulation_error}
\begin{aligned}
\Biggl| 
\biggl[\frac1M\sum_{m=1}^M \varphi(\Y^{\mathbf N,m}_{\mathbf N})\biggr] - 
\biggl[\frac1M\sum_{m=1}^M \varphi(\Y^{N,m}_{N})\biggr]
\Biggr|
\end{aligned}
\end{equation} 
as a Monte Carlo approximation of the weak error $|\E[ \phi(X_T) ] - \E[ \phi(Y_T^{\nicefrac TN})] |$; compare \eqref{eq:weakrates} in Corollary~\ref{cor:hyperbolic_anderson} above.  
Note that this Monte Carlo approximation involves a spatial spectral Galerkin approximation using $J$ basis functions. 
For the implementation of the scheme~\eqref{eq:simulation_scheme} we apply the explicit representation of the propagator $e^{(\nicefrac{T}{N})\bfA}$ 
provided 
by Lemma~\ref{lemma:semigroup}.  
Moreover, in order to 
approximately compute the multiplicative noise term $P[B(\Y^{N,m,\;\!1}_{n-1})\mathfrak W^{N,m}_n]$ in 
\eqref{eq:simulation_scheme}, 
we use the fact that for every $v,u\in C([0,1],\R)$ it holds that
\begin{equation}
\begin{aligned}
P\bigl[B\big([v]_{\lambda,\,\calB(\R)}\big) [u]_{\lambda,\,\calB(\R)}\bigr]
&= 
\sum_{j=1}^J\bigl\langle e_j, B\big([v]_{\lambda,\,\calB(\R)}\big) [u]_{\lambda,\,\calB(\R)} \bigr\rangle_H e_j \\
&= 
\sum_{j=1}^J \biggl[\textstyle \int\limits_0^1 \displaystyle \sqrt{2}\sin(j\pi x) v(x) u(x) \,dx\biggr] e_j
\end{aligned}
\end{equation} 
and we employ 
for every $j\in\{1,2,\ldots,J\}$, $v,u\in C([0,1],\R)$ 
the approximation 
\begin{equation}\label{eq:aliasing}
\begin{aligned}
\int_0^1 \sqrt{2}\sin(j\pi x) v(x) u(x) \,dx
~\approx~ 
\frac{\sum_{k=0}^J \sqrt{2}\sin(j\pi \frac{k}{J+1}) v(\frac{k}{J+1}) u(\frac{k}{J+1})}{J+1}.
\end{aligned}
\end{equation}
Observe that for every $v,u\in C([0,1],\R)$ with $[v]_{\lambda,\,\calB(\R)}, [u]_{\lambda,\,\calB(\R)}\in P(H)$ the approximations   \eqref{eq:aliasing}, $j\in\{1,2,\ldots,J\}$, are readily implemented by means of the discrete sine transform; see the \textsc{Python} code in Listing~\ref{listing}. 

In Figure~\ref{figure} we present 
approximate simulations of the weak error estimator \eqref{eq:simulation_error} for different test functions $\varphi$ 
plotted against the used numbers of time steps $N\in\{2^3,2^4,\ldots,2^{11}\}$,
employing  $\mathbf N=2^{12}$ time steps for the reference value, $J=16$ spatial basis functions, and $M=5\cdot 10^5$ Monte Carlo runs. 
By using a 
relatively 
large number of Monte Carlo runs we take into account the fact that the Monte Carlo error tends to dominate 
the weak error caused be the temporal and spatial approximations. 
As the focus lies on temporal approximations, we content ourselves with a 
relatively 
small number of spatial basis functions.
Note that the simulation results presented in Figure~\ref{figure} are in accordance with Corollary~\ref{cor:hyperbolic_anderson}. 
In particular, the results in the case $\varphi(x)=\langle x^{(2)}, \pi e_1\rangle_{H_{-\nicefrac12}}$, $x=(x^{(1)},x^{(2)})\in\bfH_0$, suggest that the weak order  $1^-$ established in Corollary~\ref{cor:hyperbolic_anderson} is sharp. 
Moreover, the visible fluctuations of simulation values seem to be due to a dominance of the Monte Carlo error in the corresponding ranges.    
The \textsc{Python} code used to obtain the simulations is presented in Listing~\ref{listing}.
\wen

\begin{figure}[!ht]
\centering
{\includegraphics[width=.8\textwidth]{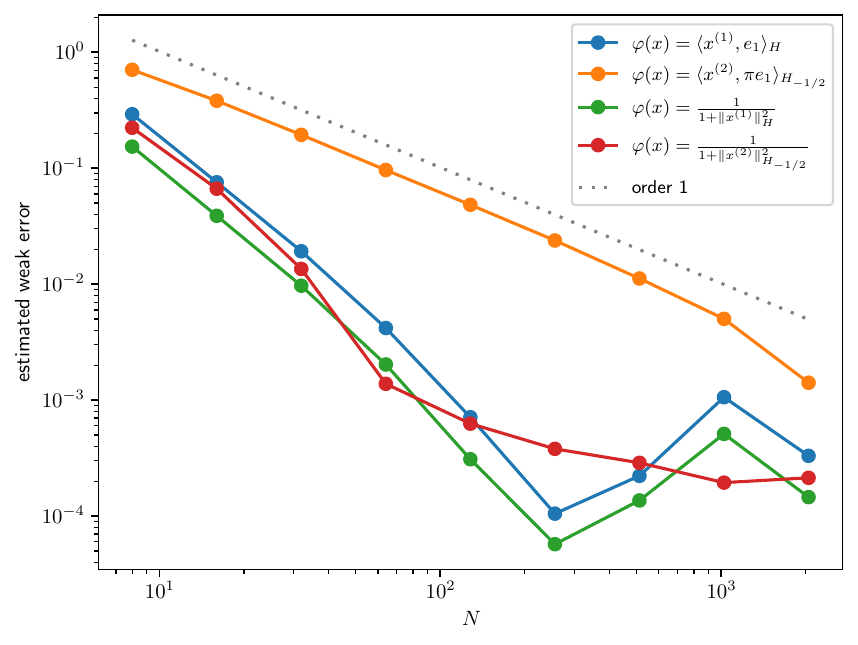}}
\caption{
Approximate simulations of the weak error estimator \eqref{eq:simulation_error} for different test functions $\varphi(x)$, $x=(x^{(1)},x^{(2)})\in\mathbf H_0$,  
plotted against the used numbers of time steps $N\in\{2^3,2^4,\ldots,2^{11}\}$,
employing  $\mathbf N=2^{12}$ time steps for the reference value, $J=16$ spatial basis functions, and $M=5\cdot 10^5$ Monte Carlo runs 
(see the text and Listing~\ref{listing} for details) 
\label{figure}
}
\end{figure}

\begin{lstlisting}[language=Python, caption=\textsc{Python} code used to create Figure~\ref{figure}, label=listing,
			captionpos=t
			]
import numpy as np
from scipy.fft import dst
import matplotlib.pyplot as plt  

# endpoint of time interval and numbers of time steps 
T= 2.; N_list = [2 ** n for n in range(3, 13)]

# number of spatial basis functions, number of Monte Carlo runs, and seed 
J = 16; M = 5e5; np.random.seed(123)  

# initial condition, given in terms of coefficients w.r.t. basis functions
xi = np.array([1] + (2 * J - 1) * [0], dtype = float).reshape(2, J)

# arrays of eigenvalues of (-A)^{1/2} and (-A)^{-1/2}
sqrt_ev = np.array([j * np.pi for j in range(1, J+1)])   
sqrt_ev_inv = sqrt_ev ** (-1)

# one-step propagation depending on step size h and array of coefficients 
# w.r.t. spatial basis functions of shape (2, J)
def propagate(h, coeff):     
    C = np.cos(h * sqrt_ev); S = np.sin(h * sqrt_ev)
    new_coeff = np.zeros([2, J])
    new_coeff[0,:] = C * coeff[0,:] + sqrt_ev_inv * S * coeff[1,:]
    new_coeff[1,:] = - sqrt_ev * S * coeff[0,:] + C * coeff[1,:]
    return new_coeff

# transformation of coefficients w.r.t. basis functions into function values
def coeff_to_fn(coeff):
    return 1 / np.sqrt(2) * dst(coeff, type = 1, norm = 'backward')

# transformation of function values into coefficients w.r.t. basis functions
def fn_to_coeff(values):
    return 1 / (np.sqrt(2) * (len(values) + 1)) \
            * dst(values, type = 1, norm = 'backward')

# simulation of approximate solution of the stochastic wave equation
# with N time steps and J spatial basis functions
def simulate_SWE(N):
    Y = np.zeros([N, 2, J])
    Y[0,:,:] = xi
    h = T / N
    for n in range(1, N):
        coeff = Y[n-1,:,:]
        y_1 = coeff_to_fn(coeff[0,:])
        dw = coeff_to_fn(np.sqrt(h) * np.random.normal(0, 1, size = J))
        BdW = fn_to_coeff(y_1 * dw)  
        coeff[1,:] += BdW
        Y[n,:,:] = propagate(h, coeff)
    return Y

# Monte Carlo approximation of E\varphi(Y^h_T) with step size h = T / N 
# using M samples and J spatial basis functions
def Monte_Carlo_EphiY(M, N):
    EphiY = np.zeros(4)   
    for m in range(M): 
        Y = simulate_SWE(N)                   
        Y_T = Y[-1,:,:]
        normsq = np.array([np.sum(Y_T[0,:] ** 2), 
                           np.sum((sqrt_ev_inv * Y_T[1,:]) ** 2)])
        EphiY += np.array([Y_T[0,0], Y_T[1,0], (1 + normsq[0]) ** (-1), 
                           (1 + normsq[1]) ** (-1)])
    EphiY *= 1/M * np.array([1, 1/np.pi, 1, 1])    
    return EphiY, Y  

# compute weak error estimates for different numbers of time steps
EphiY = np.zeros([len(N_list), 4])
error = np.zeros([len(N_list)-1, 4])
for i in range(len(N_list)):
    EphiY[i,:], Y = Monte_Carlo_EphiY(M, N_list[i])
for i in range(len(N_list)-1):
    error[i,:] = np.abs(EphiY[-1,:] - EphiY[i,:])      

## plot weak error estimates
plt.rcParams['text.usetex'] = True; plt.figure()
plt.xlabel('$N$'); plt.ylabel('estimated weak error')
plt.yscale('log'); plt.xscale('log')
order_line = 1.8 * error[0,1] * np.array([N_list[0] / N_list[i] 
                                          for i in range(0,len(N_list)-1)])
labels = [r'$\varphi(x) = \langle x^{(1)}, e_1 \rangle_H$', 
          r'$\varphi(x) = \langle x^{(2)}, \pi e_1 \rangle_{H_{-1/2}}$', 
          r'$\varphi(x) = \frac{1}{1+\|x^{(1)}\|_H^2}$', 
          r'$\varphi(x) = \frac{1}{1+\|x^{(2)}\|_{H_{-1/2}}^2}$'] 
for i in range(4):
    plt.plot(N_list[:-1], error[:,i], '-o', label = labels[i])
plt.plot(N_list[:-1], order_line, 
         linestyle = (0,(1,3)), color = '0.5', label = 'order 1')
plt.legend(prop={'size': 9})
plt.savefig('plot.pdf', bbox_inches = 'tight')
\end{lstlisting}

\section*{Acknowledgements}
This work has been partially supported by the NWO-research program VENI Vernieuwingsimpuls with the project number 639.031.549. This work has also been partially supported by the Internal Project Fund from Shenzhen Research Institute of Big Data under Grant T00120220001. The second author also gratefully acknowledges the Cluster of Excellence EXC 2044-390685587, Mathematics Münster: Dynamics-Geometry-Structure funded by the Deutsche Forschungsgemeinschaft (DFG, German Research Foundation).
\footnotesize
\bibliographystyle{alpha}
\bibliography{weakconv_wave}
%
\end{document}